\documentclass[openany, oneside]{book}

\pdfoutput=1

\usepackage{amsmath, mathtools, comment}
\usepackage{amsfonts}
\usepackage{natbib}
\usepackage{amssymb}
\usepackage{amsthm}
\usepackage{tikz-cd}
\usetikzlibrary{decorations.pathmorphing}
\usepackage{tikz}
\usepackage{amsxtra}
\usepackage{wrapfig}
\usepackage{color}
\usepackage{url}
\usepackage[hidelinks]{hyperref}
\usepackage[percent]{overpic}
\usepackage{mathrsfs}
\usepackage{stmaryrd}

\usepackage{setspace, cleveref, rotating, adjustbox}

\newtheorem{thm}{Theorem}[chapter]
\newtheorem{prop}[thm]{Proposition}

\newtheorem{cor}[thm]{Corollary}

\newtheorem{lem}[thm]{Lemma}
\theoremstyle{definition}
\newtheorem{defn}[thm]{Definition}

\newtheorem{exer}[thm]{Exercise}
\newtheorem*{thma}{Theorem}

\theoremstyle{remark}
\newtheorem{ex}[thm]{Example}
\newtheorem{rmk}[thm]{Remark}

\newcommand{\cat}[1]{{\mathbf{#1}}}
\newcommand{\p}{\paragraph{}}
\newcommand{\spec}{\operatorname{Spec}}
\newcommand{\onto}{\twoheadrightarrow}
\newcommand{\into}{\hookrightarrow}
\newcommand{\from}{\leftarrow}
\newcommand{\lot}{\otimes^{\mathbb{L}}}
\newcommand{\per}{{\ensuremath{\cat{per}}}\kern 1pt}

\newcommand{\Mod}{\cat{Mod}\text{-}}

\DeclareMathOperator{\id}{id}

\let\ker\relax\DeclareMathOperator{\ker}{ker}

\let\hom\relax\newcommand{\hom}{\mathrm{Hom}}
\newcommand{\enn}{\mathrm{End}}
\DeclareMathOperator{\tor}{Tor}
\DeclareMathOperator{\ext}{Ext}

\DeclareMathOperator{\map}{Hom}

\newcommand{\A}{\mathcal{A}}
\newcommand{\B}{\mathcal{B}}
\newcommand{\C}{\mathcal{C}}
\newcommand{\D}{\mathcal{D}}

\newcommand{\Q}{\mathbb{Q}}
\newcommand{\bbC}{\mathbb{C}}
\newcommand{\Z}{\mathbb{Z}}
\newcommand{\N}{\mathbb{N}}
\renewcommand{\P}{\mathbb{P}}
\newcommand{\R}{{\mathrm{\normalfont\mathbb{R}}}}

\newcommand{\rmap}{\R\kern -1.5pt \map}

\newcommand{\tbtm}[4]{\ensuremath{\begin{pmatrix}#1&#2\\#3&#4\end{pmatrix}}}
\newcommand{\stbtm}[4]{\ensuremath{\left(\begin{smallmatrix}#1&#2\\#3&#4\end{smallmatrix}\right)}}

\newcommand{\dq}{A/^{\mathbb{L}}\kern -2pt AeA}

\newcommand{\dco}{D^\mathrm{co}}

\numberwithin{equation}{section}

\usepackage[utf8]{inputenc}
\usepackage[T1]{fontenc}
\usepackage{textcomp}

\usepackage[nolabel]{showlabels}

\title{Lectures on singularity categories}
\author{Matt Booth}

\begin{document}
\frontmatter
	
	\begin{titlepage}
	\maketitle
	\end{titlepage}

	\chapter*{Introduction}

   Let $A$ be a (not necessarily commutative) noetherian ring. Associated to $A$ are various invariants which tell us about the geometry, homological algebra, representation theory, etc.\ of $A$. Often one thinks of an invariant as being a number, or maybe a vector space, or at least something reasonably small and concrete. But there are several natural categories attached to $A$, which one can view as invariants in their own right. These categorical invariants contain a lot more structure and information than more classical invariants, but at the downside of being more complicated objects. Examples include:
	
	\begin{enumerate}
		\item The abelian categories $\mathbf{mod}\text{-}A$ and $\Mod A$. These are rather strong invariants: Morita theory tells us that when $A$ and $A'$ have equivalent module categories, then $Z(A)\cong Z(A')$. In particular these are complete invariants for commutative rings. In the non-affine setting, one has the Gabriel-Rosenberg theorem, which says that if $X$ is a noetherian scheme then $X$ can be recovered from $\mathbf{Coh}(X)$ or $\mathbf{QCoh}(X)$.
        
		\item The triangulated category $D(\Mod A)$, defined to be the localisation of the category of chain complexes of $A$-modules at the quasi-isomorphisms (homology isomorphisms). This category is the natural home for the homological algebra of $A$-modules. Variants include  $D^b(\mathbf{mod}\text{-}A)$, where one restricts to bounded complexes of finitely generated modules, and $\per(A)$, where one further restricts to finitely generated projective modules. These categories are looser invariants that still know about the homological properties of $A$. For an example of the utility of this approach, $\mathbb{P}^1$ is derived equivalent to the path algebra of the Kronecker quiver $\bullet \rightrightarrows\bullet$, although their abelian categories of sheaves/modules are not equivalent. So homological algebra over $\mathbb{P}^1$ is the same as homological algebra over $\bullet \rightrightarrows\bullet$. Recovery theorems are more difficult to come by here; one famous example in the geometric setting is the Bondal-Orlov reconstruction theorem, which says that if $X$ is a smooth projective variety with (anti)ample canonical bundle then $D^b(X)$ recovers $X$ (for a nice exposition of this see \cite{caldararu}).
		
		\item Differential graded enhancements of the above triangulated categories; loosely these keep track of the derived hom-complexes rather than just the Ext groups. One is typically interested in dg enhancements since triangulated categories have bad formal properties:
        \begin{itemize}
        \item Mapping cones are not functorial. 
            \item The category of triangulated categories doesn't have internal homs.
            \item Triangulated categories don't satisfy any reasonable form of geometric descent.
            \item Invariants like Hochschild, cyclic, or periodic cyclic co/homology are defined using the dg structure, and it is unknown if they can be recovered from the triangulated structure alone.
        \end{itemize}
         All of these problems are solved by working with pretriangulated dg categories instead of triangulated categories.

		\item If $A$ is reasonably commutative (for example, an enveloping algebra or an $E_2$-algebra), then one can equip $D(A)$ or $\per(A)$ with a monoidal structure given by a derived tensor product. This is the starting point for the subject of tensor triangular geometry.
		\end{enumerate}
	
	By definition, $\per(A)$ is a subcategory of $D^b(\mathbf{mod}\text{-}A)$. One natural question to ask is - what sort of difference is there between these two categories? For an example, let $k$ be a field and consider the ring $A\coloneqq k[x]/x^2$. The module $k$ has a  projective resolution given by $$\cdots \xrightarrow{x}A \xrightarrow{x}A \xrightarrow{x}A \xrightarrow{x}A$$and using this one can compute that $\ext^i(k,k)\cong k$ for all $i\geq 0$. Since perfect complexes must have bounded self-Exts, $k$ cannot be perfect. Hence $k\in D^b(A)$ is not an object of the subcategory $\per(A)$. More formally, trying to measure the difference between the triangulated category $D^b(\mathbf{mod}\text{-}A)$ and its triangulated subcategory $\per(A)$ leads us to consider the Verdier quotient $D^b(\mathbf{mod}\text{-}A) / \per(A)$. This triangulated category is known as the \textbf{singularity category} of $A$ and will be the main object of our study. In particular, we will be interested in the sort of information that the singularity category $D_{\text{sg}}(A)\coloneqq D^b(\mathbf{mod}\text{-}A) / \per(A)$ knows about $A$. The above calculation shows that $D_{\text{sg}}(k[x]/x^2)$ does not vanish --- in fact, it is $D^b(k)$ (as long as $k$ is algebraically closed). The foundational result in this field, and the reason for the name ``singularity category'' (coined by Orlov), is the following result:
	\begin{thma}
		Let $A$ be a commutative noetherian ring of finite Krull dimension. Then $A$ is regular if and only if $D_{\text{sg}}(A)$ vanishes.
		\end{thma}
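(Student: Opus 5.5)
The plan is to show that both conditions are equivalent to the intermediate statement ``every finitely generated $A$-module has finite projective dimension.'' The key input is Serre's theorem: a noetherian local ring $R$ is regular if and only if $\mathrm{gldim}(R)<\infty$, which holds if and only if the residue field $k$ has finite projective dimension. I also use the Auslander--Buchsbaum--Serre consequence that for a regular local ring, $\mathrm{gldim}(R)=\dim R$. Recall that $A$ is regular when every localisation $A_\mathfrak{p}$ at a prime is a regular local ring.

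First I show that vanishing of $D_{\text{sg}}(A)$ is equivalent to every $M\in\mathbf{mod}\text{-}A$ being perfect, i.e.\ $D^b(\mathbf{mod}\text{-}A)=\per(A)$. One direction is clear. Conversely, if the Verdier quotient is zero, then every object of $D^b$ becomes isomorphic to $0$ in the quotient. The objects killed by the quotient form the thick closure of $\per(A)$, and $\per(A)$ is already thick (it is idempotent complete, being the compact objects of $D(A)$). Hence every object of $D^b$, in particular every module $M$ concentrated in degree zero, is perfect. A module quasi-isomorphic to a bounded complex of finitely generated projectives has finite projective dimension: truncate the complex, and the last syzygy is a direct summand, hence projective.

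\emph{Regular $\Rightarrow$ vanishing.} Let $M$ be finitely generated and take a resolution by finitely generated projectives, with syzygies $\Omega^n M$. For each prime $\mathfrak{p}$, $A_\mathfrak{p}$ is regular local of dimension $\leq d\coloneqq\dim A<\infty$. So $\mathrm{pd}_{A_\mathfrak{p}}M_\mathfrak{p}\le d$, and hence $(\Omega^d M)_\mathfrak{p}$ is projective, i.e.\ free, for all $\mathfrak{p}$. A finitely generated module over a noetherian ring that is locally free at every prime is projective. Thus $\mathrm{pd}_A M\le d$ and $M\in\per(A)$. This is where finite Krull dimension is used: without it one only gets local finiteness, with no uniform bound (Nagata's example).

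\emph{Vanishing $\Rightarrow$ regular.} Let $\mathfrak{p}$ be a prime. By the first step, the finitely generated module $A/\mathfrak{p}$ has a finite resolution by finitely generated projectives. Localising at $\mathfrak{p}$ (localisation is exact and preserves projectivity) gives a finite free resolution of $A_\mathfrak{p}/\mathfrak{p}A_\mathfrak{p}=\kappa(\mathfrak{p})$, the residue field of $A_\mathfrak{p}$. By Serre's theorem, $A_\mathfrak{p}$ is regular. Hence $A$ is regular.

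The main obstacle is Serre's theorem itself, specifically the direction that finite projective dimension of the residue field forces regularity, which goes through Koszul complexes and minimal resolutions. In lecture notes I would cite it rather than reprove it; the remaining steps are routine bookkeeping with the thickness of $\per(A)$ and localisation.
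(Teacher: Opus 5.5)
Your proof is correct, and its skeleton matches the paper's global Auslander--Buchsbaum--Serre argument. Both proofs pass through the condition ``every finitely generated module has finite projective dimension''. Both prove ``vanishing $\Rightarrow$ regular'' by localising finite resolutions and applying the local theorem. You localise a resolution of $A/\mathfrak{p}$ to resolve $\kappa(\mathfrak{p})$, while the paper notes that each $D_\mathrm{sg}(A_\mathfrak{p})$ vanishes; this is the same idea.

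The real difference is in ``regular $\Rightarrow$ finite projective dimension''.
\begin{itemize}
\item You get a uniform bound from $\mathbf{gldim}(A_\mathfrak{p})=\dim A_\mathfrak{p}\le\dim A$ and then glue. This gives $\mathbf{gldim}(A)\le\dim A$ in one step, but it genuinely uses the finite Krull dimension hypothesis.
\item The paper uses a compactness argument: for each finitely generated $M$ there is a maximal ideal $\mathfrak{m}$ with $\mathbf{pd}_A(M)=\mathbf{pd}_{A_\mathfrak{m}}(M_\mathfrak{m})$. Concretely, consider the loci $U_n=\{\mathfrak{p}:(\Omega^nM)_\mathfrak{p}\text{ is free}\}$. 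They are open because $\Omega^nM$ is finitely presented, they increase with $n$, and they cover $\spec(A)$ because each $A_\mathfrak{p}$ is regular. By quasi-compactness, some $U_n$ is all of $\spec(A)$.
\end{itemize}
The paper's route needs no dimension hypothesis. So the equivalence holds for every commutative noetherian ring, and finite Krull dimension matters only for the uniform statement $\mathbf{gldim}(A)=\dim A$.

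For the same reason, your remark that without finite dimension ``one only gets local finiteness'' is misleading. Nagata's ring is regular and does have vanishing singularity category, as the paper notes. What fails there is only a uniform bound on projective dimensions across all modules, i.e.\ finite global dimension, not the finiteness of each individual $\mathbf{pd}_A(M)$.
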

	Intuitively, a commutative ring is regular when the corresponding scheme is smooth. So for example, the ring $\mathbb{C}[x_1,\cdots,x_n]$ is regular, since it is the coordinate ring of complex $n$-space. The rings $\mathbb{C}[x,y]/xy$, $\mathbb{C}[x,y]/(x^2-y^3)$, and $\mathbb{C}[x,y]/(x^3+x^2-y^2)$ are not smooth, since they are respectively the coordinate rings of the coordinate axes in the plane, the cuspidal cubic, and the nodal cubic, all of which have singular points. With this in mind we regard the singularity category of $A$ as a homological invariant that detects the singularities of $A$, even when $A$ is noncommutative. Along the way to proving the above theorem, we will see a purely homological characterisation of smoothness in terms of global dimension, which measures the lengths of projective resolutions of $A$-modules; it is intuitively clear that the existence of finitely generated modules without a bounded projective resolution is an obstruction to the vanishing of $D_{\text{sg}}(A)$.
 	
 	After we prove the above theorem, we will see two important alternate descriptions of the singularity category, valid in various levels of generality. The first is as follows:

    \begin{thma}[Buchweitz]
        Suppose that $A$ is (Iwanaga)--Gorenstein; i.e.\ $A$ has finite injective dimension over itself. Then there is a triangle equivalence between $D_{\text{sg}}(A)$ and the stable category of maximal Cohen--Macaulay modules $\underline{\mathbf{MCM}}(A)$.
    \end{thma}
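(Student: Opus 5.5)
The plan is to construct the functor explicitly and check it is an equivalence in three stages. Here $\mathbf{MCM}(A)$ denotes the finitely generated modules $M$ with $\ext^i_A(M,A)=0$ for all $i>0$, and $\underline{\mathbf{MCM}}(A)$ is its quotient by maps factoring through projectives. I first check that $\mathbf{MCM}(A)$ is a Frobenius exact category whose projective-injectives are exactly $\mathrm{proj}\text{-}A$. It is closed under extensions and kernels of epimorphisms, projectives are relatively injective since $\ext^1(M,A)=0$, and every MCM module embeds into a projective with MCM cokernel. For the last point, dualise a projective resolution of $M^*=\hom_A(M,A)$; reflexivity of MCM modules over an Iwanaga--Gorenstein ring gives the required coresolution. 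By Happel's theorem $\underline{\mathbf{MCM}}(A)$ is then triangulated, with shift the cosyzygy, and the composite $\mathbf{MCM}(A)\to D^b(\mathbf{mod}\text{-}A)\to D_{\text{sg}}(A)$ kills projectives. It therefore induces a functor $F\colon \underline{\mathbf{MCM}}(A)\to D_{\text{sg}}(A)$. It is triangulated because a short exact sequence of MCM modules gives a triangle in $D^b$, and because $\Omega^{-1}M\cong M[1]$ in $D_{\text{sg}}$ via $0\to M\to P\to \Omega^{-1}M\to 0$ with $P$ perfect.

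\textbf{Essential surjectivity.} Let $d=\mathrm{injdim}\,A_A$. Every object of $D^b(\mathbf{mod}\text{-}A)$ is, up to shift, isomorphic in $D_{\text{sg}}$ to a module. To see this, truncate a projective resolution $P\to X$ far to the left: a brutal truncation differs from $P$ by a perfect complex and yields $\Omega^nN[n]$ for a module $N$. Shifts are absorbed using $M[-1]\cong \Omega M$ in $D_{\text{sg}}$. Finally, for any finitely generated $N$, dimension shifting gives $\ext^i(\Omega^dN,A)=\ext^{i+d}(N,A)=0$ for $i>0$. So high syzygies are MCM, and $N\cong \Omega^dN[d]$ in $D_{\text{sg}}$ lands in the image, since $[d]$ corresponds to $\Omega^{-d}$.

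\textbf{Full faithfulness.} This is the step I expect to be the main obstacle, because morphisms in a Verdier quotient are roofs $M\leftarrow Y\to N$ whose cone is perfect. The plan is to show that for $M$ MCM and $P$ perfect, $\hom_{D^b}(P,M[i])$ and $\hom_{D^b}(M,P[i])$ behave well enough to compute $\hom_{D_{\text{sg}}}(M,N)$ as a colimit. Concretely, I would establish
\[\hom_{D_{\text{sg}}}(M,N)\cong \varinjlim_n \underline{\hom}(\Omega^nM,\Omega^nN),\]
using the standard description of morphisms in $D^b/\per$ via truncated projective resolutions. Then I would show the transition maps $\underline{\hom}(M,N)\to\underline{\hom}(\Omega M,\Omega N)$ are bijections when $M,N$ are MCM. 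This holds because $\Omega$ is an autoequivalence of the stable Frobenius category $\underline{\mathbf{MCM}}(A)$, with inverse $\Omega^{-1}$ built above.

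An alternative for full faithfulness, which I would try if the colimit bookkeeping gets messy, is to show that $\underline{\mathbf{MCM}}(A)$ is left orthogonal to $\per(A)$ inside a suitable model. Take $K^{-,b}(\mathrm{proj})$ and identify MCM modules with their complete resolutions, which are acyclic complexes of projectives. Then verify directly that $\hom_{K}(\text{complete resolution},P)=0$ for perfect $P$, using $\ext^{>0}(M,A)=0$. Verdier localisation theory then gives that maps out of these objects are unaffected by the quotient.
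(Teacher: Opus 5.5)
Your main route is correct but differs from the paper's.

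\textbf{How the paper argues.} The paper never uses Happel's theorem. It passes through $K_\mathrm{ac}(\mathrm{proj}A)$ and shows that the truncation functor $\sigma_0\colon K_\mathrm{ac}(\mathrm{proj}A)\to D_\mathrm{sg}(A)$ is an equivalence. Full faithfulness there comes from Verdier's criterion: for perfect $P$, one has $\hom_{D^b}(\sigma_kX,P)=0$ for suitable $k$, and this reduces to $\ext^{>0}(\text{MCM},A)=0$. It then shows $\Omega_0\colon K_\mathrm{ac}(\mathrm{proj}A)\to\underline{\mathbf{MCM}}(A)$ is an equivalence using complete resolutions, and concludes from $\iota\Omega_0\cong\sigma_0$. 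The triangulated structure on $\underline{\mathbf{MCM}}(A)$ is therefore transported from $D_\mathrm{sg}(A)$, not constructed.

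\textbf{How your route differs.} You build the triangulated structure intrinsically via the Frobenius category and Happel, and check exactness of $F$ by hand. Full faithfulness then comes from $\hom_{D_\mathrm{sg}}(M,N)\cong\varinjlim_n\underline{\hom}(\Omega^nM,\Omega^nN)$. That formula holds over any noetherian ring, so your argument shows exactly where Gorensteinness enters: high syzygies are MCM, and $\Omega$ is invertible on $\underline{\mathbf{MCM}}(A)$, which needs reflexivity. It also shows that Happel's triangulation agrees with the inherited one. The paper's route costs more, but it produces $K_\mathrm{ac}(\mathrm{proj}A)$ and complete resolutions, which it reuses for stable Ext, the dg enhancement and matrix factorisations.

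\textbf{What to add.} The colimit formula carries all of full faithfulness, so state it as a lemma with a proof sketch. The key step is cofinality. Let $N\to\Omega^nN[n]$ be the maps coming from the brutal truncations $\sigma_{\le -n}$ of a projective resolution. These are cofinal among maps out of $N$ with perfect cone, because any chain map from a bounded complex of projectives into the resolution vanishes after composing with a deep enough truncation. Hence $\hom_{D_\mathrm{sg}}(M,N)\cong\varinjlim_n\ext^n_A(M,\Omega^nN)$, and this colimit agrees with the stable one. This is essentially the dual form of the paper's vanishing of $\hom_{D^b}(\sigma_kX,P)$.

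\textbf{The fallback does not work as written.} Complete resolutions are acyclic and unbounded above, so they are not objects of $K^{-,b}(\mathrm{proj})\simeq D^b(\mathbf{mod}\text{-}A)$; in $D(A)$ they are zero. The MCM modules themselves are not left orthogonal to $\per(A)$: by reflexivity, $\hom_{D^b}(M,A)=M^*\neq 0$ for every nonzero MCM module $M$. Only an eventual orthogonality holds, with the depth depending on the perfect complex. That is why the paper keeps $K_\mathrm{ac}(\mathrm{proj}A)$ as a separate category and tests against the truncations $\sigma_kX$.
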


 	We say that a finitely generated $A$-module $X$ is MCM if $\ext_A^i(X,A)$ vanishes for $i>0$; in the commutative setting, there is an equivalent characterisation in terms of depth. Loosely, the stable category of MCM modules is what one gets by taking the category of MCM modules and quotienting out the projectives. The shift of an MCM module is its (inverse) syzygy. As originally noticed by Eisenbud, when $A$ is a complete local hypersurface singularity, the syzygy functor is its own inverse, which makes $\underline{\mathbf{MCM}}(A)$ into a 2-periodic triangulated category. In this setting there is a simpler description that makes use of the 2-periodic structure, which gives us our second alternate description:

  \begin{thma}
      Suppose that $R=k\llbracket x_1,\ldots, x_n\rrbracket / f$ is a complete local isolated hypersurface singularity. Then there is a triangle equivalence between $D_{\text{sg}}(R)$ and the (homotopy) category of matrix factorisations $\mathbf{MF}(k\llbracket x_1,\ldots, x_n\rrbracket , f)$.
      \end{thma}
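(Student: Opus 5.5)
The plan is to factor the equivalence through Buchweitz's theorem. Set $S = k\llbracket x_1,\ldots,x_n\rrbracket$ and $R = S/f$. Then $R$ is a hypersurface, hence Gorenstein: it has a finite-length $S$-free resolution $0\to S\xrightarrow{f}S\to R\to 0$, and $S$ is regular. So Buchweitz gives $D_{\text{sg}}(R)\simeq\underline{\mathbf{MCM}}(R)$, and it suffices to construct a triangle equivalence $\mathbf{MF}(S,f)\simeq \underline{\mathbf{MCM}}(R)$. This is Eisenbud's theorem. I will build the functor $\operatorname{coker}\colon (\varphi\colon F_1\to F_0,\ \psi\colon F_0\to F_1)\mapsto \operatorname{coker}(\varphi)$ and check, in order, that it is well-defined, essentially surjective, fully faithful, and triangulated.

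\emph{Well-definedness.} Given a matrix factorisation with $\varphi\psi=f=\psi\varphi$, $f$ is a nonzerodivisor, so $\varphi$ is injective and $M=\operatorname{coker}\varphi$ is killed by $f$; thus $M$ is an $R$-module. Reducing mod $f$ gives the $2$-periodic complex
\[\cdots\to \bar F_0\xrightarrow{\bar\psi}\bar F_1\xrightarrow{\bar\varphi}\bar F_0\to M\to 0,\]
which is exact: if $\bar\varphi(\bar x)=0$ then $\varphi(x)=fy=\varphi\psi(y)$, so $x=\psi(y)$ by injectivity. The dual complex is again the complex of a matrix factorisation (transposes), hence also exact, so $\ext^i_R(M,R)=0$ for $i>0$ and $M$ is MCM. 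A homotopy between morphisms of matrix factorisations produces a map factoring through the free module $\bar F_0$, so the functor descends to the stable category.

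\emph{Essential surjectivity.} By Auslander--Buchsbaum over the regular local ring $S$, an MCM $R$-module $M$ has $\operatorname{depth} M=n-1$ and hence $\operatorname{pd}_S M=1$. Take a minimal resolution $0\to F_1\xrightarrow{\varphi}F_0\to M\to0$; the ranks agree since $M$ is torsion. Because $fM=0$, multiplication by $f$ on $F_0$ lifts through $\varphi$, giving $\psi$ with $\varphi\psi=f$. Then $\varphi\psi\varphi=f\varphi=\varphi f$ and injectivity yield $\psi\varphi=f$.

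\emph{Fully faithfulness.} A map of cokernels lifts to a map of resolutions, and from there to a morphism of matrix factorisations; this gives fullness. For faithfulness, if $M\to N$ factors through a projective, I will construct a null-homotopy explicitly using the periodic resolution. Bookkeeping is needed here, and one must also handle the trivial factorisations $(1,f)$ and $(f,1)$, which map to $0$ and $R$ respectively.

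\emph{Triangulated structure.} The shift $[1]$ on $\mathbf{MF}$ swaps $(\varphi,\psi)\mapsto(-\psi,-\varphi)$ and goes to $\operatorname{coker}\psi=\Omega^{-1}M$, matching the MCM shift. Mapping cones of morphisms of matrix factorisations go to the standard triangles in $\underline{\mathbf{MCM}}$, which come from short exact sequences.

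I expect faithfulness, together with matching the triangles, to be the main obstacle; the remaining steps are formal. For faithfulness my first attempt is to identify both hom-spaces with $\underline{\hom}_R$, via the Tate resolutions. The isolated-singularity and completeness hypotheses should only be needed for finiteness properties, such as Krull--Schmidt, rather than for the equivalence itself.
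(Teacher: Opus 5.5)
Your proposal is correct and is essentially the paper's proof. The paper also uses Buchweitz's equivalence $D_{\text{sg}}(R)\simeq\underline{\mathbf{MCM}}(R)$ followed by the cokernel functor, which it builds as $\Omega_0\circ c$ through $K_\mathrm{ac}(\mathrm{proj}R)$. Essential surjectivity comes from $\mathrm{pd}_S M=1$ plus lifting multiplication by $f$ through $\varphi$, and fullness from lifting maps of resolutions. Faithfulness goes via the $2$-periodic resolutions, which the paper only sketches by modifying a chain homotopy so that $h_3=h_1$. The paper also just transports the triangulated structure along the equivalence rather than matching cones as you do.

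The faithfulness step you flag closes directly with what you have already proved. Let $(\alpha_0,\alpha_1)$ be a morphism into $(\varphi'\colon G_1\to G_0,\ \psi')$ whose induced map $M\to N$ factors through a projective. That map then factors through $\bar G_0\to N$. Every map $M\to\bar G_0$ extends along the injection $M\hookrightarrow\bar F_1$ induced by $\bar\psi$; this is exactness of your transposed complex. Lifting the extension to $s_1\colon F_1\to G_0$ gives $\alpha_0-s_1\psi=\varphi's_0$ for some $s_0\colon F_0\to G_1$. Injectivity of $\varphi'$ then forces $\alpha_1=\psi's_1+s_0\varphi$, i.e.\ a null-homotopy.
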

      A matrix factorisation of $f$ is a pair of free finite rank $k\llbracket x_1,\ldots, x_n\rrbracket$-modules $M$ and $N$ together with `differentials' $d:M \to N$ and $d:N \to M$ satisfying $d^2 = f$. One can alternately think of this as 2-periodic `complex', where the differentials do not square to zero, but rather to $f$; indeed there is an interpretation here in terms of modules over a certain curved dg algebra. One can organise the collection of matrix factorisations into a category, and $\mathbf{MF}(k\llbracket x_1,\ldots, x_n\rrbracket , f)$ is given by quotienting this category by a suitable notion of homotopy.

      Armed with these three alternate descriptions, we will then proceed to enhance them to the level of dg categories. This will allow us to talk about invariants like the Hochschild co/homology of singularity categories, which we will compute in some special cases. In particular we will see the following theorem:

      \begin{thma}
      Let $R=k\llbracket x_1,\ldots, x_n\rrbracket / f$ be a complete local isolated hypersurface singularity. The 2-periodic Hochschild cohomology of $D_\mathrm{sg}(R)$ is the Milnor algebra
          $$M_f\coloneqq \frac{R}{{\left(\frac{\partial f}{\partial x_1},\ldots,\frac{\partial f}{\partial x_n}\right)}}$$concentrated in even parity. Moreover, the non-2-periodic $\mathrm{HH^0}(D_\mathrm{sg}(R))$ is the Tjurina algebra $M_f/f$.
      \end{thma}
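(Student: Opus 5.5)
The plan is to pass through the dg enhancement of $D_\mathrm{sg}(R)$ by matrix factorisations, which the earlier theorem lets us use in place of $D_\mathrm{sg}(R)$ itself. Write $S=k\llbracket x_1,\ldots,x_n\rrbracket$. I will treat $\mathbf{MF}(S,f)$ as the category of modules over the curved algebra $(S,f)$, and then use the Dyckerhoff/Segal-style computation that its Hochschild invariants reduce to a Koszul-type complex built from $f$. Dyckerhoff's compact generator result is the key input. Because the singularity is isolated, the residue field stabilisation $k^{\mathrm{stab}}$, i.e.\ the Koszul matrix factorisation, compactly generates the idempotent completion. Hence $\mathrm{HH}^*$ of the 2-periodic category is the 2-periodic Hochschild cohomology of the dg algebra $E=\enn(k^{\mathrm{stab}})$. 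Morita invariance of Hochschild cohomology transfers this back to $\mathbf{MF}(S,f)$.

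\textbf{Step 1: reduce to a Koszul complex.} I compute $\mathrm{HH}^*$ as $\mathrm{End}$ of the diagonal. The diagonal bimodule is the kernel matrix factorisation $\Delta$ of $f\otimes 1-1\otimes f$ over $S\hat\otimes S$. Write $f(y)-f(x)=\sum_i (y_i-x_i)\,\nabla_i f$ with difference quotients $\nabla_i f$. Then $\Delta$ is the stabilised Koszul factorisation on the sequence $(y_i-x_i)$, with extra differential $\sum \nabla_i f\,\partial/\partial\theta_i$ added. So $\mathrm{End}(\Delta)$ is quasi-isomorphic to $\Delta$ restricted along the diagonal $x=y$. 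On the diagonal the difference quotients become $\partial_i f$ and $f$ itself vanishes from the curvature. This gives the 2-periodic Koszul complex $K(\partial_1f,\ldots,\partial_nf)$ on $S$ with differential $\iota_{df}$ on $\Lambda^\bullet S^n$, folded mod 2.

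\textbf{Step 2: read off the answer.} Isolatedness means $(\partial_i f)$ is $\mathfrak m$-primary. Since $S$ is regular, the partials therefore form a regular sequence. So the Koszul complex has homology only in degree $0$, equal to $S/(\partial f)$, and this sits in even parity. Since $f\in(\partial_i f)$ up to the relevant completion issues, this agrees with the Milnor algebra written with $R$ in the statement. For the Tjurina statement I would instead compute non-periodic $\mathrm{HH}^0$ of the $\mathbb{Z}$-graded dg enhancement $D^b(R)/\per(R)$. One route is to view $D_\mathrm{sg}(R)$ as a localisation of $D^b(R)$ and compare with $\mathrm{HH}^*(R)$ via the Keller-type map $\mathrm{HH}^*(R)\to\mathrm{HH}^*(D_\mathrm{sg})$. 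Alternatively, use the $\mathbb{Z}$-graded curved model over $S[u]$ with $|u|=2$. In either model the Koszul complex becomes $K(\partial f)$ tensored with $k[u]$, with an extra term $f\cdot u$. In degree $0$ the extra relation $f$ appears, leaving $S/(\partial f,f)=R/(\partial f)=M_f/f$.

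The main obstacle is Step 1, specifically justifying that Hochschild cohomology of the curved, 2-periodic category is computed by this naive Koszul complex. Curved algebras have degenerate ordinary Hochschild theory, so one must use the compact generator and completions carefully, e.g.\ Hochschild of the second kind or Dyckerhoff's argument. I would first try Dyckerhoff's approach via the generator $k^{\mathrm{stab}}$ and the explicit kernel $\Delta$, checking that completeness of $S$ makes all the tensor products behave.
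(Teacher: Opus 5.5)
For the 2-periodic statement, your route is the paper's (Dyckerhoff's) argument: the compact generator $k^{\mathrm{stab}}$ plus Morita invariance, then bimodules realised as matrix factorisations of $f\otimes 1-1\otimes f$. The step you flag as the main obstacle is exactly where the paper invokes Dyckerhoff's equivalences $\mathbf{MF}^{\mathrm{dg}}(A\hat\otimes A',\sigma\otimes 1+1\otimes\sigma')\simeq P_\sigma\otimes P_{\sigma'}$ and $\mathbf{MF}^{\mathrm{dg}}(A,-\sigma)\simeq P_\sigma^{\mathrm{op}}$. The diagonal is the stabilisation of the diagonal module, and its $\mathrm{End}$ collapses to the Koszul complex of $\partial_1 f,\ldots,\partial_n f$. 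This is a regular sequence because the singularity is isolated, at least in characteristic zero, as in the paper. As a sketch this is fine.

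\textbf{The error.} One step is false: ``since $f\in(\partial_i f)$ up to the relevant completion issues''. Completion plays no role here. For an isolated singularity, $f\in(\partial_1 f,\ldots,\partial_n f)$ holds if and only if $f$ is quasi-homogeneous in suitable coordinates (K.~Saito). For example, $f=x^4+y^5+x^2y^3$ has Milnor number $12$ but Tjurina number $11$. What your computation actually produces is $k\llbracket x_1,\ldots,x_n\rrbracket/(\partial_1 f,\ldots,\partial_n f)$, and that is the intended meaning of $M_f$. Read literally, $R/(\partial f)=k\llbracket x\rrbracket/(f,\partial f)$ is already the Tjurina algebra, so $M_f/f=M_f$ and the ``moreover'' clause would say nothing. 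The paper's remark that $T_\sigma\cong M_\sigma$ \emph{when $\sigma$ is quasi-homogeneous} confirms that the $R$ in the definition of $M_f$ should be $k\llbracket x_1,\ldots,x_n\rrbracket$. So drop the reconciliation and state the periodic answer as $k\llbracket x\rrbracket/(\partial f)$. As you wrote it, the two halves of your conclusion coincide, which is true only in the quasi-homogeneous case.

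\textbf{The Tjurina clause.} The paper gives no argument of its own here. It quotes Keller's identification of the Hochschild cohomology of the $\mathbb{Z}$-graded dg singularity category with the singular Hochschild cohomology of $R$, together with the Buenos Aires group's computation of $\mathrm{HH}^*$ of hypersurfaces.

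Your first route points that way, but a plain comparison with $\mathrm{HH}^*(R)$ cannot give the answer, since $\mathrm{HH}^0(R)=R$. What matters is the stabilised (singular) version.

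Your second route is not yet an argument, for three reasons. ``$K(\partial f)\otimes k[u]$ with an extra term $f\cdot u$'' is not a well-defined complex. Also, $u$ has to be inverted to model $D_{\mathrm{sg}}(R)$. Finally, naive Hochschild theory of a curved algebra vanishes.

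The mechanism you want is one extra exterior generator dual to $u$, with differential $\partial_u(uf)=f$. This gives the Koszul complex on $(u\partial_1f,\ldots,u\partial_nf,f)$ over $k\llbracket x\rrbracket[u^{\pm1}]$, whose degree-zero cohomology is $k\llbracket x\rrbracket/(\partial f,f)$. Categorically, the length-one resolution of $k[u^{\pm1}]$ over $k[u^{\pm1}]\otimes k[u^{\pm1}]$ gives a triangle $\mathrm{HH}_{k[u^{\pm1}]}\to\mathrm{HH}_k\to\mathrm{HH}_{k[u^{\pm1}]}[-1]\to\mathrm{HH}_{k[u^{\pm1}]}[1]$. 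One must show that its connecting map is multiplication by $uf$. Granted that, evenness of the periodic answer gives $\mathrm{HH}^0_k\cong M_f/fM_f$. Identifying that connecting class with $f$ is the substantive point, and your proposal does not supply it.
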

      In particular, one can recover both the Milnor and Tjurina algebras of $f$ -- classical objects from singularity theory -- from the singularity category $D_\mathrm{sg}(R)$ (possibly with some extra 2-periodicity data attached).

      Following this, we will consider relative singularity categories, and especially concrete models for them in terms of derived quotients. As an aside, we will see some applications of this technology to the classification of isolated hypersurface singularities. Following work of Kalck and Yang, we will then use these concrete models to give explicit computations of singularity categories of noncommutative deformations of Kleinian singularities originally due to Crawford. Finally, we will see that various duality results for singularity categories can be placed within the context of Koszul duality, which is a duality theory for dg co/algebras. In particular, our earlier Hochschild cohomology computation can be viewed as an instance of an easier computation `across Koszul duality'.
  
 	\vfill\pagebreak

\section*{How to use these notes}
These notes are adapted from a seminar course that I ran in Lancaster in 2022/2023, and from an online lecture course at Imperial that I ran for the Taught Course Centre in 2025. Each Chapter is roughly supposed to correspond to two hours of lecture material. In the interests of brevity, many proofs are hence sketches, given in varying levels of detail. The main references used for each Chapter are given at the start, rather than being given inline; any unreferenced results are to be attributed to those citations and in particular I make no claims to originality of any of the material presented here.

These notes are aimed at early graduate students; the reader should have a basic knowledge of rings, algebras, and their modules, and in particular knowledge of tensor products and the hom-tensor adjunction. Knowledge of category theory is a must, and it would be helpful to be comfortable with abelian categories. Basic knowledge of affine algebraic geometry is useful but not strictly required. The first five chapters are more `classical' and well-known material, and the second five are harder and more oriented towards recent research. A brief `further reading' section is provided at the end, in part to mention topics not included in the main text.

There are several scattered exercises to test the reader's understanding which I encourage attempting. A star indicates a harder exercise; these should be thought of as optional.

    \section*{Acknowledgments}
    I have benefited over the years from many helpful conversations in and around singularity categories with Xiao-Wu Chen, John Greenlees, Martin Kalck, Bernhard Keller, Andrey Lazarev, David Pauksztello, Jonathan Pridham, Michael Wemyss, Nicholas Williams, and Dong Yang. I would like to thank all the participants of the Lancaster seminar and the Imperial course, for directly or indirectly contributing improvements to the present text.

	\tableofcontents
\mainmatter		 
\chapter{Derived categories}
We begin with our basic object of study, the derived category of a ring. For far more comprehensive treatments than this section provides, see \cite{weibel} or \cite{yekbook}. If you are geometrically inclined, you will enjoy \cite{thomasworking}.

\section{Chain complexes and quasi-isomorphisms}

Let $A$ be a ring. All modules are right modules unless otherwise specified. A \textbf{cochain complex} is a $\Z$-indexed sequence of $A$-modules $\{M^n\}_{n\in \Z}$ together with differentials $d:M^n \to M^{n+1}$ such that $d^2=0$. A \textbf{chain complex} is defined similarly, except that the differentials lower the degree; in this case we typically write the indices as subscripts. One can convert between homological and cohomological notation by setting $M^n = M_{-n}$. For us, the term \textbf{complex} will always mean a cochain complex. Observe that the complexes concentrated in degree zero are precisely the $A$-modules.

If $M$ is a complex then its \textbf{shift} $M[1]$ is the complex with $M[1]^n = M^{1+n}$ and differential $d_{M[1]}=-d_{M}$. Graphically, this corresponds to shifting the complex \textit{left}. There are analogous shifts $M[n]$ for all integers $n$, and we have $M[i][j]\cong M[i+j]$.

\begin{rmk}
    The \textbf{Koszul sign rule} says that when an object of degree $p$ moves past an object of degree $q$, then a sign change of $(-1)^{pq}$ is required. All of the sign conventions in this book can be worked out with a careful (if, sometimes, non-obvious!) use of the Koszul sign rule.
\end{rmk}

A \textbf{morphism} of complexes $M\to N$ is simply a collection of maps $M_n \to N_n$, with no compatibilities. A \textbf{morphism of degree} $d$ is a morphism $M[d] \to N$. A morphism $f:M \to N$ is a \textbf{chain map} if it is compatible with the differentials in the sense that $d_N f =(-1)^{\deg f}fd_M$.

An $n$-\textbf{cocycle} in a complex $M$ is an element $m\in M^n$ such that $dm=0$. We denote the $A$-module of $n$-cocycles by $Z^n(M)$. An $n$-\textbf{coboundary} is an element $m\in M^n$ of the form $m=dn$. We denote the module of $n$-coboundaries by $B^n(M)$. Clearly we have $B^nM \subseteq Z^nM$, and the quotient $Z^n(M)/B^n(M)$ is the $n^\text{th}$ \textbf{cohomology} module $H^n(M)$. We often assemble the cohomology of $M$ into a complex $H(M)$ with zero differential.

\begin{exer}[Mapping complexes]\hfill
\begin{enumerate}
    \item If $M,N$ are complexes, show that there is a natural complex of $\Z$-modules $\hom_A(M,N)$ which in degree $n$ consists of the morphisms of degree $n$, and whose differential is defined by $f \mapsto d_N f -(-1)^{\deg f}fd_M$. Show that the $n$-cocycles are precisely the degree $n$ chain maps.
    \item If $X$ is a complex and $f,g$ are two endomorphisms of $X$, their \textbf{graded commutator} is $[f,g]=fg - (-1)^{\deg(f)\cdot\deg(g)}gf$. Show that the graded commutator $[d_X,-]$ makes the graded abelian group $\enn_A(X)$ into a complex.
    \item Show that the obvious inclusion $\hom_A(M,N)\to \enn_A(M\oplus N)$ is a chain map. This explains the sign in the differential of the hom-complex.
    \item*If $B$ is another ring, $Y$ a complex of right $B$-modules and $Z$ a complex of left $B$-modules, show that the graded abelian group $Y\otimes_BZ$ which in degree $i$ is given by $\bigoplus_{i=p+q}Y^p\otimes_B Z^q$ is a complex under the differential $d(y\otimes z) = dy\otimes z + (-1)^{\deg(y)}y\otimes dz$. If $M$ is a $B$-$A$-bimodule and $N$ an $A$-module, show that the evaluation pairing $\hom(M,N)\otimes_B M \to N$ is a chain map.
    \item *Formulate and prove a hom-tensor adjunction for complexes.
\end{enumerate}
\end{exer}

\begin{exer}
    Compute the cohomology of the two-term complex $$0\to A \xrightarrow{a\cdot}A\to 0.$$ 
\end{exer}

If $f:M \to N$ is a chain map, it induces maps $H^if: H^iM \to H^iN$ on cohomology groups. A chain map $f$ is a \textbf{quasi-isomorphism} if each $H^if$ is an isomorphism. Two chain complexes are \textbf{quasi-isomorphic} if there is a (finite length) zig-zag of quasi-isomorphisms between them.
\begin{exer}
    Let $M$ be the complex of abelian groups $\Z \xrightarrow{n}\Z$, with the rightmost $\Z$ placed in degree zero. Show that the projection $M \to \Z/n$ is a quasi-isomorphism.
\end{exer}

\begin{exer}
    If $k$ is a field, and $M$ is a complex of $k$-vector spaces, show that $M$ is quasi-isomorphic to $H(M)$. (Hint: if $U\into V$ is a subspace, then $U$ admits a complement $U^\perp \into V$ such that $U\oplus U^\perp = V$.)
\end{exer}

\begin{exer}
    Let $A$ be the ring $\mathbb{C}[x,y]$, and consider the two complexes $$M = A\oplus A \xrightarrow{(x,y)}A$$ and $$M' = A\xrightarrow{x\mapsto 0, \, y\mapsto 0} \mathbb{C}$$Show that $H(M)\cong H(M')$. Show that there is no ($A$-linear!) quasi-isomorphism $M\to M'$. *Show that $M$ and $M'$ are not quasi-isomorphic.
\end{exer}

A complex $M$ is \textbf{exact at $n$}\, if $H^n(M)\cong 0$. A complex is \textbf{acyclic} (or a \textbf{long exact sequence}) if it is quasi-isomorphic to the zero complex. 
\begin{exer}
    If $M$ is a complex, show that $M$ is acyclic if and only if $M$ is exact at all $i\in \Z$.
\end{exer}

\begin{exer}\hfill
    \begin{itemize}
        \item When is a two-term complex $0\to M_0 \xrightarrow{f}M_1\to 0$ acyclic?
        \item When is a three-term complex $0\to M_0 \xrightarrow{f}M_1\xrightarrow{g}M_2\to 0$ acyclic?
    \end{itemize}
\end{exer}
If $f:M \to N$ is a chain map, the \textbf{mapping cone} $\mathrm{cone}(f)$ is the complex with $\mathrm{cone}(f)_i = M_{i+1} \oplus N_i$ and differential given by the upper-triangular matrix $\stbtm{d_M}{f}{0}{-d_N}$.
\begin{exer}[Mapping cones]
    If $f:M \to N$ is a chain map, show that there are natural chain maps $N \to \mathrm{cone}(f) \to M[1]$. Show that $f$ together with these maps induces a long exact sequence $$\cdots \to H^i(M) \xrightarrow{H^i(f)} H^i(N) \to H^i(\mathrm{cone}(f)) \to H^{i+1}(M)\to\cdots$$ Deduce that $f$ is a quasi-isomorphism if and only if $\mathrm{cone}(f)$ is acyclic.
\end{exer}

\begin{exer}\label[exer]{tStructureExer1}
    Let $M= 0 \to M_n \to \cdots \to M_m \to 0$ be a complex with finitely many nonzero terms. Show that $M$ can be obtained from the finite set $\{M^i: 0<i<m\}$ via a finite sequence of shifts and mapping cones.
\end{exer}

A complex $M$ is \textbf{strictly bounded above} if $M^i\cong 0$ for all $i\gg 0$ and \textbf{strictly bounded below} if $M^i\cong 0$ for all $i\ll 0$. A complex $M$ is \textbf{strictly bounded} if $M^i\cong 0$ for all but finitely many $i$; this is equivalent to being strictly bounded above and strictly bounded below. We say that $M$ is \textbf{bounded} if the complex $H(M)$ is bounded, and similarly for above/below.

Synonyms for bounded above in the literature include \textbf{right bounded} and \textbf{eventually connective}; bounded below is also referred to as \textbf{left bounded} or \textbf{eventually connective}. Sometimes we will use \textbf{cohomologically bounded} as a synonym for bounded, for emphasis.

If $A$ is a ring then we let $\mathbf{Ch}(A)$ denote the category of chain complexes of $A$-modules, with morphisms given by the chain maps. 
\begin{defn}
    Let $A$ be a ring. The \textbf{derived category} of $A$ is the localisation $D(A)\coloneqq \mathbf{Ch}(A)[\text{quasi-iso}^{-1}]$ given by formally inverting $\mathbf{Ch}(A)$ at the quasi-isomorphisms. This has subcategories $D^+(A)$, $D^-(A)$, $D^b(A)$ on those complexes which are bounded below, bounded above, and bounded, respectively. If $A$ is noetherian, we write $D(\mathbf{mod-}A)$ for the full subcategory of $D(A)$ on the bounded complexes of finitely generated $A$-modules.
\end{defn}

\begin{exer}
    Show that $D(A)$ is also the quotient $\mathbf{Ch}(A)/(\text{acyclic modules})$ where we identify all acyclic modules with zero. (Hint: mapping cones.)
\end{exer}
For the rest of this section, we will try to understand $D(A)$. In generality, this is often too difficult, and we restrict ourselves to $D^b(A)$ instead.

\section{Projective resolutions and chain homotopies}
Recall that an $A$-module is \textbf{projective} if it is a summand of a free module.

\begin{exer}
    Let $P$ be a module. Show that $P$ is projective if and only if for all maps $f:P\to N$ and surjections $\pi:M\onto N$, there exists a lift of $f$ through $\pi$, i.e.\ a map $\tilde f : P \to M$ such that $\pi\tilde f = f$.
\end{exer}
\begin{exer}
    If $p,q$ are distinct primes, show that $\Z/p$ is a projective $\Z/(pq)$-module.
\end{exer}

\begin{exer}
A \textbf{short exact sequence} is an exact complex of the form $0\to M_1 \to M_2 \to M_3 \to 0$. Let $P$ be a projective module. Show that the functor $\hom_A(P,-):\mathbf{mod-}A \to \mathbf{Ab}$ is \textbf{exact}; i.e.\ preserves short exact sequences.
\end{exer}

If $M$ is a module, a \textbf{projective resolution} of $M$ is a complex $$P \quad=\quad \cdots \to P_{2} \to P_1 \to P_0$$ of projectives together with a quasi-isomorphism $P\xrightarrow{\simeq }M$.
\begin{exer}
    If $P \quad=\quad \cdots \to P_{2} \to P_1 \to P_0$ is a complex of projectives, show that $P$ is a projective resolution of $M$ if and only if $H^i(P)\cong 0$ for $i\neq 0$, and $H^0(P)\cong M$.
\end{exer}

\begin{prop}
  Projective resolutions exist.  
\end{prop}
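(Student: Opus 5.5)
The plan is to build the resolution inductively, one step at a time. At each step we cover the relevant module by a free module, then pass to the kernel. The only input needed is that every module is a quotient of a free module. Given a module $N$, the free module $A^{(N)}$ on the underlying set of $N$ surjects onto $N$ via $e_n \mapsto n$. Free modules are projective, being summands of themselves.

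First choose a free module $P_0$ with a surjection $\varepsilon: P_0 \onto M$, as above. Set $K_0 \coloneqq \ker \varepsilon$. Then choose a free module $P_1$ with a surjection $P_1 \onto K_0$, and let $d_1: P_1 \to P_0$ be this surjection composed with the inclusion $K_0 \into P_0$. Inductively, once $d_n : P_n \to P_{n-1}$ is constructed, set $K_n \coloneqq \ker d_n$. Pick a free $P_{n+1}\onto K_n$, and let $d_{n+1}$ be the composite $P_{n+1}\onto K_n \into P_n$. In cohomological indexing, $P_n$ sits in degree $-n$.

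Next we verify that this is a complex with the correct cohomology. By construction the image of $d_{n+1}$ is exactly $K_n = \ker d_n$. Hence $d_n d_{n+1} = 0$, so we have a complex, and it is exact at every $P_n$ with $n \geq 1$. At $P_0$, the cohomology is $P_0/\mathrm{im}(d_1) = P_0/K_0 \cong M$ via $\varepsilon$, since $\varepsilon$ is surjective. By the preceding exercise characterising projective resolutions, the map $P \to M$ induced by $\varepsilon$ is therefore a quasi-isomorphism, where $M$ sits in degree zero. We should check that $\varepsilon$ is a chain map; this holds because $\varepsilon d_1 = 0$.

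There is no real obstacle. The only point needing care is the bookkeeping that the image of each new differential equals the kernel of the previous one, rather than merely being contained in it. Choosing $P_{n+1}$ to surject onto $K_n$, and not onto some larger module, guarantees this. If one wants finitely generated projectives when $A$ is noetherian and $M$ is finitely generated, the same argument works. One uses finite free covers, and noetherianity ensures each $K_n$, as a submodule of a finitely generated module, is again finitely generated.
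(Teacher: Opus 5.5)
Your proposal is correct and follows the paper's proof exactly: cover $M$ by the free module on its underlying set, pass to the kernel, and iterate. Your explicit checks (image equals kernel at each stage, $\varepsilon d_1 = 0$, cohomology $M$ in degree zero) and your remark on the finitely generated noetherian case, which the paper leaves as the following exercise, are all sound.
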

\begin{proof}
    Given an arbitrary module $M$, we can find a free module $P_0$ and a surjection $P_0\onto M$; for example, take $P_0$ to be the free module on the set $M$. This yields a short exact sequence $$0 \to K_0 \to P_0 \to M \to 0$$where $K_0$ denotes the kernel. Find a free module $P_1$ and a surjection $P_1 \onto K_0$; by composition this yields an exact sequence $$0 \to K_1 \to P_1 \to P_0 \to M \to 0$$where $K_1$ denotes the kernel of $P_1 \to K_0$. Continuing inductively produces the desired resolution.
\end{proof}

\begin{exer}
    If $A$ is noetherian and $M$ is a finitely generated module, show that $M$ has a resolution by finitely generated projectives.
\end{exer}

\begin{prop}
   Let $f:M\to M'$ be a map of modules. If $P\to M$ and $P' \to M'$ are projective resolutions, then $f$ lifts to a map $P \to P'$.
\end{prop}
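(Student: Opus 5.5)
We build the lift $f_\bullet : P \to P'$ one degree at a time, using only the lifting property of projectives (the exercise characterising projective modules via lifts through surjections) together with exactness of the resolution $P'$. Write $\varepsilon : P_0 \to M$ and $\varepsilon' : P'_0 \to M'$ for the augmentations, and $d$ for the differentials $P_{n+1}\to P_n$, $P'_{n+1}\to P'_n$ (homological indexing). We want maps $f_n : P_n \to P'_n$ with
\[
\varepsilon' f_0 = f\varepsilon \qquad\text{and}\qquad d f_{n+1} = f_n d \ \text{ for all } n\geq 0 .
\]
That $P'\to M'$ is a resolution means, by the earlier exercise, that $P'_0 \xrightarrow{\varepsilon'} M'$ is surjective, $\ker \varepsilon' = \operatorname{im}(d:P'_1\to P'_0)$, and $\ker(d:P'_n\to P'_{n-1}) = \operatorname{im}(d:P'_{n+1}\to P'_n)$ for $n\geq1$. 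For $P$ we only need that it consists of projectives and that $\varepsilon d = 0$, $d^2=0$.

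\textbf{Base step.} The composite $f\varepsilon : P_0 \to M'$ is a map out of a projective module, and $\varepsilon'$ is surjective. So it lifts to some $f_0 : P_0 \to P'_0$ with $\varepsilon' f_0 = f\varepsilon$.

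\textbf{Degree one.} Consider $f_0 d : P_1 \to P'_0$. Then
\[
\varepsilon' f_0 d = f \varepsilon d = 0,
\]
so its image lies in $\ker\varepsilon' = \operatorname{im}(d : P'_1 \to P'_0)$. Hence $f_0 d$ is a map from the projective $P_1$ into $\operatorname{im} d$. The map $d : P'_1 \onto \operatorname{im} d$ is surjective, so $f_0 d$ lifts to some $f_1 : P_1 \to P'_1$ with $d f_1 = f_0 d$.

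\textbf{Inductive step.} Suppose $f_0,\dots,f_n$ have been constructed with $n\ge1$. Then
\[
d(f_n d) = f_{n-1} d d = 0,
\]
so $f_n d : P_{n+1} \to P'_n$ lands in $\ker(d:P'_n\to P'_{n-1}) = \operatorname{im}(d : P'_{n+1}\to P'_n)$. Since $P_{n+1}$ is projective, $f_n d$ lifts through the surjection $P'_{n+1} \onto \operatorname{im} d$. This gives $f_{n+1}$ with $d f_{n+1} = f_n d$.

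By induction the $f_n$ assemble into a chain map $P\to P'$ compatible with $f$ via the augmentations, which is what the statement means by a lift. The only point needing care is in each step: we must check that the map to be lifted lands in the image of the next differential, so that we can apply the projective lifting property to a genuine surjection. This is exactly where exactness of $P'$ (and $P'\to M'$ being a quasi-isomorphism) enters; no exactness of $P$ is used. There is no real obstacle beyond keeping the indices and signs straight. With homological indexing the chain map condition is simply $df=fd$, since $f$ has degree $0$.
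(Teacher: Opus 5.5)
Your proof is correct and is the paper's approach: the paper's proof just says to iterate the lifting property of projective modules, and you carry this out in full. You also correctly note that at each stage exactness of $P'$ puts the map to be lifted into the image of the next differential, so that only projectivity of $P$ and exactness of $P'$ are needed.
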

\begin{proof}
    Iteratively use the lifting property of projective modules.
\end{proof}
\begin{exer}[Truncations]\label[exer]{truncExer}
    If $M$ is a right bounded complex, show that there is a strictly right bounded complex $M'$ with a quasi-isomorphism $M\simeq M'$. Show that if $M$ is a complex of projectives, then one can take $M'$ to be a complex of projectives. (Hint: if $n$ is the largest integer for which $H^n(M)$ is nonzero, then one can arrange for $M'$ to agree with $M$ in degrees $<n$ and be zero in degrees $>n$.)
\end{exer}

If $M$ is a right bounded complex, a \textbf{projective resolution} of $M$ is a right bounded complex of projectives $P$ with a quasi-isomorphism $P \to M$.
\begin{prop}
    Projective resolutions of right bounded complexes exist.
\end{prop}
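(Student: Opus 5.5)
By the truncation exercise (\Cref{truncExer}) we may replace $M$ by a quasi-isomorphic strictly right bounded complex, so we assume $M^i=0$ for $i>n$. A composite of quasi-isomorphisms is a quasi-isomorphism, so it suffices to resolve this truncated complex. I will build a strictly right bounded complex of projectives $P$ with $P^i=0$ for $i>n$, together with a chain map $f:P\to M$. The construction goes by descending induction on the degree, imitating the proof that projective resolutions of modules exist.

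Start at the top degree. Choose a free module $P^n$ and a surjection $P^n\onto M^n$, and set $f^n$ to be this surjection. It is onto $M^n$, so it is in particular onto $H^n(M)=M^n/B^n(M)$.

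Now suppose $P^j$ and $f^j$ have been constructed for all $j>i$, with $d_P^2=0$, $f$ commuting with the differentials, and the following \emph{inductive condition} holding in each degree $j>i$: the map
\[
\tilde f^{j}:\ Z^{j}(P)\times_{Z^{j}(M)}\ \text{(lifts)}
\]
is controlled, in the sense that $\mathrm{cone}(f)$ is exact in degrees $>i+1$. To produce $P^i$, consider the module
\[
W^i=\{(p,m)\in P^{i+1}\oplus M^i : d_Pp=0,\ f^{i+1}(p)=d_M m\}.
\]
This is the module of $i$-cocycles of the cone of the already-built part, viewed appropriately. Choose a free module $Q$ surjecting onto $W^i$, and also a free module $Q'$ surjecting onto $M^i$. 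Set $P^i=Q\oplus Q'$. On $Q$, define $d_P$ as the first component and $f^i$ as the second component. On $Q'$, define $d_P=0$ and $f^i$ as the surjection. Then $d_P^2=0$ because the image lies in the cycles, and $f$ is a chain map because $f^{i+1}d_P=d_Mf^i$ on $Q$ by the definition of $W^i$.

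It remains to check the inductive condition. The key point is that every cocycle in degree $i$ of $\mathrm{cone}(f)$ restricted to degrees $\geq i$ comes from $W^i$, so it becomes a coboundary once $P^i$ is added. Surjectivity onto $W^i$ makes $H^{i+1}(f)$ injective, and, together with the $Q$ and $Q'$ pieces, makes $H^{i}$ hit everything modulo later corrections. In the limit, $\mathrm{cone}(f)$ is acyclic, so $f$ is a quasi-isomorphism by the mapping cone exercise. I expect the main obstacle to be this bookkeeping: checking that the surjections onto $W^i$ (which handle injectivity on $H^{i+1}$) and onto $M^i$ (which handle surjectivity on $H^i$) together give exactness of the cone in every degree.

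A cleaner alternative avoids that bookkeeping: induct on the length using the mapping cone exercise, resolving each module $M^i$ via the proposition that projective resolutions of modules exist and gluing with the proposition lifting maps. That argument, however, only works for strictly bounded complexes, so I would use the direct construction above.
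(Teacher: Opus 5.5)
Your construction has a genuine error: as written, $f$ is not a chain map. On the summand $Q'$ you set $d_P=0$ and $f^i=\pi\colon Q'\onto M^i$. The chain-map condition in degree $i$ then requires $d_M\pi(q')=f^{i+1}(d_Pq')=0$ for every $q'\in Q'$. Since $\pi$ is onto, this says $d_M^i=0$, which fails in general; take for instance $M=(\Z\xrightarrow{\id}\Z)$ in degrees $-1,0$. You only checked the condition on $Q$, so $\mathrm{cone}(f)$ is not even defined at this stage.

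The repair is to delete $Q'$. For every $m\in Z^i(M)$ the pair $(0,m)$ lies in $W^i$, so a surjection $P^i\onto W^i$ already makes $Z^i(P)\to H^i(M)$ surjective: lift $(0,m)$ to some $q$, and then $d_Pq=0$ and $f^iq=m$. If you want to keep a second summand, it must surject onto $Z^i(M)$ rather than $M^i$, and then it is redundant.

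The exactness argument is also missing. Your stated inductive hypothesis (``$\tilde f^{j}\colon Z^{j}(P)\times_{Z^{j}(M)}$ (lifts) is controlled'') is not a statement, and ``in the limit'' and ``modulo later corrections'' are not an argument. In fact no bookkeeping is needed.

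Up to the sign of $m$, $W^i$ is exactly $Z^i(\mathrm{cone}(f))$. It involves only $d_P^{i+1}$, $f^{i+1}$ and $d_M^i$, all of which are fixed before $P^i$ is chosen. Surjectivity of $(d_P^i,f^i)\colon P^i\to W^i$ says precisely that every degree-$i$ cocycle of the cone is, up to sign, the coboundary of an element of $P^i\oplus 0$. So the cone is exact in degree $i$. Building $P^{i-1},P^{i-2},\ldots$ afterwards never changes $\mathrm{cone}^{i-1}\to\mathrm{cone}^{i}\to\mathrm{cone}^{i+1}$. The correct inductive hypothesis is therefore simply ``the cone is exact in degrees $\geq i+1$''. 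Each degree is settled at a finite stage, the cone is acyclic, and $f$ is a quasi-isomorphism.

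With these repairs your argument is a valid alternative to the paper's proof. The paper instead builds a Cartan--Eilenberg resolution: a double complex of projectives resolving the $M^n$ columnwise, which it then totalises. Contrary to your closing remark, that route is not limited to strictly bounded complexes. Right boundedness makes each antidiagonal of the double complex finite, so the totalisation is again a right bounded complex of projectives.
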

\begin{proof}
The construction we give is known as the \textbf{Cartan--Eilenberg resolution}. By \Cref{truncExer} we may assume that $M_i\cong 0$ for $i\gg 0$. First resolve each $M_n$ individually to obtain a projective resolution $P_n$. The differentials in $M$ lift to maps $P_n \to P_{n+1}$ and this yields a double complex $P$. Totalising $P$ gives the desired resolution.
\end{proof}
\begin{rmk}
    One can also use this construction to give, for every complex $M$, a complex of projectives $P$ with a quasi-isomorphism $P\to M$. In general, such a complex $P$ is not `nice enough' from a homotopical viewpoint - for evidence towards this see \Cref{unbExer} below. The correct generalisation of `right bounded complex of projectives' to the unbounded setting is given by the concept of \textbf{h-projective} complexes, which we will not discuss further.
\end{rmk}

If $f,g:M \to N$ are two maps of degree $n$, a \textbf{chain homotopy} from $f$ to $g$ is a degree $n-1$ map $h:M \to N$ such that $\partial h = f-g$, where $\partial h$ denotes the differential of $h$ in the mapping complex $\hom_A(M,N)$. We write $f\simeq g$.

Say that two complexes $M,N$ are \textbf{chain homotopy equivalent} if there exists a pair of maps $f:M\to N$ and $g:N \to M$ such that $fg\simeq \id_N$ and $gf\simeq \id_M$. A complex is \textbf{nullhomotopic} if it is chain homotopy equivalent to $0$.

\begin{exer}
    Show that a complex $M$ is nullhomotopic if and only if there exists a degree $-1$ map $h:M \to M$ such that $\partial h = \id_M$.
\end{exer}

\begin{exer}\label[exer]{chqiEx}
    Show that if $M,N$ are chain homotopy equivalent complexes then they are quasi-isomorphic.
\end{exer}

\begin{prop}
Let $f:M \to N$ be a map of modules. If $g,g':P \to Q$ are two lifts of $f$ to projective resolutions, then $g$ and $g'$ are chain homotopic.
\end{prop}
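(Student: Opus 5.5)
Since $g$ and $g'$ both lift $f$, their difference $u\coloneqq g-g'$ is a chain map $P\to Q$ lifting the zero map $M\to M'$. It therefore suffices to show that any lift $u$ of the zero map is nullhomotopic, i.e.\ to build maps $h_n:P_n\to Q_{n+1}$ (in homological indexing, with $P_n=0=Q_n$ for $n<0$) such that
$$u_n = d_Q h_n + h_{n-1} d_P$$
for all $n\geq 0$, with $h_{-1}=0$. Up to the sign conventions of the mapping complex, this is exactly $\partial h = g-g'$. I will construct the $h_n$ inductively using only the lifting property of projective modules and the exactness of $Q\to M'\to 0$.

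\textbf{Base case.} Write $\varepsilon:Q_0\to M'$ for the augmentation. Since $u$ lifts zero, $\varepsilon u_0=0$, so $u_0$ lands in $\ker\varepsilon$. Exactness of the resolution gives $\ker\varepsilon=\mathrm{im}(d_Q:Q_1\to Q_0)$, so $u_0$ is a map from the projective $P_0$ into the image of the surjection $Q_1\onto \mathrm{im}(d_Q)$. By projectivity of $P_0$ it lifts to $h_0:P_0\to Q_1$ with $d_Qh_0=u_0$, which is the required identity for $n=0$.

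\textbf{Inductive step.} Suppose $h_0,\dots,h_{n-1}$ have been constructed satisfying the identity up to degree $n-1$. Set $v\coloneqq u_n-h_{n-1}d_P:P_n\to Q_n$. Using that $u$ is a chain map and the inductive hypothesis in degree $n-1$,
$$d_Qv = u_{n-1}d_P - d_Qh_{n-1}d_P = (h_{n-2}d_P)d_P = 0.$$
Hence $v$ lands in $\ker(d_Q:Q_n\to Q_{n-1})$, which equals $\mathrm{im}(d_Q:Q_{n+1}\to Q_n)$ because $Q$ is exact in positive degrees. Projectivity of $P_n$ again gives a lift $h_n:P_n\to Q_{n+1}$ with $d_Qh_n=v$, which is the required identity in degree $n$.

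The only step needing care is this verification that $v$ is a cycle. It is where both the chain-map property of $u$ and the previous stage of the homotopy are used, and it also requires fixing the sign conventions so that the identity matches $\partial h=g-g'$ as defined via the mapping complex. The rest of the argument is just repeated application of the lifting property.
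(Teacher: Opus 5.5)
Your proof is correct and is the argument the paper intends: the paper only says the homotopy is constructed iteratively using the lifting property, and your induction, including the cycle check $d_Qv = h_{n-2}d_Pd_P = 0$, fills in that sketch. Also, with the paper's convention $\partial h = d_Q h - (-1)^{\deg h} h\, d_P$ and $\deg h = -1$, your identity $u_n = d_Q h_n + h_{n-1}d_P$ is exactly $\partial h = g - g'$, so no sign adjustment is needed (and your $M'$ is the statement's $N$).
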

\begin{proof}
    The homotopy is constructed iteratively using the lifting property.
\end{proof}
\begin{cor}\label[cor]{nhcor}
    If $P$ is a right bounded acyclic complex of projectives, then $P$ is nullhomotopic.
\end{cor}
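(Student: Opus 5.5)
The plan is to deduce this from the preceding proposition, applied to the identity map of the zero module, after recognising $P$ as a projective resolution of $0$. By \Cref{truncExer}, or simply because $P$ is right bounded, exact and made of projectives, we may assume $P^i=0$ for $i>n$. Since $P$ is acyclic, the map $P\to 0$ is a quasi-isomorphism. Shifting so that the top nonzero degree is $0$, $P$ becomes a complex $\cdots\to P_1\to P_0$ of projectives with $H^i(P)=0$ for all $i$, including $H^0(P)=0$. By the earlier exercise characterising projective resolutions, $P\to 0$ is therefore a projective resolution of the zero module.

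Now take $f=\id_0:0\to 0$, with $P$ as the resolution of both source and target. Both $\id_P$ and the zero map $0:P\to P$ lift $f$, since both are chain maps compatible with the augmentations to $0$. The previous proposition says any two lifts are chain homotopic, so $\id_P\simeq 0$. Concretely, there is a degree $-1$ map $h$ with $\partial h=\id_P$, and by the nullhomotopy exercise $P$ is nullhomotopic. Shifting back costs nothing, because a contraction of $P[n]$ gives one of $P$ (at worst with a sign change).

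The only point needing care is whether the proposition's homotopy construction really applies when the module being resolved is $0$ and the top term $P_0$ is not itself zero. To be safe, I would redo the iterative construction explicitly. Set $Z^i=\ker(d:P^i\to P^{i+1})=\operatorname{im}(d:P^{i-1}\to P^i)$, using exactness. Working downward from the top degree $n$, where $d:P^{n-1}\to P^n$ is surjective, projectivity of $P^n$ gives a splitting $h^n:P^n\to P^{n-1}$ with $dh^n=\id$.

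Inductively, suppose $h$ has been defined in degrees above $i$ with $dh+hd=\id$ there. Then $\id-h^{i+1}d$ maps $P^i$ into $Z^i$, because
$$d(\id-h^{i+1}d)=d-(\id-h^{i+2}d)d=0.$$
Since $d:P^{i-1}\to Z^i$ is surjective and $P^i$ is projective, this map lifts to $h^i:P^i\to P^{i-1}$ with $dh^i=\id-h^{i+1}d$, which is exactly $dh+hd=\id$ in degree $i$. This inductive step is the crux. Right boundedness is what gives the induction a starting point, and it is essential, as the later \Cref{unbExer} is presumably meant to show.
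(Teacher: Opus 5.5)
Your proposal is correct and follows the paper's route. The paper states this as an immediate corollary of the preceding proposition on uniqueness of lifts up to homotopy, applied exactly as you do: view $P$ as a projective resolution of $0$, and take $\id_P$ and $0$ as two lifts of $\id_0$. Your explicit downward induction is precisely the ``iterative construction using the lifting property'' that the paper leaves implicit.

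One correction: do not appeal to \Cref{truncExer} to arrange $P^i=0$ for $i>n$. Truncation only gives a quasi-isomorphic complex, and contractibility is not invariant under quasi-isomorphism; the complex of \Cref{unbExer} is acyclic, hence cohomologically bounded, yet not contractible. So the hypothesis has to be read as strict right boundedness, which is exactly what your induction uses.
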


\begin{exer}\label[exer]{unbExer}
    Let $A$ be the ring $k[\varepsilon] / \varepsilon^2$ of dual numbers. Let $M$ be the unbounded complex of projective modules $$\cdots\xrightarrow{\varepsilon}A\xrightarrow{\varepsilon}A\xrightarrow{\varepsilon}A\xrightarrow{\varepsilon}\cdots$$
    Show that $M$ is acyclic. Show that $M$ is not nullhomotopic. In particular, \Cref{nhcor} is false if one drops the right bounded hypothesis - essentially because one needs a rightmost module at which to begin the induction.
\end{exer}

\begin{prop}\label[prop]{rbprop}
    Let $P,\ Q$ be two right bounded complexes of projectives. A map $f:P\to Q$ is a quasi-isomorphism if and only if it is a chain homotopy equivalence.
\end{prop}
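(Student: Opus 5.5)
One direction is immediate: if $f$ is a chain homotopy equivalence, then $f$ is a quasi-isomorphism. Chain homotopic maps induce the same map on cohomology, since $\partial h = f-g$ means $f-g$ sends cocycles to coboundaries. Hence $H(g)H(f)=\id$ and $H(f)H(g)=\id$. (This is essentially \Cref{chqiEx}.)

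For the converse, suppose $f:P\to Q$ is a quasi-isomorphism between right bounded complexes of projectives, and pass to the mapping cone $C\coloneqq\mathrm{cone}(f)$.

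\textbf{Step 1: $C$ is contractible.} In each degree $C$ is a direct sum of two projectives, so it is a complex of projectives. It is right bounded because $P$ and $Q$ are. By the mapping cone exercise, $C$ is acyclic because $f$ is a quasi-isomorphism. So \Cref{nhcor} applies, and $C$ is nullhomotopic: there is a degree $-1$ map $h:C\to C$ with $\partial h=\id_C$.

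\textbf{Step 2: extract a homotopy inverse.} Write $h$ as a $2\times 2$ matrix with respect to the decomposition $C^i=P^{i+1}\oplus Q^i$, say $h=\stbtm{a}{b}{g}{c}$. Here
\begin{itemize}
\item $g:Q\to P$ is a degree $0$ map,
\item $a$ and $c$ are degree $-1$ self-maps of $P$ and $Q$ respectively,
\item $b$ is a degree $-2$ map $P\to Q$.
\end{itemize}
Expand the identity $d_Ch+hd_C=\id_C$ using $d_C=\stbtm{d_P}{f}{0}{-d_Q}$ and compare entries. The four entries should give, up to the appropriate signs:
\begin{itemize}
\item $g$ is a chain map;
\item $fg-\id_Q$ is nullhomotopic via $c$ (or $-c$);
\item $gf-\id_P$ is nullhomotopic via $a$ (or $-a$);
\item a fourth equation involving $b$, which is not needed.
\end{itemize}
Thus $g$ is a homotopy inverse to $f$.

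\textbf{Main obstacle.} The only real difficulty is bookkeeping the signs in Step 2, given the conventions for shifts and for the cone. If a sign does not work out, I would replace $g$ by $\pm g$, or $a$ and $c$ by their negatives. Any such sign choice still exhibits $g$ as a chain map with $fg\simeq\id_Q$ and $gf\simeq\id_P$, so the argument goes through.

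An alternative route avoids the matrix algebra. Use the lifting-property argument behind \Cref{nhcor} to show that $\hom(Q,-)$ sends quasi-isomorphisms between right bounded complexes to quasi-isomorphisms. Then $f_*:\hom(Q,P)\to\hom(Q,Q)$ is surjective on $H^0$, which yields $g$ with $fg\simeq\id$. Running the same argument for $g$ gives a left inverse $g'$ with $gg'\simeq\id$, and then $g'\simeq f$ formally. The cone argument is more direct, so I would use it.
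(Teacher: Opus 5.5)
Your proof is correct and follows the paper's argument: form $\mathrm{cone}(f)$, observe that it is a right bounded acyclic complex of projectives, contract it using \Cref{nhcor}, and read off a homotopy inverse to $f$. Your Step 2 spells out what the paper compresses into ``a chain homotopy equivalence $C\simeq 0$ is the same thing as a chain homotopy equivalence $P\simeq Q$''. The signs do work out: the entry $g\colon Q\to P$ of the contraction is a chain map, and the two diagonal entries exhibit $gf-\id_P$ and $fg-\id_Q$ as boundaries up to sign.
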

\begin{proof}By \Cref{chqiEx}, we only need to prove the forwards implication, so assume that $f$ is a quasi-isomorphism. Let $C$ be the cone of $f$, so that $C$ is acyclic. Hence \Cref{nhcor} yields a nullhomotopy of $C$. But a chain homotopy equivalence $C\simeq 0$ is the same thing as a chain homotopy equivalence $P\simeq Q$.
\end{proof}

\section{Homotopy categories and Ext}

\begin{defn}
    Let $A$ be a ring. The \textbf{homotopy category} of $A$ is the category $K^-(\mathbf{Proj-}A)$ whose objects are the strictly right bounded complexes of projectives. The morphisms are given by homotopy classes of chain maps. We may also replace $K^-$ by $K^b$ to consider only strictly bounded complexes, or $\mathbf{Proj}$ by $\mathbf{proj}$ to consider only those complexes of finitely generated projectives (which is only well-behaved when $A$ is noetherian).
\end{defn}
Observe that the morphisms in $K^-(\mathbf{Proj-}A)$ are given by taking $H^0$ of the corresponding mapping complexes.
\begin{thm}\label[thm]{KProjthm}
    There is an equivalence $K^-(\mathbf{Proj-}A) \to D^-(A)$ which is the identity on objects.
\end{thm}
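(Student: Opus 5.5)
The plan is to define the functor $F\colon K^-(\mathbf{Proj-}A)\to D^-(A)$ as the identity on objects and show that it is well defined, essentially surjective and fully faithful.

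First, $F$ is well defined on morphisms. The localisation functor $\mathbf{Ch}(A)\to D(A)$ sends chain homotopic maps to the same morphism. To see this, use the cylinder $\mathrm{Cyl}(P)$, the cone of $(\id,-\id)\colon P\to P\oplus P$ up to sign conventions. A homotopy $f\simeq g$ is the same as a chain map $\mathrm{Cyl}(P)\to Q$ restricting to $f$ and $g$ on the two ends. The projection $\mathrm{Cyl}(P)\to P$ is a quasi-isomorphism; it is in fact a homotopy equivalence, and \Cref{chqiEx} applies. Both end inclusions are sections of this projection, so they agree in $D(A)$, and hence $f=g$ in $D(A)$. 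Objects of $K^-$ are strictly right bounded complexes, so they land in $D^-(A)$.

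Next, $F$ is essentially surjective. Any $M\in D^-(A)$ admits a projective resolution $P\to M$ by the Cartan--Eilenberg construction, combined with \Cref{truncExer} to make $P$ strictly right bounded. This $P$ is an object of $K^-$ and is isomorphic to $M$ in $D(A)$.

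The main step is full faithfulness, and I expect it to be the main obstacle. Morphisms $P\to Q$ in $D(A)$ are zig-zags of chain maps whose backward arrows are quasi-isomorphisms. The key lemma I would prove is the following: if $P$ is a right bounded complex of projectives and $s\colon X\to Y$ is a quasi-isomorphism between arbitrary complexes, then
$$s_*\colon H^0\hom_A(P,X)\to H^0\hom_A(P,Y)$$
is bijective. Equivalently, $\hom_A(P,C)$ is acyclic for any acyclic $C$, applied to $C=\mathrm{cone}(s)$, using that $\hom_A(P,-)$ preserves cones and the long exact sequence. To prove acyclicity, note that a cocycle in $\hom_A(P,C)$ of degree $n$ is a chain map $P[n]\to C$. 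Construct a nullhomotopy degree by degree, starting from the top nonzero term of $P$ and descending, using projectivity of each $P^i$ and exactness of $C$; this is the same inductive lifting argument as behind \Cref{nhcor}, and it is where right boundedness is used. The subtle point is the bookkeeping at each stage: one must lift through the surjection $C^{i-1}\onto B^i(C)=Z^i(C)$.

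Given the lemma, every zig-zag $P\to\cdots\to Q$ can be rewritten as a single chain map. Working left to right, each backward quasi-isomorphism $s\colon X\to Y$ preceded by a map $P\to Y$ lifts, uniquely up to homotopy, to a map $P\to X$. So the map $\hom_K(P,Q)\to\hom_D(P,Q)$ is surjective. For injectivity, suppose $f,g\colon P\to Q$ become equal in $D(A)$. Use the calculus of fractions: the quasi-isomorphisms up to homotopy form a localising class, which follows from mapping cones. Then there is a quasi-isomorphism $t\colon Q\to Y$ with $tf\simeq tg$. By the lemma applied to $t$, we get $f\simeq g$.

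If one prefers to avoid the calculus of fractions, an alternative is to show that $K^-(\mathbf{Proj-}A)$ satisfies the universal property of the localisation of $\mathbf{Ch}^-(A)$. The functor $M\mapsto P_M$ (a chosen resolution) is functorial up to homotopy by the lifting propositions, it inverts quasi-isomorphisms by \Cref{rbprop}, and it is inverse to $F$.
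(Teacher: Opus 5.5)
Your proof is correct, but it reaches full faithfulness through a different key lemma from the paper's.

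The paper's sketch derives fullness and faithfulness from \Cref{rbprop}. Spelled out, that argument goes as follows:
\begin{itemize}
\item represent a morphism $P\to Q$ in $D(A)$ by a roof $P\xleftarrow{s}X\to Q$;
\item resolve the apex by $P_X\to X$, so that $P_X\to P$ is a quasi-isomorphism between right bounded complexes of projectives, hence a homotopy equivalence;
\item invert $P_X\to P$ up to homotopy to get a chain map $P\to Q$.
\end{itemize}
Faithfulness likewise cancels a quasi-isomorphism $s$ with $fs\simeq gs$. This reuses a result already proved, but it relies on the calculus of roofs and on resolving intermediate objects.

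You instead show that $\hom_A(P,-)$ sends \emph{every} acyclic complex to an acyclic complex, i.e.\ that $P$ is h-projective. This buys several things:
\begin{itemize}
\item it contains \Cref{nhcor} and \Cref{rbprop} as special cases, since it is the same induction from the top degree of $P$;
\item it lets you collapse arbitrary zig-zags through possibly unbounded complexes without resolving anything;
\item it is exactly the property that the paper's remark singles out as the right notion in the unbounded setting.
\end{itemize}
Your cylinder argument also makes explicit that homotopic maps become equal in $D(A)$. That is what the descent to $K^-(\mathbf{Proj-}A)$ actually requires; the paper's sketch phrases this step in terms of objects rather than morphisms.

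Two small remarks.

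First, you can avoid the calculus of fractions entirely. Your lemma says that $H^0\hom_A(P,-)$ inverts quasi-isomorphisms, so it factors through $D(A)$ as a functor $\Phi$. By naturality (Yoneda), the map $\alpha\mapsto\Phi(\alpha)(\id_P)$ is inverse to $H^0\hom_A(P,X)\to\hom_{D(A)}(P,X)$. This gives fullness and faithfulness in one stroke.

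Second, your closing alternative only identifies $K^-(\mathbf{Proj-}A)$ with $\mathbf{Ch}^-(A)[\mathrm{qis}^{-1}]$. Since $D^-(A)$ is defined as a full subcategory of $D(A)$, you would still need this localisation to embed fully faithfully into $D(A)$. That is essentially what your main argument proves, so the alternative is not an independent route.
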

\begin{proof}
If $\mathbf{Ch}^-(\mathbf{Proj-}A)$ denotes the category of strictly right bounded complexes of projectives (without taking homotopy classes), then there is an obvious functor $\mathbf{Ch}^-(\mathbf{Proj-}A) \to D(A)$ which sends an object to its equivalence class in the quotient. Since quasi-isomorphic objects in the source are chain homotopy equivalent, it descends to a functor $K^-(\mathbf{Proj-}A) \to D(A)$. Since every right bounded complex admits a projective resolution, and these can be taken to be strictly bounded by \Cref{truncExer}, this gives an essentially surjective functor $K^-(\mathbf{Proj-}A) \to D^-(A)$. Fullness and faithfulness of this functor both follow from \Cref{rbprop}.
\end{proof}
Similarly, we obtain an equivalence $K^-(\mathbf{proj-}A) \simeq D^-(\mathbf{mod-}A)$.
\begin{defn}
    If $X,Y$ are two objects of $D^-(A)$, then we write $$\ext_A^i(X,Y)\coloneqq \hom_{D(A)}(X,Y[i]).$$
\end{defn}
By \Cref{KProjthm}, we may compute Ext as follows. First find projective resolutions $P,Q$ of $X,Y$ respectively. We put $\R\hom_A(X,Y)\coloneqq \hom_A(P,Q)$, and it follows that $\ext_A^i(X,Y)\cong H^i\R\hom_A(X,Y)$. The complex $\R\hom_A(X,Y)$ is called the (total) \textbf{derived hom-complex} from $X$ to $Y$.

\begin{exer}
    Show that $\R\hom_A(X,Y)$ is well-defined as an object of $D(\Z)$, i.e.\ different choices of resolution lead to quasi-isomorphic derived hom complexes. *If $R$ is a commutative ring and $A$ is an $R$-algebra, show that $\R\hom_A(X,Y)$ is well-defined as an object of $D(R)$.
\end{exer}
\begin{exer}\label[exer]{exthalfexer}
Let $X,\ Y$ be two $A$-modules, and let $P\to X,\ Q\to Y$ be projective resolutions. Show that the natural map $\hom_A(P,Q) \to \hom_A(P,Y)$ is a quasi-isomorphism. Hence, one can compute Ext by only resolving in the first variable. Use this to show that if $M,N$ are $A$-modules, there is an isomorphism $\mathrm{Ext}^0_A(M,N)\cong \hom_A(M,N)$.
\end{exer}

\begin{defn}
    A complex is \textbf{perfect} if it is quasi-isomorphic to a strictly bounded complex of finitely generated projectives. We denote the category of perfect complexes by $\per(A) \into D(A)$.
\end{defn}
By \Cref{KProjthm}, there is an equivalence $K^b(\mathbf{proj-}A) \simeq \per(A)$.

\begin{exer}
    If $P$ is perfect and $X$ is bounded, show that $\R\hom_A(P,X)$ is bounded. Let $A$ be the dual numbers and let $M$ be the complex $$\cdots\xrightarrow{\varepsilon}A\xrightarrow{\varepsilon}A\xrightarrow{\varepsilon}A$$with the rightmost $A$ in degree zero. Show that $M$ is a projective resolution of $k$ and use this to compute $\ext^*_A(k,k)$. Deduce that $k$ is not a perfect $A$-module.
\end{exer}

\begin{exer}[Derived tensor products]
Let $X$ be a bounded above complex of right $A$-modules and $Y$ a bounded above complex of left $A$-modules. Let $P\to X$ and $Q\to Y$ be projective resolutions. Define the \textbf{derived tensor product} $X\lot_A Y$ to be the tensor product $P\otimes_A Q$. We write $\mathrm{Tor}^A_i(X,Y)\coloneqq H_i(X\lot_AY)$, using homological notation.
\begin{enumerate}
    \item Show that $X\lot_A Y$ is well-defined as an object of $D(\Z)$.
    \item *(Balancing of Tor). Let $M$ be a right $A$-module and $N$ a left $A$-module, with projective resolutions $P, \ Q$ respectively. Show that there are natural quasi-isomorphisms $M\otimes_A Q \from P\otimes_A Q \to P\otimes_A N$. (Hint: use the double complex $E^{pq} = P^p\otimes_A Q^q$). Hence, one may resolve in either (or both) variables to compute Tor.
    \item Show that if $M, \ N$ are modules, then there is a natural isomorphism $\mathrm{Tor}^A_0(M,N)\cong M\otimes_A N$. (Hint: resolve in only one variable).
    \item *Formulate and prove a hom-tensor adjunction for $\lot$ and $\R\hom$.
\end{enumerate}
\end{exer}

\section{Injective modules}\label[section]{injsect}

Dual to the notion of a projective module is an \textit{injective module}. In this section we formulate dual versions of our above results, culminating in a version of \Cref{KProjthm} for bounded below complexes of injective modules. Injective modules will be of use to us later in \Cref{Gorchapt} when we define Gorenstein rings; if desired the reader may skip this section until then. Injective objects more generally are of use in situations where we may not have enough projectives, for example in categories of sheaves arising in algebraic geometry.

\begin{defn}
    A module $I$ over a ring $A$ is \textbf{injective} if, whenever $I\into M$ is an injection, there exists a module $N\into M$ with $I\oplus N \cong M$.
\end{defn}
\begin{exer}
    Show that a module is injective if and only if it has the right lifting property with respect to injections: if $N\into M$ is an injection and $N\to I$ is any map then there exists an extension $M\to I$.
\end{exer}
\begin{exer}
    *Show that $\Q$ and $\Q/\Z$ are injective $\Z$-modules.
\end{exer}
\begin{exer}
    If $A$ is a semisimple ring, show that every $A$-module is injective.
\end{exer}
Just like every module has a projective resolution, every module $M$ has an \textbf{injective resolution}: a complex $I^0 \to I^1 \to I^2 \to \cdots$ which resolves $M$. Maps of modules lift to maps of injective resolutions, uniquely up to chain homotopy. One can construct Chevalley--Eilenberg type injective resolutions of bounded below complexes. By adapting the proof of \Cref{KProjthm}, one can show the following result:
\begin{thm}\label[thm]{kinjthm}
    Let $A$ be a ring. There is a triangle equivalence $$K^+(\mathrm{Inj}(A)) \simeq D^+(A)$$ which is the identity on objects.
\end{thm}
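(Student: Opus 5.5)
The plan is to dualise the proof of \Cref{KProjthm}. I will construct a functor $K^+(\mathrm{Inj}(A)) \to D^+(A)$ and show that it is essentially surjective and fully faithful. Let $\mathbf{Ch}^+(\mathrm{Inj}(A))$ denote the category of strictly left bounded complexes of injectives with chain maps. The localisation functor gives $\mathbf{Ch}^+(\mathrm{Inj}(A)) \to D(A)$. Chain homotopic maps become equal in $D(A)$, because a homotopy $f\simeq g$ factors through the cylinder (equivalently, \Cref{chqiEx} applied to cones). Hence this functor descends to $K^+(\mathrm{Inj}(A))$, and it is the identity on objects. It is a triangle functor because both categories have shift and cones computed by the same formulas on complexes.

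The key steps, in order, are the injective analogues of the projective lemmas.
\begin{enumerate}
\item Dual truncation (the analogue of \Cref{truncExer}). A left bounded complex is quasi-isomorphic to a strictly left bounded one. If $n$ is the least degree with $H^n\neq 0$, replace $M^n$ by $M^n/B^n$ and kill lower degrees. For injectives, the dual truncation keeps the complex injective if one instead uses that the relevant cokernel splits off.
\item Existence of resolutions. Every module embeds in an injective. I will take this as known, e.g.\ via $\hom_\Z(A,\Q/\Z)$ and Baer's criterion. It gives injective resolutions of modules, and the Cartan--Eilenberg double complex construction, totalised with products, gives a quasi-isomorphism $M\to I$ from any left bounded $M$ into a strictly left bounded complex of injectives. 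This gives essential surjectivity.
\item Lifting and homotopy uniqueness of maps into injective resolutions, using the extension property of injectives.
\end{enumerate}

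The central lemma is the dual of \Cref{nhcor}: a strictly left bounded acyclic complex of injectives $I$ is nullhomotopic. I will construct the contracting homotopy $h:I^{n}\to I^{n-1}$ inductively from the leftmost nonzero degree upward. At the bottom degree, $d:I^{m}\to I^{m+1}$ is injective, so $\id_{I^m}$ extends along it to $h:I^{m+1}\to I^m$. Inductively, $\id - dh$ vanishes on $B^{k}=Z^{k}$ by exactness, so it factors through $I^k/Z^k\into I^{k+1}$ and extends to $I^{k+1}$ by injectivity. From this the dual of \Cref{rbprop} follows exactly as before: a quasi-isomorphism $f$ between such complexes has acyclic, strictly left bounded injective cone, hence a nullhomotopic cone, hence $f$ is a homotopy equivalence.

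Full faithfulness then follows. Any morphism in $D^+(A)$ between $I,J\in K^+(\mathrm{Inj}(A))$ is a zigzag of chain maps and quasi-isomorphisms. Each intermediate object $X$ can be replaced by an injective resolution $X\to I_X$. The key point is that for $J$ left bounded injective, $\hom(-,J)$ sends quasi-isomorphisms between left bounded complexes to quasi-isomorphisms. I will prove this by applying the contractibility lemma to the cone, after first showing $\hom(C,J)$ is acyclic for $C$ acyclic left bounded, by induction and completion with the usual brutal truncations. Thus $H^0\hom(I,J)\cong \hom_{D(A)}(I,J)$.

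I expect the main obstacle to be exactly this last step: checking that $\hom(C,J)$ is acyclic when $C$ is acyclic but not itself injective. This is where left boundedness of both complexes is essential, the dual of \Cref{unbExer}. I will handle it by noting that in each degree the hom complex is a finite product, so a filtration argument reduces it to the case where $J$ is a single injective, where exactness of $\hom(-,J^q)$ does the job.
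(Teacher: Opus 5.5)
Up to and including your dual of \Cref{rbprop}, your plan follows the paper's intended route (dualise \Cref{KProjthm}). Your inductive construction of the contracting homotopy is correct. The full-faithfulness paragraph, however, does not work as written, for three reasons.

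First, the claim that $\hom(C,J)$ is a finite product in each degree is false. If $C^p=0$ for $p<c$ and $J^r=0$ for $r<m$, then $\hom(C,J)^n=\prod_{p\geq \max(c,\,m-n)}\hom(C^p,J^{p+n})$ is an infinite product. So no finite filtration by brutal truncations of $J$ reaches it; you would need a genuine $\lim^1$/Mittag-Leffler argument for $\hom(C,J)=\lim_q \hom(C,J^{\leq q})$.

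Second, the contractibility lemma itself says nothing about the cone of a quasi-isomorphism $X\to Y$ between non-injective complexes. What you need is a version of it for maps into $J$.

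Third, restricting to \emph{left bounded} acyclic $C$ does not match your description of morphisms. The intermediate objects of an arbitrary zigzag in $D(A)$ need not have bounded below cohomology, so they cannot be replaced by injective resolutions. Moreover, a functor that only inverts quasi-isomorphisms between left bounded complexes does not factor through $D(A)$ by the universal property alone.

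Either of two repairs closes the gap.

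(i) Run your homotopy induction with $\id_I$ replaced by an arbitrary degree $n$ chain map $f\colon C\to J$ from \emph{any} acyclic complex $C$. Take $h=0$ below the bottom of $J$. If $f=\partial h$ holds in degrees $<p$, then $f^p-dh^p$ kills $B^p(C)=Z^p(C)$. So it factors through $C^p/Z^p(C)\into C^{p+1}$ and extends to $h^{p+1}$ because $J^{p+n}$ is injective. Hence $\hom(C,J)$ is acyclic for every acyclic $C$. Then $H^0\hom(-,J)$ inverts all quasi-isomorphisms and factors through $D(A)$, and Yoneda gives $H^0\hom(X,J)\cong\hom_{D(A)}(X,J)$ for all complexes $X$. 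This is more than the theorem needs, and it is exactly what one wants for computing Ext with injective resolutions.

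(ii) Alternatively, argue as in the paper's proof of \Cref{KProjthm}. Write a morphism $I\to J$ in $D(A)$ as a roof $I\xrightarrow{g}Y\xleftarrow{t}J$ with $t$ a quasi-isomorphism. Then $Y$ has bounded below cohomology, so $Y\to\tau_{\geq n}Y\to I_Y$ are quasi-isomorphisms. The composite $J\to I_Y$ is a quasi-isomorphism of strictly left bounded complexes of injectives, hence a homotopy equivalence by your dual of \Cref{rbprop}. This gives fullness. Faithfulness follows in the same way from a quasi-isomorphism $t\colon J\to Y$ with $tf\simeq 0$. Route (ii) uses only what you have already proved.

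Separately, your remark that truncating a left bounded complex of injectives keeps it injective holds only for strictly left bounded complexes. It fails for the acyclic complex $\cdots\xrightarrow{\varepsilon}A\xrightarrow{\varepsilon}A\xrightarrow{\varepsilon}\cdots$ over the self-injective ring $k[\varepsilon]/\varepsilon^2$, but you never use it.
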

\begin{exer}
    Prove \Cref{kinjthm} by dualising the proof of \Cref{KProjthm}. You may assume the nontrivial fact that every module is a submodule of an injective module.
\end{exer}

\begin{exer}[Balancing of Ext]
*Let $X$ be a bounded above complex and $Y$ a bounded below complex. Let $P \to X$ be a projective resolution and $Y\to I$ be an injective resolution. Show that there is a natural quasi-isomorphism $\hom_A(P,Y) \to \hom_A(P,I)$. Deduce using \Cref{exthalfexer} that one can compute Ext by resolving in either variable separately, using projective or injective resolutions as appropriate.
\end{exer}

 	\chapter{Triangulated categories}
 	
 	The notion of a triangulated category is an axiomatisation of some of the properties that derived categories enjoy. As intimated above, triangulated categories will not be completely sufficient for our uses, so we only give a sketch of the ideas. Comprehensive discussions can be found in \cite{neemantxt, weibel,yekbook}.

    \section{Axioms}
 	
 	Let $k$ be a commutative ring. A $k$-linear \textbf{triangulated category} is a $k$-linear category $\C$ together with two extra pieces of data. The first piece of data is a linear autoequivalence $\Sigma$ of $\C$, which we call the suspension or the shift functor. A \textbf{triangle} in $\C$ is a sequence of three morphisms $$X \to Y \to Z \to \Sigma X$$ which we will frequently abbreviate by dropping the $\Sigma X$ term and letting the rightmost arrow point to nowhere. A \textbf{morphism} of triangles is a triple of morphisms which fits into the obvious commutative diagram. The second piece of data is a class of \textbf{exact} (or \textbf{distinguished}) triangles. The suspension and the shift should satisfy the following axioms:
 	
 	\begin{itemize}
 		\item TR0: Exact triangles are closed under isomorphisms and under $\Sigma$.
 		\item TR1: The triangle $X \xrightarrow{\id}X \to 0 \to $ is exact. Every morphism $f:X \to Y$ has a \textbf{cone} $Z$, which fits into an exact triangle of the form $X \to Y \to Z \to$. We caution that the cone \textit{need not be functorial}.
 		\item TR2: One can rotate triangles: the triangle $X \to Y \to Z \to$ is exact if and only if $Y \to Z \to \Sigma X \xrightarrow{-}$ is, where one has to flip the sign on the indicated map. 
 		\item TR3: Given a morphism $f \to g$ in the arrow category (i.e.\ a commutative square from $f$ to $g$!) then there is an induced morphism $\mathrm{cone}(f) \to \mathrm{cone}(g)$ fitting into a morphism of exact triangles. This morphism \textit{need not be unique}.
 		\item TR4: the famous octahedral axiom. Loosely this encodes a version of the third isomorphism theorem, if one thinks of cones as homotopy cokernels.
 		\end{itemize}
        \begin{prop}
    $D(A)$ and $D^b(A)$ are triangulated categories.
\end{prop}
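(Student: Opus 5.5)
The plan is to first show that the homotopy category $K(A)$ of all complexes (chain maps modulo chain homotopy) is triangulated, and then transfer this structure to $D(A)$ via the localisation functor $K(A)\to D(A)$. Throughout, $\Sigma$ is the shift $M\mapsto M[1]$. A triangle in $K(A)$ is declared exact if it is isomorphic to a standard triangle $M\xrightarrow{f}N\to \mathrm{cone}(f)\to M[1]$ built from the natural maps of the mapping cone exercise. In $D(A)$ a triangle is exact if it is isomorphic in $D(A)$ to the image of a standard triangle.

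For $K(A)$, TR0 and TR1 are immediate. The cone of $\id_X$ is the complex $X[1]\oplus X$ with the upper-triangular differential, and this is contractible: the matrix $\stbtm{0}{0}{\id}{0}$ is a nullhomotopy. For TR2 I would write down explicit maps between $\mathrm{cone}(N\to\mathrm{cone}(f))$ and $M[1]$ and check that they are mutually inverse up to homotopy. This is the standard computation, and it uses the same kind of contracting homotopy. TR3 follows from the fact that a square which commutes up to a homotopy $h$ induces a map of cones given by $\stbtm{u[1]}{0}{h}{v}$. For TR4 I would reduce, up to isomorphism, to a pair of composable maps $f,g$ and exhibit the required triangle $\mathrm{cone}(f)\to\mathrm{cone}(gf)\to\mathrm{cone}(g)\to$ explicitly. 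One checks that the third cone is homotopy equivalent to the cone of the first map, again by an explicit matrix homotopy.

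Next I would pass to $D(A)$. By the exercise identifying $D(A)$ with the quotient by acyclic complexes, $D(A)$ is the Verdier quotient of $K(A)$ by the full subcategory of acyclic complexes. The mapping cone long exact sequence shows this subcategory is a thick triangulated subcategory: it is closed under shifts and cones (by the five lemma on cohomology) and under summands. The general fact that a Verdier quotient $\mathcal{T}/\mathcal{S}$ by a triangulated subcategory is triangulated then gives the result. To make this a plan rather than a citation, I would first verify that the quasi-isomorphisms in $K(A)$ satisfy the Ore conditions, using cones and TR3/TR4 in $K(A)$. Then every morphism in $D(A)$ is a roof $X\xleftarrow{s}X'\to Y$, and the axioms in $D(A)$ are proved by lifting the data to $K(A)$ and using the axioms there.

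Finally, $D^b(A)$ is the full subcategory of $D(A)$ on complexes with bounded cohomology. It is closed under shifts, and by the long exact cohomology sequence of a cone it is closed under cones, so it is a triangulated subcategory. I expect the main obstacle to be the octahedral axiom, both in $K(A)$ (the sign bookkeeping in the explicit homotopy equivalence) and in its transfer to the localisation, where several roofs must be replaced by a common denominator before TR4 in $K(A)$ can be applied.
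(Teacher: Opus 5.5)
Your proposal is correct and puts the same structure on $D(A)$ as the paper's sketch: the suspension is $[1]$ and the exact triangles are those isomorphic to standard cone triangles, with $D^b(A)$ then a triangulated subcategory. The paper only names this data, while you actually check the axioms in $K(A)$ and transfer them through the Verdier quotient $K(A)/K_{\mathrm{ac}}(A)$, which the paper itself later identifies with $D(A)$. One cosmetic point: your nullhomotopy $\stbtm{0}{0}{\id}{0}$ of $\mathrm{cone}(\id_X)$ and your TR3 map of cones $\stbtm{u[1]}{0}{h}{v}$ are written in mutually transposed matrix conventions, so fix one convention (and the sign on $h$) when writing them out.
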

\begin{proof}[Proof idea]
    The suspension is the shift $[1]$. The exact triangles are precisely those triangles isomorphic to triangles of the form $X \to Y \to \mathrm{cone}(f) \to$.
\end{proof}
  The intuition is that an exact triangle behaves like a rolled-up long exact sequence. Indeed, in our main example $D(A)$, given a morphism $f:X\to Y$ the induced morphism $\mathrm{cone}(f) \to X[1]$ corresponds precisely to the connecting morphisms in the associated long exact sequence.
       
\begin{exer}
    Show that cones are unique up to isomorphism. *Show that this isomorphism need not be unique.
\end{exer}
If $f:X\to Y$ is a morphism, with cone $C$, we will often refer to $\Sigma^{-1}C$ as the \textbf{cocone} of $f$. Observe that this fits into the exact triangle $\Sigma^{-1}C\to X \xrightarrow{f}Y \to$.
\begin{exer}
    Show that $X \to Y \to Z \to$ is exact if and only if the rotated triangle $\Sigma^{-1}Z \xrightarrow{-} X \to Y \to$ is. (Hint: what happens if you rotate a triangle three times?)
\end{exer}
\begin{exer}\label[exer]{zeroExer}
    Show that any two consecutive compositions in an exact triangle are zero.
\end{exer}

  A \textbf{triangle functor} between triangulated categories is a $k$-linear functor that commutes with $\Sigma$ and sends exact triangles to exact triangles.
  \begin{exer}
      If $X$ is a fixed object of $D^b(A)$, show that the functor $$\R\hom_A(X,-): D(A) \to D(\Z)$$ is a triangle functor. Deduce that if $Y\to Z \to W \to$ is an exact triangle in $D(A)$ then there is a long exact sequence $$\cdots\to \ext^i_A(X,Y) \to  \ext^i_A(X,Z) \to  \ext^i_A(X,W) \to  \ext^{i+1}_A(X,Y)\to\cdots$$of abelian groups.
  \end{exer}We generalise the previous exercise. If $\mathcal{T}$ is a triangulated category, we write $\ext^i_{\mathcal{T}}(X,Y)\coloneqq \mathcal{T}(X,\Sigma^iY)$.
  \begin{prop}\label[prop]{TriCatLES}Let $\mathcal{T}$ be a triangulated category. If $X$ is any object of $\mathcal{T}$ and $Y \to Z \to W \to$ is an exact triangle, then there is a long exact sequence $$\cdots\to \ext^i_\mathcal{T}(X,Y) \to  \ext^i_\mathcal{T}(X,Z) \to  \ext^i_\mathcal{T}(X,W) \to  \ext^{i+1}_\mathcal{T}(X,Y)\to\cdots$$of abelian groups.
  \end{prop}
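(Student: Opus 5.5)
By rotation (TR2) and the fact that $\Sigma$ is an autoequivalence, it suffices to prove one thing: for any exact triangle $Y \xrightarrow{f} Z \xrightarrow{g} W \xrightarrow{h} \Sigma Y$, the sequence of abelian groups
$$\mathcal{T}(X,Y) \xrightarrow{f_*} \mathcal{T}(X,Z) \xrightarrow{g_*} \mathcal{T}(X,W)$$
is exact in the middle. Granting this, apply it to the rotated triangles $Z \to W \to \Sigma Y \to$ and $W \to \Sigma Y \to \Sigma Z \to$, which are exact by TR2. The sign flip on one map does not change images or kernels. This gives exactness at the $W$ and $\Sigma Y$ spots. Applying TR0 (closure under $\Sigma$, and under $\Sigma^{-1}$ via repeated rotation) shifts everything by $\Sigma^i$. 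Finally, using $\mathcal{T}(X,\Sigma^i(-))$ identifies the groups with $\ext^i_\mathcal{T}(X,-)$, which splices the pieces into the stated long exact sequence. The connecting map $\ext^i_\mathcal{T}(X,W)\to\ext^{i+1}_\mathcal{T}(X,Y)$ is postcomposition with $h$.

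For exactness at $\mathcal{T}(X,Z)$, the inclusion $\operatorname{im} f_* \subseteq \ker g_*$ follows from \Cref{zeroExer}, since $gf=0$. For the reverse inclusion, take $u: X \to Z$ with $gu = 0$. Form the triangle $X \xrightarrow{\id} X \to 0 \to \Sigma X$, which is exact by TR1. Rotate both it and the given triangle so that TR3 applies. Concretely, the rotated triangles $X \to 0 \to \Sigma X \xrightarrow{-\id} \Sigma X$ and $Z \xrightarrow{g} W \xrightarrow{h} \Sigma Y \xrightarrow{-\Sigma f} \Sigma Z$ are exact. The square formed by $u: X\to Z$ and $0: 0 \to W$ commutes because $gu=0$. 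TR3 then gives a map $v: \Sigma X \to \Sigma Y$ with $(-\Sigma f)\circ v = \Sigma u \circ (-\id)$, that is, $\Sigma f \circ v = \Sigma u$. Since $\Sigma$ is an autoequivalence, hence full, we can write $v = \Sigma w$ for some $w : X \to Y$. Faithfulness then gives $fw = u$, so $u \in \operatorname{im} f_*$.

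The main obstacle is the sign and rotation bookkeeping. One has to check that the rotated trivial triangle really is exact, and that TR3 is applied in a direction that yields a map \emph{into} the $Y$-term. A cleaner alternative I would try first is to rotate backwards using the earlier exercise, so that $\Sigma^{-1}W \to Y \to Z \to$ is exact. Then compare $X \xrightarrow{\id} X \to 0\to$, rotated into the form $\Sigma^{-1}0 \to X \xrightarrow{\id} X \to$, with $u$ in the third position. This way TR3 directly produces the lift, with no need to desuspend. Either way, the remaining steps are formal.
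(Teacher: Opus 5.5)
Your argument is correct and is the paper's proof. The paper also reduces by rotation and shifts to exactness at $\mathcal{T}(X,Z)$, gets $gf=0$ from \Cref{zeroExer}, and completes the square with top row $X\to 0$ and bottom row $Z\xrightarrow{g}W$ to a morphism of (rotated) exact triangles; you then desuspend the resulting map $\Sigma X\to\Sigma Y$. One caution on your ``cleaner alternative'': TR3 produces the third vertical map from the first two, so putting $u$ in the third slot when comparing $\Sigma^{-1}0\to X\to X\to$ with $\Sigma^{-1}W\to Y\to Z\to$ does not directly give the middle map $w\colon X\to Y$. You would need one more backward rotation of both triangles, so that $\Sigma^{-1}u$ and $0\to\Sigma^{-1}W$ are the first two components, before TR3 yields $w$ with $fw=u$.
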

  \begin{proof}
      By rotating the triangle and applying shifts it suffices to check exactness at $\ext^0_\mathcal{T}(X,Z)\cong \mathcal{T}(X,Z)$. The composition is zero by \Cref{zeroExer}, and exactness follows from completing a commutative square $$\begin{tikzcd}
          X \ar[r]\ar[d]& 0\ar[d] \\ Z\ar[r] & W
      \end{tikzcd}$$to a morphism of exact triangles.
  \end{proof}
\begin{rmk}
    There is a similar long exact sequence involving the $\ext_\mathcal{T}^i(-,X)$ functors. It can be derived from the previous proposition using the fact that the opposite of a triangulated category is itself triangulated.
\end{rmk}

  \begin{rmk} Say that a triangulated category $\mathcal{T}$ \textbf{has functorial cones} if there is a functor $C:\mathbf{Ar}(\mathcal{T}) \to \mathcal{T}$ such that for each $f$, the object $C(f)$ is a cone of $f$. Then a triangulated category $\mathcal{T}$ has functorial cones if and only if $\mathcal{T}$ is semisimple abelian. This fact goes back to Verdier's thesis \cite[1.2.13]{verdierthesis}, but Greg Stevenson has given a modern proof \cite{nofuncones}. The loose idea is that having functorial cones actually forces $\mathcal{T}$ to have kernels and cokernels. Then the claim follows because monos and epis split.
  	\end{rmk}
  
  \begin{rmk}For homotopy theorists: we will later say that a triangulated category $\mathcal{T}$ has an \textbf{enhancement} if it is the homotopy category of a pretriangulated dg category (so, roughly, if one can coherently assign derived hom-complexes $\R\hom_{\mathcal{T}}(X,Y)\in D(k)$ to every $X,Y\in \mathcal{T}$). A more general notion than this is a \textbf{topological enhancement}, namely that $\mathcal{T}$ is the homotopy category of a stable $\infty$-category (pretriangulated dg categories are precisely the $k$-linear stable $\infty$-categories). The classical stable homotopy category admits a topological enhancement, but not an enhancement. Suppose that $\C$ is a stable $\infty$-category, so that the homotopy category $h\C$ is canonically triangulated. In $\C$, one can make a functorial choice of cone, giving a morphism $\mathrm{Fun}(\Delta^1,\C) \to \C$. One then obtains by taking homotopy categories a functor $h\mathrm{Fun}(\Delta^1,\C) \to h\C$. There is a comparison map $h\mathrm{Fun}(\Delta^1,\C)  \to \mathbf{Ar}(h\C)$, but it fails to be an equivalence, as spelled out in \cite{386369}. In particular, the `functorial cone' does not factor through a morphism $\mathbf{Ar}(h\C) \to \C$, and so this does not prove that every enhanceable triangulated category is actually abelian.
  	\end{rmk}

  \section{Sums and subcategories}

  \begin{exer}
      An \textbf{extension} of $A$ by $B$ is an object $X$ fitting into an exact triangle $B\to X \to A \to$. Show that extensions are classified by $\ext^1(A,B)$.
  \end{exer}
  
\begin{prop}
    Triangulated categories have finite biproducts.
\end{prop}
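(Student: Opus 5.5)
To prove that a triangulated category $\mathcal{T}$ has finite biproducts, I will construct the direct sum $X\oplus Y$ of two objects as the cone of the zero map and check its universal properties using \Cref{TriCatLES}. A zero object is available already, since TR1 gives an exact triangle $X\xrightarrow{\id}X\to 0\to$ and hence an object $0$. I first check that $\mathcal{T}(W,0)=0$ for every $W$. Apply \Cref{TriCatLES} to this triangle: the map $\mathcal{T}(W,X)\xrightarrow{\id}\mathcal{T}(W,X)$ is an isomorphism, so exactness at both ends forces $\mathcal{T}(W,0)=0$. The dual long exact sequence from the remark following \Cref{TriCatLES} gives $\mathcal{T}(0,W)=0$ in the same way. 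Hence $0$ is a zero object, and by induction it suffices to treat binary biproducts.

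Given $X$ and $Y$, I use TR1 to complete the zero morphism $\Sigma^{-1}Y\xrightarrow{0}X$ to an exact triangle $\Sigma^{-1}Y\xrightarrow{0}X\xrightarrow{i}Z\xrightarrow{p}Y$. The rotation in TR2, together with $\Sigma$ being an autoequivalence, lets me place $Y$ in third position. For any $W$, \Cref{TriCatLES} yields an exact sequence
\[\mathcal{T}(W,\Sigma^{-1}Y)\xrightarrow{0}\mathcal{T}(W,X)\xrightarrow{i_*}\mathcal{T}(W,Z)\xrightarrow{p_*}\mathcal{T}(W,Y)\xrightarrow{0}\mathcal{T}(W,\Sigma X).\]
The outer maps vanish because they are induced by the zero morphism and its shift. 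Therefore $0\to\mathcal{T}(W,X)\to\mathcal{T}(W,Z)\to\mathcal{T}(W,Y)\to 0$ is a short exact sequence of abelian groups.

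The main step is to split this sequence naturally in $W$. Taking $W=Y$, surjectivity of $p_*$ gives $j:Y\to Z$ with $pj=\id_Y$. Next, $\id_Z-jp$ is killed by $p$, since $p(\id_Z-jp)=p-p=0$. Taking $W=Z$, exactness then gives a unique $q:Z\to X$ with $iq=\id_Z-jp$. From this, $iqi=i-jpi=i$ because $pi=0$ by \Cref{zeroExer}, and since $i_*$ is injective we get $qi=\id_X$. Similarly, $iqj=j-jpj=0$ gives $qj=0$. Thus $(Z;i,j,p,q)$ satisfies the biproduct identities $qi=\id$, $pj=\id$, $iq+jp=\id_Z$, $pi=0$, $qj=0$.

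Finally, in an additive ($k$-linear) category these identities are exactly the data of a biproduct. Composition with $(q,p)$ gives $\mathcal{T}(W,Z)\cong\mathcal{T}(W,X)\oplus\mathcal{T}(W,Y)$, and composition with $(i,j)$ gives $\mathcal{T}(Z,W)\cong\mathcal{T}(X,W)\oplus\mathcal{T}(Y,W)$, with inverses built from the displayed identities. So $Z=X\oplus Y$ is both the product and the coproduct. The only delicate point is the splitting step: it relies on choosing the test objects $W=Y$ and $W=Z$, and on injectivity of $i_*$, which comes from the left-hand zero map in the long exact sequence.
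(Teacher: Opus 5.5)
Your proof is correct, but after the first step it takes a genuinely different route from the paper's. Both arguments start from the same object. Your cone of $0\colon\Sigma^{-1}Y\to X$, after rotating and using TR0 to replace $\Sigma\Sigma^{-1}Y$ by $Y$, is exactly the paper's triangle $X\to X\oplus Y\to Y\xrightarrow{0}$.

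The paper then argues diagrammatically:
\begin{enumerate}
\item it uses TR3 to complete a morphism of triangles exhibiting $X$ as a retract;
\item it extends this to a $3\times 3$ diagram and applies the Nine Lemma, which in a triangulated category comes from the octahedral axiom, to get the symmetric statement $X\oplus Y\cong Y\oplus X$;
\item it combines the contravariant long exact sequence with these splittings and Yoneda to obtain the coproduct property, and leaves products as ``similar''.
\end{enumerate}

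You use only the covariant sequence of \Cref{TriCatLES}. Both outer maps are induced by the zero morphism or its shift, so $0\to\mathcal{T}(W,X)\to\mathcal{T}(W,Z)\to\mathcal{T}(W,Y)\to 0$ is short exact for every $W$. Testing with $W=Y$, $W=Z$ and $W=X$ then produces $j$, $q$ and all five biproduct identities. These give the product and coproduct universal properties simultaneously in any preadditive category. (``Preadditive'' is the word you want there, since ``additive'' already presupposes biproducts.)

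This is the standard proof that a triangle with zero third morphism splits. It is shorter, never touches TR4, and needs neither the dual long exact sequence nor a separate treatment of products. What the paper's route gives in addition is information about exact triangles themselves, for instance that the swapped triangle $Y\to X\oplus Y\to X\xrightarrow{0}$ is also exact. Your argument does not need this and does not establish it.
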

\begin{proof}Let $\mathcal{T}$ be a $k$-linear triangulated category. It has a zero object, so we show that $\mathcal{T}$ has binary biproducts. We define $X\oplus Y$ to be the unique object fitting into the exact triangle $X\to X\oplus Y \to Y \xrightarrow{0}$. Equivalently, $X\oplus Y$ is the unique extension corresponding to $0\in \ext^1_\mathcal{T}(Y,X)$. By completing the diagram$$\begin{tikzcd}
    X\ar[r]\ar[d,"\id"]&X\oplus Y \ar[r]& Y \ar[r,"0"]\ar[d]& {}\\
    X\ar[r,"\id"]&X\ar[r]&0\ar[r]& {}
\end{tikzcd}$$to a morphism of exact triangles, we see that $X$ is a retract of $X\oplus Y$. Similarly we see that $Y$ is also a retract. One can further extend this diagram to a diagram with exact rows of the form $$\begin{tikzcd}
0\ar[r]\ar[d]&Y\ar[r,"\id"] \ar[d]& Y \ar[r]\ar[d,"\id"]& {}\\
    X\ar[r]\ar[d,"\id"]&X\oplus Y\ar[d] \ar[r]& Y \ar[r,"0"]\ar[d]& {}\\
    X\ar[r,"\id"]\ar[d]&X\ar[r]\ar[d,"0"]&0\ar[r]\ar[d]& {}\\
     {}&{}&{}& {}
\end{tikzcd}$$and the Nine Lemma now shows that the middle vertical triangle is exact; in other words we have $X\oplus Y \cong Y \oplus X$.

If $Z$ is a third object, apply the long exact sequence of \Cref{TriCatLES} together with the above splittings to deduce the existence of an isomorphism $\mathcal{T}(X\oplus Y,Z)\cong\mathcal{T}(X,Z)\oplus\mathcal{T}(Y,Z)$. The fact that $\oplus$ is a coproduct now follows from the Yoneda lemma. The argument for products is similar.
\end{proof}
\begin{defn}
     If $\D$ is a triangulated category, then a \textbf{triangulated subcategory} $\C$ is a full subcategory which contains $0$ and is closed under shifts and cones. The restriction of the triangulated structure to $\C$ makes $\C$ into a triangulated category and the inclusion into a triangle functor. 
\end{defn}
\begin{exer}
 Let $\D$ be a triangulated category and $\C\ni 0$ a full subcategory. 
 \begin{enumerate}
     \item Show that if $\C$ is closed under cones, then it is a triangulated subcategory that is moreover closed under finite direct sums.
     \item Show that if $\C$ is closed under shifts, then it is a triangulated subcategory if and only if it is closed under extensions.
 
 \end{enumerate}
\end{exer}

  Let $\D$ be a triangulated category. A triangulated subcategory $\C$ of $\D$ is called \textbf{thick} (or \textbf{\'epaisse}) if it is closed under direct summands. If $S$ is a set of objects of $\mathcal{T}$, the \textbf{thick closure} of $S$ is the smallest thick subcategory of $\mathcal{T}$ containing $S$. We denote it by $\mathbf{thick}_\D(S)$ or just $\mathbf{thick}(S)$ when the context is clear. When $S=\{X\}$ then we write $\mathbf{thick}_\D(X)$ or just $\mathbf{thick}(X)$.
  \begin{prop}\label[prop]{thickexistence}
      $\mathbf{thick}(S)$ exists.
  \end{prop}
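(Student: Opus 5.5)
The natural plan is to realise $\mathbf{thick}(S)$ as an intersection. Consider the collection of all thick subcategories of $\D$ that contain $S$. This collection is nonempty, since $\D$ itself is a thick subcategory of $\D$ containing $S$: it contains $0$, and it is closed under shifts, cones and direct summands. We then define $\mathbf{thick}(S)$ to be the full subcategory whose objects are exactly those objects of $\D$ lying in every member of the collection. Because thick subcategories are full, this intersection is determined entirely by its class of objects, so there is no ambiguity about morphisms.

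The main step is to check that an intersection of thick subcategories is again thick. We verify each closure condition objectwise.
\begin{enumerate}
    \item Every member contains $0$, so the intersection contains $0$.
    \item If $X$ lies in every member, then so does $\Sigma^{\pm1}X$, since each member is closed under shifts.
    \item Let $f:X\to Y$ be a morphism between objects of the intersection, and let $Z$ be a cone of $f$. In each member $\C$, the morphism $f$ is a morphism of $\C$ (by fullness), and $\C$ is closed under cones, so $Z\in\C$. Hence $Z$ lies in the intersection.
    \item If $X\oplus Y$ lies in the intersection, then $X$ and $Y$ lie in each member by closure under summands, so they lie in the intersection.
\end{enumerate}
By construction the intersection contains $S$. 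It is contained in every thick subcategory containing $S$, so it is the smallest such.

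The only subtle point, and the one I expect to need care, is the meaning of ``closed under cones''. Cones are only defined up to (non-unique) isomorphism, so we should read closure as ``some cone lies in the subcategory'' or ``every cone does''. Since subcategories are usually taken to be replete, these two readings agree: any two cones of $f$ are isomorphic, as established in the earlier exercise on uniqueness of cones. If repleteness is not built into the definition, I would impose it, or equivalently note that a thick subcategory is closed under isomorphisms because $X\cong X\oplus 0$ exhibits $X$ as a summand of any object isomorphic to it.

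There is also a mild set-theoretic point: we are intersecting a possibly large collection of subcategories. This is harmless, because the intersection is defined by a condition on objects of $\D$, namely membership in every thick subcategory containing $S$.
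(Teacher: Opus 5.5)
Your argument is correct, but it goes the opposite way from the paper's. You obtain $\mathbf{thick}(S)$ from above, as the intersection of all thick subcategories containing $S$. Existence and minimality are then immediate, and the only work is checking that each closure condition survives intersection. Your remark that repleteness makes ``closed under cones'' unambiguous is exactly the care needed there.

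The paper builds $\mathbf{thick}(S)$ from below. It takes $\mathbf{thick}_1(S)$ to be the summands of finite sums of shifts of objects of $S$, takes $\mathbf{thick}_{r+1}(S)$ to be extensions of objects of $\mathbf{thick}_1(S)$ by objects of $\mathbf{thick}_r(S)$, and sets $\mathbf{thick}(S)$ to be the union. That route gives an explicit description of the objects, each reached from $S$ in finitely many steps. It also gives a filtration by `generation time', which is the starting point for levels and Rouquier dimension. And it never quantifies over all thick subcategories of a possibly large $\D$. Your last paragraph passes over that point quickly; it is harmless with universes, or when $\D$ is essentially small.

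The price of the bottom-up route is that one must check the union really is thick, and here your route is more robust. As literally written, the paper takes summands only at the first stage, and then the union need not be closed under summands. For example, work in $D^b(\mathbf{mod}\text{-}k[t])$ with $S=\{k[t]/t^2\}$. Every object of $\mathbf{thick}_1(S)$ has even-dimensional cohomology in each degree, since summands of free $k[t]/t^2$-modules are free. The Euler characteristic $\chi(X)=\sum_i(-1)^i\dim_k H^i(X)$ is additive on triangles, so it is even on the whole union. Yet $\mathrm{cone}(t\colon k[t]/t^2\to k[t]/t^2)\cong k\oplus k[1]$ lies in $\mathbf{thick}_2(S)$ and has the summand $k$, with $\chi(k)=1$. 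So the bottom-up construction needs summands at every stage (or of the union at the end), a repair your intersection argument does not need.
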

  \begin{proof}
  One sets $\mathbf{thick}_1(S)$ to be the full subcategory on those objects which are direct summands of objects of the form $\bigoplus^n_{i=1}\Sigma^{t_i}S_i$ with $S_i\in S$. One then inductively sets $\mathbf{thick}_{r+1}(S)$ to be the full subcategory given by extending objects of $\mathbf{thick}_1(S)$ by objects of $\mathbf{thick}_r(S)$. Then we take $\mathbf{thick}(S)$ to be then union of all the $\mathbf{thick}_r(S)$.
  \end{proof}
  \begin{exer}[\Cref{tStructureExer1}, derived version]\label[exer]{tStructureExer2}
  Let $A$ be a ring and $M$ a bounded complex. Show that $M\in \mathbf{thick}\{H^i(M): i\in \Z\}$. (Hint: induct on $i$.)
  \end{exer}

  \begin{ex}
      If $A$ is a ring, the subcategory of $\per(A)$ on the strictly bounded complexes of finitely generated free $A$-modules is a triangulated subcategory which is not in general thick, since it need not be closed under summands.
  \end{ex}

  \begin{prop}
      Let $A$ be a noetherian ring. Then there are equalities $\per(A) = \mathbf{thick}_{D(A)}(A) = \mathbf{thick}_{D^b(A)}(A)$.
  \end{prop}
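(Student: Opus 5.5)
The plan is to prove the two inclusions $\mathbf{thick}_{D(A)}(A)\subseteq \per(A)$ and $\per(A)\subseteq \mathbf{thick}_{D^b(A)}(A)$, together with the trivial observation $\mathbf{thick}_{D^b(A)}(A)\subseteq\mathbf{thick}_{D(A)}(A)$. The trivial observation holds because $D^b(A)$ is a thick subcategory of $D(A)$: it is closed under shifts, cones (long exact sequence in cohomology) and summands. Hence any thick subcategory of $D^b(A)$ containing $A$ is also thick in $D(A)$, so the smallest one in $D(A)$ containing $A$ lies inside it. These three inclusions together give the chain of equalities.

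For $\per(A)\subseteq\mathbf{thick}_{D^b(A)}(A)$, I would use \Cref{KProjthm} to represent a perfect complex $P$ by a strictly bounded complex of finitely generated projectives. By \Cref{tStructureExer1} (or rather its brutal-truncation version: $P$ is an iterated cone of its terms $P^i[-i]$ via the stupid filtration), $P$ lies in the thick closure of the modules $P^i$. Each $P^i$ is a summand of some $A^{n}$, so $P^i\in\mathbf{thick}_1(A)$. Also, every perfect complex has bounded cohomology, so $\per(A)\subseteq D^b(A)$ and the closure may be taken inside $D^b(A)$.

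For $\mathbf{thick}_{D(A)}(A)\subseteq\per(A)$, it suffices to show that $\per(A)$ is a thick subcategory of $D(A)$ containing $A$. The steps are as follows.
\begin{enumerate}
\item Closure under shifts is obvious.
\item Closure under cones: using \Cref{KProjthm}, any morphism between perfect complexes is represented by an honest chain map between strictly bounded complexes of finitely generated projectives. Its mapping cone is again such a complex, and exact triangles in $D(A)$ are isomorphic to cone triangles.
\end{enumerate}

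The main obstacle is closure under direct summands. Suppose $X\oplus Y\cong P$ with $P$ a strictly bounded complex of finitely generated projectives. First, $X$ lies in $D^-$ and has a resolution $Q$ by finitely generated projectives. This uses noetherianity: $X$ has finitely generated cohomology, bounded, since it is a summand of $P$. Next, choose $n$ below all degrees of $P$ and below the cohomological range, and consider the module $M=\ker(Q^{n}\to Q^{n+1})$, equivalently the truncation of $Q$ at $n$. We have $\ext^i(X,N)=0$ for all modules $N$ and $i\gg0$, since $\ext^i(P,N)$ vanishes for $i$ outside the range of $P$. By dimension shifting, this gives $\ext^1(M',N)=0$ for a suitable syzygy module $M'$ of $Q$, so $M'$ is projective. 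Then the brutal truncation of $Q$ with $M'$ placed in the last spot is a strictly bounded complex of finitely generated projectives quasi-isomorphic to $X$. The delicate point is the bookkeeping of the dimension shift, namely identifying $\ext^{1}(M',N)$ with $\ext^{i}(X,N)$ for large $i$ once we are below the cohomology of $X$; I expect to handle it by the long exact sequence from \Cref{TriCatLES} applied to the triangle relating $Q$, its brutal truncation, and $M'[-n]$.
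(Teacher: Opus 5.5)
Your proposal is correct and follows the same route as the paper. You show that $\per(A)$ is a thick subcategory of $D(A)$ containing $A$, and conversely you place $\per(A)$ inside the thick closure of $A$ via the brutal filtration of a strictly bounded complex of finitely generated projectives.

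The one place you go beyond the paper is closure under direct summands. The paper only remarks that summands of finitely generated projective modules are projective. That handles termwise summands, but not summands in $D(A)$. Your argument does prove it:
\begin{enumerate}
\item Resolve the summand $X$ by finitely generated projectives $Q$.
\item Use the vanishing of $\ext^{i}(X,-)$ for $i\gg 0$, uniformly in the second variable, to see that $Z^n(Q)$ is projective for $n\ll 0$.
\item Replace $Q$ by the bounded complex $0\to Z^n(Q)\to Q^n\to\cdots$.
\end{enumerate}
The bookkeeping you flag works out. Once $H^{\le n}(X)=0$ and $n$ lies below the degrees of $P$, the triangle relating $Q$ to its brutal truncation gives $\ext^1(Z^n(Q),N)\cong\ext^{2-n}(X,N)=0$ for every module $N$.

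There is one small slip. Your justification of the ``trivial observation'' argues that every thick subcategory of $D^b(A)$ containing $A$ is thick in $D(A)$. That proves $\mathbf{thick}_{D(A)}(A)\subseteq\mathbf{thick}_{D^b(A)}(A)$, which is the opposite of the inclusion your cycle needs. The needed direction $\mathbf{thick}_{D^b(A)}(A)\subseteq\mathbf{thick}_{D(A)}(A)$ is just as easy: $\mathbf{thick}_{D(A)}(A)\cap D^b(A)$ is a thick subcategory of $D^b(A)$ containing $A$.
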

  \begin{proof}
      The second equality is easy to see so we concentrate on the first. Certainly $\per(A)$ is a triangulated subcategory of $D(A)$, and moreover it is thick since summands of finitely generated projective modules are finitely generated projective. So we need only show that $\per(A) \subseteq \mathbf{thick}_{D(A)}(A)$. Since the latter is closed under sums and summands we have $\mathbf{proj-}A \subseteq  \mathbf{thick}_{D(A)}(A)$. But by \Cref{tStructureExer1} we are done.
  \end{proof}

  \begin{rmk}
      In more general settings one often \textit{defines} $\per(A)\coloneqq \mathbf{thick}_{D(A)}(A)$.
  \end{rmk}

    \section{Verdier quotients and $D_\mathrm{sg}$}

    Suppose that $F:\C \to \D$ is a triangle functor. The \textbf{kernel} of $F$ is the full subcategory of $\C$ on those objects $x$ such that $F(x)\cong 0$. We denote the kernel by $\ker(F)$.
    \begin{exer}
        Show that $\ker(F)$ is a thick triangulated subcategory of $\C$. 
    \end{exer}

\begin{defn}
    Let $\D$ be a triangulated category and $\C\into \D$ a triangulated subcategory. The \textbf{Verdier quotient} is the universal triangulated category $\D/\C$ equipped with a functor $\pi:\D\to\D/\C$ such that $\C\subseteq \ker(\pi)$. In other words, if $F:\D\to \D'$ is a triangle functor which kills $\C$, then $F$ factors through $\pi$.
\end{defn}
Verdier quotients are also referred to as \textbf{Verdier localisation}s, since killing an object $X$ is equivalent to inverting $0\to X$. More generally, inverting a morphism $f$ is equivalent to killing its cone.
\begin{prop}
    Verdier quotients exist. The kernel of the natural projection $\D \to \D/\C$ is precisely $\mathbf{thick}_\D(\C)$.
\end{prop}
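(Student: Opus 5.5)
The plan is to construct $\D/\C$ by a calculus of fractions and then identify the kernel. First I replace $\C$ by $\mathcal{N}\coloneqq\mathbf{thick}_\D(\C)$, which exists by \Cref{thickexistence}. This is harmless: the kernel of any triangle functor is thick, so a functor kills $\C$ if and only if it kills $\mathcal{N}$. The two universal problems therefore coincide.

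Let $S$ be the class of morphisms $s$ in $\D$ whose cone lies in $\mathcal{N}$. By TR0 and TR2 this is well defined, since cones are unique up to isomorphism and $\mathcal{N}$ is closed under isomorphisms and shifts. I will check the following:
\begin{enumerate}
\item $S$ contains identities and is closed under composition. Closure under composition uses the octahedral axiom TR4: the cone of $ts$ is an extension of $\mathrm{cone}(t)$ by $\mathrm{cone}(s)$, and $\mathcal{N}$ is extension-closed.
\item $S$ satisfies the Ore conditions. Given $s:X'\to X$ in $S$ and $f:Y\to X$, complete $\Sigma^{-1}\mathrm{cone}(s)\to X\to \mathrm{cone}(s)$ and pull back along $f$ by taking the cocone of $Y\to X\to\mathrm{cone}(s)$. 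TR3 supplies the square, and the new map lies in $S$ because it has the same cone.
\item $S$ satisfies the cancellation condition. If $fs=0$ with $s\in S$, then $f$ factors through $\mathrm{cone}(s)\in\mathcal{N}$ by \Cref{TriCatLES}. Taking $t$ to be the cocone of $Y\to\mathrm{cone}(\cdot)$ then gives $tf=0$ with $t\in S$.
\end{enumerate}

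Define $\D/\C$ to have the same objects as $\D$, with morphisms given by roofs $X\xleftarrow{s}X'\xrightarrow{f}Y$ where $s\in S$, modulo the usual equivalence. The shift is inherited, and the exact triangles are those isomorphic to images of exact triangles of $\D$. Verifying TR0 to TR4 amounts to lifting finitely many roofs to a common denominator using the Ore condition and then applying the axioms in $\D$.

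The universal property then follows from the calculus of fractions. If $F$ kills $\mathcal{N}$, then $F$ inverts $S$, because $F(s)$ has zero cone. So $F$ extends uniquely by sending the roof $(s,f)$ to $F(f)F(s)^{-1}$, and this extension is exact.

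For the kernel, $\pi(X)\cong0$ means $\id_X$ equals zero in $\D/\C$. By the description of equality of roofs, this happens exactly when there is some $s:X'\to X$ in $S$ with $s=0$ (equivalently, some $t:X\to X''$ in $S$ with $t=0$). A zero map $t:X\to X''$ has cone $X''\oplus\Sigma X$. This cone lies in $\mathcal{N}$, and since $\mathcal{N}$ is closed under summands and shifts, $X\in\mathcal{N}$. The converse inclusion $\mathcal{N}\subseteq\ker\pi$ is immediate because kernels are thick. This last step is where thickness is genuinely needed, and it shows why the kernel is the thick closure rather than $\C$ itself.

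I expect the main obstacle to be verifying the octahedral axiom in the quotient and the well-definedness of composition of roofs. I would handle both by the standard reduction: every diagram of finitely many morphisms in $\D/\C$ is isomorphic to the image of a diagram in $\D$. Set-theoretic size issues (whether the hom-classes are sets) I would ignore.
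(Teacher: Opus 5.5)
Your proposal is correct and is essentially the paper's (Neeman's) roof construction. The one real difference is that you localise at morphisms with cone in $\mathbf{thick}_\D(\C)$ from the start, which is justified because kernels are thick. This reduces the kernel computation to observing that a zero map in $S$ has cone $X\oplus\Sigma X'$. The paper instead inverts morphisms with cone in $\C$, and so must identify the kernel as the direct summands of objects of $\C$.

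One small slip: for your roofs $X\xleftarrow{s}X'\to Y$, the cancellation condition you need is that $sh=0$ with $s\in S$ implies $ht=0$ for some $t\in S$. What you wrote is the mirror-image condition, but the same argument applied in $\D^{\mathrm{op}}$ gives the one you need.
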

In particular, if $\C$ is thick, then $\C$ is precisely the kernel of the projection to the Verdier quotient.  
\begin{proof}[Proof sketch]The rough idea is to formally adjoin inverses to morphisms whose cone lies in $\C$. We follow the construction given in \cite[\S2.1]{neemantxt}. The objects of $\D/\C$ will simply be the objects of $\D$. For two such objects $X,Y$, let $\alpha(X,Y)$ denote the set of roofs $X \xleftarrow{f} Z \xrightarrow{g} Y$ where $f$ has cone in $\C$. We think of such a roof as a `fraction' $g/f$. A morphism of roofs $R \to R'$ is simply a morphism $Z\to Z'$ making the obvious diagram commute. Declare that two such roofs $R, R'$ are equivalent if they are dominated by a common roof $R \from R'' \to R'$. Define $(\D/\C)(X,Y)$ to be the set of equivalence classes of roofs. Composition is given by homotopy pullback; a concrete model for the homotopy pullback of $Z\to Y \from Z'$ is given by the cocone of the induced map $Z\oplus Z' \to Y$. This makes $\D/\C$ into a category, with an obvious functor $\pi:\D\to \D/\C$ which sends $f:X \to Y$ to the roof $X\xleftarrow{\mathrm{id}}X \xrightarrow{f}Y$. Next, one shows that if the cone of $f$ is in $\C$, then this roof is inverse in $\C/\D$ to the roof $Y\xleftarrow{f}X \xrightarrow{\mathrm{id}}X$. In particular, $\pi$ inverts all morphisms whose cone is in $\C$, and hence kills $\C$. One then shows that $\D/\C$ inherits a triangulated structure from $\D$ making $\pi$ into a triangle functor. The universal property follows from the construction. To identify the kernel, one direction is clear since kernels are always thick. For the other direction, one shows that $X\to 0$ becomes an isomorphism in $\D/\C$ if and only if $X$ is a summand of an object of $\C$. 
\end{proof}

  \begin{ex}
 Let $K(A)$ denote the chain homotopy category, where the objects are the chain complexes, and the morphisms are the chain maps taken up to chain homotopy equivalence. This is a triangulated category in the usual way. Let $K_\mathrm{ac}(A)$ denote the subcategory of acyclic complexes; this is a thick subcategory. The Verdier quotient $K(A)/K_\mathrm{ac}(A)$ is precisely the derived category $D(A)$.
  \end{ex}
 	
 	We can finally define the singularity category.
 	
 	\begin{defn}
 	Let $A$ be a noetherian ring. Then the singularity category is the Verdier quotient $$D_\text{sg}(A)\coloneqq \frac{D^b(\mathbf{mod}\text{-}A) }{\per(A)}$$
 	\end{defn}
 
 The singularity category comes equipped with a natural projection map $D^b(\mathbf{mod}\text{-}A) \to D_\text{sg}(A)$ whose kernel is precisely $\per(A)$.
 	
 	\chapter{Notions of regularity}

    In this section, we investigate various notions of regularity for rings and relate these notions to the singularity category. We will restrict ourselves here to two-sided noetherian rings, since their dimension theory avoids some pathologies present in the general case. Our references here are \cite{lamlect, eisenbud, matsumura}. 

    \section{Global dimension}
    
 	Let $A$ be a ring. If $M$ is an $A$-module, the \textbf{projective dimension} of $M$ is the minimal length of a projective resolution of $M$, where the length means the number of nonzero modules. We denote it by $\mathbf{pd}_A(M)$ or just $\mathbf{pd}(M)$. Evidently, the modules of projective dimension 0 are precisely the projective modules.
    \begin{exer}Let $A$ be noetherian. Show that a finitely generated module $M$ has finite projective dimension if and only if $M\in D(A)$ is a perfect complex.
    \end{exer}
     \begin{exer}
        If $0\to X \to Y \to Z\to 0$ is a short exact sequence, show that $\mathbf{pd}(Y) \leq \max(\mathbf{pd}(X),\mathbf{pd}(Z))$.
    \end{exer} 
    
    If $A$ is noetherian, the \textbf{global dimension} of $A$ is defined to be the supremum of the projective dimensions of all finitely generated $A$-modules. We denote the global dimension by $\mathbf{gldim}(A)$.

    \begin{exer}
        If $A$ is a principal ideal domain, show that $\mathbf{gldim}(A)\leq 1$.
    \end{exer}
    \begin{exer}
        *Show that a noetherian ring has infinite global dimension if and only if it has a module of infinite projective dimension.
    \end{exer}
    \begin{rmk}
        A priori, there is a left and a right notion of global dimension. However, for two-sided noetherian rings the two concepts agree, and we will use the two notions interchangeably.
    \end{rmk}

 	\begin{ex}
 		A ring $A$ has global dimension zero if and only if every module is projective. These are precisely the semisimple rings. A commutative semisimple ring is a finite direct product of fields.
 		\end{ex}
 	
 	\begin{ex}
 		If $A$ has global dimension $n$, then if $M,N$ are two finitely generated $A$-modules, we must have $\ext^i(M,N)\cong 0$ for $i>n$. In particular, when $k$ is a field the ring $k[x]/x^2$ must have infinite global dimension.
 		\end{ex}
 	
 	\begin{lem}
 		If $A$ has finite global dimension then $D_\mathrm{sg}(A)$ vanishes.
 		\end{lem}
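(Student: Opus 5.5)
To show $D_\mathrm{sg}(A)$ vanishes, it suffices to show that the inclusion $\per(A)\into D^b(\mathbf{mod}\text{-}A)$ is essentially surjective. Then every object of $D^b(\mathbf{mod}\text{-}A)$ lies in the kernel of the projection $D^b(\mathbf{mod}\text{-}A)\to D_\mathrm{sg}(A)$. That projection is the identity on objects, so every object of the quotient would then be isomorphic to zero.

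\textbf{Step 1: finitely generated modules are perfect.} Let $M$ be a finitely generated $A$-module and put $n=\mathbf{gldim}(A)<\infty$. By definition $\mathbf{pd}(M)\leq n$, so $M$ has a projective resolution of finite length. By the earlier exercise, a finitely generated module over a noetherian ring has finite projective dimension if and only if it is perfect. One small point needs care: that exercise requires the resolution to consist of \emph{finitely generated} projectives. To arrange this, I take a resolution of $M$ by finitely generated free modules, which exists since $A$ is noetherian. I then truncate it at stage $n$, keeping $P_{n-1}\to\cdots\to P_0$ and replacing the next term by the kernel $K=\ker(P_{n-1}\to P_{n-2})$. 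The finite global dimension forces the $n$-th syzygy $K$ to be projective; this is the standard dimension-shifting fact, e.g.\ $\ext^1(K,-)\cong\ext^{n+1}(M,-)=0$. Also $K$ is finitely generated, being a submodule of a finitely generated module over a noetherian ring. So $M$ is quasi-isomorphic to a strictly bounded complex of finitely generated projectives, and hence $M\in\per(A)$.

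\textbf{Step 2: pass to bounded complexes.} Let $X\in D^b(\mathbf{mod}\text{-}A)$. By \Cref{tStructureExer2}, $X\in\mathbf{thick}\{H^i(X)\}$, a thick subcategory generated by finitely many finitely generated modules. By Step 1 each $H^i(X)$ lies in $\per(A)$. Since $\per(A)$ is a thick subcategory of $D(A)$ (it equals $\mathbf{thick}_{D(A)}(A)$), it contains $\mathbf{thick}\{H^i(X)\}$, and therefore $X\in\per(A)$. So $\per(A)=D^b(\mathbf{mod}\text{-}A)$, and the Verdier quotient is zero.

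The only genuinely delicate point is in Step 1: ensuring the finite-length resolution can be taken to consist of finitely generated projectives, i.e.\ that the truncated syzygy is both projective and finitely generated. I expect to handle this by the dimension-shifting argument above. The rest is bookkeeping with thick subcategories.
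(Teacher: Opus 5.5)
Your proof is correct, but it is organised differently from the paper's. The paper works directly with a bounded complex $M_p\to\cdots\to M_q$ of finitely generated modules. It resolves each term $M_i$ by a bounded complex $P_i$ of finitely generated projectives, lifts the differentials to maps $P_i\to P_{i+1}$, and totalises the resulting double complex to get an explicit perfect complex quasi-isomorphic to $M$. You instead first show that every finitely generated module is perfect. You then pass from modules to complexes formally, using that a bounded complex lies in $\mathbf{thick}\{H^i(X)\}$ and that $\per(A)$ is thick in $D(A)$.

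Your route buys two things. First, it justifies that the finite resolutions can be taken to consist of finitely generated projectives; your syzygy argument in Step 1 supplies this, whereas the paper simply asserts it ``by hypothesis''. Second, it avoids the double-complex step, which needs care: the composite $P_i\to P_{i+1}\to P_{i+2}$ of two lifts is in general only nullhomotopic, not zero. So a genuine double complex requires a Cartan--Eilenberg-type construction, or a correction by higher homotopies.

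The paper's approach, once made rigorous, buys a concrete perfect model of $M$ whose length is visibly controlled by that of $M$ and by $\mathbf{gldim}(A)$. Yours exhibits $X$ only as an iterated extension of shifted perfect complexes. Neither argument really uses the uniform bound. Both only need each finitely generated module to have finite projective dimension, and your Step 1 works verbatim with $n=\mathbf{pd}(M)$ in place of $\mathbf{gldim}(A)$.
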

 	\begin{proof}
 		Take a bounded complex $M=M_p \to \cdots \to M_q$ of finitely generated modules. By hypothesis, each $M_i$ has a bounded resolution $P_i$ by finitely generated projectives. Each differential $M_i \to M_{i+1}$ moreover lifts to a morphism $P_i \to P_{i+1}$ of complexes. It follows that $M$ is quasi-isomorphic to the totalisation of the double complex $P_p \to \cdots \to P_q$, which is clearly perfect. Hence $M$ is quasi-isomorphic to a perfect complex. So $\per(A)=D^b(A)$ and hence $D_\mathrm{sg}(A)$ vanishes.
 		\end{proof}

In the rest of this part we will look for a converse to the above lemma in the setting of commutative rings. To begin with, we will restrict ourselves to local rings.

        \section{Commutative rings I: the local setting}\label[section]{ABSsect}

If $R$ is a commutative ring, recall that the \textbf{Krull dimension} of $R$ is the supremum of the lengths of all chains of prime ideals in $R$. If $M$ is an $R$-module, we put $\dim(M)\coloneqq \dim(R/\mathbf{ann}(M))$. In particular, since taking quotients cannot increase the Krull dimension, we have $\dim(M)\leq \dim(R)$.

    If $(R,\mathfrak{m},k)$ is a commutative noetherian local ring with residue field $k$, then there is an inequality $\dim(R) \leq \dim_k(\mathfrak{m}/\mathfrak{m}^2)$, the dimension of the cotangent space of $R$. This is the same as the minimal number of generators of the ideal $\mathfrak{m}$, by Nakayama's Lemma. In particular, we can conclude that $R$ has finite Krull dimension.

    Say that $(R,\mathfrak{m},k)$ is \textbf{regular} if $\dim(R)=\dim_k(\mathfrak{m}/\mathfrak{m}^2)$. In other words, this means that the (co)tangent space of $R$ has the correct expected dimension.

 	\begin{thm}[``Auslander--Buchsbaum--Serre'']
 		Let $R$ be a commutative local noetherian ring. The following are equivalent:
 		\begin{enumerate}
 			\item $R$ is regular.
 			\item $\mathbf{gldim}(R)$ is finite. 
 			\item $D_\mathrm{sg}(R)$ vanishes.
 			\end{enumerate}
 		Moreover, if any of the above hold, the global dimension of $R$ is equal to its Krull dimension.
 		\end{thm}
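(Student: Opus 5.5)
We will prove the cycle of implications $(1)\Rightarrow(2)\Rightarrow(3)\Rightarrow(1)$. The implication $(2)\Rightarrow(3)$ is exactly the preceding Lemma, so the content lies in the other two steps. Throughout, the key player is the residue field $k=R/\mathfrak{m}$. For a finitely generated module $M$ over a local ring, we will use the standard fact (via minimal free resolutions and Nakayama) that $\mathbf{pd}(M)$ is the largest $i$ with $\tor_i^R(M,k)\neq 0$. In particular $\mathbf{gldim}(R)=\mathbf{pd}(k)$, since $\tor_i(M,k)$ can be computed by resolving $k$ instead.

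For $(1)\Rightarrow(2)$, assume $R$ is regular of dimension $d$, so $\mathfrak{m}$ is generated by $d$ elements $x_1,\ldots,x_d$. These form a system of parameters, and we will show that they form a regular sequence. We will argue by induction on $d$:
\begin{itemize}
\item a regular local ring is a domain (via the associated graded ring being a polynomial ring), so $x_1$ is a nonzerodivisor;
\item $R/x_1$ is again regular, of dimension $d-1$.
\end{itemize}
Then the Koszul complex $K(x_1,\ldots,x_d)$ is a free resolution of $k$ of length $d$. Hence $\mathbf{pd}(k)\leq d$, and therefore $\mathbf{gldim}(R)\leq d$ by the Tor criterion above.

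For $(3)\Rightarrow(1)$, vanishing of $D_\mathrm{sg}(R)$ means that $k$ is perfect, so $\mathbf{pd}(k)<\infty$ (exercise above). Now take a minimal free resolution $F$ of $k$; the Koszul complex on a minimal generating set of $\mathfrak{m}$ (of length $n=\dim_k\mathfrak{m}/\mathfrak{m}^2$) embeds in it. Following Serre/Tate, we will show that $\tor_n(k,k)\supseteq \Lambda^n(\mathfrak{m}/\mathfrak{m}^2)\neq 0$, so $\mathbf{pd}(k)\geq n$. Separately, we need the Auslander--Buchsbaum formula, $\mathbf{pd}(M)+\mathrm{depth}(M)=\mathrm{depth}(R)$ for $M$ of finite projective dimension. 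Applied to $k$, which has depth $0$, it gives $\mathbf{pd}(k)=\mathrm{depth}(R)\leq\dim R$. Combining, $n\leq\mathbf{pd}(k)\leq \dim R\leq n$, so $R$ is regular. This chain also gives $\mathbf{gldim}(R)=\mathbf{pd}(k)=\dim R$, which is the final claim; in direction $(1)\Rightarrow(2)$ the Koszul complex already shows $\mathbf{pd}(k)=d$.

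The main obstacle is the lower bound $\mathbf{pd}(k)\geq \dim_k\mathfrak{m}/\mathfrak{m}^2$, i.e.\ showing that the Koszul homology on minimal generators injects into $\tor(k,k)$. I would try Tate's argument first: construct a DG-algebra resolution of $k$ starting from the Koszul complex, and use minimality of the generators to see that $K\otimes k\to F\otimes k$ is injective. If this becomes too technical, the fallback is induction on $\dim_k\mathfrak{m}/\mathfrak{m}^2$. Pick $x\in\mathfrak{m}\setminus\mathfrak{m}^2$ that is a nonzerodivisor (available when $\mathrm{depth}\,R>0$, by prime avoidance). Show that $\mathbf{pd}_{R/x}(k)<\infty$ via the standard change-of-rings result that $\mathfrak{m}/x\mathfrak{m}$ has $\mathfrak{m}/x$ as a summand. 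The depth-zero base case needs separate treatment: if $\mathbf{pd}(k)<\infty$ and $\mathrm{depth}\,R=0$, then $k$ is free by Auslander--Buchsbaum, so $\mathfrak{m}=0$ and $R$ is a field.
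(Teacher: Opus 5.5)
Your proposal is correct. It agrees with the paper on the key equality $\mathbf{gldim}(R)=\mathbf{pd}_R(k)$ via Tor. It also agrees on $(1)\Rightarrow(2)$: minimal generators of $\mathfrak{m}$ form a regular sequence, so the Koszul complex gives $\mathbf{pd}_R(k)=\dim R$. And it agrees on the first step of $(3)\Rightarrow(1)$, which is the paper's $(3)\Rightarrow(2)$. The genuine difference is the hard direction, which the paper omits, describing it only as ``roughly, an induction on $\mathbf{gldim}(R)$''. Your primary route avoids induction altogether. Set $n=\dim_k\mathfrak{m}/\mathfrak{m}^2$. The chain $n\le\mathbf{pd}_R(k)=\mathrm{depth}(R)\le\dim R\le n$, built from the Auslander--Buchsbaum formula and $\mathrm{depth}\le\dim$, gives regularity and $\mathbf{gldim}(R)=\dim R$ in one stroke. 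Its lower bound $\dim_k\tor_i^R(k,k)\ge\binom{n}{i}$ holds over every noetherian local ring, which is more than the theorem needs. The price is that this lower bound is a deep input. Injectivity of $K\otimes_R k\to F\otimes_R k$ is not a formal consequence of minimality of the generators. It is the Tate--Gulliksen theorem that the acyclic closure of $k$ is a minimal resolution (equivalently, that $\tor^R(k,k)$ contains the exterior algebra on $\tor_1^R(k,k)$), so you should cite it rather than expect it to fall out quickly. Your fallback is Serre's classical argument: take a nonzerodivisor $x\in\mathfrak{m}\setminus\mathfrak{m}^2$, show $\mathbf{pd}_{R/x}(\mathfrak{m}/x\mathfrak{m})<\infty$, and apply the summand lemma. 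This is essentially the induction the paper alludes to, organised by embedding dimension instead of global dimension. It is elementary and complete as stated, provided you record that $\dim R/x=\dim R-1$ and that the embedding dimension drops by exactly one, so regularity of $R/x$ lifts to $R$. The depth-zero case is not really a base case but a case that can occur at any embedding dimension; your treatment of it, forcing $R$ to be a field, is correct.
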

        \begin{rmk}
             Here we make a historical remark. The `original' ABS theorem is $(1)\iff (2)$, which long predates the invention of singularity categories. The implication $(1)\implies (2)$ was first noticed by Buchsbaum, and the implication $(2)\implies (1)$ was independently proved by both Serre and Auslander--Buchsbaum. The equivalence of both statements with $(3)$ and the statement about the Krull dimension are in fact easy corollaries of the proof.
        \end{rmk}
Before we give a sketch proof of the ABS theorem, we will give an intermediate result about regular sequences and projective dimension. Recall that a \textbf{regular sequence} in a commutative ring $R$ is a sequence of elements $x_1,\ldots x_n$ such that $x_{i+1}$ is not a zerodivisor in the quotient $R/(x_1,\ldots, x_i)$ for $0<i<n$. Note that the $i=0$ case says that $x_1$ is not a zerodivisor on $R$.
        \begin{prop}\label[prop]{RegSeqProp}
    Let $R$ be a commutative noetherian local ring and $x_1,\ldots x_n$ a regular sequence contained in $\mathfrak{m}$. Then $\mathbf{pd}_R(R/(x_1,\ldots, x_n)) = n$.
\end{prop}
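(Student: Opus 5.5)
We argue by induction on $n$, using the Koszul complex or, more economically, the short exact sequences cut out by the successive elements of the sequence. Write $R_i\coloneqq R/(x_1,\ldots,x_i)$, so $R_0=R$. Because $x_{i+1}$ is a nonzerodivisor on $R_i$, there is a short exact sequence
$$0\to R_i \xrightarrow{x_{i+1}} R_i \to R_{i+1}\to 0.$$
This gives both an upper and a lower bound on projective dimension. Recall that $\mathbf{pd}$ counts the nonzero terms of a resolution. So that the statement reads correctly (e.g.\ $\mathbf{pd}(R)=0$ when $n=0$), I will use the standard convention that $\mathbf{pd}(M)$ is the largest index $j$ with $P_j\neq 0$ in a minimal resolution $P_j\to\cdots\to P_0$.

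For the upper bound, we build the resolution inductively. Suppose $P^{(i)}\to R_i$ is a free resolution of length $i$. Multiplication by $x_{i+1}$ lifts to a chain map $P^{(i)}\to P^{(i)}$, and its mapping cone resolves $R_{i+1}$. The cone is acyclic except in degree $0$, because the long exact sequence of the cone together with the injectivity of $x_{i+1}$ on $R_i$ kills the $H^{-1}$ term. The cone has length $i+1$. Hence $\mathbf{pd}(R_n)\le n$. In fact this construction produces exactly the Koszul complex on $x_1,\ldots,x_n$, with terms $R^{\binom{n}{j}}$ in homological degree $j$.

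For the lower bound, I would compute $\tor^R_n(R_n,k)$, where $k=R/\mathfrak m$. Tensoring the resolution with $k$ gives a complex whose differentials all have entries in $\mathfrak m$, since every $x_i\in\mathfrak m$. The differentials therefore vanish after tensoring, and $\tor_n^R(R_n,k)\cong k^{\binom{n}{n}}=k\neq 0$. Any projective resolution of length $<n$ would force $\tor_n$ to vanish, so $\mathbf{pd}(R_n)\ge n$.

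The main obstacle is making sure that the lifted map in the cone construction can be chosen so that all entries of the differentials lie in $\mathfrak m$, i.e.\ that the resolution is minimal. The cleanest way to guarantee this is to write down the Koszul complex explicitly and check its acyclicity by the same induction: the Koszul complex on $x_1,\ldots,x_{i+1}$ is the cone of multiplication by $x_{i+1}$ on the Koszul complex for $x_1,\ldots,x_i$. With this description all differential entries are $\pm x_j\in\mathfrak m$, and the Tor computation becomes immediate.

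Alternatively, the lower bound can be obtained inductively from the long exact Tor sequence of the short exact sequence above. Tensoring with $k$, multiplication by $x_{i+1}$ induces zero on $\tor_*(R_i,k)$. This yields $\tor_{i+1}(R_{i+1},k)\cong\tor_i(R_i,k)\neq0$, which avoids any minimality discussion.
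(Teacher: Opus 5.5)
Your proof is correct. The upper bound is the paper's: the Koszul resolution, whose acyclicity you actually prove by the cone induction where the paper only cites it. The lower bound goes a different way.

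Your ``main obstacle'' is not really one. Since $R$ is commutative, multiplication by $x_{i+1}$ is already a chain map $P^{(i)}\to P^{(i)}$, so no lift has to be chosen, and all differential entries lie in $\mathfrak m$ by induction.

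For the lower bound, the paper proves the general inequality $\mathbf{pd}(M/xM)\geq\mathbf{pd}(M)+1$ for $x\in\mathfrak m$ regular on $M$. It chooses $N$ with $\ext^d(M,N)\neq0$, where $d=\mathbf{pd}(M)$. Nakayama then shows that multiplication by $x$ on $\ext^d(M,N)$ is not surjective, so its cokernel $\ext^{d+1}(M/xM,N)$ is nonzero.

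You instead test against the residue field, in two ways. Directly: the Koszul complex is minimal, so $\tor^R_j(R_n,k)\cong k^{\binom{n}{j}}$. Inductively: $x_{i+1}$ annihilates $k$ and therefore acts by zero on $\tor_*(R_i,k)$. This splits the long exact sequence into short exact ones and makes $\tor_{i+1}(R_{i+1},k)\to\tor_i(R_i,k)$ surjective. It is an isomorphism once you use $\tor_{i+1}(R_i,k)=0$, which comes from the upper bound.

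Your route is more concrete. It needs no witness module $N$, which in the paper has to be taken finitely generated for Nakayama to apply. The direct version also computes all Betti numbers of $R_n$.

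The paper's route gives the inequality for any finitely generated $M$ of finite projective dimension. To get that from your Tor argument you would also need $\tor_{\mathbf{pd}(M)}(M,k)\neq0$, i.e.\ the minimal-resolution characterisation of projective dimension, which is itself a Nakayama argument.
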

Before we give the proof we first recall some facts about the Koszul complex. If $R$ is a commutative ring and $f\in R$ then we let $K(R;f)$ denote the cochain complex of $R$-modules $R \xrightarrow{f} R$ concentrated in degrees $-1$ and $0$. If $f_1,\ldots, f_n$ is a sequence of elements, we put $$K(R;f_1,\ldots, f_n)\coloneqq K(R;f_1)\otimes_R\cdots \otimes_R K(R;f_n)$$ and call this the \textbf{Koszul complex} of the sequence $f_1,\ldots, f_n$.

\begin{exer}[Exterior power description of the Koszul complex]\label[exer]{KosExer}Take a rank $n$ free $R$-module $E=R^{\oplus n}$  and let $e_1,\ldots,e_n$ be the standard $R$-basis for $E$. Show that the Koszul complex $K(R;f_1,\ldots, f_n)$ is \textit{isomorphic} to the complex $$\wedge^nE \to \wedge^{n-1}E \to\cdots \to\wedge^1E\to \wedge^0E\cong R$$with differential induced by the map which sends $$e_{i_1}\wedge\cdots \wedge e_{i_j}\mapsto \sum_k (-1)^kf_{i_k}e_{i_1}\wedge\cdots\wedge \tilde e_{i_k} \wedge\cdots\wedge e_{i_j}$$where the tilde denotes omission of the corresponding factor.
\end{exer}
The key fact for us will be the following. We omit the proof.
\begin{thm}
    If $f_1,\ldots, f_n$ is a regular sequence then the Koszul complex $K(R;f_1,\ldots, f_n)$ is a resolution of the quotient $R/(f_1,\ldots, f_n)$.
\end{thm}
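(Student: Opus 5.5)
The plan is to argue by induction on $n$, realising $K(R;f_1,\ldots,f_n)$ as a mapping cone of multiplication by $f_n$ on $K(R;f_1,\ldots,f_{n-1})$. Then the long exact sequence of the mapping cone exercise reduces everything to the regularity hypothesis. Write $K_m\coloneqq K(R;f_1,\ldots,f_m)$ and $I_m\coloneqq (f_1,\ldots,f_m)$.

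\emph{Base case $n=1$.} Here $K_1$ is $R\xrightarrow{f_1}R$ in degrees $-1,0$. Its $H^{-1}$ is the annihilator of $f_1$, which vanishes because $f_1$ is a nonzerodivisor. Its $H^0$ is $R/f_1$. Since $K_1$ consists of free modules, it is a projective resolution of $R/(f_1)$.

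\emph{Inductive step.} For any complex $X$ of $R$-modules, unwinding the definition of the tensor product gives
$$(X\otimes_R K(R;f))^i \cong X^{i+1}\oplus X^{i}.$$
The differential is upper triangular, with diagonal entries $\pm d_X$ and off-diagonal entry $\pm f$. Hence $X\otimes_R K(R;f)$ is isomorphic to $\mathrm{cone}(f\cdot: X\to X)$, possibly after twisting some components by signs. I expect this sign bookkeeping to be the only fiddly point; since only cohomology matters, any consistent sign choice will do. Applying this with $X=K_{n-1}$ and $f=f_n$, the mapping cone exercise gives a long exact sequence
$$\cdots\to H^i(K_{n-1})\xrightarrow{f_n}H^i(K_{n-1})\to H^i(K_n)\to H^{i+1}(K_{n-1})\xrightarrow{f_n}H^{i+1}(K_{n-1})\to\cdots$$
By induction, using that $f_1,\ldots,f_{n-1}$ is again a regular sequence, $H(K_{n-1})$ is $R/I_{n-1}$ concentrated in degree $0$. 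We read off the cohomology of $K_n$ degree by degree:
\begin{itemize}
\item $H^i(K_n)=0$ for $i\neq 0,-1$, since both neighbouring terms vanish.
\item $H^{-1}(K_n)\cong\ker(f_n\cdot : R/I_{n-1}\to R/I_{n-1})$, which is zero because $f_n$ is a nonzerodivisor on $R/I_{n-1}$.
\item $H^0(K_n)\cong \mathrm{coker}(f_n\cdot)=R/I_n$.
\end{itemize}

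Finally, $K_n$ is a strictly bounded complex of finitely generated free modules (by \Cref{KosExer}, its terms are $\wedge^jR^{\oplus n}$). Its augmentation $K_n^0=R\to R/I_n$ induces the identification of $H^0$ found above. So by the characterisation of projective resolutions via cohomology, $K_n$ is a resolution of $R/(f_1,\ldots,f_n)$.
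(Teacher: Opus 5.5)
The paper states this theorem but explicitly omits its proof, so there is no argument in the text to compare yours with. Your proof is correct and is the standard one. You induct on $n$ via $K_n\cong \mathrm{cone}(f_n\cdot\colon K_{n-1}\to K_{n-1})$, and the long exact sequence gives $H^{-1}(K_n)\cong\ker(f_n\cdot)$ on $R/I_{n-1}$ and $H^0(K_n)\cong R/I_n$; this is exactly where regularity enters. You also correctly adjoin $f_n$ (rather than $f_1$) at the inductive step, so the hypothesis applies in the order it is given. Nothing in your argument uses that $R$ is noetherian or local.

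The one point you left vague, the signs, is harmless and can be made precise. Write $e_{-1},e_0$ for the basis of $K(R;f)$ in degrees $-1,0$, and take $x\in X^{i+1}$, $y\in X^i$. The Koszul sign rule gives the differential $(x,y)\mapsto (dx,\,(-1)^{i+1}fx+dy)$ on $(X\otimes_R K(R;f))^i\cong X^{i+1}\oplus X^i$. The degreewise maps $(x,y)\mapsto ((-1)^{i+1}x,\,y)$ then form an isomorphism of complexes onto the cone with differential $(x,y)\mapsto(-dx,\,fx+dy)$.

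Alternatively, you can avoid identifying the cone at all. $X\otimes_R Re_0\cong X$ is a subcomplex, and the quotient has cohomology $H^{i+1}(X)$ in degree $i$. The connecting homomorphism of the resulting short exact sequence is multiplication by $(-1)^{i+1}f$. Either way, the sign does not change the kernel or cokernel of multiplication by $f_n$ on $R/I_{n-1}$, which is all you use.
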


\begin{proof}[Proof of \Cref{RegSeqProp}]
    The Koszul resolution of $R/(f_1,\ldots, f_n)$ is a length $n$ projective resolution, so we see that $\mathbf{pd}_R(R/(x_1,\ldots, x_n)) \leq n$. It will suffice to prove that if $M$ is an $R$-module and $x\in \mathfrak{m}$ a nonzerodivisor on $M$ then we have an inequality $\mathbf{pd}(M/x)\geq 1+ \mathbf{pd}(M)$; we then obtain the desired inequality $\mathbf{pd}_R(R/(x_1,\ldots, x_n)) \geq n$ by induction. To prove this statement, first observe that since $x$ is not a zerodivisor on $M$ the sequence $M\xrightarrow{x}M\to M/x$ is exact. If $d$ denotes the projective dimension of $M$, then we can find another $R$-module $N$ such that $\mathrm{Ext}^d(M,N)$ is nonzero. Consider the associated long exact sequence $$\cdots\to \mathrm{Ext}^d(M,N)\xrightarrow{x}\mathrm{Ext}^d(M,N)\to \mathrm{Ext}^{d+1}(M/x,N)\to 0$$Nakayama's Lemma tells us that multiplication by $x$ on $\mathrm{Ext}^d(M,N)$ is not surjective, and so its cokernel $\mathrm{Ext}^{d+1}(M/x,N)$ does not vanish. Hence we have an inequality $\mathbf{pd}(M/x)> d$, as required.
\end{proof}
        
 	\begin{proof}[Sketch proof of Auslander--Buchsbaum--Serre]
    Let $\mathfrak{m}$ be the maximal ideal of $R$ and $k=R/\mathfrak{m}$ the residue field. The proof relies on the key equality $$\mathbf{gldim}(R)=\mathbf{pd}_R(k)$$
   which can be proved via an argument with Tor-dimension. We deduce that (2) and (3) are equivalent: we have already observed one direction of the proof, and the other follows since if $D_\mathrm{sg}(R)$ vanishes then certainly $k$ has finite projective dimension.

   Assume now that (1) holds, i.e. that $R$ is a regular local ring. Take a minimal set of generators $x_1,\ldots, x_d$ for the maximal ideal $\mathfrak{m}$. Because $R$ is regular, we have $d=\dim(R)$ by hypothesis. The $x_i$ in fact form a regular sequence on $R$, so by \Cref{RegSeqProp} we have $d=\mathbf{pd}_R(k)$. By the key equality, we see that both (2) and the statement about Krull dimension hold.
   
   We are left to show that (2) implies (1). This is the hard part of the proof; we omit the argument which, roughly, is an induction on $\mathbf{gldim}(R)$.
 		\end{proof}

        \begin{cor}
            If $R$ is a commutative noetherian regular local ring and $\mathfrak{p}$ is a prime ideal of $R$, then the localisation $R_\mathfrak{p}$ is also regular local.
        \end{cor}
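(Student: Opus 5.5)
By the Auslander--Buchsbaum--Serre theorem it suffices to show that $R_\mathfrak{p}$ has finite global dimension. Note first that $R_\mathfrak{p}$ is again commutative, noetherian and local, with maximal ideal $\mathfrak{p}R_\mathfrak{p}$ and residue field $\kappa(\mathfrak{p})=R_\mathfrak{p}/\mathfrak{p}R_\mathfrak{p}$, so the theorem applies to it. The implication $(1)\Rightarrow(2)$ for $R$ gives $\mathbf{gldim}(R)=\dim(R)=:d<\infty$. We then show $\mathbf{gldim}(R_\mathfrak{p})\leq d$, and the implication $(2)\Rightarrow(1)$ for $R_\mathfrak{p}$ finishes the proof. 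The hard direction of ABS is what converts this homological bound back into the geometric statement, so we get to use it as a black box.

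To bound the global dimension of $R_\mathfrak{p}$, take a finitely generated $R_\mathfrak{p}$-module $N$. Choose generators $n_1,\ldots,n_r$ and let $M\subseteq N$ be the $R$-submodule they generate. Then $M$ is a finitely generated $R$-module with $M_\mathfrak{p}\cong N$: localisation is exact, so $M_\mathfrak{p}\to N_\mathfrak{p}=N$ is injective, and it is surjective since it hits the generators. Since $\mathbf{pd}_R(M)\leq d$, choose a projective resolution $0\to P_d\to\cdots\to P_0\to M\to 0$ by finitely generated projectives, which is possible because $R$ is noetherian. Applying the exact functor $-\otimes_R R_\mathfrak{p}$ gives an exact complex $0\to (P_d)_\mathfrak{p}\to\cdots\to(P_0)_\mathfrak{p}\to N\to 0$. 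Each $(P_i)_\mathfrak{p}$ is projective over $R_\mathfrak{p}$, because localisation preserves direct summands of free modules. Hence $\mathbf{pd}_{R_\mathfrak{p}}(N)\leq d$, and so $\mathbf{gldim}(R_\mathfrak{p})\leq d<\infty$.

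All the genuine difficulty sits in the omitted implication $(2)\Rightarrow(1)$ of ABS. That is why the corollary is historically significant: it is hard to prove directly from the definition $\dim=\dim_k(\mathfrak{m}/\mathfrak{m}^2)$, because there is no obvious way to relate a minimal generating set of $\mathfrak{m}$ to one of $\mathfrak{p}R_\mathfrak{p}$. The only step needing care here is the exactness of localisation, namely that $R_\mathfrak{p}$ is flat over $R$. This is standard, and I would simply cite it, or sketch it via the description of $M_\mathfrak{p}$ as fractions $m/s$ with $s\notin\mathfrak{p}$. As a byproduct we also obtain $\dim R_\mathfrak{p}=\mathbf{gldim}(R_\mathfrak{p})\leq \dim R$, using the final clause of the ABS theorem.
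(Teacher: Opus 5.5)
Your proof is correct and is the paper's argument: localising finite projective resolutions gives $\mathbf{gldim}(R_\mathfrak{p})\leq \mathbf{gldim}(R)<\infty$, and then the hard direction of Auslander--Buchsbaum--Serre applied to $R_\mathfrak{p}$ gives regularity. You have also filled in two details the paper's one-line proof leaves implicit, namely that every finitely generated $R_\mathfrak{p}$-module is the localisation of a finitely generated $R$-module, and that localisation is flat.
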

        \begin{proof}
            Resolutions localise so we have $\mathbf{gldim}(R_\mathfrak{p})\leq \mathbf{gldim}(R)<\infty$.
        \end{proof}

    \section{Commutative rings II}

Now we move to the global setting.
 	
 	 	\begin{thm}[``Global Auslander--Buchsbaum--Serre'']
 		Let $R$ be a commutative noetherian ring. The following are equivalent:
 		\begin{enumerate}
 			\item The localisation $R_\mathfrak{m}$ is regular for every $\mathfrak{m}\in \mathrm{MaxSpec}(R)$.
            \item The localisation $R_\mathfrak{p}$ is regular for every $\mathfrak{p}\in \mathrm{Spec}(R)$.
 			\item Every finitely generated $R$-module has finite projective dimension.
            \item $D_\mathrm{sg}(R)$ vanishes.
 		\end{enumerate}
 		Moreover, if any of the above hold, the global dimension of $R$ is equal to its Krull dimension.
 	\end{thm}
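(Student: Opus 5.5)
The plan is to prove the cycle of implications $(2)\Rightarrow(1)\Rightarrow(3)\Leftrightarrow(4)$ and $(3)\Rightarrow(2)$, reducing everything to the local Auslander--Buchsbaum--Serre theorem together with the fact that projective resolutions localise.

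\textbf{Easy implications.}
\begin{itemize}
\item $(2)\Rightarrow(1)$ is immediate, since maximal ideals are prime.
\item $(3)\Rightarrow(4)$ is the argument of the lemma on finite global dimension. Given a bounded complex of finitely generated modules, resolve each term by a bounded complex of finitely generated projectives, which is possible by (3). Lift the differentials and totalise; the totalisation is perfect. Note that no uniform bound on projective dimensions is needed, since only finitely many terms occur.
\item $(4)\Rightarrow(3)$: if $D_\mathrm{sg}(R)=0$, every finitely generated module $M$ lies in $\per(R)$. By the earlier exercise, a finitely generated module is perfect exactly when it has finite projective dimension.
\item $(3)\Rightarrow(2)$: let $\mathfrak{p}$ be prime. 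The residue field $k(\mathfrak{p})=(R/\mathfrak{p})_\mathfrak{p}$ is the localisation of the finitely generated module $R/\mathfrak{p}$. Localisation is exact and preserves finitely generated projectives, so a finite resolution of $R/\mathfrak{p}$ localises to a finite resolution of $k(\mathfrak{p})$ over $R_\mathfrak{p}$. By the key equality $\mathbf{gldim}(R_\mathfrak{p})=\mathbf{pd}(k(\mathfrak{p}))$ from the proof of the local theorem, $R_\mathfrak{p}$ has finite global dimension, hence is regular by local ABS.
\end{itemize}

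\textbf{Main step: $(1)\Rightarrow(3)$.} Let $M$ be finitely generated and take a resolution $P_\bullet\to M$ by finitely generated projectives. Let $K_n$ be the $n$th syzygy, which is finitely generated since $R$ is noetherian. Then $\mathbf{pd}(M)\le n$ if and only if $K_n$ is projective. For each maximal ideal $\mathfrak{m}$, $(P_\bullet)_\mathfrak{m}$ resolves $M_\mathfrak{m}$, and by (1) and local ABS $\mathbf{pd}_{R_\mathfrak{m}}(M_\mathfrak{m})\le \dim R_\mathfrak{m}$. So $(K_n)_\mathfrak{m}$ is projective, hence free, once $n\ge \dim R_\mathfrak{m}$. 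A finitely presented module that is locally projective at every maximal ideal is projective, because $\ext^1(K_n,-)$ commutes with localisation for finitely presented modules.

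\textbf{Expected obstacle.} The difficulty is that the local bounds $\dim R_\mathfrak{m}$ need not be uniform, since $R$ may have infinite Krull dimension (as in Nagata's example). To handle this for a single $M$, I would use that the locus where $(K_n)_\mathfrak{p}$ is free is open, as $K_n$ is finitely presented. For each $\mathfrak{m}$ choose $n(\mathfrak{m})$ and an open neighbourhood $U_\mathfrak{m}$ on which $\mathbf{pd}(M_\mathfrak{p})\le n(\mathfrak{m})$. This works because once a syzygy is projective at $\mathfrak{m}$, it is free on a neighbourhood $D(f)$. Quasi-compactness of $\spec R$ then gives a finite subcover and a uniform bound $n=\max n(\mathfrak{m})$, so $\mathbf{pd}(M)<\infty$. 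This proves (3).

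\textbf{Final statement.} Assume the equivalent conditions hold and $\dim R<\infty$. Then $\mathbf{pd}(M)=\sup_\mathfrak{m}\mathbf{pd}(M_\mathfrak{m})\le\sup_\mathfrak{m}\dim R_\mathfrak{m}=\dim R$, using the local equality $\mathbf{gldim}=\dim$. Conversely, choose $\mathfrak{m}$ with $\dim R_\mathfrak{m}=\dim R$. Then $\mathbf{pd}_R(R/\mathfrak{m})\ge \mathbf{pd}_{R_\mathfrak{m}}(k)=\dim R_\mathfrak{m}$, which gives equality. If $\dim R=\infty$, the same lower bound shows $\mathbf{gldim}(R)=\infty$ as well. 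The statement about global dimension is therefore read as equality in $\N\cup\{\infty\}$, which is consistent with (3) holding module-by-module without a uniform bound.
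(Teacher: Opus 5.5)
Your proposal is correct and follows essentially the same route as the paper. Everything is reduced to the local Auslander--Buchsbaum--Serre theorem by localising resolutions; your openness-of-the-free-locus and quasi-compactness argument for $(1)\Rightarrow(3)$ is the ``compactness argument'' the paper invokes to get $\mathbf{pd}_R(M)=\mathbf{pd}_{R_\mathfrak{m}}(M_\mathfrak{m})$ for some maximal $\mathfrak{m}$, your dimension statement is the paper's chain $\mathbf{gldim}(R)=\sup_\mathfrak{m}\mathbf{gldim}(R_\mathfrak{m})=\sup_\mathfrak{m}\dim(R_\mathfrak{m})=\dim(R)$, and your $(3)\Rightarrow(2)$ via $k(\mathfrak{p})=(R/\mathfrak{p})_\mathfrak{p}$ and the key equality is a more explicit version of the paper's $(4)\Rightarrow(2)$.
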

    If $R$ satisfies any of the above equivalent conditions, we call $R$ \textbf{regular}.
 	\begin{proof}[Sketch proof of global ABS] The equivalence of (3) and (4) is clear. Since resolutions localise, if (4) holds then every  $D_\mathrm{sg}(R_\mathfrak{p})$ also vanishes, and hence (2) holds by ABS. Clearly (2) implies (1) so we only need to show that (1) implies (3). One first proves, by a compactness argument, that for every finitely generated $R$-module $M$ there exists a maximal ideal $\mathfrak{m}$ of $R$ such that $\mathbf{pd}_R(M) = \mathbf{pd}_{R_\mathfrak{m}}(M_\mathfrak{m})$. Hence if each $R_\mathfrak{m}$ is regular, then (3) holds by ABS again. The statement about Krull dimension follows from the equalities $$\mathbf{gldim}(R) = \sup_\mathfrak{m}\mathbf{gldim}(R_\mathfrak{m}) = \sup_\mathfrak{m}\dim(R_\mathfrak{m})=\dim(R)$$where in the second equality we are using ABS for one final time.
 		\end{proof}

 	Note that we have not proved that a commutative noetherian regular ring must have finite global dimension. In fact this is false! Nagata gave an example of a commutative noetherian regular ring $R$ with infinite Krull dimension (and hence, by global ABS, global dimension). Each localisation of $R$ must have finite - but arbitrarily large - global dimension. Although $D_\mathrm{sg}(R)$ vanishes, it does not vanish in a `uniform' way, in the sense that one cannot uniformly bound the projective dimension of all finitely generated modules.
 	
 	\begin{ex}[Nagata \cite{nagata}]
 		Let $I_n\subseteq \N$ denote the interval $[2^{n-1}, 2^n-1]$, which has length $2^{n-1}$. Let $A=\mathbb{C}[x_1,x_2,\cdots] $ be the infinite-dimensional polynomial ring and for each $n\in \N$ let $\mathfrak{p}_n$ denote the ideal generated by $\{x_i: i\in I_n\}$. Put $S\coloneqq A/\cup_n\mathfrak{p_n}$ and put $R\coloneqq A_S$ the localisation. Since $\mathfrak{p_n}$ has height $2^{n-1}$ in $A$ it follows that $R$ has infinite Krull dimension. To prove that it is regular, first use the Prime Avoidance Lemma to show that every maximal ideal of $R$ is of the form $\mathfrak{p}_nR$, so that we need to check that each $A_{\mathfrak{p}_n}$ is regular; this holds since it is a localisation of a regular ring. To prove that it is noetherian boils down to checking that each $A_{\mathfrak{p}_n}$ is noetherian; again this holds since it can be written as a localisation of a noetherian ring.
 		\end{ex}

        \begin{rmk}
            Suppose that $k$ is a field and $R$ is a finite type $k$-algebra. If $k$ is perfect (for example, characteristic zero, algebraically closed, or finite) then $R$ is regular if and only if $R$ is smooth over $k$. Without the perfectness assumption this is no longer true; for example, the variety $x^2-y^2=t$ defined over the field $\Z/2(t)$ is regular but not smooth.
        \end{rmk}

\chapter{Buchweitz's stable category}\label[chapter]{Gorchapt}

In this chapter we give an alternate description of the singularity category, which works for Gorenstein rings. The original (and very good) reference here is \cite{buchweitz}; see also the new and updated version \cite{buchnew}. If you are a representation theorist then see also \cite{beligiannis} for some significant generalisations.

\section{Gorenstein rings and depth}
Recall from \Cref{injsect} the notion of an injective module. Dually to projective dimension, one can define the \textbf{injective dimension} of a module $M$ as the minimal length of an injective resolution of $M$. We denote this number by $ \mathbf{id}_A(M)$ or just $\mathbf{id}(M)$ when relevant.
\begin{rmk}
    It is a nontrivial fact that $\mathbf{gldim}(A) = \sup_M \mathbf{id}_A(M)$, where the supremum is taken over \textbf{all} $A$-modules $M$.
\end{rmk}

\begin{defn}
    A two-sided noetherian ring $A$ is \textbf{Gorenstein} or \textbf{Iwanaga--Gorenstein} if the $A$-module $A$ has finite injective dimension over $A$, as both a left and a right module. 
\end{defn}
\begin{rmk}
    If $A$ is Gorenstein, then a theorem of Zaks states that the right injective dimension of $A$ must agree with the left injective dimension of $A$. In general, the injective dimension may be infinite on one side and finite on the other. See \cite{zaks} for further discussion.
\end{rmk}

In the remainder of this section, our goal is to give an alternate characterisation of commutative Gorenstein rings in terms of an invariant called depth. This will also be of use later when discussing matrix factorisations. For the rest of this section, let $(R,\mathfrak{m},k)$ be a commutative noetherian local ring. The \textbf{depth} of an $R$-module $M$ is the smallest number $i$ for which $\ext^i(k,M)$ is nonzero. Note that $\mathrm{depth}(M)$ may be infinite - clearly the zero module has $\mathrm{depth}(0)=\infty$.

\begin{ex}
    A module $M$ is depth zero precisely when there exists a nonzero map $k\to M$. This is equivalent to the existence of $x\in M$ with $x\mathfrak{m}=0$.
\end{ex}

\begin{thm}[Rees]
    If $M$ is finitely generated, then the depth of $M$ is the length of a maximal $M$-regular sequence $x_1,\ldots x_n$ with all $x_i\in \mathfrak{m}$.
\end{thm}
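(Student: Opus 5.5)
We argue by induction on $\mathrm{depth}(M)$. The aim is to show that every maximal $M$-regular sequence in $\mathfrak{m}$ has length exactly $\mathrm{depth}(M)$. In particular all maximal regular sequences then have the same length, so the statement is well posed. Throughout, $M\neq 0$ is finitely generated. The zero module is degenerate, with infinite depth and arbitrarily long ``regular sequences'', and we set it aside.

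The base case is a standard lemma: $\mathrm{depth}(M)=0$ if and only if every element of $\mathfrak{m}$ is a zerodivisor on $M$. One direction is immediate. If some $x\in M$ has $x\mathfrak{m}=0$ with $x\neq 0$, as in the Example above, then every element of $\mathfrak{m}$ kills $x$. For the converse, suppose every element of $\mathfrak{m}$ is a zerodivisor on $M$. Then $\mathfrak{m}$ is contained in the union of the associated primes of $M$. Since $M$ is finitely generated over a noetherian ring, there are only finitely many associated primes. Prime avoidance then gives $\mathfrak{m}\subseteq \mathfrak{p}$ for some associated prime $\mathfrak{p}$, so $\mathfrak{m}=\mathfrak{p}=\mathbf{ann}(x)$ for some $x\in M$. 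Hence $k\cong xR\into M$ and $\hom(k,M)\neq 0$. This is the main nonformal input, and I would rely on the standard facts about associated primes rather than reprove them.

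For the inductive step, let $x\in\mathfrak{m}$ be a nonzerodivisor on $M$. The sequence $0\to M\xrightarrow{x}M\to M/xM\to 0$ is exact, and applying $\hom(k,-)$ gives a long exact sequence in $\ext^i(k,-)$. The maps induced by $x$ on $\ext^i(k,M)$ are zero, because $x$ annihilates $k$ and the $R$-action on Ext can be computed through the first variable. So the long exact sequence breaks into short exact sequences
$$0\to \ext^i(k,M)\to \ext^i(k,M/xM)\to \ext^{i+1}(k,M)\to 0.$$
From these we read off $\mathrm{depth}(M/xM)=\mathrm{depth}(M)-1$. Also $M/xM\neq 0$ by Nakayama's Lemma, since $x\in\mathfrak{m}$.

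Now take any maximal $M$-regular sequence $x_1,\ldots,x_n$ in $\mathfrak{m}$. If $n=0$, then every element of $\mathfrak{m}$ is a zerodivisor on $M$, so the base case gives $\mathrm{depth}(M)=0$. If $n\geq 1$, then $x_2,\ldots,x_n$ is a maximal $M/x_1M$-regular sequence in $\mathfrak{m}$. By the inductive step $\mathrm{depth}(M/x_1M)=\mathrm{depth}(M)-1$, and by induction this depth equals $n-1$. Hence $\mathrm{depth}(M)=n$.

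To close the induction we also need $\mathrm{depth}(M)<\infty$, which holds because $\mathrm{depth}(M)$ is bounded by the length of any maximal regular sequence. Any $M$-regular sequence in $\mathfrak{m}$ is finite, since the chain of submodules $(x_1,\ldots,x_i)M$ is strictly increasing and $M$ is noetherian. Every sequence extends to a maximal one, so a maximal sequence exists and the argument above applies to it. The base case, relating zerodivisors to associated primes, is the step I expect to be the main obstacle.
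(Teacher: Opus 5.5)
Your proof is correct and follows the paper's route. The key step is $\mathrm{depth}(M/xM)=\mathrm{depth}(M)-1$ for a nonzerodivisor $x\in\mathfrak{m}$, which you read off from the $\ext^i(k,-)$ long exact sequence, and you also supply the base case via associated primes and prime avoidance, which the paper leaves implicit. One small tidy-up: as written, finiteness of $\mathrm{depth}(M)$ is justified by the very induction it is meant to close, which is circular. Fix this by inducting on the length $n$ of the given maximal sequence rather than on $\mathrm{depth}(M)$; your short exact sequences already show that $\mathrm{depth}(M)=\infty$ forces $\mathrm{depth}(M/xM)=\infty$, so finiteness then comes out of the argument.
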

\begin{proof}[Proof idea]
This is by induction on the depth of $M$ - one shows that if $x\in \mathfrak{m}$ is a non-zerodivisor then $\mathrm{depth}(M/x) = \mathrm{depth}(M)-1$. This computation is similar to the proof of \Cref{RegSeqProp}.
\end{proof}
\begin{cor}
    If $M\neq 0$ is finitely generated, there is an inequality $$\mathrm{depth}(M)\leq \dim(M).$$
\end{cor}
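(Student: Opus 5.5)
We argue by induction on $d=\mathrm{depth}(M)$, using Rees' theorem to produce a maximal $M$-regular sequence $x_1,\ldots,x_d\in\mathfrak{m}$. The reduction step is the claim that if $x\in\mathfrak{m}$ is a non-zerodivisor on a nonzero finitely generated module $M$, then
$$\dim(M/xM)\leq \dim(M)-1.$$
Granting this, set $M_i\coloneqq M/(x_1,\ldots,x_i)M$. Each $M_i$ is nonzero by Nakayama's Lemma, since the $x_j$ lie in $\mathfrak{m}$ and $M$ is finitely generated. Moreover $x_{i+1}$ is a non-zerodivisor on $M_i$, so applying the claim repeatedly gives
$$0\leq \dim(M_d)\leq \dim(M)-d.$$
Here $\dim(M_d)\geq 0$ because $M_d\neq 0$. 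Hence $d\leq \dim(M)$. The case of infinite depth does not occur: a nonzero finitely generated module over a local ring admits only finite regular sequences, since $\dim(M)$ is finite, so by Rees the depth is finite.

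To prove the claim, first note that $\mathbf{ann}(M/xM)\supseteq \mathbf{ann}(M)+(x)$. Hence $\dim(M/xM)\leq \dim(\bar R/x\bar R)$, where $\bar R\coloneqq R/\mathbf{ann}(M)$ and $\bar x$ is the image of $x$. So it suffices to show two things: that $\bar x$ lies in no minimal prime of $\bar R$, and that quotienting a local ring by an element outside all minimal primes drops the dimension by at least one.

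For the first point, the minimal primes of $\bar R$ are the minimal primes of $\mathrm{Supp}(M)$, and these are associated primes of $M$. Since $x$ is a non-zerodivisor on $M$, it avoids every associated prime of $M$, hence every minimal prime of $\bar R$.

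For the second point, take any chain of primes $\mathfrak{q}_0\subsetneq\cdots\subsetneq\mathfrak{q}_r$ in $\bar R$ containing $\bar x$. Then $\mathfrak{q}_0$ is not minimal in $\bar R$, so the chain extends downward by at least one prime. This gives $\dim(\bar R/\bar x)\leq \dim(\bar R)-1=\dim(M)-1$.

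The main obstacle is the commutative-algebra input in the first point: that minimal primes of $\mathrm{Supp}(M)$ are associated primes of $M$, and that the set of zerodivisors on $M$ is the union of its associated primes. Both are standard facts for finitely generated modules over noetherian rings, and I would quote them from the chapter references rather than reprove them.
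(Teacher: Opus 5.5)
Your proof is correct and takes the same route as the paper: use Rees' theorem to realise the depth as the length of a maximal $M$-regular sequence in $\mathfrak{m}$, cut down by one element at a time, and use that a nonzero module has dimension at least zero. The differences are small. The paper simply asserts the equality $\dim(M/xM)=\dim(M)-1$, whereas you prove only the inequality $\leq$ that is actually needed, via the fact that minimal primes of $\mathrm{Supp}(M)$ are associated primes. You also make explicit, using Nakayama, that each quotient $M_i$ is nonzero.
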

\begin{proof}
    If $x$ is not a zerodivisor on $M$, we have $\dim(M/x) = \dim(M)-1$. An induction now shows that $\dim(M/(x_1,\ldots, x_n)) = \dim(M) - n$ for any $M$-regular sequence. The left hand side is at least zero, so we have $n\leq \dim(M)$. 
\end{proof}
Say that a module $M$ is \textbf{Cohen--Macaulay} (or just \textbf{CM}) if it satisfies  $\mathrm{depth}(M)=\dim(M)$. Say that $M$ is \textbf{maximal Cohen--Macaulay} (or \textbf{MCM}) if $\mathrm{depth}(M) = \dim(R)$. By convention, we also say that $0$ is an MCM module. Say that $R$ is \textbf{Cohen--Macaulay} if the $R$-module $R$ is CM (in which case it is necessarily MCM).
\begin{rmk}
    One can extend the above definitions to non-local rings by saying that $M$ is (M)CM whenever all of its localisations at primes are so.
\end{rmk}
\begin{ex}
    A regular local ring is CM, since in this case $\mathfrak{m}$ is generated by a regular sequence of length $\dim(R)$, which implies that $\mathrm{depth}(R)\geq \dim(R)$.
\end{ex}
\begin{ex}
    A commutative Artinian ring is CM, since it has Krull dimension zero. Every module is MCM.
\end{ex}
\begin{thm}[Auslander--Buchsbaum formula]
    If $M$ is a nonzero module of finite projective dimension, then there is an equality
    $$\mathrm{depth}(M)+ \mathbf{pd}_R(M) = \mathrm{depth}(R).$$
\end{thm}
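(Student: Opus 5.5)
We prove the Auslander--Buchsbaum formula by induction on $\mathbf{pd}_R(M)$, which is finite by hypothesis. Throughout, $(R,\mathfrak{m},k)$ is local noetherian and $M\neq 0$ is finitely generated. We freely use two facts. The first is Rees' theorem: depth is the length of a maximal regular sequence in $\mathfrak{m}$. The second is the $\ext$ characterisation of depth: $\mathrm{depth}(N)$ is the least $i$ with $\ext^i(k,N)\neq 0$. We also use the standard long exact sequences in $\ext^*(k,-)$ coming from short exact sequences, which follow from \Cref{TriCatLES}.

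\emph{Base case, $\mathbf{pd}(M)=0$.} Here $M$ is projective, and over a local ring a finitely generated projective module is free, so $M\cong R^{\oplus r}$ with $r\geq 1$. Since $\ext^i(k,-)$ commutes with finite direct sums, $\mathrm{depth}(M)=\mathrm{depth}(R)$, which is the formula.

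\emph{Case $\mathbf{pd}(M)=1$.} Take a minimal free resolution $0\to R^m\xrightarrow{\phi}R^n\to M\to 0$. Minimality means $\phi$ has entries in $\mathfrak{m}$, and $m\geq 1$ since $M$ is not free. Write $d=\mathrm{depth}(R)$ and apply $\ext^*(k,-)$.
\begin{itemize}
\item For $i<d-1$, the groups $\ext^i(k,R^n)$ and $\ext^{i+1}(k,R^m)$ vanish, hence so does $\ext^i(k,M)$.
\item In degree $d-1$ we get an exact sequence $0\to\ext^{d-1}(k,M)\to\ext^d(k,R^m)\xrightarrow{\phi_*}\ext^d(k,R^n)$.
\item The map $\phi_*$ is zero, because $\ext^*(k,-)$ is killed by $\mathfrak{m}$ (it is an $R$-linear functor in $k$) and $\phi$ has entries in $\mathfrak{m}$.
\item Therefore $\ext^{d-1}(k,M)\cong \ext^d(k,R)^{\oplus m}\neq 0$.
\end{itemize}
This gives $\mathrm{depth}(M)=d-1$. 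If $d=0$, the argument instead yields a contradiction. In that case $\phi_*$ is injective on $\hom(k,-)$, while the map $\hom(k,R)^m\to\hom(k,R)^n$ is zero and $\hom(k,R)\neq 0$. So a module of projective dimension $1$ forces $\mathrm{depth}(R)\geq 1$, which is consistent with the formula.

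\emph{Inductive step, $\mathbf{pd}(M)\geq 2$.} Take a short exact sequence $0\to K\to F\to M\to 0$ with $F$ free. Then $\mathbf{pd}(K)=\mathbf{pd}(M)-1\geq 1$, so by induction $\mathrm{depth}(K)=\mathrm{depth}(R)-\mathbf{pd}(M)+1$; call this number $e$. Note $e<\mathrm{depth}(R)=\mathrm{depth}(F)$, using the base case for $F$. The long exact sequence
$$\ext^{i}(k,F)\to\ext^i(k,M)\to\ext^{i+1}(k,K)\to\ext^{i+1}(k,F)$$
then shows $\ext^i(k,M)\cong\ext^{i+1}(k,K)$ for all $i+1<\mathrm{depth}(R)$. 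Hence $\mathrm{depth}(M)=e-1$, which is the formula. For this to hold we need $e-1<\mathrm{depth}(R)-1$, i.e. $e<\mathrm{depth}(R)$, which we already have.

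\emph{Main obstacle.} The delicate point is the degree-bookkeeping in the $\mathbf{pd}=1$ case. There the isomorphism range of the long exact sequence just barely fails, so one genuinely needs minimality of the resolution together with the observation that $\mathfrak{m}$ annihilates $\ext(k,-)$. Packaging the argument this way, rather than reducing modulo a regular element, avoids having to choose an element that is simultaneously regular on $R$ and on $M$ (which would require prime avoidance).
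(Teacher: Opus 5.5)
Your proof is correct. It has the same skeleton as the paper's sketch (induction on $\mathbf{pd}_R(M)$), but uses a different tool at the key step.

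The paper's sketch passes through a characterisation of depth via Koszul homology of a generating set of $\mathfrak{m}$. You instead work directly with the $\ext_R^*(k,-)$ definition of depth adopted in the notes. You observe that the dimension-shifting long exact sequence, together with the induction hypothesis, handles $\mathbf{pd}_R(M)\geq 2$. You then isolate $\mathbf{pd}_R(M)=1$ as the one case where the isomorphism range falls short. There you supply the missing input from minimality of the resolution and the fact that $\mathfrak{m}$ annihilates $\ext_R^*(k,-)$.

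Your route is more self-contained relative to these notes. It needs no comparison between the $\ext$ and Koszul descriptions of depth, and, as you say, no prime avoidance to find an element regular on both $R$ and $M$. The Koszul route packages depth into a finite complex of free modules, which fits the Koszul-complex arguments elsewhere in the notes (e.g.\ \Cref{RegSeqProp}). It does, however, take the Koszul characterisation of depth as an extra input that must itself be proved.

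Two details to state explicitly in your inductive step. First, concluding $\mathrm{depth}(M)=e-1$ requires $e\geq 1$. This is the $i=-1$ instance of your isomorphism: $\hom_R(k,K)\into\hom_R(k,F)=0$, since $\mathrm{depth}(F)=\mathrm{depth}(R)>e\geq 0$. So it is covered, but only implicitly. Second, take $F$ finitely generated, so that $K$ is a nonzero finitely generated module and the induction hypothesis applies to it.
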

\begin{proof}[Proof idea]
    Induct on the projective dimension of $M$. At the induction step one uses a characterisation of depth in terms of the Koszul complex for $\mathfrak{m}$.
\end{proof}
\begin{cor}
    Let $R$ be a Cohen--Macaulay ring and $M$ an MCM $R$-module. Then $M$ is either projective or has infinite projective dimension.
\end{cor}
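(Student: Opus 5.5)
The plan is to apply the Auslander--Buchsbaum formula directly, working locally with $(R,\mathfrak{m},k)$ as in the surrounding discussion. If the ring is not local, we first localise at each prime, since both the MCM property and finiteness of projective dimension localise.

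Suppose $M$ is an MCM module of finite projective dimension. If $M=0$ it is projective, so assume $M\neq 0$. By definition of MCM we have $\mathrm{depth}(M)=\dim(R)$. Since $R$ is Cohen--Macaulay, $\mathrm{depth}(R)=\dim(R)$. The Auslander--Buchsbaum formula then gives
$$\mathbf{pd}_R(M)=\mathrm{depth}(R)-\mathrm{depth}(M)=\dim(R)-\dim(R)=0,$$
so $M$ is projective. Hence the only alternative is $\mathbf{pd}_R(M)=\infty$, which proves the claim.

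In the non-local case the argument goes as follows. If $\mathbf{pd}_R(M)<\infty$, then each localisation $M_\mathfrak{p}$ has finite projective dimension over $R_\mathfrak{p}$. Each $M_\mathfrak{p}$ is MCM over the CM local ring $R_\mathfrak{p}$ by the convention in the remark above. By the local case, each $M_\mathfrak{p}$ is projective, i.e.\ free.

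The main obstacle is this last step, passing from locally free to projective for a finitely generated module. The standard fact for finitely presented modules over a noetherian ring handles it; alternatively, one can note that $\ext^1_R(M,-)$ vanishes because it vanishes after every localisation.

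The local case, where depth is defined, is the essential content; beyond this the proof has no real difficulty.
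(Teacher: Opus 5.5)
Your proof is correct and is the intended argument: the paper gives no separate proof and presents the statement as an immediate consequence of the Auslander--Buchsbaum formula for a local ring, where $\mathrm{depth}(M)=\dim(R)=\mathrm{depth}(R)$ forces $\mathbf{pd}_R(M)=0$ whenever $M\neq 0$ has finite projective dimension. Your extra reduction from the non-local case (localise, then use that a finitely presented, locally free module is projective) is also valid, but it is not needed, since the section already assumes $R$ is local.
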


\begin{ex}Let $R$ be a commutative Artinian ring. Then every $R$-module is either projective or has infinite projective dimension. For example, one can take $R=kG$ for $G$ a finite abelian group and $k$ a field; this fact is well known when $\mathrm{char}(k)$ does not divide the order of $G$, in which case $kG$ is semisimple and hence every module is projective.
\end{ex}

Now we are ready to link CM rings to Gorenstein rings:

\begin{thm}
    Let $R$ be a commutative noetherian local ring with residue field $k$, of Krull dimension $n$. The following are equivalent:
    \begin{enumerate}
        \item $R$ is Gorenstein.
        \item $R$ has injective dimension $n$.
        \item $R$ is CM and $\mathrm{Ext}_R^n(k,R)\simeq k$.
    \end{enumerate}
    \end{thm}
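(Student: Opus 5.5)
The plan is to prove $(2)\Rightarrow(1)\Rightarrow(3)\Rightarrow(2)$, using Bass-type facts about injective dimension over local rings. The implication $(2)\Rightarrow(1)$ is immediate from the definition, since $R$ is commutative and left and right injective dimensions coincide. The real content is a lemma computing injective dimension via the residue field. For a finitely generated module $M$ over $(R,\mathfrak m,k)$ with $\mathbf{id}(M)<\infty$, we have $\mathbf{id}(M)=\sup\{i:\ext^i(k,M)\neq 0\}$, and moreover $\mathbf{id}(M)=\mathrm{depth}(R)$. I would establish this first.

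For the first equality of the lemma:
\begin{itemize}
\item Baer's criterion reduces injective dimension to the vanishing of $\ext^{i}(R/I,M)$ for ideals $I$.
\item Filtering $R/I$ by modules $R/\mathfrak p$ reduces further to showing that $\ext^{i}(R/\mathfrak p,M)=0$ for $i$ large.
\item Choose $x\in\mathfrak m\setminus\mathfrak p$ and use the sequence $0\to R/\mathfrak p\xrightarrow{x}R/\mathfrak p\to R/(\mathfrak p+x)\to 0$.
\item Nakayama, applied to the finitely generated module $\ext^{i}(R/\mathfrak p,M)$, then lets us induct on $\dim R/\mathfrak p$, exactly in the style of the proof of \Cref{RegSeqProp}.
\end{itemize}

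For the equality $\mathbf{id}(M)=\mathrm{depth}(R)$, take a maximal regular sequence $x_1,\dots,x_t$ on $R$, where $t=\mathrm{depth}(R)$ by Rees' theorem. By \Cref{RegSeqProp}, $N=R/(x_1,\dots,x_t)$ has projective dimension $t$, and $N$ has depth $0$. Using the Koszul resolution, $\ext^t(N,M)\cong M/(x_1,\dots,x_t)M\neq 0$ by Nakayama, so $\mathbf{id}(M)\geq t$. For the reverse inequality, let $d=\mathbf{id}(M)$ and choose $\mathfrak p$ maximal among primes with $\ext^d(R/\mathfrak p,M)\neq 0$. If $\mathfrak p\neq\mathfrak m$, the sequence with $x\notin\mathfrak p$ forces multiplication by $x$ on $\ext^d(R/\mathfrak p,M)$ to be surjective, contradicting Nakayama. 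Hence $\ext^d(k,M)\neq0$. Since $N$ has depth $0$, $k$ embeds in $N$; let $C$ be the cokernel. As $\ext^{d+1}(C,M)=0$, the map $\ext^d(N,M)\to\ext^d(k,M)$ is surjective, so $\ext^d(N,M)\neq 0$ and hence $d\leq \mathbf{pd}(N)=t$.

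With the lemma in hand, $(1)\Rightarrow(2),(3)$ runs as follows. If $R$ is Gorenstein, the lemma gives $\mathbf{id}(R)=\mathrm{depth}(R)\le\dim R=n$. To show $R$ is CM, I would use the standard fact that $\ext^{\dim R/\mathfrak p}(k,R)\neq 0$ whenever $\ext^0(R/\mathfrak p,R)\neq0$, for instance taking $\mathfrak p$ a minimal prime of maximal dimension, which is an associated prime. This forces $\mathbf{id}(R)\ge n$, hence $\mathrm{depth}(R)=n$, so $R$ is CM and $\mathbf{id}(R)=n$. Next, cut down by a maximal regular sequence $\underline x$ of length $n$. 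We have $\ext^n_R(k,R)\cong\hom_{\bar R}(k,\bar R)$, where $\bar R=R/\underline x$ is Artinian Gorenstein, meaning self-injective. The socle of an indecomposable injective $E(k)$ is $k$, which gives $\ext^n_R(k,R)\cong k$.

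For $(3)\Rightarrow(1)$, reduce modulo a maximal regular sequence; the CM hypothesis makes this sequence have length $n$. This reduces to the Artinian case, where $\mathrm{soc}(\bar R)\cong k$. An Artinian local ring with simple socle embeds into $E(k)$, and comparing lengths via Matlis duality gives $\bar R\cong E(k)$, so $\bar R$ is injective. The injective dimension of $R$ is then at most $n$, because $\ext^{i}_R(k,R)\cong\ext^{i-n}_{\bar R}(k,\bar R)=0$ for $i>n$. The main obstacle is the step that uses finiteness of injective dimension and the lemma to compare $\mathbf{id}$ with depth and dimension. In particular, it requires the nonvanishing $\ext^{\dim R/\mathfrak p}(k,M)\neq 0$, which would need the same maximal-prime/Nakayama argument iterated along a chain of primes. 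I would prove it first as a separate claim, then deduce everything else.
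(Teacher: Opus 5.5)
Your overall architecture works, and it is the standard way to fill in the proof the paper omits: Bass's formula $\mathbf{id}(M)=\mathrm{depth}(R)$, Rees' lemma to cut down by a maximal regular sequence, and Matlis duality in dimension zero. Your lemma, the implication $(3)\Rightarrow(1)$, and the computation $\ext^n_R(k,R)\cong\mathrm{soc}(\bar R)\cong k$ are all fine.

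\textbf{The gap.} The genuine gap is the step Gorenstein $\Rightarrow$ CM. It rests on your deferred claim, and the proof you sketch for that claim does not work as described. An induction along a chain $\mathfrak p\subsetneq\mathfrak q\subsetneq\cdots\subsetneq\mathfrak m$ needs an intermediate statement in positive degrees. If you run the Nakayama argument over $R$ itself, the natural candidate is ``$\ext^i_R(R/\mathfrak p,M)\neq 0\Rightarrow\ext^{i+\dim R/\mathfrak p}_R(k,M)\neq 0$'', and this is false.

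For a counterexample, take $R$ regular local of dimension $4$ and $R/\mathfrak p$ a two-dimensional domain of depth $1$ (e.g.\ $k\llbracket s^4,s^3t,st^3,t^4\rrbracket$). Auslander--Buchsbaum gives $\mathbf{pd}_R(R/\mathfrak p)=3$, hence $\ext^3_R(R/\mathfrak p,R)\neq 0$. On the other hand $\ext^5_R(k,R)=0$.

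The underlying reason is that for $\mathfrak q\neq\mathfrak m$, the module $R/(\mathfrak p+x)$ has prime-filtration factors $R/\mathfrak q'$ of uncontrolled dimension, possibly $k$ itself, so the degree count collapses. What does propagate is the Bass number $\mu_i(\mathfrak p,M)=\dim_{k(\mathfrak p)}\ext^i_{R_\mathfrak p}(k(\mathfrak p),M_\mathfrak p)$, along a \emph{saturated} chain of length $\dim R/\mathfrak p$. For each step $\mathfrak p\subsetneq\mathfrak q$ you must localize at $\mathfrak q$, where $R_\mathfrak q/(\mathfrak p+x)R_\mathfrak q$ has finite length. Then Nakayama gives $\mu_i(\mathfrak p,M)\neq 0\Rightarrow\mu_{i+1}(\mathfrak q,M)\neq 0$.

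Correspondingly, the starting hypothesis must be $\mu_0(\mathfrak p,M)\neq 0$, i.e.\ $\mathfrak p\in\mathrm{Ass}(M)$. The condition $\hom_R(R/\mathfrak p,M)\neq 0$ that you wrote only says that $\mathfrak p$ lies inside some associated prime.

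\textbf{A simpler fix.} You can avoid Bass numbers entirely using what you have already proved.
\begin{enumerate}
\item If $R$ is Gorenstein, let $t=\mathrm{depth}(R)=\mathbf{id}(R)$, and let $\bar R=R/(x_1,\dots,x_t)$ for a maximal $R$-sequence.
\item Rees' lemma gives $\ext^i_{\bar R}(k,\bar R)\cong\ext^{i+t}_R(k,R)=0$ for $i>0$, so $\bar R$ is self-injective.
\item Your own Nakayama step with $d=0$ then shows $\hom_{\bar R}(\bar R/\mathfrak p,\bar R)=0$ for every non-maximal prime $\mathfrak p$.
\item Hence every associated prime of $\bar R$, in particular every minimal prime, is maximal. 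So $\dim\bar R=0$ and $n=\dim R=t$.
\end{enumerate}

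\textbf{A smaller point.} State the first half of your lemma without the hypothesis $\mathbf{id}(M)<\infty$. In $(3)\Rightarrow(1)$, and in step 2 above, you apply it before knowing the injective dimension is finite. Your argument for that half never uses the hypothesis, so this only needs restating.
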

    We omit the proof, which uses an induction on the dimension and some facts about dualising modules. As an immediate corollary, we see that a commutative Gorenstein ring is CM.

    \begin{ex}
        A commutative noetherian ring $R$ is \textbf{lci} (local complete intersection) if for every prime $p$, the completion $\hat{R}_p$ is of the form $A/(a_1,\ldots,a_r)$ where $A$ is regular complete local and the $x_i$ are a regular sequence. The obvious class of examples of lci rings is given by the \textit{global} complete intersections, i.e. the rings of the form $R=k[x_1,\ldots,x_n]/(f_1,\ldots, f_r)$ where the $f_i$ form a regular sequence (which in this setting is equivalent to $\dim R = n-r$). In particular, hypersurfaces (the case $r=1$) are lci. All lci rings are Gorenstein: to see this, first note that being Gorenstein is a complete local property, so it suffices to check that a complete intersection is Gorenstein. This is a computation with the Koszul complex.
    \end{ex}
    \begin{rmk}
        In general there exist CM rings which are not Gorenstein, and Gorenstein rings which are not lci; examples can be found even in Artinian rings.
    \end{rmk}

We give a useful alternate characterisation of MCM modules:
	\begin{thm}
	    Let $R$ be a commutative Gorenstein local ring and $M$ a finitely generated $R$-module. Then the following are equivalent:
  \begin{enumerate}
      \item $M$ is MCM.
      \item There is a natural quasi-isomorphism $\R\hom_A(M,A)\simeq \hom_A(M,A)$.
      \item $\ext_A^i(M,A)\cong 0$ for $i>0$.
  \end{enumerate}
	\end{thm}
    \begin{proof}[Proof idea]
The equivalence of (2) and (3) is clear, so we focus on the equivalence of (1) and (2). For a local CM ring $R$ with finitely generated module $M$, local duality tells us that $M$ has maximal depth if and only if $\mathrm{Ext}^i(M,\omega)$ vanishes for all $i>0$. Here $\omega$ denotes a dualising module for $R$, which exists since $R$ is CM. (The \textbf{dualising complex} is then $\omega[\dim R]$). A Gorenstein ring is precisely a ring where $R$ is a dualising module for $R.$
    \end{proof}
In the noncommutative setting, we may use the second criterion as a \textit{definition} of what it means to be MCM:
\begin{defn}
     A module $M$ over a Gorenstein ring $A$ is \textbf{maximal Cohen--Macaulay} (or just \textbf{MCM} for short) if there is a natural quasi-isomorphism $\R\hom_A(M,A)\simeq \hom_A(M,A)$.
\end{defn}

        \section{The stable category}
 		\p Suppose from now on that $A$ is a Gorenstein ring. Let $M$ be a finitely generated module. A \textbf{syzygy} of $M$ is a module $\Omega M$ which fits into a short exact sequence of the form $$0 \to \Omega M \to P \to M \to 0$$where $P$ is a finitely generated projective. Observe that one can stitch together the syzygy exact sequences for all $\Omega^iM$ into a projective resolution of $M$.
\begin{exer}
    Show that if $\Omega M$ is a syzygy of $M$ then so is $Q\oplus \Omega M$ for any finitely generated projective module $Q$.
\end{exer} 
        \begin{lem}
            If $M$ is MCM then so is $\Omega M$.
        \end{lem}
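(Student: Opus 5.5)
The plan is to use the definition of MCM given just above: $M$ is MCM if and only if $\ext^i_A(M,A)\cong 0$ for all $i>0$. Equivalently, $\R\hom_A(M,A)\simeq\hom_A(M,A)$, and the two formulations agree because the latter just says the cohomology of $\R\hom_A(M,A)$ is concentrated in degree zero. We then apply the long exact sequence for $\ext_A(-,A)$ to the syzygy sequence $0\to\Omega M\to P\to M\to 0$.

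First, the short exact sequence gives an exact triangle $\Omega M\to P\to M\to$ in $D(A)$: its cone is quasi-isomorphic to the cokernel, via the mapping-cone exercise. Next, apply $\R\hom_A(-,A)$, which is a triangle functor. The contravariant version of \Cref{TriCatLES} (the remark following it) then gives a long exact sequence
$$\cdots\to \ext^i_A(P,A)\to\ext^i_A(\Omega M,A)\to\ext^{i+1}_A(M,A)\to\ext^{i+1}_A(P,A)\to\cdots$$

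Since $P$ is projective, $\ext^i_A(P,A)\cong 0$ for $i>0$; indeed $P$ is its own projective resolution, so $\R\hom_A(P,A)=\hom_A(P,A)$ sits in degree zero. Hence for every $i\geq 1$ we get an isomorphism $\ext^i_A(\Omega M,A)\cong\ext^{i+1}_A(M,A)$. The right-hand side vanishes by the MCM hypothesis on $M$, so $\ext^i_A(\Omega M,A)=0$ for all $i>0$. This is exactly the statement that $\R\hom_A(\Omega M,A)\to\hom_A(\Omega M,A)$ is a quasi-isomorphism, so $\Omega M$ is MCM. The degree-zero part of the sequence is irrelevant here, since only positive degrees matter.

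There is essentially no serious obstacle. The one point requiring care is whether ``natural quasi-isomorphism'' in the definition means more than cohomology vanishing in positive degrees. It does not, provided we read $\R\hom$ via a projective resolution: the natural map $\hom_A(M,A)\to\hom_A(P_\bullet,A)$ induces the isomorphism $H^0=\ext^0=\hom$ of \Cref{exthalfexer}. So the map is a quasi-isomorphism exactly when the higher cohomology vanishes. The argument also never uses that $A$ is Gorenstein; it holds for any noetherian ring.
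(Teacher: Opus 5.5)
Your proposal is correct and follows the paper's proof. Both apply $\R\hom_A(-,A)$ to the triangle $\Omega M\to P\to M\to$ coming from the syzygy sequence. Using that $P$ is projective, the long exact sequence then gives $\ext^i_A(\Omega M,A)\cong\ext^{i+1}_A(M,A)=0$ for $i>0$. You spell out details the paper leaves to the reader, including why vanishing of the higher Ext groups is equivalent to the natural map $\hom_A(M,A)\to\R\hom_A(M,A)$ being a quasi-isomorphism.
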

        \begin{proof}
            The defining short exact sequence yields an exact triangle $$\R\hom_R(M, R) \to \R\hom_R(P,R) \to \R\hom_R(\Omega M,R) \to$$and examining the corresponding long exact sequence, one sees that $\Omega M$ must be MCM.
        \end{proof}

 		\p One can define the \textbf{stable category of MCM modules} 
        $\underline{\mathbf{MCM}}(A)$ to have objects the MCM $A$-modules, and morphisms given by $$\underline\hom(M,N)\coloneqq\frac{\hom(M,N)}{(\text{morphisms which factor through a projective module})}.$$ In particular, projective modules go to zero in $\underline{\mathbf{MCM}}(A)$. Observe that $\underline{\mathbf{MCM}}(A)$ is a subcategory of the larger stable category $\underline{\mathbf{mod}}(A)$, defined analogously.

\begin{lem}
    There is a well-defined functor $\Omega$ on $\underline{\mathbf{MCM}}(A)$ which sends an object to its syzygy.
\end{lem}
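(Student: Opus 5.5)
To define $\Omega$ as a functor on $\underline{\mathbf{MCM}}(A)$ we need three things: $\Omega M$ is well defined up to isomorphism in the stable category, morphisms induce well-defined maps on syzygies, and composition and identities are respected. By the preceding lemma $\Omega M$ is again MCM, so $\Omega$ lands in the right category.

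\emph{Objects.} For each MCM module $M$ choose a surjection $P \onto M$ from a finitely generated projective, with kernel $\Omega M$. Suppose $0\to \Omega' M \to P' \to M\to 0$ is a second choice. I would use Schanuel's lemma: it gives $\Omega M\oplus P'\cong \Omega' M\oplus P$. Since $P$ and $P'$ are zero in the stable category and finite direct sums there are induced from $\mathbf{mod}\text{-}A$, this makes $\Omega M\cong\Omega'M$ in $\underline{\mathbf{MCM}}(A)$. The isomorphism can also be built directly, which is what makes it canonical. Lift $\id_M$ to maps $P\to P'$ and $P'\to P$ using projectivity, and restrict to maps $\Omega M\rightleftarrows \Omega'M$. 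Their composites are lifts of $\id_M$, so by the uniqueness argument below they equal the identity stably.

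\emph{Morphisms.} Given $f:M\to N$ with chosen sequences $0\to\Omega M\to P\xrightarrow{p} M\to 0$ and $0\to\Omega N\to Q\xrightarrow{q} N\to 0$, projectivity of $P$ lets us lift $fp$ through the surjection $q$ to a map $\tilde f:P\to Q$. Restricting $\tilde f$ gives $\Omega f:\Omega M\to\Omega N$.

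\emph{Well-definedness.} This is the main point. It splits into two checks.
\begin{enumerate}
\item Independence of the lift. If $\tilde f,\tilde f'$ are two lifts, then $q(\tilde f-\tilde f')=0$, so $\tilde f-\tilde f'$ factors through $\Omega N$ via some $h:P\to\Omega N$. The difference of the restrictions is then $h|_{\Omega M}$, which factors through the projective $P$ and so vanishes stably.
\item Maps that are stably zero. Suppose $f$ factors as $M\xrightarrow{a} R\xrightarrow{b} N$ with $R$ projective. Lift $b$ through $q$ to $b':R\to Q$; then $b'a$ is a map $M\to Q$ with $qb'a=f$. Take the lift $\tilde f\coloneqq b'ap$. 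It kills $\Omega M=\ker p$, so its restriction is zero. Combined with the first check, $\Omega f=0$ stably.
\end{enumerate}
The second check is the step I expect to need the most care, because one must arrange the lift to factor through $p$ before invoking independence of choices. Together the two checks show that $\Omega$ is well defined on $\underline\hom(M,N)$, and the construction is visibly additive.

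\emph{Functoriality.} The identity $\id_P$ is a lift of $\id_M$, so $\Omega(\id_M)=\id_{\Omega M}$. If $\tilde f$ lifts $f$ and $\tilde g$ lifts $g$, then $\tilde g\tilde f$ lifts $gf$, so $\Omega(gf)=\Omega(g)\,\Omega(f)$. Finally, the canonical isomorphisms between different choices of syzygy are compatible with these induced maps, again by independence of lifts. Hence the functor does not depend on the choices up to natural isomorphism.
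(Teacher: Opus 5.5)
Your proof is correct and follows the same route as the paper: Schanuel's lemma shows that any two syzygies agree in the stable category, and functoriality comes from lifting maps along the projective covers. You also supply the details the paper leaves as ``easy to see'', namely that the induced map is independent of the chosen lift and that maps factoring through a projective induce stably zero maps on syzygies.
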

\begin{proof}
Schanuel's Lemma shows that any two syzygies $X,Y$ of a module $M$ are \textbf{stably equivalent}, in the sense that $X\oplus Q \cong Y\oplus Q'$ for some projective (or free) modules $Q,Q'$. Hence they define the same object in the stable category, and so $\Omega$ is well-defined. Functoriality is easy to see.
\end{proof}
        
        \begin{lem}
            There is a functor $\iota: \underline{\mathbf{MCM}}(A) \to D_\mathrm{sg}(A)$ which is the identity on objects. It sends the $\Omega$ functor to the inverse shift functor $[-1]$.
        \end{lem}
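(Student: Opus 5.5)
The plan is to build $\iota$ in two stages: first on the honest module category, then show it descends to the stable category.

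Start with the composite $\mathbf{MCM}(A)\into \mathbf{mod}\text{-}A \into D^b(\mathbf{mod}\text{-}A)\to D_\mathrm{sg}(A)$. Here a module is viewed as a complex concentrated in degree zero, and the last map is the Verdier projection $\pi$. This is a functor which is the identity on objects.

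To see that it factors through $\underline{\mathbf{MCM}}(A)$, I need the composite to be additive and to kill every morphism factoring through a finitely generated projective. Additivity is clear, since every functor in the chain is additive. Now suppose $f=gh$ with $h:M\to P$ and $g:P\to N$, where $P$ is finitely generated projective. Then $P\in\per(A)$, so $\pi(P)\cong 0$ and hence $\pi(f)=\pi(g)\pi(h)=0$. So the morphisms factoring through projectives form an ideal contained in the kernel of the functor on hom-groups. The functor therefore descends to $\iota:\underline{\mathbf{MCM}}(A)\to D_\mathrm{sg}(A)$, and this is still the identity on objects.

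For the compatibility with $\Omega$, take the defining short exact sequence $0\to\Omega M\to P\to M\to 0$. It gives an exact triangle $\Omega M\to P\to M\to \Omega M[1]$ in $D^b(\mathbf{mod}\text{-}A)$; concretely, the cone of $\Omega M\to P$ is quasi-isomorphic to $M$. Apply $\pi$. Since $\pi(P)\cong0$, the triangle $\Omega M\to 0\to M\to \Omega M[1]$ is exact in $D_\mathrm{sg}(A)$. By the long exact sequence of \Cref{TriCatLES}, or by rotating the triangle, the connecting map $\delta_M:M\to \Omega M[1]$ is an isomorphism. Shifting gives $\iota(\Omega M)\cong \iota(M)[-1]$.

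The step I expect to need the most care is naturality of this isomorphism. Given $f:M\to N$, choose projective covers $P\to M$ and $Q\to N$. Lift $f$ to $P\to Q$ using projectivity, and let $\Omega f$ be the induced map on syzygies; this is well-defined in the stable category by the previous lemma. This gives a morphism of short exact sequences, and hence a morphism of the associated triangles in $D^b$. Because the connecting morphism in $D^b$ of a short exact sequence is natural in maps of short exact sequences, this yields $\delta_N\circ\iota(f)=\iota(\Omega f)[1]\circ\delta_M$.

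Different choices of lift change $\Omega f$ only by a map factoring through a projective. Such a change is invisible after applying $\iota$, so the square is independent of choices. Altogether this gives a natural isomorphism $\iota\circ\Omega\cong[-1]\circ\iota$, as required.
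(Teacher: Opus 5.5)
Your proposal is correct and follows the paper's argument: the projection kills projectives and so factors through the stable category, and applying it to the triangle $\Omega M\to P\to M\to$ shows the connecting map $M\to \Omega M[1]$ becomes an isomorphism in $D_\mathrm{sg}(A)$. Your naturality check via morphisms of short exact sequences goes beyond what the paper writes down. One small correction: you only need surjections from finitely generated projectives, not projective covers, which need not exist over a general noetherian ring.
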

        \begin{proof}
            The projection ${\mathbf{MCM}}(A) \to D_\mathrm{sg}(A)$ clearly kills projective modules so factors uniquely through the stable category; $\iota$ is the factoring map. By definition of $\Omega$ there is an exact triangle $$\Omega M \to P \to M \to$$ in $D^b(A)$ which induces an exact triangle $$\iota\Omega M \to 0 \to \iota M \to$$ in $D_\mathrm{sg}(A)$. Hence the connecting map $\iota M \to (\iota \Omega M) [1]$ is an isomorphism.
        \end{proof}
 
 		\begin{thm}\label[thm]{buchthm}
 			$\iota$ is an equivalence. Hence $\underline{\mathbf{MCM}}(A)$ is a triangulated category, with shift functor the `inverse syzygy' $\Omega^{-1}$.
 			\end{thm}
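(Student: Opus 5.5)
We must show $\iota$ is essentially surjective and fully faithful; the triangulated structure on $\underline{\mathbf{MCM}}(A)$ is then transported along $\iota$. By the previous lemma, the shift on the transported structure is the inverse of $\Omega$, i.e.\ $\Omega^{-1}$.

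\emph{Essential surjectivity.} Let $X\in D^b(\mathbf{mod}\text{-}A)$. By \Cref{KProjthm} we may replace $X$ by a right bounded complex $P$ of finitely generated projectives. Since $X$ has bounded cohomology, $H^i(P)=0$ for $i<-N$, with $N$ as large as we like. The brutal truncation $\sigma_{\geq -N}P$ is perfect, and the cone of $\sigma_{\geq -N}P\to P$ is, up to shift, the truncation $\sigma_{<-N}P$. For $N\gg 0$ this truncation is quasi-isomorphic to the module $M=\mathrm{coker}(P^{-N-2}\to P^{-N-1})$, placed in degree $-N-1$. Hence $X\cong M[N+1]$ in $D_\mathrm{sg}(A)$, and $M$ is a high syzygy of the module $\mathrm{coker}(P^{-N-1}\to P^{-N})$.

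The key Gorenstein input is that if $d=\mathbf{id}_A(A)<\infty$, then any $d$-th syzygy is MCM. Indeed, dimension shifting gives $\ext^i(\Omega^dL,A)\cong\ext^{i+d}(L,A)=0$ for $i>0$. So taking $N\ge d$ makes $M$ MCM. Then $M[N+1]\cong\iota(\Omega^{-(N+1)}M)$, provided $\Omega^{-1}$ exists on MCM modules. To build $\Omega^{-1}M$, dualise: $M^*=\hom(M,A)$ is MCM over $A^{op}$ (Gorenstein on both sides) and $M\cong M^{**}$. Take a projective cover $Q\onto M^*$ with kernel $K$. Dualising and using $\ext^1(M^*,A)=0$ gives a short exact sequence $0\to M\to Q^*\to K^*\to 0$. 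Thus $K^*$ is an MCM module with $\Omega K^*\cong M$, so $\Omega$ is essentially surjective on $\underline{\mathbf{MCM}}(A)$. Iterating gives essential surjectivity of $\iota$.

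\emph{Full faithfulness.} Let $M,N$ be MCM. A morphism $M\to N$ in $D_\mathrm{sg}(A)$ is a roof $M\xleftarrow{s}Z\to N$ with $\mathrm{cone}(s)$ perfect. I will show that $\hom_{D^b}(M,N)\to\hom_{D_\mathrm{sg}}(M,N)$ is surjective with kernel exactly the maps factoring through projectives. The standard tool is the following lemma: for $M$ MCM and $P$ perfect, $\hom_{D^b}(M,P[i])=0$ for $i>0$, and every map $M\to P$ factors through a projective module. This follows since $\R\hom(M,A)\simeq M^*$ is concentrated in degree $0$, and $P$ is built from shifts of $A$.

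Using the lemma, I want to rewrite each roof as an honest morphism. Since $\iota$ intertwines shifts and $\Omega$ is an autoequivalence, I can replace $M,N$ by $\Omega^{-n}M,\Omega^{-n}N$. This realises $\hom_{D_\mathrm{sg}}(M,N)$ as $\varinjlim_n \underline\hom(\Omega^nM,\Omega^nN)$, using the Buchweitz-style description $\hom_{D_\mathrm{sg}}(X,Y)\cong\varinjlim_n\hom_{\underline{\mathbf{mod}}}(\Omega^nX,\Omega^nY)$. It then suffices to show $\Omega:\underline\hom(M,N)\to\underline\hom(\Omega M,\Omega N)$ is bijective for MCM modules. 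This follows by applying $\hom(-,-)$ to the syzygy sequences and using $\ext^1(M,Q)=0$ for $Q$ projective (MCM-ness), which lets maps $\Omega M\to\Omega N$ extend to the projective covers.

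I expect the main obstacle to be the colimit description of $\hom_{D_\mathrm{sg}}$: the work is controlling roofs with perfect cone via truncations. If that is too messy, I would instead argue directly. Given a roof, complete $s$ to a triangle $Z\to M\to C\to$ with $C$ perfect, and use the lemma to show $M\to C$ vanishes in a suitable sense after passing to a syzygy.
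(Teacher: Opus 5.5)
\paragraph{Verdict on essential surjectivity.} Your essential surjectivity argument is sound and matches the paper's in substance. A deep brutal truncation leaves a high syzygy, which is MCM, and your dual construction of $\Omega^{-1}$ is what the paper's complete resolutions encode. It needs three small repairs:
\begin{itemize}
\item Projective covers need not exist over a general noetherian ring, but any surjection from a finitely generated projective works.
\item $N$ must be large relative to both $d$ and the cohomological amplitude of $X$, not merely $N\ge d$.
\item You should say why $K^*$ is MCM: $K$ is an MCM syzygy over $A^{\mathrm{op}}$, and duals of MCM modules are MCM.
\end{itemize}

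\paragraph{The gap: full faithfulness.} The formula $\hom_{D_\mathrm{sg}}(X,Y)\cong\varinjlim_n\underline{\hom}(\Omega^nX,\Omega^nY)$ is the Buchweitz/Keller--Vossieck stabilisation theorem. It encodes precisely the control of roofs with perfect cone that is the hard half of the statement, so invoking it assumes what is to be proved. Your verification that $\Omega$ is bijective on $\underline{\hom}$ between MCM modules is correct, but it is not the missing piece.

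Moreover, your \emph{standard tool} is false for general perfect $P$. For $P=A[-1]$ one gets $\hom_{D^b}(M,P[1])\cong\hom_A(M,A)$, which is nonzero e.g.\ for $M=A$. For the factorisation claim, take $A=k[x]/x^2$, $M=k$ and $P=(A\xrightarrow{x}A)$ in degrees $0,1$. The inclusion $k\cong H^0(P)\to P$ is nonzero in $D^b$, yet every composite $k\to Q\to P$ through a projective $Q$ vanishes. The correct statement is: if $P$ is a bounded complex of finitely generated projectives concentrated in degrees $\le 0$, then $\hom_{D^b}(M,P[i])=0$ for $i>0$ and every map $M\to P$ factors through $P^0$.

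\paragraph{How the corrected lemma closes the gap.} This is the same orthogonality the paper feeds into Verdier's criterion when proving $\sigma_0$ fully faithful.
\begin{itemize}
\item \emph{Faithfulness.} If $\iota f=0$, then $f$ factors in $D^b$ through a bounded complex $P$ of finitely generated projectives. Since $N$ sits in degree $0$, the map $P\to N$ factors through the quotient complex $\sigma_0P$ of terms in degrees $\le 0$. Since $\hom_{D^b}(M,\sigma_{-1}P)=0$, the map $M\to\sigma_0P$ factors through the projective module $P^0$.
\item \emph{Fullness.} Take a roof with triangle $Z\xrightarrow{s}M\xrightarrow{t}C\to$ and $C$ perfect. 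Pick $n\ge 1$ exceeding the top degree of $C$. Use your duality construction to produce an exact sequence $0\to M\to Q_1\to\cdots\to Q_n\to M'\to 0$ with MCM cokernels.

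The induced map $u\colon M'[-n]\to M$ has perfect cone, and $tu\in\hom_{D^b}(M',C[n])=0$. Hence $u$ factors through $s$, and the roof is equivalent to $M\xleftarrow{u}M'[-n]\to N$, i.e.\ to a class in $\ext^n(M',N)$.

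Dimension shifting along the sequence shows that $f\mapsto fu$ maps $\hom_A(M,N)$ onto $\ext^n(M',N)$. So the roof equals $\iota(f)$ for a module map $f$.
\end{itemize}
Without an argument of this kind, your proposal establishes only essential surjectivity of $\iota$.
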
 
            The proof passes through an intermediate category $K_\mathrm{ac}(\mathrm{proj}A)$, the homotopy category of acyclic complexes of finitely generated projective modules.

\begin{prop}\label[prop]{buchprop}\hfill
    \begin{enumerate}
        \item There are functors $\Omega_i: K_\mathrm{ac}(\mathrm{proj}A) \to \underline{\mathbf{MCM}}(A)$ which send a complex $X$ to the cokernel of $d^i:X^{-i-1} \to X^{-i}$. We have $\Omega_i(X[j])\simeq \Omega_{i-j}X$. 
        \item There are triangle functors $\sigma_i: K_\mathrm{ac}(\mathrm{proj}A) \to D_\mathrm{sg}(A)$ which send a complex to its brutal truncation at $i$. In other words, $(\sigma_iX)^j$ is $X^j$ for $j\leq i$ and $0$ otherwise.
        \item We have $\iota \Omega_i (X) \cong \sigma_{-i}(X)[-i]$.
    \end{enumerate}
\end{prop}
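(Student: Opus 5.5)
The plan is to treat the three parts in order. Throughout, $A$ is Gorenstein and $X\in K_\mathrm{ac}(\mathrm{proj}A)$ is an acyclic complex of finitely generated projectives.

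For (1), set $\Omega_i X\coloneqq \mathrm{coker}(X^{-i-1}\to X^{-i})$, which by exactness is isomorphic to $Z^{-i+1}=\mathrm{im}(X^{-i}\to X^{-i+1})$. The first step is to show this module is MCM. Applying $\hom_A(-,A)$ to $X$ gives a complex $X^\vee$ of finitely generated projective left modules, and I will argue it is again acyclic. The right truncation $\cdots\to X^{-i-1}\to X^{-i}$ is a projective resolution of $\Omega_iX$, so
\[
\ext^j(\Omega_iX,A)\cong H^j(\text{truncated dual}).
\]
The left part $X^{-i+1}\to X^{-i+2}\to\cdots$ exhibits $\Omega_iX$ as an $n$-th syzygy for every $n$, where $n=\mathbf{id}(A)<\infty$. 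By dimension shifting, $\ext^j(\Omega_iX,A)\cong\ext^{j+n}(\Omega_{i-n}X,A)=0$ for $j>0$. So $\Omega_iX$ is MCM, and this is where the Gorenstein hypothesis enters.

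Functoriality also needs checking. A chain map induces a map on cokernels. If the map is nullhomotopic via $h$, then the induced map on $\mathrm{coker}(d^{-i})$ factors through $X^{-i+1}$, or equivalently through the projective $Y^{-i}$: write $f^{-i}=dh+hd$, and note $hd$ vanishes on the cokernel after composing. So it is zero in $\underline{\mathbf{MCM}}(A)$. The relation $\Omega_i(X[j])\cong\Omega_{i-j}X$ is immediate from $(X[j])^{m}=X^{m+j}$; the sign change on the differential does not alter the cokernel.

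For (2), the brutal truncation $\sigma_iX$, with terms $X^j$ for $j\le i$, is a complex of projectives. It has cohomology only in degree $i$, namely $\mathrm{coker}(X^{i-1}\to X^i)=\Omega_{-i}X$. Hence $\sigma_iX\simeq \Omega_{-i}X[-i]$ lies in $D^b(\mathbf{mod}\text{-}A)$, and composing with the projection gives an object of $D_\mathrm{sg}(A)$.

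Functoriality in $K_\mathrm{ac}$ is the step I expect to be the main obstacle. A nullhomotopy of $f:X\to Y$ does not restrict to a nullhomotopy of $\sigma_if$, because of the boundary term $h:X^{i+1}\to Y^i$. The discrepancy is a map $\sigma_iX\to\sigma_iY$ concentrated in degree $i$ that factors through the stalk complex $Y^i[-i]$, which is perfect. So $\sigma_if$ and $\sigma_ig$ agree after passing to $D_\mathrm{sg}(A)$. I will make this precise by checking that the difference factors through $Y^i[-i]$ in $D^b$.

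Exactness then follows. A mapping cone in $K_\mathrm{ac}$ truncates degreewise to the mapping cone of the truncations. For the shift, $\sigma_i(X[1])=(\sigma_{i+1}X)[1]$, and $\sigma_{i+1}X$ differs from $\sigma_iX$ by the perfect term $X^{i+1}[-i-1]$, giving $\sigma_i(X[1])\cong(\sigma_iX)[1]$ in $D_\mathrm{sg}$.

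For (3), take the quasi-isomorphism $\sigma_{-i}X\to \Omega_iX[i]$ from (2), namely the augmentation in degree $-i$. Shifting it gives $\sigma_{-i}X[-i]\simeq \Omega_iX$ in $D^b$, hence in $D_\mathrm{sg}$, where the right-hand side is $\iota\Omega_iX$. Finally, I will check naturality of this identification with respect to chain maps, which holds because the augmentation is compatible with induced maps on cokernels.
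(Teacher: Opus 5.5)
Your proposal is correct and follows the paper's argument. MCM-ness comes from dimension shifting $\ext^j_A(\Omega_iX,A)\cong\ext^{j+n}_A(\Omega_{i-n}X,A)=0$ using $\mathbf{id}(A)<\infty$, nullhomotopies induce maps that factor through projectives, and (3) follows from $\sigma_{-i}X[-i]$ being a projective resolution of $\Omega_iX$; your treatment of (2) fills in what the paper only calls ``easy to see''. There are two harmless slips: on the cokernel it is the $d_Yh$ term of $f=d_Yh+hd_X$ that dies, while $hd_X$ gives the factorisation through $X^{-i+1}$; and $\sigma_i(\mathrm{cone}\,f)$ is not literally $\mathrm{cone}(\sigma_if)$, since it has an extra $X^{i+1}$ in degree $i$, but the two differ by a perfect stalk complex, exactly as in your treatment of the shift.
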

\begin{proof}
First observe that we obviously have functors $\Omega_i: \mathrm{Ch}_\mathrm{ac}(\mathrm{proj}A) \to \underline{\mathbf{mod}}(A)$ which satisfy the required conditions. If $f:P \to Q$ is nullhomotopic then the map $\Omega_i f$ factors through a projective module, so that they descend to functors $\Omega_i: K_\mathrm{ac}(\mathrm{proj}A) \to \underline{\mathbf{mod}}(A)$. The compatibility with shifts is clear. To see that the images are MCM, observe that we have isomorphisms $$\ext^j_A(\Omega_iX,A)\cong \ext_A^{j+k}(\Omega_{i-k}X,A)$$ for all $j>0$ and $k\geq 0$. In particular we see by taking $k\gg 0$ that these Ext groups vanish, and hence $\Omega_iX$ is MCM. Claim (2) is easy to see, and claim (3) follows from the fact that for $X \in K_\mathrm{ac}(\mathrm{proj}A)$, the complex $\sigma_i(X)[i]$ is a projective resolution of $\Omega_{-i}X$.
\end{proof}
We remark that part (1) of the preceding proposition tells us that `high enough syzygies of an arbitrary module are MCM'.

\begin{prop}\label[prop]{sigma0}
    The functor $\sigma_0$ is a triangle equivalence.
\end{prop}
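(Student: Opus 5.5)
The plan is to factor $\sigma_0$ through the composite $\iota\circ\Omega_0$. By \Cref{buchprop}(3) we have $\sigma_0\cong\iota\Omega_0$, so it suffices to show three things: that $\sigma_0$ is essentially surjective, that it is full, and that it is faithful. Throughout we use that $A$ is Gorenstein, so that $A$ has finite injective dimension $d$, which means $\ext^i_A(X,A)=0$ for $i>d$ and every finitely generated $X$.

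\emph{Essential surjectivity.} First, every object of $D_\mathrm{sg}(A)$ is isomorphic to a shift of a module. Given $M\in D^b(\mathbf{mod}\text{-}A)$, replace it by a bounded-above resolution $P$ by finitely generated projectives, using \Cref{KProjthm}. Brutally truncating $P$ far to the left changes it only by a perfect complex, so in $D_\mathrm{sg}(A)$ we get $M\cong N[n]$, where $N$ is a high syzygy module. Taking the syzygy high enough, $N$ is MCM: dimension shifting gives $\ext^j(\Omega^kN',A)\cong\ext^{j+k}(N',A)=0$ for $k\ge d$.

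Next, for an MCM module $N$ we build a complete resolution. Splice a projective resolution $P\to N$ with the dual of a projective resolution of $N^*=\hom_A(N,A)$. Since $N$ is MCM, $\R\hom(N,A)=N^*$, and $N^*$ is again MCM over $A^{op}$ (here we need the two-sided Gorenstein hypothesis). Applying $(-)^*$ to a projective resolution of $N^*$ gives an exact coresolution $0\to N\to Q^0\to Q^1\to\cdots$ by finitely generated projectives; this uses the reflexivity $N\cong N^{**}$ of MCM modules. Splicing the two yields $X\in K_\mathrm{ac}(\mathrm{proj}A)$ with $\Omega_0X\cong N$. Hence $\sigma_0X\cong N$, and shifting $X$ hits $N[n]$.

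\emph{Fully faithfulness.} For $X,Y$ acyclic, the hom-sets $K(X,Y)$ are the ones to compare with $D_\mathrm{sg}(\sigma_0X,\sigma_0Y)$. I would compute the latter as a colimit
$$D_\mathrm{sg}(M,N)\cong\operatorname{colim}_n\underline{\hom}(\Omega^nM,\Omega^nN),$$
taken over syzygies (a known Buchweitz/Beligiannis-type formula). Alternatively one can argue directly: a morphism in $D_\mathrm{sg}$ is a roof $\sigma_0X\leftarrow Z\to\sigma_0Y$ with perfect cone. Truncating far enough left so that the perfect cone no longer interferes, this becomes a genuine chain map between left tails. Such a map extends uniquely up to homotopy to a map of the full acyclic complexes, because the right halves consist of projectives that are injective relative to MCM modules: $\ext^{>0}(\text{MCM},A)=0$ lets maps be extended rightwards, and homotopies be constructed rightwards as well. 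Faithfulness follows similarly. If $\sigma_0f$ vanishes in $D_\mathrm{sg}$, then $f$ is nullhomotopic on a far-left tail, and the homotopy extends to the right by the same Ext-vanishing.

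\emph{Main obstacle.} I expect the main obstacle to be this fully-faithfulness step, specifically the comparison of morphisms in the Verdier quotient with stable morphisms, that is, showing $\underline{\hom}(M,N)\to D_\mathrm{sg}(M,N)$ is bijective for $M,N$ MCM. The key lemma I would aim for is that, for $N$ MCM and $P$ perfect, every map $N\to P$ in $D^b$ factors through a projective module. This should follow from $\ext^{>0}(N,A)=0$: such a map factors through a module, since Ext vanishes beyond degree $0$, and then through $P^0$.
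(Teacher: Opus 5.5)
Your essential surjectivity argument is the same as the paper's: pass to a high syzygy, which is MCM, complete it to a complete resolution, and shift. The gap is in full faithfulness, at the point you flag yourself. Citing $D_\mathrm{sg}(M,N)\cong\operatorname{colim}_n\underline{\hom}(\Omega^nM,\Omega^nN)$ hands off exactly the hard step. In the direct route, the claims ``truncating far enough left, the perfect cone no longer interferes'' and ``if $\sigma_0 f=0$ in $D_\mathrm{sg}$ then $f$ is nullhomotopic on a far-left tail'' are stated but not proved.

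The lemma you propose to justify them is false. Take $A=k[\varepsilon]/\varepsilon^2$, which is self-injective, so every module is MCM. Let $N=k$ and $P=(A\xrightarrow{\varepsilon}A)$ in degrees $0,1$. Then $\hom_{D^b}(k,P)\cong\hom_A(k,\varepsilon A)\neq 0$. On the other hand:
\begin{itemize}
\item $\hom_{D^b}(k,Q[j])=0$ for $j\neq 0$;
\item every map $k\to Q$ lands in $\varepsilon Q$;
\item every morphism $Q\to P$ in $D^b$ is a map $Q\to\varepsilon A$.
\end{itemize}
So every composite $k\to Q\to P$ vanishes, and the nonzero map $k\to P$ factors through no projective. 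What your reasoning actually shows is weaker: a morphism $N\to P$ is an honest chain map $N\to Z^0P\subseteq P^0$, so any composite $N\to P\to N'$ into a module factors through $P^0$. That gives faithfulness of $\underline{\hom}(M,N)\to D_\mathrm{sg}(M,N)$, but not fullness. It does not straighten a roof $M\xleftarrow{s}Z\to N$ whose cone receives a map from $M$ that does not factor through a projective.

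The missing idea, which the paper's proof uses, is to move the source past the amplitude of $P$, rather than trying to control maps out of a module in degree $0$.
\begin{itemize}
\item For $k\ge 0$ the projection $q_k\colon\sigma_kX\to\sigma_0X$ has perfect cocone $X^1\to\cdots\to X^k$. So $q_k$ is invertible in $D_\mathrm{sg}$, and $\sigma_kX\simeq\Omega_{-k}X[-k]$.
\item Let $P$ be a bounded complex of finitely generated projectives with top degree $b$. For $k>b$ we get $\hom_{D^b}(\sigma_kX,P)\cong H^k\hom_A(\Omega_{-k}X,P)=0$. This is exactly where $\ext^{>0}(\mathrm{MCM},\mathrm{proj})=0$ enters.
\item Consequently, for a roof $\sigma_0X\xleftarrow{s}Z\xrightarrow{g}\sigma_0Y$ with perfect cone, $q_k$ factors as $sr$ for $k\gg0$. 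The roof then equals $gr\circ q_k^{-1}$, where $gr$ is an honest morphism in $D^b$.
\item Likewise, a $D^b$-morphism out of $\sigma_kX$ that dies in $D_\mathrm{sg}$ already dies after precomposing with the projection $\sigma_{k'}X\to\sigma_kX$ for some $k'\gg k$.
\end{itemize}
This is the Verdier-type criterion in the paper. Only after this reduction does your rightward extension of maps and homotopies identify $D_\mathrm{sg}(\sigma_0X,\sigma_0Y)$ with $K_\mathrm{ac}(\mathrm{proj}A)(X,Y)$.

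Your ``far left'' intuition can also be made to work, but only with left roofs $\sigma_0X\to W\leftarrow\sigma_0Y$. The truncations $\sigma_0Y\to\sigma_kY$ with $k\ll 0$ are cofinal among these, because $\hom_{D^b}(P,\sigma_kY[1])=0$ once $k$ is below the bottom degree of $P$. Either way, the vanishing statement you need is a different one from your lemma.
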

Before we prove this, let us show how this yields a proof of \Cref{buchthm}.

\begin{proof}[Proof of \Cref{buchthm}]
    By \Cref{sigma0} and \Cref{buchprop} it suffices to prove that $\Omega_0$ is an equivalence. It is faithful by \Cref{sigma0}, so it remains to check that $\Omega_0$ is full and essentially surjective. To do this we introduce the notion of the \textbf{complete resolution} of an MCM module $M$. This is glued together out of the data of
    \begin{enumerate}
        \item A projective resolution $P\to M$
        \item A projective resolution $Q \to M^\vee$
    \end{enumerate}
    where we denote $M^\vee \coloneqq \hom_A(M,A)$. Since $M$ is MCM, its dual $M^\vee$ is MCM, and moreover $M$ is reflexive: we have a natural isomorphism $M\to M^{\vee\vee}$. Dualising the projective resolution $Q$ hence yields a quasi-isomorphism $M \to Q^\vee$. Composing $P\to M \to Q^\vee$ hence yields a quasi-isomorphism. A \textbf{complete resolution} for $M$ is the corresponding acyclic complex $$\mathbf{CR}(M)\ \coloneqq\ \mathrm{cocone}(P\to Q^\vee)\ \in\   K_\mathrm{ac}(\mathrm{proj}A) $$We caution that this is merely notation, although by choosing functorial projective resolutions $\mathbf{CR}$ can be upgraded to a functor. Clearly we have an isomorphism $\Omega_0\mathbf{CR}(M)\simeq M$, and hence $\Omega_0$ is essentially surjective. To show fullness, let $f:M\to M'$ be a map between two MCM modules. By lifting both $f$ and $f^\vee$ to maps of projective resolutions, we can lift $f$ to a map of complete resolutions.
\end{proof}

\begin{proof}[Proof of \Cref{sigma0}]
We need to show that $\sigma_0$ is fully faithful and essentially surjective. For essential surjectivity, take an object $X$ of the singularity category, which we may assume is a bounded and strictly right bounded complex of finitely generated projectives. Observe that, for every $k$, the complexes $X$ and $\sigma_k X$ differ by a perfect complex and hence are isomorphic in the singularity category. For $k\ll0$, we see that $M\coloneqq H^{k}(\sigma_kX) \simeq \sigma_k X$ is MCM since high enough syzygies are MCM as in \Cref{buchprop}(1). Completing $\sigma_k X$ to a complete resolution of $M$ and then shifting yields an acyclic complex $Y$ with $\sigma_0Y\simeq \sigma_k Y \simeq \sigma_kX \simeq X$, as desired.

For fully faithfulness, we apply a theorem of Verdier stating that it is enough to show that if $P$ is a perfect complex and $X \in K_\mathrm{ac}(\mathrm{proj}A)$, then there exists a $k$ such that $\hom_{D^b(A)}(\sigma_kX,P)\simeq 0$. By taking shifts and cones we can reduce to the case that $P$ is a projective module concentrated in degree $-i$. Taking $k>i$, in this case, the claim amounts to checking that the Ext group $\ext^1_A(\Omega_{-i-1}X,P)$ vanishes. But this is the case since $P$ is finitely generated projective and the module $\Omega_{-i-1}X$ is MCM.
\end{proof}

         \section{Stable Ext}
Since it is a triangulated category, the stable category $\underline{\mathbf{MCM}}(A)$ admits a notion of Ext groups, which we denote by $\underline\ext$ and refer to as the \textbf{stable Ext groups}. One pleasing fact about them is the following:
 		 \begin{prop}\label[prop]{stabextprop}
 		 	Let $A$ be a Gorenstein ring and $M,N$ two MCM $R$-modules.
            \begin{enumerate}
                \item For $j>0$, there are natural isomorphisms $\underline{\ext}^j_A(M,N)\cong \ext_A^j(M,N)$.
                \item For $j<-1$ there are natural isomorphisms $\underline{\ext}^j_A(M,N)\cong \tor^A_{-j-1}(N,M^\vee)$.
                \item There is a four-term exact sequence
                $$0 \to\underline{\ext}_A^{-1}(M,N) \to N\otimes_A M^\vee \to \hom_A(M,N) \to \underline{\ext}^0(M,N)\to 0.$$
            \end{enumerate}
 		 	\end{prop}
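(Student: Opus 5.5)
The plan is to compute everything via complete resolutions, using \Cref{buchthm} and \Cref{sigma0}. Since $\Omega_0$ is an equivalence $K_\mathrm{ac}(\mathrm{proj}A)\to\underline{\mathbf{MCM}}(A)$ compatible with shifts, we have
$$\underline{\ext}^j_A(M,N)\cong \hom_{K(A)}(\mathbf{CR}(M),\mathbf{CR}(N)[j]) = H^j\hom_A(\mathbf{CR}(M),\mathbf{CR}(N)).$$
The first step is to replace the target $\mathbf{CR}(N)$ by $N$ itself. Concretely, we will show that $H^j\hom_A(\mathbf{CR}(M),\mathbf{CR}(N))\cong H^j\hom_A(\mathbf{CR}(M),N)$, where $N$ sits in degree zero and the map is induced by the augmentation $\mathbf{CR}(N)^0\to N$, suitably truncated. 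To justify this, split $\mathbf{CR}(N)$ into its brutal truncations. The left part $\sigma_0$ is a projective resolution of $N$. The right part is a coresolution $N\to Q^\vee$ by projectives. Maps from an acyclic complex of projectives $\mathbf{CR}(M)$ into a bounded below complex of projectives are computed by $\ext^{>0}(\Omega_iM,A)=0$; since all $\Omega_i\mathbf{CR}(M)$ are MCM, the complex $\hom_A(\mathbf{CR}(M),A)$ is acyclic. This is the standard "totally acyclic" property, and I expect it to be the main technical point, including getting the degree bookkeeping exactly right.

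Granting this, we have $\underline{\ext}^j(M,N)\cong H^j\hom_A(\mathbf{CR}(M),N)$. For $j>0$ the relevant part of $\mathbf{CR}(M)$ is $\sigma_0\mathbf{CR}(M)=P$, the projective resolution of $M$. The terms in positive degrees only affect $H^{\le 0}$, which gives $H^j\hom_A(P,N)=\ext^j_A(M,N)$ and proves (1).

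For negative $j$, the relevant part is the right half $Q^\vee$. We use $\hom_A(Q^\vee,N)\cong N\otimes_A Q$, which holds because the $Q^i$ are finitely generated projective. Then $H^j$ for $j\le -2$ becomes $H_{-j-1}(N\otimes_A Q)=\tor^A_{-j-1}(N,M^\vee)$, since $Q$ resolves $M^\vee$. The shift by one comes from $Q^\vee$ starting in degree $1$ of the cocone. This proves (2).

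For (3), the degrees $-1$ and $0$ are where the two halves meet. Near the splice, the complex $\hom_A(\mathbf{CR}(M),N)$ looks like
$$\cdots\to N\otimes Q^1\to N\otimes Q^0\xrightarrow{\ \alpha\ }\hom(P_0,N)\to\hom(P_1,N)\to\cdots$$
Here $\alpha$ is induced by the splice map $P_0\to M\to (Q^0)^\vee$. Taking cohomology at the two middle spots, and comparing with the exact sequences $N\otimes Q^1\to N\otimes Q^0\to N\otimes M^\vee\to 0$ and $0\to\hom(M,N)\to\hom(P_0,N)\to\hom(P_1,N)$, shows two things. First, $\underline{\ext}^{-1}=\ker(N\otimes M^\vee\to\hom(M,N))$. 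Second, $\underline{\ext}^0=\mathrm{coker}$ of the same map. That map is the natural one $n\otimes\phi\mapsto(m\mapsto n\phi(m))$, and its image is exactly the set of maps factoring through projectives, consistent with the definition of $\underline{\hom}$. A diagram chase then yields the four-term sequence. Naturality in $M$ and $N$ follows because maps lift to complete resolutions uniquely up to homotopy.
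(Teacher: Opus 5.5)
Your proposal is correct and follows essentially the same route as the paper. The paper also replaces $\mathbf{CR}(N)$ by the resolution $P$ of $N$ using the acyclicity of $\hom_A(\mathbf{CR}(M),Q^\vee)$, then splits $\mathbf{CR}(M)$ as the cocone of $P'\to Q'^\vee$ and uses $\hom_A(Q'^\vee,P)\simeq P\otimes_A Q'$; it packages this as the triangle $N\lot_A M^\vee\to\R\hom_A(M,N)\to\hom_A(\mathbf{CR}(M),N)\to$ and its long exact sequence, whereas you read the same cohomology directly off the spliced complex, which also identifies the middle map in (3) as evaluation.

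Two points are left implicit, in your proposal and in the paper alike, and both are easily filled. Acyclicity of $\hom_A(\mathbf{CR}(M),Q^\vee)$ follows from that of $\hom_A(\mathbf{CR}(M),A)$ by a Mittag-Leffler limit argument over the brutal truncations of the bounded-below complex $Q^\vee$. And $\hom_A(\mathbf{CR}(M),P)\to\hom_A(\mathbf{CR}(M),N)$ is a quasi-isomorphism: compare the sequences coming from the splitting of $\mathbf{CR}(M)$, using $\hom_A(P',P)\simeq\hom_A(P',N)$ and $P\otimes_A Q'\simeq N\otimes_A Q'$.
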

            \begin{proof}
               In what follows we let $\mathbf{CR}(X)$ denote a complete resolution for $X$. Since $\underline{\mathbf{MCM}}(A)$ is equivalent to $K_\mathrm{ac}(\mathrm{proj} (A))$, we have isomorphisms $$\underline{\ext}^i(M,N)\cong H^i\hom_A( \mathbf{CR}(M),\mathbf{CR}(N))$$Let $P$ be a projective resolution for $N$ and $Q$ a projective resolution of $N^\vee$. The description of $\mathbf{CR}(N)$ as a mapping cocone of the natural morphism $P\to Q^\vee$ gives us an exact triangle $$\hom_A( \mathbf{CR}(M),\mathbf{CR}(N))\to\hom_A( \mathbf{CR}(M),P) \to\hom_A( \mathbf{CR}(M),Q^\vee)\to $$in $D(\Z)$. Since $A$ is Gorenstein, the complex $\hom_A( \mathbf{CR}(M),Q^\vee)$ is actually acyclic - morally this is since $Q^\vee$ is close to being an injective resolution of $N$. Hence we obtain a quasi-isomorphism$$\hom_A( \mathbf{CR}(M),\mathbf{CR}(N)) \simeq \hom_A( \mathbf{CR}(M),P)$$Now let $P'$ be a projective resolution of $M$ and $Q'$ a projective resolution of $M^\vee$. We obtain an exact triangle $$\hom_A(Q'^\vee,P) \to \hom_A(P',P)\to \hom_A(\mathbf{CR}(M),P) \to$$There is a natural map $$P\otimes_A Q'\cong P\otimes_A Q'^{\vee \vee}\to\hom_A(Q'^\vee,P)$$which is a quasi-isomorphism since $P$ and $Q'$ are bounded above complexes of finitely generated projectives. The exact triangle above becomes an exact triangle $$N\lot_A M^\vee \to \R\hom_A(M,N) \to \hom_A(\mathbf{CR}(M),N)\to$$which finally yields a long exact sequence
               $$\cdots \to \tor_{-i}^A(N,M^\vee) \to \ext^i_A(M,N) \to \underline{\ext}^i(M,N) \to \cdots$$which gives the desired statements.
            \end{proof}

         \begin{rmk}
             Buchweitz calls the stable Ext groups the \textbf{Tate cohomology} groups, since they generalise the following construction. Let $k$ be a field of characteristic zero and $G$ a finite group. If $M$ is a $G$-module then we can consider its group homology $H_\bullet(G,M)$ and its group cohomology $H^\bullet(G,M)$. There is a norm map $H_0(M,G) \to H^0(G,M)$ and the \textbf{Tate complex} $\hat{H}(G,M)$ is defined to be the cone of the induced map $H_\bullet(G,M) \to H^\bullet(G,M)$. In this case, the group ring $kG$ is Gorenstein, and we have $\hat{H}^i(G,M)\cong \underline{\mathrm{Ext}}^i_{kG}(k,M)$.
         \end{rmk}

 		\chapter{Matrix factorisations}
        Here we give our third main construction of the singularity category, which works for hypersurfaces. References here include \cite{dyck, symons, bjmat} or the original \cite{eisenbudper}. We will make use of the following fact from commutative algebra:
        \begin{prop}
            A projective module over a commutative local ring is free.
        \end{prop}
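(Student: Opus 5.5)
The plan is to prove Kaplansky's theorem directly, in three steps. First, reduce to countably generated projectives by Kaplansky's dévissage. Second, prove a key lemma: every element of a projective module over a local ring lies in a finite free direct summand. Third, assemble a basis of a countably generated projective by induction. Throughout, $(R,\mathfrak{m},k)$ is the local ring and $P$ is projective, so that $P\oplus Q=F$ for some free module $F$.

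\textbf{Step 1 (dévissage).} I will show that any direct summand of a direct sum of countably generated modules is itself a direct sum of countably generated modules. Write $F=\bigoplus_{j\in J}F_j$ with each $F_j\cong R$. Using transfinite induction, I build an increasing continuous chain of subsets $J_\alpha\subseteq J$ with $J=\bigcup_\alpha J_\alpha$. At successor stages $J_{\alpha+1}\setminus J_\alpha$ is countable, and each $F_\alpha=\bigoplus_{J_\alpha}F_j$ satisfies
$$F_\alpha=(F_\alpha\cap P)\oplus(F_\alpha\cap Q).$$
To enlarge a set by a countable amount while keeping this splitting, I take one index and then alternately close up: take the finitely many coordinates of the $P$- and $Q$-components of each basis element, and repeat countably often. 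The successive quotients $(F_{\alpha+1}\cap P)/(F_\alpha\cap P)$ are then summands of $F_{\alpha+1}/F_\alpha$. Hence they are countably generated projectives, and $P$ is their direct sum. So it suffices to treat countably generated $P$.

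\textbf{Step 2 (key lemma).} Let $x\in P$. Among all bases of $F$, write $x=\sum_{i=1}^n a_ie_i$ with $n$ minimal. By minimality, no $a_i$ lies in the ideal generated by the others. Let $\pi\colon F\to P$ be the projection, and write
$$\pi(e_i)=\sum_{j\le n} b_{ij}e_j+t_i,$$
where each $t_i$ lies in the span of the basis elements other than $e_1,\dots,e_n$. Since $\pi(x)=x$, we get $a_j=\sum_i a_ib_{ij}$, that is, $(1-b_{jj})a_j-\sum_{i\ne j}a_ib_{ij}=0$.

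Minimality then forces $1-b_{jj}\in\mathfrak{m}$ and $b_{ij}\in\mathfrak{m}$ for $i\neq j$. Otherwise, because $R$ is local, some coefficient is a unit and $a_j$ would lie in the ideal of the others. Hence $B\equiv I\pmod{\mathfrak{m}}$, so $\det B$ is a unit and $B$ is invertible. Consequently $\pi(e_1),\dots,\pi(e_n)$, together with the remaining basis vectors, form a basis of $F$. The span $G$ of the $\pi(e_i)$ is therefore free, lies in $P$, and is a direct summand of $F$, hence of $P$. Finally, $x=\pi(x)\in G$.

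This coefficient bookkeeping is the main obstacle. The delicate points are getting the minimality argument to produce exactly the congruence $B\equiv I\pmod{\mathfrak{m}}$, and checking that "summand of $F$ contained in $P$" really gives "summand of $P$" via the modular law.

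\textbf{Step 3 (induction).} Let $P$ be countably generated by $x_1,x_2,\dots$. Apply Step 2 to $x_1$ to get $P=G_1\oplus P_1$ with $G_1$ finite free and $P_1$ projective. Then apply Step 2 in $P_1$ to the $P_1$-component of $x_2$, and continue. This yields $P=\bigoplus_m G_m$ with each $G_m$ free, because at each stage the generators $x_1,\dots,x_m$ lie in $G_1\oplus\cdots\oplus G_m$. So $P$ is free, and combining with Step 1 proves the proposition.
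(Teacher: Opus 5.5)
Your proof is correct. It is Kaplansky's argument: d\'evissage to countably generated summands, then the lemma that every element of a projective module over a local ring lies in a finite free summand, then a countable induction. Each step checks out, including the modular-law point $P=G\oplus(P\cap H)$, where $H$ is the span of the basis vectors other than $e_1,\dots,e_n$.

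There is one small slip in Step 2. Suppose the unit coefficient in the $j$-th relation $\sum_i a_i(\delta_{ij}-b_{ij})=0$ is $b_{ij}$ with $i\neq j$. Then it is $a_i$, not $a_j$, that lies in the ideal generated by the others. The conclusion that every entry of $I-B$ lies in $\mathfrak{m}$ is unaffected.

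The paper gives no proof of its own. It quotes the statement as a background fact at the start of the matrix factorisation chapter. It only ever applies it to finitely generated modules over the noetherian local rings $A=k\llbracket x_1,\ldots,x_n\rrbracket$ and $R=A/\sigma$. There it is used to make the length-one projective resolutions of MCM modules free, and to treat acyclic complexes of finitely generated projectives as complexes of free modules.

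For that case there is a much shorter route via Nakayama's lemma. Lift a $k$-basis of $P/\mathfrak{m}P$ to a surjection $R^n\twoheadrightarrow P$, which splits because $P$ is projective. The kernel $K$ is then finitely generated, being a summand of $R^n$, and satisfies $K/\mathfrak{m}K=0$, so $K=0$.

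Your Steps 1 and 3 are exactly the extra work needed because the statement as written allows arbitrary, possibly infinitely generated, projectives. Your Step 2 is where the locality of the ring really enters. The general version is what you would need for the infinite-rank objects the paper mentions later, such as infinite rank matrix factorisations and big singularity categories. Your argument also uses no noetherian hypothesis, matching the generality of the stated proposition.
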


        \section{Periodicity}
        In this part, $k$ is a field. In what follows we will put $A\coloneqq k\llbracket x_1,\cdots,x_n \rrbracket$ and $R\coloneqq A/\sigma$ for some $\sigma \in\mathfrak{m}_A^2$. The condition that $\sigma$ has order two implies that the unique closed point of $R$ is a singular point. We call $R$ a \textbf{complete local hypersurface singularity}. Since $R$ is clearly lci it is a Gorenstein ring. 
        \begin{lem}\label[lem]{MFABFlem}
            An $R$-module $M$ is MCM if and only if $\mathrm{pd}_A(M)=1$.
        \end{lem}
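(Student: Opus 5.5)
The plan is to apply the Auslander--Buchsbaum formula twice, once over $R$ and once over $A$. Throughout, $M$ is a nonzero finitely generated $R$-module. By convention $0$ is MCM, and for $M=0$ the statement is read with that convention. Two preliminary facts are needed.

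First, $A$ is a regular local ring of Krull dimension $n$, so by Auslander--Buchsbaum--Serre every finitely generated $A$-module has finite projective dimension. Also $\mathrm{depth}(A)=n$, since $x_1,\ldots,x_n$ is a maximal regular sequence. Second, $\sigma$ is a nonzerodivisor in the domain $A$, so $\dim R = n-1$, using the dimension drop for a nonzerodivisor recorded in the proof of the corollary to Rees' theorem.

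The key step is to compare depths over $R$ and over $A$. The maximal ideal of $R$ is the image of $\mathfrak{m}_A$, and both rings have residue field $k$. By Rees' theorem, depth is the length of a maximal $M$-regular sequence in the maximal ideal. A sequence of elements of $\mathfrak{m}_A$ is $M$-regular exactly when its image in $\mathfrak{m}_R$ is $M$-regular, because $M$ is annihilated by $\sigma$ and the $A$-action factors through $R$. Hence $\mathrm{depth}_A(M)=\mathrm{depth}_R(M)$. This identification is the step needing most care, and I would justify it through Rees' characterisation rather than by comparing Ext groups directly.

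Now apply the Auslander--Buchsbaum formula over $A$:
$$\mathrm{depth}_R(M)+\mathbf{pd}_A(M)=n.$$
Then $M$ is MCM, meaning $\mathrm{depth}_R(M)=n-1=\dim R$, if and only if $\mathbf{pd}_A(M)=1$.

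If we read ``$\mathrm{pd}_A(M)=1$'' in the usual convention, where the length of a resolution counts maps rather than modules, this matches directly. It says $M$ has a free $A$-resolution $0\to A^a\to A^b\to M\to 0$; free because projectives over the local ring $A$ are free. I would note that the paper's convention of counting nonzero modules shifts the number by one, so the lemma should be read with the standard convention.

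As a sanity check, $M$ cannot have $\mathbf{pd}_A(M)=0$, since a free $A$-module is not killed by $\sigma\neq 0$. So $\mathrm{depth}_R(M)\le n-1$, which is consistent with $\mathrm{depth}\le\dim$.
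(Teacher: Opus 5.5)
Your argument is correct and matches the paper's proof, which just says to apply the Auslander--Buchsbaum formula over $A$ together with $\dim A = 1+\dim R$; you additionally spell out the implicit step $\mathrm{depth}_A(M)=\mathrm{depth}_R(M)$ via Rees' theorem. Your remark on conventions is also right: the standard convention, where length counts maps, is the intended one, as the paper itself says projective modules have projective dimension $0$.
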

        \begin{proof}
        Use the Auslander--Buchsbaum formula along with the fact that, since $R$ is lci, we have $\dim(A)=1+\dim(R)$.
        \end{proof}
        Eisenbud was the first to notice the following surprising fact:
 		\begin{thm}\label[thm]{perthm}
 			If $R$ is a complete local hypersurface singularity then $\Omega^2\cong \id$ as endofunctors of $\underline{\mathbf{MCM}}(R)$. 
 		\end{thm}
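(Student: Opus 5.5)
The plan is to build, for each MCM $R$-module $M$, an explicit two-term free resolution over $A$, and from it a matrix factorisation that yields a $2$-periodic free resolution over $R$.

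First, the $A$-resolution. By \Cref{MFABFlem}, a nonzero MCM module $M$ has $\mathrm{pd}_A(M)=1$. Since $A$ is local, projectives are free, so there is a short exact sequence
$$0\to F\xrightarrow{\varphi} G\to M\to 0$$
of free $A$-modules. Because $M$ is killed by $\sigma$, localising at $\sigma$ (or tensoring with the fraction field of $A$) shows $M$ is torsion. Hence $\varphi$ becomes an isomorphism there, and $F$ and $G$ have the same finite rank $r$.

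Second, the matrix factorisation. Multiplication by $\sigma$ on $G$ induces zero on $M$. So $\sigma G\subseteq \mathrm{im}(\varphi)$, and since $\varphi$ is injective there is a unique $\psi:G\to F$ with $\varphi\psi=\sigma\cdot \id_G$. Then $\varphi\psi\varphi=\sigma\varphi$, and injectivity of $\varphi$ gives $\psi\varphi=\sigma\cdot\id_F$. This is the pair $(\varphi,\psi)$ with both composites equal to $\sigma$.

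Third, the periodic resolution over $R$. Reduce mod $\sigma$, writing $\bar F=F/\sigma F$ and $\bar G=G/\sigma G$, which are free $R$-modules. I claim that
$$\cdots\to \bar G\xrightarrow{\bar\psi}\bar F\xrightarrow{\bar\varphi}\bar G\xrightarrow{\bar\psi}\bar F\xrightarrow{\bar\varphi}\bar G\to M\to 0$$
is exact. Surjectivity onto $M$ and $\mathrm{im}\,\bar\varphi=\ker(\bar G\to M)$ are immediate from the $A$-sequence. The remaining checks are $\ker\bar\varphi=\mathrm{im}\,\bar\psi$ and $\ker\bar\psi=\mathrm{im}\,\bar\varphi$, using that $\sigma$ is a nonzerodivisor on the free modules $F,G$.

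For the first, suppose $\varphi(x)=\sigma y=\varphi\psi(y)$. Injectivity of $\varphi$ gives $x=\psi(y)$. The second is symmetric, using injectivity of $\psi$, which follows from $\varphi\psi=\sigma$ and $\sigma$ being a nonzerodivisor. This exactness check is the main point to be careful about, but it is a short diagram chase.

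Reading off syzygies, $\Omega M\cong\mathrm{coker}(\bar\psi)$ and $\Omega^2M\cong\mathrm{coker}(\bar\varphi)\cong M$.

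Finally, naturality. A stable map $f:M\to N$ lifts to a map of $A$-resolutions $(f_G,f_F)$ commuting with $\varphi$. Injectivity of $\varphi$ then forces it to commute with $\psi$ as well, so it induces a chain map of the periodic resolutions. Since lifts to resolutions are unique up to homotopy, and a nullhomotopic map induces maps on syzygies factoring through projectives, the isomorphisms $\Omega^2M\cong M$ assemble into a natural isomorphism $\Omega^2\cong\id$ on $\underline{\mathbf{MCM}}(R)$.

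One choice has to be handled: $\Omega$ is only defined up to stable equivalence, via Schanuel's Lemma. I would fix $\Omega$ using these particular resolutions, which is legitimate since any choice gives an isomorphic functor on the stable category.
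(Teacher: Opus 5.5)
Your proof is correct and follows the paper's route. Both arguments take the two-term $A$-free resolution from \Cref{MFABFlem} and use the fact that $\sigma$ kills $M$ to produce the second map $\psi$ of a matrix factorisation. Both then check that the reduced $2$-periodic complex is exact, which is the content of \Cref{mfperlem}. Your explicit naturality argument, where a lift $(f_G,f_F)$ automatically commutes with $\psi$ by injectivity of $\varphi$, fills in a step the paper leaves implicit when it reduces to exhibiting a $2$-periodic resolution for each object.
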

        In order to prove this, we introduce the main object of study of this part. 
         \begin{defn}
 	 	A \textbf{matrix factorisation} of $\sigma$ is a free finitely generated $\Z/2$-graded $A$-module $X$ together with an odd $A$-linear map $d:X \to X$ satisfying $d^2=\id_X\cdot \sigma$.
 	 	\end{defn}
        More concretely, a matrix factorisation is a pair $(F,G)$ of finitely generated free $A$-modules and two maps $\phi:F \to G$ and $\psi:G \to F$ such that $\phi\psi = \sigma\mathrm{id}_G $ and $\psi\phi=\sigma\mathrm{id}_F$.

  	\begin{ex}
  		For the nodal cubic $\sigma=y^2-x^2-x^3$ in the plane, one matrix factorisation is  $$\phi=\psi=\begin{pmatrix}y&x+x^2\\-x&-y\end{pmatrix}$$ 
  		\end{ex}

        If $M$ is an $A$-module, we put $\bar M\coloneqq M\otimes_A R \cong M/\sigma$.
        \begin{lem}\label[lem]{mfperlem}
            If $X$ is a matrix factorisation of $\sigma$, then the corresponding 2-periodic diagram
           $$\cdots \xrightarrow{\bar d_0}\bar X_0 \xrightarrow{\bar d_1}\bar X_1 \xrightarrow{\bar d_0}\bar X_0\xrightarrow{\bar d_1}\bar X_1 \xrightarrow{\bar d_0}\cdots$$
           is an acylic complex of free finite rank $R$-modules.
        \end{lem}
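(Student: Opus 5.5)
The argument is elementary. The key inputs are that $A = k\llbracket x_1,\ldots,x_n\rrbracket$ is a domain and that $\sigma\neq 0$.

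Write $\phi = d_0 : X_0\to X_1$ and $\psi = d_1: X_1\to X_0$ (matching the indexing in the displayed diagram), so that $\phi\psi = \sigma$ and $\psi\phi = \sigma$.

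\textbf{It is a complex.} Each $\bar X_i = X_i\otimes_A R$ is free of finite rank over $R$. The composites of consecutive reduced maps are $\bar\phi\bar\psi = \overline{\sigma\,\id}=0$ and $\bar\psi\bar\phi = 0$, since $\sigma$ vanishes in $R$.

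\textbf{Preliminary facts about $\phi$ and $\psi$.}
\begin{enumerate}
\item Both maps are injective. If $\phi(x)=0$ then $\sigma x = \psi\phi(x) = 0$, and $X_0$ is a free module over a domain, so $x=0$. The same argument works for $\psi$.
\item The free modules $X_0$ and $X_1$ have the same rank. After inverting $\sigma$ (or passing to the fraction field of $A$), $\phi$ becomes an isomorphism with inverse $\sigma^{-1}\psi$. This fact is not strictly needed below.
\end{enumerate}

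\textbf{Exactness at $\bar X_1$.} By symmetry this also gives exactness at $\bar X_0$. Suppose $y\in X_1$ satisfies $\bar\psi(\bar y)=0$, that is, $\psi(y) = \sigma x'$ for some $x'\in X_0$. Then
\[
\psi(y) = \sigma x' = \psi\phi(x').
\]
Injectivity of $\psi$ gives $y = \phi(x')$. Hence $\bar y = \bar\phi(\bar x')$, so $\ker\bar\psi\subseteq \mathrm{im}\,\bar\phi$. The reverse inclusion holds because the sequence is a complex.

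\textbf{Exactness at $\bar X_0$.} Swap the roles of $\phi$ and $\psi$ in the previous step. Since the diagram is 2-periodic, this establishes exactness at every position, and the complex is acyclic.

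The only step needing care is the injectivity of $\psi$ and $\phi$. This is where the hypothesis that $A$ is a domain and $\sigma\neq0$ (automatic since $\sigma\in\mathfrak m^2$ defines a hypersurface) enters, so I would state it explicitly there. Everything else is a direct diagram chase.
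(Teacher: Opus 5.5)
Your proof is correct and is essentially the paper's argument. The paper takes $a$ with $d_0a=\sigma b$, computes $\sigma a=d_1d_0a=\sigma d_1b$ and cancels $\sigma$; you instead rewrite $\sigma x'$ as $\psi\phi(x')$ and cancel $\psi$ by injectivity, which rests on the same fact that $\sigma$ is a nonzerodivisor on free $A$-modules. (One small wording point: $\sigma\neq 0$ is part of the hypersurface hypothesis rather than a consequence of $\sigma\in\mathfrak{m}^2$, since $0\in\mathfrak{m}^2$.)
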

        \begin{proof}
            The diagram is a complex since $d^2 = \sigma$. It is obviously a complex of free finite rank $R$-modules so we are left to check the exactness. By symmetry it suffices to check the exactness at $\bar X_0$. To do this, let $a\in X_0$ be such that $\bar a \in \ker \bar d_0$. This is equivalent to asking that $d_0a = \sigma b$ for some $b\in X_1$. Hence we see that $$\sigma a = d_1d_0a = d_1\sigma b =d_1d_0d_1b = \sigma d_1b$$Since $\sigma$ is not a zerodivisor in $A$, it follows that $a=d_1b$, and hence $\bar a = \bar d_1 \bar b$, as required.
        \end{proof}

        \begin{proof}[Proof of \Cref{perthm}]
            It suffices to show that every MCM $R$-module $M$, without free summands, has a 2-periodic free resolution. By \Cref{MFABFlem}, $M$ admits a two-step $A$-free resolution $X_1\xrightarrow{d_0}X_0\to M$. Multiplication by $\sigma$ annihilates $M$, so is nullhomotopic on the resolution. This nullhomotopy is witnessed by a map $d_1:X_0 \to X_1$ which fits into a commutative diagram $$\begin{tikzcd}
 				X_1 \ar[r,"d_0"]\ar[d,"\sigma"]& X_0 \ar[d,"\sigma"]\ar[dl,"d_1"]
 				\\ X_1 \ar[r,"d_0"]& X_0
 				\end{tikzcd}$$and hence $(X_0,X_1, d_0, d_1)$ is a matrix factorisation of $\sigma$. By \Cref{mfperlem}, the complex
                 $$\cdots \xrightarrow{\bar d_0}\bar X_0 \xrightarrow{\bar d_1}\bar X_1 \xrightarrow{\bar d_0}\bar X_0$$ is a 2-periodic free resolution of its cohomology $\mathrm{coker}(d_0) = \bar M$. Since $A \to R$ is a quotient map, it is an epimorphism, and hence the natural map $R\otimes_A R \to R$ is an isomorphism. It follows that $\bar M \cong M$ as $R$-modules.
        \end{proof}

        \begin{rmk}
            If $A$ is a noetherian local CM ring, and $\sigma$ a non-zerodivisor in $\mathfrak{m}_A$, then the above proof shows that every MCM $A/\sigma$-module admits a 2-periodic resolution constructed using matrix factorisations. On the other hand, typically one wants $A$ to be a regular complete local $k$-algebra; the Cohen structure theorem then gives an isomorphism $A\simeq k\llbracket x_1,\ldots, x_n \rrbracket$.
        \end{rmk}

    \begin{rmk}
    The converse of \Cref{perthm} is also true: if $R$ is a complete local Gorenstein ring with $\Omega^p\cong \id$ for some $p\geq 1$ then $R$ is actually a hypersurface singularity. The proof uses a criterion of Gulliksen \cite{gulliksen}.
    \end{rmk}

    We finish with the useful observation that for a matrix factorisation $X$, the matrices $d_0$ and $d_1$ are necessarily square:
    \begin{lem}
        If $X$ is a matrix factorisation of $\sigma$, then $X_0 \cong X_1$ as $A$-modules.
    \end{lem}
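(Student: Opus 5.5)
Since $X_0$ and $X_1$ are finitely generated free $A$-modules, say $X_0\cong A^{p}$ and $X_1\cong A^{q}$, it suffices to show $p=q$. The plan is to localise at a prime, or pass to the fraction field, where $\sigma$ becomes invertible; there the matrix factorisation equations make $d_0$ and $d_1$ mutually inverse up to the unit $\sigma$, and then compare ranks.

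Concretely, let $K$ be the field of fractions of $A$, which makes sense because $A=k\llbracket x_1,\ldots,x_n\rrbracket$ is an integral domain. Since $\sigma\in\mathfrak{m}_A^2$ is a nonzero element (implicitly, so that $R$ is a genuine hypersurface), $\sigma$ is a unit in $K$. Tensor the factorisation with $K$ to get $K$-linear maps $d_0\otimes K: K^{q}\to K^{p}$ and $d_1\otimes K: K^{p}\to K^{q}$. These satisfy
\[
(d_0\otimes K)(d_1\otimes K)=\sigma\,\mathrm{id}, \qquad (d_1\otimes K)(d_0\otimes K)=\sigma\,\mathrm{id}.
\]
Therefore $\sigma^{-1}(d_1\otimes K)$ is a two-sided inverse to $d_0\otimes K$. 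So $K^p\cong K^q$ as $K$-vector spaces, which gives $p=q$ by invariance of dimension. Hence $X_0\cong A^p\cong X_1$.

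There is no serious obstacle. The only points needing care are that $\sigma\neq 0$ and that $A$ is a domain, both immediate in this setting. An alternative argument avoids fraction fields: take determinants of the square matrix identity $d_0d_1=\sigma I_p$ after padding. The cleanest version of that argument is the rank argument above, so we commit to it.
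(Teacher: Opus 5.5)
Your proof is correct and is essentially the paper's argument: base change to the fraction field $K$ of $A$, where the composites $d_0d_1$ and $d_1d_0$ become multiplication by the invertible scalar $\sigma$, forcing the two ranks to agree. You also state explicitly that this needs $\sigma\neq 0$ (the lemma fails for $\sigma=0$), a hypothesis the paper's proof uses only implicitly.
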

    \begin{proof}
        Putting $X_0 \cong A^n$ and $X_1 \cong A^m$, we want to show that $n=m$. If $K$ denotes the fraction field of $A$, then base changing to $K$ gives us a pair of $K$-vector spaces $K^n, K^m$ and linear maps $K^n \to K^m$ and $K^m \to K^n$ such that the composite maps $K^n \to K^m \to K^n$ and $K^m \to K^n \to K^m$ are isomorphisms. This immediately implies that $n=m$, as desired.
    \end{proof}

     \section{The homotopy category}\label[section]{mfhcatsect}
  	The category of matrix factorisations admits a notion of mapping complex: if $X,Y$ are matrix factorisations we put $$\hom^i(X,Y)=\hom_A(X_0,Y_{i})\oplus \hom_A(X_1,Y_{1+i})$$where we take the indices mod $2$. The differential is given by the usual formula $\partial(f)=d_Yf - (-1)^{\vert f\vert}fd_X$.
    \begin{exer}
        Check that this is actually a differential.
    \end{exer}
    We let $\mathbf{MF}(A,\sigma)$ denote the \textbf{homotopy category of matrix factorisations}: the objects are the matrix factorisations of $\sigma$ and the morphisms are the homotopy classes of chain maps in the mapping complexes. Observe that $\mathbf{MF}(A,\sigma)$ has a \textbf{shift functor} $\Sigma$ which sends $(X_0,X_1)$ to $(X_1,X_0)$. Clearly $\Sigma^2=\mathrm{id}$.

  	\begin{thm}
  		There is an equivalence
  		$$\mathrm{C}:\mathbf{MF}(A,\sigma) \to \underline{\mathbf{MCM}}(R)$$which sends a matrix factorisation to $\mathrm{coker}(d_0)$. It sends $\Sigma$ to $\Omega = \Omega^{-1}$, and hence makes $\mathbf{MF}(A,\sigma)$ into a triangulated category.
  		\end{thm}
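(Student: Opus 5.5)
The plan is to factor $\mathrm{C}$ through the homotopy category of acyclic complexes, using the machinery already set up for \Cref{buchthm}. First, define the functor $\mathbf{MF}(A,\sigma)\to K_\mathrm{ac}(\mathrm{proj}R)$ sending $X$ to the 2-periodic complex $\bar X$ of \Cref{mfperlem}. We normalise degrees so that $\bar X_0$ sits in degree $0$ and $\bar d_0:\bar X_1\to \bar X_0$ is the differential into degree $0$. A closed morphism $f:X\to Y$ of matrix factorisations, i.e.\ a pair $f_0,f_1$ commuting with the $d$'s, reduces mod $\sigma$ to a chain map $\bar X\to \bar Y$. A homotopy $h$ with $\partial h=f$ likewise reduces to a chain homotopy. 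So we get a functor $\mathbf{MF}(A,\sigma)\to K_\mathrm{ac}(\mathrm{proj}R)$, and composing with $\Omega_0$ from \Cref{buchprop} yields $\mathrm{coker}(\bar d_0)$. As in the proof of \Cref{perthm}, this equals $\mathrm{coker}(d_0)$ because $\sigma$ kills it. This composite is $\mathrm{C}$. It visibly sends $\Sigma$ to the shift $[1]$ on periodic complexes, and hence, by \Cref{buchprop}(1) and \Cref{buchthm}, to $\Omega^{-1}$, which equals $\Omega$ by \Cref{perthm}.

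Essential surjectivity is already contained in the proof of \Cref{perthm}. Given an MCM module $M$, \Cref{MFABFlem} gives an $A$-resolution $X_1\xrightarrow{d_0}X_0\to M$, the nullhomotopy of $\sigma$ supplies $d_1$, and $\mathrm{coker}(d_0)\cong M$. By \Cref{buchthm}, and since $\Omega_0$ is an equivalence (shown in the proof of \Cref{buchthm}), it then remains to show that $X\mapsto \bar X$ is fully faithful into $K_\mathrm{ac}(\mathrm{proj}R)$. Equivalently, $\mathrm{C}$ itself should be fully faithful on stable hom-sets.

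For fullness I would work directly with modules. Given $g:\mathrm{coker}(d_0^X)\to \mathrm{coker}(d_0^Y)$, lift it to a map of the two-term $A$-free resolutions, giving $f_0:X_0\to Y_0$ and $f_1:X_1\to Y_1$ with $d_0f_1=f_0d_0$. The remaining relation $d_1 f_0=f_1 d_1$ is then forced. Multiplying by $d_0$ gives $d_0d_1f_0=\sigma f_0=f_0 d_0 d_1=d_0f_1d_1$, and $d_0$ is injective because $\sigma$ is a nonzerodivisor and $d_0d_1=\sigma$. So $f=(f_0,f_1)$ is a morphism of matrix factorisations with $\mathrm{C}(f)=g$.

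Faithfulness is the main obstacle. Suppose $\mathrm{C}(f)$ factors through a projective, hence free, $R$-module. Then $\mathrm{C}(f)$ factors through the projection $\bar Y_0\to \mathrm{coker}(d_0^Y)$, so there is a lift $s:\mathrm{coker}(d_0^X)\to \bar Y_0$. I would lift the composite $X_0\to \mathrm{coker}(d_0^X)\to\bar Y_0$ to an $A$-linear map $h_0:X_0\to Y_0$, and then show that $f_0-h_0$ maps through $d_0^Y$ modulo $\sigma$-corrections. This should produce $h_1$ and define a homotopy $h$ with $\partial h=f$, where the components are checked using injectivity of $d_0$ and $d_1$ together with the relation $d^2=\sigma$. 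If this bookkeeping gets messy, the fallback is to avoid it: show that the chain map $\bar f$ between complete resolutions is nullhomotopic whenever $\Omega_0\bar f=0$ in the stable category, which is exactly faithfulness of $\Omega_0$ from \Cref{sigma0} and \Cref{buchthm}. Then lift an $R$-linear 2-periodic nullhomotopy back to $A$, using freeness of the modules and $\sigma$-regularity to correct the lift.
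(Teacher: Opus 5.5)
Your proposal is correct, and most of it follows the paper: $\mathrm{C}=\Omega_0\circ c$ through $K_\mathrm{ac}(\mathrm{proj}R)$, essential surjectivity from the proof of \Cref{perthm}, and fullness by lifting to the two-term $A$-resolutions. Your check that $d_1f_0=f_1d_1$ is automatic fills in a step the paper leaves implicit. One small correction: injectivity of $d_0^Y$ comes from $d_1^Yd_0^Y=\sigma$, not from $d_0d_1=\sigma$.

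The genuine difference is faithfulness. The paper extends $f$ periodically to the 2-periodic $R$-free resolutions $\tilde{\mathrm{C}}(X)\to\tilde{\mathrm{C}}(Y)$ and takes a chain homotopy there. It then modifies that homotopy until $h_3=h_1$, so that $(h_1,h_2)$ is a homotopy of matrix factorisations; this modification is only sketched. Your primary route stays with the two-term $A$-resolutions and does close up.

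Let $k:X_0\to Y_0$ lift the composite $X_0\to\mathrm{coker}(d_0^X)\xrightarrow{s}\bar Y_0$. Since $kd_0^X\equiv 0$ mod $\sigma$, write $kd_0^X=\sigma\ell$ with $\ell:X_1\to Y_0$. Then $\sigma k=kd_0^Xd_1^X=\sigma\ell d_1^X$, so $k=\ell d_1^X$. Because $s$ lifts $\mathrm{C}(f)$ through $\bar Y_0\to\mathrm{coker}(d_0^Y)$, the map $f_0-k$ lands in $\mathrm{im}\,d_0^Y+\sigma Y_0=\mathrm{im}\,d_0^Y$. Hence $f_0-k=d_0^Yt$ with $t:X_0\to Y_1$, and
\[ f_0=d_0^Yt+\ell d_1^X,\qquad d_0^Y\bigl(f_1-d_1^Y\ell-td_0^X\bigr)=f_0d_0^X-\sigma\ell-(f_0-\ell d_1^X)d_0^X=0. \]
Injectivity of $d_0^Y$ then gives $f_1=d_1^Y\ell+td_0^X$. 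So the odd map with components $t$ and $\ell$ is a homotopy $h$ with $\partial h=f$. Your even map $h_0$ is therefore not itself a homotopy component; it gets factored as $\ell d_1^X$.

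Your route gives a complete, elementary argument that handles ``$\mathrm{C}(f)$ factors through a projective'' directly, with no periodicity correction at all. The paper's route reuses the periodic resolutions but leaves that correction unexplained.

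Your fallback, by contrast, would not work as written. Faithfulness of $\Omega_0$ only produces some nullhomotopy of $\bar f$, not a 2-periodic one. Making it periodic is exactly the paper's $h_3=h_1$ step, which the fallback assumes rather than proves.
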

        \begin{proof}
                To construct the functor we can argue as follows. If $X$ is a matrix factorisation, we let $cX \in K_\mathrm{ac}(\mathrm{proj}R)$ be the acyclic complex $$\cdots \xrightarrow{\bar d_0}\bar X_0 \xrightarrow{\bar d_1}\bar X_1 \xrightarrow{\bar d_0}\bar X_0\xrightarrow{\bar d_1}\bar X_1 \xrightarrow{\bar d_0}\cdots$$The assignment $X\mapsto cX$ is functorial, since, as in the proof of \Cref{buchprop}, if two morphisms of matrix factorisations are homotopy equivalent then their difference factors through a projective module. The functor $\mathrm C$ is then the composition of $c$ with the equivalence $\Omega_0:K_\mathrm{ac}(\mathrm{proj}R) \to \underline{\mathbf{MCM}}(R)$. In particular, we see that $\mathrm{C}(X)\simeq \mathrm{coker}(d_0)$. It is easy to see that $c(\Sigma X) \simeq (cX)[1]\simeq (cX)[-1]$, and hence $\mathrm{C}(\Sigma X)\simeq \Omega\mathrm{C}(X)$. During the proof of \Cref{perthm}, we showed that $\mathrm{C}$ was essentially surjective, so to finish we need to show that it is fully faithful. 
                
                For fullness, let $f:M \to N$ be a map in the stable category. Find matrix factorisations $X,Y$ with $\mathrm{C}(X)\cong M$ and $\mathrm{C}(Y)\cong N$. Lifting $f$ to a map of resolutions $$\begin{tikzcd}
 				X_1 \ar[d,"\tilde f_1"]\ar[r,"d_0"] & X_0 \ar[d,"\tilde f_0"]
 				\\ Y_1 \ar[r,"d_0"] & Y_0
 				\end{tikzcd}$$defines a morphism $\tilde f:X \to Y$ of matrix factorisations which lifts $f$. 

                Faithfullness is the hardest part of the proof. For this we introduce some notation: if $X$ is a matrix factorisation, we let $\tilde{\mathrm{C}}(X)$ denote the complex $$\cdots \xrightarrow{\bar d_0}\bar X_0 \xrightarrow{\bar d_1}\bar X_1 \xrightarrow{\bar d_0}\bar X_0$$ which is a 2-periodic free resolution of the MCM module $\mathrm{C}(X)$. Take two maps $f,g:X \to Y$ with $\mathrm{C}f=\mathrm{C}g$. We want to prove that they are homotopic in $\mathbf{MF}(A,\sigma)$. Extending $f$ periodically gives a map $\tilde{\mathrm{C}}(f): \tilde{\mathrm{C}}(X) \to \tilde Y$. Since $\tilde{\mathrm{C}}(f)$ is a resolution of ${\mathrm{C}}(f)={\mathrm{C}}(g)$, we obtain a chain homotopy $h:\tilde{\mathrm{C}}(f)\simeq \tilde{\mathrm{C}}(g)$. If this chain homotopy were 2-periodic, we would be done, since it would lift to a homotopy of maps of matrix factorisations. In fact, we don't need full periodicity of $h$; writing $h=(h_1,h_2,h_3,\ldots)$ in components, it is enough that $h_3=h_1$. By modifying $h$ a little we can ensure this, and hence $(h_1,h_2)$ gives a homotopy $f\simeq g$.
        \end{proof}

\begin{rmk}
    A very general construction of the homotopy category of matrix factorisations is given in \cite{bjcat}.
\end{rmk}

   \section{Kn\"orrer periodicity}
   Here we let $k$ be algebraically closed and of characteristic not 2.
   \begin{thm}[{\cite{knoerrer}}]
       There is a triangle equivalence $$\mathbf{MF}(k\llbracket x_1,\ldots, x_n \rrbracket,\sigma) \simeq \mathbf{MF}(k\llbracket x_1,\ldots, x_{n},y,z \rrbracket,\sigma+y^2 + z^2)$$
   \end{thm}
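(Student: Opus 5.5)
The plan is to factor the equivalence through Knörrer's original argument with the double branched cover. Since $k$ is algebraically closed of characteristic not $2$, the change of variables $u=y+iz$, $v=y-iz$ gives $y^2+z^2=uv$. So it suffices to build an equivalence
$$\mathbf{MF}(A,\sigma)\simeq \mathbf{MF}(A\llbracket u,v\rrbracket,\sigma+uv),$$
where $A=k\llbracket x_1,\ldots,x_n\rrbracket$. Put $B=A\llbracket u,v\rrbracket$ and $\tau=\sigma+uv$.

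\textbf{Step 1: the functor.} Define $\Phi:\mathbf{MF}(A,\sigma)\to\mathbf{MF}(B,\tau)$ as a tensor product of matrix factorisations. Take $X=(X_0,X_1,\phi,\psi)$ and the rank-one factorisation $K=(B,B,u,v)$ of $uv$. Set $\Phi(X)=X\hat\otimes_A K$. Concretely,
$$\Phi(X)_0=X_0\otimes B\oplus X_1\otimes B,\qquad \Phi(X)_1=X_1\otimes B\oplus X_0\otimes B,$$
with odd differential
$$D=\begin{pmatrix}\phi & u\\ -v & \psi\end{pmatrix},\qquad\text{resp. }\begin{pmatrix}\psi & -u\\ v & \phi\end{pmatrix}.$$
The signs are chosen via the Koszul rule so that $D^2=(\sigma+uv)\,\mathrm{id}$. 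This construction is compatible with the mapping complexes of \Cref{mfhcatsect}: $\hom_B(\Phi X,\Phi Y)\cong \hom_A(X,Y)\hat\otimes_A \enn(K)$. It therefore descends to the homotopy categories and commutes with $\Sigma$.

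\textbf{Step 2: full faithfulness.} I would compute $\enn(K)$ as a $\Z/2$-graded complex over $B$. It is a Koszul-type complex on $u,v$, with cohomology $B/(u,v)\cong A$ in even degree, killed by $u$ and $v$. So the natural map $\hom_A(X,Y)\to\hom_B(\Phi X,\Phi Y)$ should be a quasi-isomorphism. This needs care, because $\hom_A(X,Y)$ is a curved-free but genuinely differential complex, and base change $A\to B$ followed by tensoring with $\enn(K)$ must collapse back. I would set it up as a filtration/spectral sequence argument in the $(u,v)$-adic direction, or more concretely as an explicit contraction: $\enn(K)\simeq A$ via a homotopy built from $\partial/\partial u$ and $\partial/\partial v$-type maps, and then transfer it.

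\textbf{Step 3: essential surjectivity.} This is the main obstacle. I would move to the MCM side using the equivalence $\mathrm{C}:\mathbf{MF}\simeq\underline{\mathbf{MCM}}$. Let $R=A/\sigma$ and $S=B/\tau$. Take an MCM $S$-module $N$ without free summands, and restrict it to $R'=A\llbracket u\rrbracket/\sigma$, via $S/v\cong A\llbracket u\rrbracket/\sigma$. Then use Knörrer's trick: for the double cover $R^\sharp=A\llbracket y\rrbracket/(\sigma+y^2)$, every MCM module $M$ satisfies that $M\oplus\Omega M$ is induced from $R$. Applying this twice shows that every object of $\mathbf{MF}(B,\tau)$ is a direct summand of some $\Phi(X)$. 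By full faithfulness, idempotents split in the source, since MF over a complete local ring is Krull–Schmidt. So the essential image is closed under summands, and $\Phi$ is essentially surjective.

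I expect this summand argument to be the hardest point. It uses the involution $y\mapsto -y$, which requires characteristic not $2$, together with the Krull–Schmidt property. Finally, compatibility with $\Sigma$ and cones, which are preserved by the tensor construction, makes $\Phi$ a triangle equivalence.
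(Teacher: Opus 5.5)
Steps 1 and 2 are sound, but Step 3 does not go through as written. Restricting along $v$ is the wrong cut. On the hyperplane $v=0$ the quadratic form $uv$ vanishes identically, so $N/vN$ lives over $S/v\cong A\llbracket u\rrbracket/\sigma$. You therefore land in factorisations of $\sigma$ over the larger ring $A\llbracket u\rrbracket$, where the double-cover lemma ($Z\oplus\Sigma Z\simeq G\rho Z$ for factorisations $Z$ of $\sigma+y^2$) says nothing. That lemma needs an anisotropic variable, so you must cut along $z$ (from $\sigma+y^2+z^2$ to $\sigma+y^2$) and then along $y$.

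Even then, applying the lemma twice only shows that $Y$ is a summand of $G_2G_1(\rho_1\rho_2Y)$. Here $G_j$ are the double-cover functors $X\mapsto(\omega,\omega)$ with $\omega=\stbtm{y}{\psi}{\phi}{-y}$, and nothing in your proposal connects $G_2G_1$ to $\Phi$. The missing bridge is $G_2G_1\simeq\Phi\oplus\Sigma\Phi$, which is \Cref{knlemeasy}(1) in the paper. It is a direct check:
\begin{itemize}
\item Both parities of $G_2G_1X$ are $V\oplus V$ with $V=X_0\oplus X_1$, and the differential is $\omega_2=\stbtm{z}{\omega_1}{\omega_1}{-z}$.
\item $\omega_2$ swaps the submodules $\{(w,iw)\}$ and $\{(w,-iw)\}$, acting in the coordinate $w$ by $i\stbtm{v}{\psi}{\phi}{-u}$ and $-i\stbtm{u}{\psi}{\phi}{-v}$ respectively.
\item After rescaling, this gives $\Phi X\oplus\Sigma\Phi X$, up to your sign conventions.
\end{itemize}
With this, $Y$ is a summand of $\Phi(X'\oplus\Sigma X')$ with $X'=\rho_1\rho_2Y$, and your summand-closure argument finishes the proof. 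Alternatively you could prove a $uv$-version $\Phi(Y/(u,v))\simeq Y\oplus\Sigma Y$ directly, but that also needs an argument you have not given.

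Once repaired, your route is genuinely leaner than the paper's. The paper gets only faithfulness of $H$ (your $\Phi$) from $\rho_1\rho_2H\simeq\mathrm{id}\oplus\Sigma$ and omits fullness. It then proves essential surjectivity by showing $H$ is a bijection on isoclasses of indecomposables, which requires the decomposability analysis of \Cref{knprop}.

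Your Step 2 gives full faithfulness outright:
\begin{itemize}
\item $\enn(K)$ is the $2$-periodic folding of the Koszul complex $K(B;u,v)$.
\item This deformation retracts $A$-linearly onto $A$ via the divided-difference operators $u^n\mapsto u^{n-1}$, $v^m\mapsto v^{m-1}$. Use these rather than a literal $\partial/\partial u$, which would require dividing by integers.
\item Tensoring over $A$ with the finite free complex $\hom_A(X,Y)$ preserves this homotopy equivalence, which disposes of the unboundedness worry you raise.
\end{itemize}
Given full faithfulness and the Krull--Schmidt property of the source, closure of the essential image under summands is automatic, so \Cref{knprop} becomes unnecessary. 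What the paper's route buys in exchange is finer information on how indecomposables behave under $G$ and $\rho$.
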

   The proof will be quite hands-on. In what follows we will set $A=k\llbracket x_1,\ldots, x_n \rrbracket$ for brevity. We will write $A\langle y \rangle \coloneqq k\llbracket x_1,\ldots, x_n,y \rrbracket$ and similarly for $A\langle y,z\rangle$. We do not use the notation $A\llbracket y \rrbracket$ since, as is well known, the two rings $k\llbracket x,y \rrbracket$ and $k\llbracket x \rrbracket \llbracket y \rrbracket$ are \textit{not} isomorphic.
\begin{lem}\label[lem]{knlemsym}
    Suppose $X$ is a matrix factorisation such that $\Sigma X \simeq X$. Then $X$ is isomorphic to a matrix factorisation of the form $(\omega, \omega)$.
\end{lem}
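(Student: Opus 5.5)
The plan is to turn the homotopy equivalence $\Sigma X\simeq X$ into a strict isomorphism $\theta:X\to\Sigma X$ with $\Sigma\theta\circ\theta=\id_X$, and then use $\theta$ to identify the two free modules of $X$ so that $d_0$ and $d_1$ become the same matrix. I interpret ``isomorphic'' as isomorphic in $\mathbf{MF}(A,\sigma)$, and use throughout the standing assumptions of this section ($k$ algebraically closed, $\mathrm{char}\,k\neq 2$).

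\textbf{Step 1: reduction to the reduced case.} Write $X=(F,G,\phi,\psi)$ with $\phi:F\to G$ and $\psi:G\to F$. If some entry of $\phi$ or $\psi$ is a unit, row and column operations split off a summand $(A,A,1,\sigma)$ or $(A,A,\sigma,1)$. Each such summand is contractible, with nullhomotopy given by the unit entry. After removing all of them I may assume $X$ is \emph{reduced}, meaning all matrix entries lie in $\mathfrak{m}_A$; then $\Sigma X$ is reduced too.

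Next I claim that a homotopy equivalence between reduced matrix factorisations can be replaced by an honest isomorphism of matrix factorisations. Suppose $gf=\id+\partial h$. Since $d_X$ and $d_Y$ have entries in $\mathfrak{m}$, the term $\partial h=dh+hd$ also has entries in $\mathfrak{m}$. So $gf\equiv\id \pmod{\mathfrak{m}}$, and by Nakayama and completeness $gf$ is invertible. The same holds for $fg$, so $f$ is an isomorphism of graded modules commuting with $d$. This step is standard, and I expect to treat it briefly.

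\textbf{Step 2: normalising the involution.} Take an isomorphism $\theta=(\alpha,\beta):X\to\Sigma X$, where $\alpha:F\to G$, $\beta:G\to F$, and $\psi\alpha=\beta\phi$, $\phi\beta=\alpha\psi$. Then $\Sigma\theta=(\beta,\alpha):\Sigma X\to X$. Put $\gamma\coloneqq\Sigma\theta\circ\theta=(\beta\alpha,\alpha\beta)$, an automorphism of $X$. Using $\Sigma^2=\id$, one checks directly that
$$\theta\,p(\gamma)=\Sigma(p(\gamma))\,\theta$$
for every polynomial $p$, and hence for every element of the closure $B\coloneqq\overline{A[\gamma]}\subseteq\enn(X)$.

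The key point, and the step I expect to be the main obstacle, is to find a unit $s\in B$ with $s^2=\gamma$. The plan is as follows. The ring $B$ is commutative and module-finite over the complete local ring $A$, so it is a finite product of complete local rings. Each residue field is a finite extension of $k$, hence equals $k$ because $k$ is algebraically closed. The image of $\gamma$ in each residue field is a nonzero scalar, which has a square root in $k$. Since $\mathrm{char}\,k\neq 2$, Hensel's lemma applied to $T^2-\gamma$ lifts this square root to $B$.

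Set $\theta'\coloneqq\theta s^{-1}$. The commutation rule gives
$$\Sigma\theta'\circ\theta'=\Sigma\theta\,\Sigma(s^{-1})\,\theta\,s^{-1}=\Sigma\theta\,\theta\, s^{-2}=\gamma\gamma^{-1}=\id.$$
So I may assume $\beta\alpha=\id_F$ and $\alpha\beta=\id_G$, that is, $\beta=\alpha^{-1}$.

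\textbf{Step 3: conclusion.} Change basis on $G$ via $\alpha$, identifying $G$ with $F$. The factorisation $X$ is then isomorphic to $(F,F,\alpha^{-1}\phi,\psi\alpha)$. The relation $\psi\alpha=\beta\phi=\alpha^{-1}\phi$ shows that both maps equal a single matrix $\omega$. Finally, add back the contractible summands removed in Step 1, which are zero in $\mathbf{MF}(A,\sigma)$. This gives $X\cong(\omega,\omega)$ in $\mathbf{MF}(A,\sigma)$.
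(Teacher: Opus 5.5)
Your proof is correct, and it follows the paper's outline: take an isomorphism $(\alpha,\beta)\colon X\to\Sigma X$, rescale it by a square root of the automorphism $(\beta\alpha,\alpha\beta)$ so that $\beta=\alpha^{-1}$, and then conjugate.

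The way you get the square root is different. The paper assumes $X$ indecomposable, so that $\enn(X)$ is local with residue field $k$. It then writes $(\beta\alpha,\alpha\beta)=(\id,\id)+(f_1,f_2)$ with the $f_i$ in the radical, and applies the power series for $(1+x)^{-1/2}$. You instead take the square root inside the commutative module-finite $A$-algebra $B$, using its splitting into complete local rings and Hensel's lemma.

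Your route buys three things.
\begin{itemize}
\item You avoid the reduction to indecomposables. In the paper that reduction silently needs a Krull--Schmidt argument, because $\Sigma X\simeq X$ does not force $\Sigma X_i\simeq X_i$ for the indecomposable summands $X_i$ of $X$. For example, $X=Y\oplus\Sigma Y$ has to be handled separately.
\item Your last step is cleaner. Changing basis on $G$ by $\alpha$ gives $\omega=\alpha^{-1}\phi=\psi\alpha$ directly. The paper instead takes a further square root of $\alpha'$ and sets $\omega=\gamma\psi\gamma$, which needs an identification $X_0\cong X_1$ and a matrix square root that it does not justify.
\item Your Step~1 makes explicit the passage from an isomorphism in $\mathbf{MF}(A,\sigma)$ to a strict isomorphism of factorisations. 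The paper uses this tacitly when it computes with components.
\end{itemize}

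The cost is heavier commutative algebra. One detail you should write down: $\gamma$ is a unit of $B$. This follows from Cayley--Hamilton, since the determinants of its two components are units of $A$. It guarantees that the residues of $\gamma$ are nonzero, and that $s^{-1}\in B$, so $s^{-1}$ is again a chain map and $\theta s^{-1}$ is an isomorphism of factorisations.
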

\begin{proof}Let $(\alpha, \beta)$ be components of an isomorphism $X \to \Sigma X$. Then $(\beta\alpha,\alpha \beta)$ is an automorphism of $X$. Without loss of generality $X$ is indecomposable, so the maximal semisimple quotient of the endomorphism ring of $X$ is a copy of $k$. So put $(\beta\alpha,\alpha \beta) = (\mathrm{id}, \mathrm{id})+ (f_1, f_2)$ with $f_i$ in the radical of the endomorphism ring. It follows that $\alpha f_1 = f_2 \alpha$ and $\beta f_2 = f_1 \beta$. Let $p$ be a power series expansion for $(1+x)^{-\frac{1}{2}}$ and consider the new morphisms $\alpha'=\alpha p(f_1) = p(f_2)\alpha$ and $\beta' = \beta p(f_2) = p(f_2)\beta$. These new morphisms define a morphism $X \to \Sigma X$ with the property that $(\beta'\alpha',\alpha' \beta') = (\mathrm{id}, \mathrm{id})$. Since the endomorphisms of $X_0$ are a product of power series ring, we may take a square root $\gamma$ of $\alpha'$. Then we set $\omega = \gamma \psi \gamma = \gamma^{-1} \phi \gamma^{-1}$.
\end{proof}
If $X$ is a matrix factorisation of $\sigma$, we let $GX=(\omega, \omega)$ denote the matrix factorisation of $\sigma + y^2$ where $$\omega \coloneqq \tbtm{y}{\psi}{\phi}{-y}$$This notation means that $\omega$ is a block matrix, where $y$ denotes $y\mathrm{Id}$. Given a morphism $f:X \to Y$, we let $Gf:GX \to GY$ be the morphism with components $$\tbtm{f_0}{0}{0}{f_1} \ , \ \tbtm{f_0}{0}{0}{f_1}$$We omit the check that $G$ defines a functor. In fact it is a triangle functor; the natural isomorphism $G \Sigma\simeq \Sigma G = G$ is given by the matrix$$\tbtm{0}{i}{-i}{0}$$
We also let $\rho $ denote the restriction functor from factorisations of $\sigma + y^2$ to $\sigma$; concretely we have $\rho(X) = X/y$. Let $\tau$ be the involution of $A\langle y \rangle/(\sigma+y^2)$ that sends $y\mapsto -y$.

\begin{lem}\label[lem]{knlemlist}
\begin{enumerate}
    \item  $\rho \circ G \simeq \mathrm{id}\oplus \Sigma$.
    \item If $X$ is nontrivial and indecomposable, then $G\rho X \simeq X \oplus \tau X$.
    \item The image of $G$ consists of those matrix factorisations with $Y\simeq \tau Y$.
    \item If $Z$ is an indecomposable factorisation of $\sigma + y^2$ then $\Sigma Z \simeq \tau Z$.
\end{enumerate}
\end{lem}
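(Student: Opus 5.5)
We work throughout with explicit block matrices over $A\langle y\rangle$, where $X=(\phi,\psi)$ is a factorisation of $\sigma$ and $GX=(\omega,\omega)$ with $\omega=\stbtm{y}{\psi}{\phi}{-y}$. Since $k$ is algebraically closed of characteristic not 2, we may use $i=\sqrt{-1}$ and divide by $2$. Krull--Schmidt holds for $\mathbf{MF}$ over a complete local ring, because endomorphism rings are finitely generated modules over the complete ring $A$ and hence semiperfect. This lets us argue with indecomposables, as in \Cref{knlemsym}.

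For (1), reduce $\omega$ modulo $y$ to get the factorisation $(\omega_0,\omega_0)$ of $\sigma$ on $X_0\oplus X_1$, where $\omega_0=\stbtm{0}{\psi}{\phi}{0}$. The even part $X_0\oplus X_1\to X_1\oplus X_0$ has components $\phi$ and $\psi$. After reordering the summands with the evident permutation matrices, this is visibly $(\phi,\psi)\oplus(\psi,\phi)=X\oplus\Sigma X$, and the isomorphism is natural in $X$.

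For (2), let $Z=(\alpha,\beta)$ be a factorisation of $\sigma+y^2$, and write $\rho Z=(\bar\alpha,\bar\beta)$. Then $G\rho Z$ has differential $\stbtm{y}{\bar\beta}{\bar\alpha}{-y}$, with components lifted to $A\langle y\rangle$ by extension of scalars. Since $\rho Z$ is only defined over $A$, we first choose a lift: by the Weierstrass-type observation that $Z$ as an $A\langle y\rangle$-module with $y^2=-\sigma-\ldots$ is free of rank 2 over $A$, we write $Z$ over $A$ as $Z\oplus Zy$. We then aim to write down explicit maps $Z\oplus\tau Z\to G\rho Z$ of the form $\stbtm{1}{1}{c}{-c}$, where $c$ is built from $y$ and $i$. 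The two eigenspaces $y=\pm\sqrt{-(\sigma)}$ are replaced by the decomposition of $\rho Z$ (viewed as an $A\langle y\rangle$-factorisation) under $y\mapsto \pm iy$. Concretely, the operator $y\cdot$ on $\rho Z$ regarded over $A$ squares to $-\sigma$ up to homotopy, so idempotents $\tfrac12(1\pm i\,y\,d^{-1})$ up to homotopy split $G\rho Z$ into two summands. Twisting by $\tau$ exchanges these summands, and one summand is identified with $Z$. This step is the main obstacle: getting the homotopy idempotents right and checking that they split in the idempotent-complete category $\mathbf{MF}$. I would first verify everything on rank-one examples such as $\sigma+y^2$ with $\alpha=y+i\sqrt{\sigma}$ before writing the general homotopies.

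For (3), one direction comes from conjugating $\omega$ by $\mathrm{diag}(1,-1)$, which sends the $y$ entries to $-y$ and the $\phi,\psi$ entries to their negatives. Composing with the sign isomorphism then gives $\tau GX\cong GX$. Conversely, if $Y\simeq\tau Y$, decompose $Y$ into indecomposables and apply (2): $G\rho Y\simeq Y\oplus\tau Y\simeq Y\oplus Y$. We then need a single $X$ with $GX\simeq Y$. By (1), $\rho$ of $G\rho Y$ is $\rho Y\oplus\Sigma\rho Y$. Using Krull--Schmidt and comparing multiplicities, we choose $X$ to be half of $\rho Y$, i.e.\ a summand $X$ with $\rho Y\simeq X\oplus\Sigma X$; this uses $\tau$-invariance and, when a summand is $\Sigma$-fixed, \Cref{knlemsym}. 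Then $GX\simeq Y$.

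For (4), observe that for $Z=(\alpha,\beta)$, the pair $\tau Z=(\tau\alpha,\tau\beta)$ can be compared with $\Sigma Z=(\beta,\alpha)$. Apply (2): $G\rho Z\simeq Z\oplus\tau Z$. We also have $\Sigma G\simeq G$, so $G\rho Z\simeq \Sigma Z\oplus\Sigma\tau Z$. Krull--Schmidt then gives $Z\simeq\Sigma Z$ or $Z\simeq\Sigma\tau Z$; in the latter case we are done since $\tau$ is an involution. In the former case $Z\simeq\Sigma Z$, and we also get $\tau Z\simeq\Sigma\tau Z$. To conclude we need $\tau Z\simeq Z$, i.e.\ $Z$ lies in the image of $G$ by (3). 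This again requires (3) together with a rank count, and is a secondary delicate point.
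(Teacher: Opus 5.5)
Part (1) is correct. The forward half of (3) is also correct, although the description is off: conjugating by $\mathrm{diag}(1,-1)$ fixes the diagonal entries $\pm y$ and negates $\phi,\psi$, so it produces $-\tau\omega$, and the sign isomorphism $(1,-1)$ then finishes.

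The genuine gap is (2), on which your (3) and (4) both rest, because what you describe is not a construction. Since $\rho Z=Z/yZ$, the element $y$ acts as zero on $\rho Z$. The differential squares to the non-unit $\sigma+y^2$, so it is not invertible, and $d^{-1}$ and the idempotents $\tfrac12(1\pm i y d^{-1})$ do not exist. Also $Z$ itself is not finitely generated over $A$.

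The missing idea is to pass to the cokernel. $M=\mathrm{C}(Z)$ is MCM over $R^\sharp\coloneqq A\langle y\rangle/(\sigma+y^2)$, which is free over $A$ on $1,y$ by Weierstrass. By Auslander--Buchsbaum, $M$ is therefore a free $A$-module $F$ on which $y$ acts by some $W\in\mathrm{End}_A(F)$ with $W^2=-\sigma$. Since $(y-W,y+W)$ is a factorisation of $\sigma+y^2$ with cokernel $M$, you get a normal form $Z\simeq(y-W,\,y+W)$. With it, (2) and (4) become matrix identities, valid for every $Z$ and needing no Krull--Schmidt:
for (4), $\tau Z=(-y-W,-y+W)\cong(y+W,y-W)=\Sigma Z$ via $(1,-1)$;
for (2), $\rho Z=(-W,W)$, and for any factorisation $(\phi,\psi)$ of $\sigma$ the isomorphism $(1,\mathrm{diag}(1,-1))$ identifies $G(\phi,\psi)$ with $(y-Y,y+Y)$, where $Y=\stbtm{0}{-\psi}{\phi}{0}$. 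Hence
\[
G\rho Z\cong\bigl(y-W',\,y+W'\bigr),\qquad W'=\stbtm{0}{-W}{-W}{0}.
\]
Conjugating by $\stbtm{1}{1}{1}{-1}$ turns $W'$ into $\mathrm{diag}(-W,W)$; this uses only $\tfrac12\in k$, not $i$. So $G\rho Z\cong\Sigma Z\oplus Z\simeq Z\oplus\tau Z$.

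Your (3) and (4) also do not close on their own terms. In (4), you push the case $Z\simeq\Sigma Z$ onto ``$Z$ lies in the image of $G$''. In the text that implication is \Cref{kncor}, which is deduced from (3) and (4) themselves, so the argument is circular. In the converse of (3), halving $\rho Y\simeq X\oplus\Sigma X$ needs every $\Sigma$-fixed indecomposable summand $V$ of $\rho Y$ to occur with even multiplicity. Neither $\tau$-invariance nor \Cref{knlemsym} gives this. \Cref{knlemsym} only yields $V\cong(\omega,\omega)$ and hence $GV\cong(\omega+iy,\omega-iy)\oplus(\omega-iy,\omega+iy)$. Evenness would follow only if these two summands were non-isomorphic, which is part of \Cref{knprop} and is proved there using (3).

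What is needed is the power series trick the text alludes to:
\begin{enumerate}
\item Let $Y\simeq(y-W,y+W)$ be indecomposable and $\tau$-fixed, hence $\Sigma$-fixed by (4). Choose $W$ so that $M_W$ (that is, $F$ with $y$ acting as $W$) has no free summands.
\item Then $M_W\cong M_{-W}$, so there is an automorphism $P$ of $F$ with $PW=-WP$.
\item $P^2$ is a unit of the local ring $\mathrm{End}_{R^\sharp}(M_W)$ and commutes with $P$. Write $P^2=c(1+n)$ with $c\in k^\times$ and $n$ in the radical. Then $c^{-1/2}P(1+n)^{-1/2}$ is an involution that still anticommutes with $W$.
\item On its eigenspaces $F=F_+\oplus F_-$ one has $W=\stbtm{0}{a}{b}{0}$. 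This says precisely that $Y\simeq GX$ for $X=(b,-a)$.
\end{enumerate}
A decomposable $\tau$-fixed $Y$ then splits into such pieces together with pairs $U\oplus\Sigma U\simeq G\rho U$.
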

We omit the proof (claim 3 relies on another power series trick).
\begin{cor}\label[cor]{kncor}
    Let $X$ be an indecomposable factorisation of $\sigma + y^2$. If we have $\Sigma X \simeq X$, then $X$ is in the image of $G$.
\end{cor}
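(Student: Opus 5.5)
The plan is to combine \Cref{knlemlist}(3) and (4). Parts (3) and (4) of that lemma are stated for factorisations of $\sigma+y^2$, and the corollary concerns exactly such a factorisation, so the argument should be short.

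Let $X$ be an indecomposable factorisation of $\sigma+y^2$ with $\Sigma X\simeq X$. First, by \Cref{knlemlist}(4) applied to $Z=X$, which is allowed since $X$ is indecomposable, we get $\tau X\simeq \Sigma X$. Combining this with the hypothesis gives $\tau X\simeq \Sigma X\simeq X$. Second, \Cref{knlemlist}(3) says that the image of $G$ consists exactly of the factorisations $Y$ with $Y\simeq\tau Y$. Hence $X$ lies in the image of $G$, up to isomorphism.

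The only real content is already packed into \Cref{knlemlist}(3). This is the direction that builds a $G$-preimage from a $\tau$-fixed factorisation, via the power series trick mentioned there. So the one thing to check is that "image" in (3) means essential image, and that it applies to indecomposable $Y$ without extra hypotheses. I would also confirm that (4) does not secretly require $X$ to be nontrivial. If $X$ is the trivial factorisation then it is zero in $\mathbf{MF}$, and the claim is vacuous, since $G(0)=0$.

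If one wished to avoid (3) and use \Cref{knlemsym} instead, the route would be as follows. From $\Sigma X\simeq X$, \Cref{knlemsym} gives $X\simeq(\omega,\omega)$ with $\omega^2=\sigma+y^2$. One would then try to conjugate $\omega$ into the block form $\stbtm{y}{\psi}{\phi}{-y}$. This is essentially the power series argument behind (3), and it is where I would expect the main obstacle to lie. Given the lemma as stated, though, the two-line deduction above suffices.
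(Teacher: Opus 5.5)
Your proof is correct and is exactly the deduction the paper intends; it states this corollary immediately after \Cref{knlemlist} without a separate proof. Part (4) gives $\tau X\simeq\Sigma X\simeq X$, and part (3) then places $X$ in the (essential) image of $G$.
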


\begin{prop}\label[prop]{knprop}\hfill 
\begin{enumerate}
    \item Let $X$ be an indecomposable factorisation of $\sigma$. Then $GX$ is decomposable if and only if $\Sigma X \simeq X$; in this case we have $GX \simeq Y \oplus \Sigma Y$ for some indecomposable $Y$ with $\Sigma Y \not\simeq Y$.
    \item Let $U$ be an indecomposable factorisation of $\sigma + y^2$. Then $\rho U$ is decomposable if and only if $\Sigma U \simeq U$; in this case we have $\rho U \simeq V \oplus \Sigma V$ for some indecomposable $V$ with $\Sigma V \not\simeq V$.
\end{enumerate}
\end{prop}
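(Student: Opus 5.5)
The plan is to derive both parts from \Cref{knlemlist}, \Cref{kncor} and \Cref{knlemsym}, together with Krull--Schmidt in $\mathbf{MF}$. Krull--Schmidt holds because endomorphism rings of matrix factorisations over the complete local ring $A$ are finite $A$-algebras, hence semiperfect, so indecomposables have local endomorphism rings. The key bookkeeping tool is the multiplicity count coming from \Cref{knlemlist}(1): for any $X$ we have $\rho GX\simeq X\oplus\Sigma X$. Therefore $GX$ can have at most two indecomposable summands when $X$ is indecomposable, and their images under $\rho$ must partition $\{X,\Sigma X\}$.

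For part (1), the first step is the direction $\Sigma X\simeq X \Rightarrow GX$ decomposable. By \Cref{knlemsym} I would write $X\simeq(\omega_0,\omega_0)$, so that $\omega_0^2=\sigma$. Then I would conjugate the block matrix $\tbtm{y}{\omega_0}{\omega_0}{-y}$ by the constant matrix $\tbtm{1}{i}{i}{1}$ (up to scaling), which uses that $k$ is algebraically closed of characteristic not $2$. The aim is to split $GX$ as $Y\oplus\Sigma Y$ with $Y=(\omega_0+iy,\ \omega_0-iy)$, a factorisation of $\sigma+y^2$. Applying $\rho$ gives $\rho Y\simeq X$, so $Y$ is indecomposable: any splitting of $Y$ would split $X$ after applying $\rho$, since a nonzero factorisation restricts to a nonzero one. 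Next, if $\Sigma Y\simeq Y$ held, then \Cref{kncor} would give $Y\simeq GX'$, and $\rho Y\simeq X'\oplus\Sigma X'$ would be decomposable, a contradiction.

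For the converse in part (1), suppose $GX\simeq U\oplus U'$ with both summands nonzero. Then $\rho U\oplus\rho U'\simeq X\oplus\Sigma X$, so after reordering $\rho U\simeq X$ and $\rho U'\simeq\Sigma X$. By \Cref{knlemlist}(2) and (4), $G\rho U\simeq U\oplus\tau U\simeq U\oplus\Sigma U$, that is, $GX\simeq U\oplus\Sigma U$. Comparing with $GX\simeq U\oplus U'$ and using Krull--Schmidt gives $U'\simeq\Sigma U$. Applying $\rho$, which commutes with $\Sigma$, yields $\Sigma X\simeq\rho\Sigma U\simeq\rho U'\simeq\Sigma X$. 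This is only a tautology, so a better route is needed: \Cref{knlemlist}(3) says $GX$ is $\tau$-stable, and I would show that $GX$ decomposing forces $X\simeq\rho U\simeq\rho\Sigma U\simeq\Sigma\rho U\simeq\Sigma X$. The point is that $\rho\Sigma U=\rho\tau U\simeq\rho U$, since $\tau$ fixes $y=0$, and so $\rho\tau=\rho$ up to the obvious isomorphism. This last identity is the key observation.

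Part (2) is dual. If $\Sigma U\simeq U$, then \Cref{kncor} gives $U\simeq GX$, and $\rho U\simeq X\oplus\Sigma X$ is decomposable. Applying $G$ and using \Cref{knlemlist}(2) shows $X$ indecomposable, and part (1) shows $\Sigma X\not\simeq X$, since otherwise $GX=U$ would decompose. Conversely, if $\rho U\simeq V\oplus V'$ is nontrivial, then $U\oplus\Sigma U\simeq G\rho U\simeq GV\oplus GV'$. By Krull--Schmidt, after reordering $GV\simeq U$ and $GV'\simeq\Sigma U$. Since $\Sigma G\simeq G$, this gives $\Sigma U\simeq GV'$, and $U$ lies in the image of $G$, hence is $\tau$-stable by \Cref{knlemlist}(3). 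Then \Cref{knlemlist}(4) gives $\Sigma U\simeq\tau U\simeq U$.

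I expect the main obstacle to be the explicit splitting of $GX$ in part (1). That step needs the precise diagonalising change of basis and a check that it is compatible with both components of the factorisation. The rest is Krull--Schmidt bookkeeping, which relies on Krull--Schmidt actually holding in $\mathbf{MF}$ for complete $A$.
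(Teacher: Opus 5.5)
Your proposal is correct and follows essentially the same route as the paper: write $X\simeq(\omega,\omega)$ and split $GX\simeq(\omega+iy,\omega-iy)\oplus(\omega-iy,\omega+iy)$, exclude $\Sigma Y\simeq Y$ via \Cref{kncor}, and for the converse use $\Sigma\rho Z\simeq\rho\Sigma Z\simeq\rho\tau Z\simeq\rho Z$ together with Krull--Schmidt bookkeeping on $\rho GX\simeq X\oplus\Sigma X$. The differences are cosmetic: you read off indecomposability of $Y$ directly from $\rho Y=X$, you treat part (2) more completely than the paper's sketch, and in the converse of part (1) the input you actually use is \Cref{knlemlist}(4), not (3).
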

\begin{proof}
  For the first claim, suppose first that $\Sigma X \simeq X$. Then by \Cref{knlemsym} we may write $X\simeq (\omega, \omega)$. In this case we have $$GX \simeq (\omega + iy, \omega - iy)\oplus (\omega-iy, \omega + iy).$$
  We need to prove that $Y=(\omega + iy, \omega - iy)$ is indecomposable and $\Sigma Y \not \simeq Y$. The isomorphisms$$X\oplus \Sigma X \simeq \rho G X \simeq \rho Y \oplus \rho \Sigma Y\simeq \rho Y \oplus \Sigma \rho Y$$show that $\rho Y$ is indecomposable, and hence $Y$ must be indecomposable. Moreover, if $\Sigma Y \simeq Y$ then $Y$ must be in the image of $G$ by \Cref{kncor}, so writing $Y=GY'$ and applying $\rho$ to $GX\simeq GY'\oplus G\Sigma Y'$, we obtain an isomorphism $$X\oplus \Sigma X \simeq Y'\oplus \Sigma Y' \oplus \Sigma Y' \oplus Y' $$which shows that the left hand side has at least four nontrivial summands, which contradicts the indecomposability of $X$.
  
  Conversely, if $GX$ is decomposable, with a nontrivial indecomposable summand $Z$, then $\rho Z$ is a summand of $\rho G X\simeq X \oplus \Sigma X$. Hence it is either $X$ or $\Sigma X$. We also have $\Sigma \rho Z \simeq \rho\Sigma Z \simeq \rho\tau Z\simeq \rho Z$ and hence $X \simeq \Sigma X$. 

  The proof of the second claim is similar and uses the observation that if $\Sigma U \simeq U$ then $U$ is in the image of $G$, so $\rho U \simeq V\oplus \Sigma V$ by the above Lemma.
\end{proof}
Now we move on to the case where we add \textit{two} new quadratic terms. To begin the proof, we put $u=y+iz$ and $v=y-iz$ so that $y^2+z^2=uv$. If $X=(\phi,\psi)$ is a matrix factorisation of $\sigma$, denote by $HX$ the matrix factorisation $$\tbtm{u}{\psi}{\phi}{-v} \ , \ \tbtm{v}{\psi}{\phi}{-u}$$ of $\sigma + uv$. Given a morphism $f:X \to Y$, we let $Hf:HX \to HY$ be the morphism with components $$\tbtm{f_0}{0}{0}{f_1} \ , \ \tbtm{f_0}{0}{0}{f_1}$$We omit the check that $H$ defines a functor. We denote

$$G_1: \mathbf{MF}(A,\sigma) \to \mathbf{MF}(A\langle y \rangle,\sigma+ y^2)$$
$$G_2: \mathbf{MF}(A\langle y \rangle,\sigma+y^2) \to \mathbf{MF}(A\langle y, z \rangle,\sigma+ y^2 + z^2)\simeq \mathbf{MF}(A\langle u,v \rangle,\sigma+ uv)$$and $\rho_1, \rho_2$ similarly. The following are easy checks:

\begin{lem}\label[lem]{knlemeasy} \hfill
    \begin{enumerate}
        \item $G_2 G_1 \simeq H \oplus \Sigma H$.
        \item $\rho_1\rho_2 H \simeq \mathrm{id}\oplus \Sigma$.
        \item $\Sigma H \simeq H\Sigma$.
    \end{enumerate}
\end{lem}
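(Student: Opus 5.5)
The plan is to prove all three claims by explicit matrix manipulations. In each case I will exhibit constant block matrices, with entries in $\{0,\pm 1,\pm i,\pm\tfrac12\}$, that conjugate one matrix factorisation into the other. Naturality in $X$ then comes for free, because the morphisms $G_if$ and $Hf$ are block-diagonal with the blocks $f_0,f_1$ repeated. For these constant matrices to intertwine $Gf$ or $Hf$, they only need to respect the decomposition into copies of $X_0$ and $X_1$, in the sense that they mix copies of $X_0$ only with copies of $X_0$, and likewise for $X_1$. Throughout I write $X=(\phi,\psi)$, with $\phi:X_0\to X_1$ and $\psi:X_1\to X_0$.

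Claim (2) is the easiest, so I do it first. Applying $\rho_1\rho_2$ to $HX$ amounts to setting $z=0$ and then $y=0$, that is $u=v=0$. Both components of $HX$ then become $\stbtm{0}{\psi}{\phi}{0}$ acting on $X_0\oplus X_1$. Splitting this factorisation into the piece with $X_0$ in even degree and the piece with $X_1$ in even degree gives exactly $X\oplus\Sigma X$. The isomorphism is a permutation of summands, so it is natural.

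For claim (3), I write out both sides. The factorisation $\Sigma HX$ swaps the two components, so it is $\bigl(\stbtm{v}{\psi}{\phi}{-u},\stbtm{u}{\psi}{\phi}{-v}\bigr)$. The factorisation $H\Sigma X$ is $\bigl(\stbtm{u}{\phi}{\psi}{-v},\stbtm{v}{\phi}{\psi}{-u}\bigr)$, where the underlying modules are now $X_1\oplus X_0$. I will conjugate by the swap $\stbtm{0}{1}{1}{0}$, possibly combined with a sign matrix $\stbtm{1}{0}{0}{-1}$ on one side. The check is that this turns $\stbtm{u}{\phi}{\psi}{-v}$ into $\stbtm{-v}{\psi}{\phi}{u}$, and that the remaining sign is absorbed by negating one summand. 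Here I need to verify that the pair of conjugating matrices is chosen consistently on the even and odd parts, so that the result is a genuine isomorphism of matrix factorisations.

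Claim (1) is the main computation and I expect it to be the main obstacle. By definition $G_1X=(\omega,\omega)$ with $\omega=\stbtm{y}{\psi}{\phi}{-y}$. Applying $G_2$ gives both components equal to $\Omega=\stbtm{z}{\omega}{\omega}{-z}$, acting on $(X_0\oplus X_1)^{\oplus 2}$. The natural move is to diagonalise the coupling between $y$ and $z$. The $y$ terms sit in $\omega$ off the $2\times 2$ outer diagonal and the $z$ terms sit on it, so I change basis in the outer $2\times2$ by $P=\stbtm{1}{i}{1}{-i}$ (invertible because $\operatorname{char}k\neq2$), using $P$ on the source and $P^{-1}$ (or a variant) on the target. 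The aim is to combine $y$ and $iz$ into $u=y+iz$ and $v=y-iz$. Before conjugating I reorder the four summands as $X_0,X_1,X_0,X_1$, so that the mixing happens between the two copies of $X_0$ and between the two copies of $X_1$ separately; this is what keeps the isomorphism compatible with $G_2G_1f$. I then expect the result to be block-diagonal, with one block $\stbtm{u}{\psi}{\phi}{-v}$ or $\stbtm{v}{\psi}{\phi}{-u}$ (that is, $HX$) and the other block its swap (that is, $\Sigma HX$). If a single constant $P$ does not work, I would instead use different base changes on the even and odd parts, say $P$ and $\stbtm{1}{-i}{1}{i}$. I would solve for them by requiring that they intertwine $\Omega$ with $HX\oplus\Sigma HX$, using $u+v=2y$ and $u-v=2iz$. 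Finally, the signs may require one more scaling of a summand by $\pm1$ or $\pm i$ before everything matches.
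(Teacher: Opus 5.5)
Your approach is correct and is the direct verification the paper has in mind; the paper gives no proof beyond calling these ``easy checks''. Claim (2) is the permutation of summands you describe, and in (3) the isomorphism $\Sigma HX\to H\Sigma X$ is $\stbtm{0}{1}{-1}{0}$ on the even part and its negative on the odd part.

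In (1) you do need different matrices on the even and odd parts, because both components of $G_2G_1X$ equal $\Omega=\stbtm{z}{\omega}{\omega}{-z}$, whereas those of $HX\oplus\Sigma HX$ differ. Your fallback works: since $HX=(\omega+iz,\omega-iz)$ and $\Sigma HX=(\omega-iz,\omega+iz)$, take $\stbtm{1}{i}{1}{-i}$ on the even part and $\stbtm{i}{1}{-i}{1}$ on the odd part, with entries acting as scalars on $X_0\oplus X_1$. This gives a natural isomorphism $G_2G_1X\to HX\oplus\Sigma HX$, by the same computation as the splitting of $G(\omega,\omega)$ in the proof of \Cref{knprop}.
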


         \begin{proof}[Proof of the main theorem]
         \Cref{knlemeasy}(3) shows that $H$ is a triangle functor, and \Cref{knlemeasy}(2) shows that it is faithful. We omit showing that $H$ is full - it is a standard argument about factoring maps through projective modules, combined with some power series manipulations. For essential surjectivity, it suffices to show that $H$ induces an bijection on isoclasses of indecomposable objects. To see this, let $X$ be an indecomposable factorisation of $\sigma$. Then $G_2G_1X \simeq HX \oplus \Sigma HX$ is decomposable, so by \Cref{knprop} it has exactly two indecomposable summands. Hence $HX$ is indecomposable. On the other hand, if $Y$ is an indecomposable factorisation of $\sigma + uv$, then $Y$ is a summand of $G_2\rho_2Y$ by \Cref{knlemlist}. Similarly, $\rho_2Y$ is a summand of $G_1\rho_1Y$, and it follows that $Y$ is a summand of $G_2G_1\rho_1\rho_2Y \simeq H\rho_1\rho_2Y \oplus H\Sigma \rho_1\rho_2Y$. In particular, $Y$ appears as an indecomposable summand of some $HX$. To finish, suppose that $X,X'$ are indecomposable factorisations of $\sigma$, with $HX\simeq HX'$. We want to show that $X\simeq X'$. Applying $\rho_1\rho_2$ to the isomorphism $HX\simeq HX'$ shows that $X\simeq X'$ or $X\simeq \Sigma X'$. We are done unless $X\simeq \Sigma X'$ and $\Sigma X \not \simeq X$. In this situation it follows that $G_1X$ is indecomposable by \Cref{knprop}. Now $G_2G_1X \simeq HX \oplus \Sigma HX$ has exactly two indecomposable non-isomorphic summands by \Cref{knprop} again, and hence $HX \not\simeq \Sigma HX$. But we have $HX \simeq HX' \simeq H\Sigma X \simeq \Sigma HX$, which is a contradiction.
         \end{proof}

         \begin{ex}
             A \textbf{simple singularity} is a hypersurface singularity of the form $g(x,y)+ z_1^2 + \cdots + z_n^2$, where $g$ defines a simple plane curve singularity (i.e. $g$ has a term of order two or three). The two-dimensional simple singularities are precisely the Kleinian singularities (see \Cref{kleinchapt} for more about these). Kn\"orrer periodicity shows that all simple singularities are singular equivalent to either their defining simple curve singularity or a Kleinian singularity.
         \end{ex}
         
         \begin{rmk}Over $\bbC$, Kalck has shown that \textit{all} singular equivalences between complete local isolated Gorenstein singularities of different Krull dimension come from Kn\"orrer periodicity \cite{kalckclassifying}.
         \end{rmk}

		\chapter{Differential graded categories} 	
        All of the triangulated categories we have considered thus far are in fact examples of dg categories, in that, roughly, they have homotopy coherent notions of mapping complexes. As we will later see, this allows us to extract quite fine information from them. In this chapter we set up the relevant dg category theory needed as background. Our references here are \cite{keller, toendglectures, drinfeldquotient}. In this chapter, $k$ will be a fixed commutative base ring. Unadorned tensor products, homs, etc.\ are to be taken over $k$.

        \section{First definitions}
        Recall that the category of complexes $\mathrm{Ch}(k)$ is a monoidal category under the \textbf{tensor product of complexes}: we have $(X\otimes Y)_i = \bigoplus_{p+q=i}X_p\otimes Y_q$ with differential defined analogously. The unit is the complex $k$ concentrated in degree zero.
  \begin{defn}
            A \textbf{dg algebra} is a monoid in $(\mathrm{Ch}(k),\otimes)$: in other words it is a chain complex $A$ together with a multiplication map $\mu: A\otimes A \to A$ and a unit map $e:k \to A$ satisfying associativity and unitality. We also ask that these be chain maps; for $\mu$ this gives the Leibniz rule and for $e$ this says that $d(1)=0$.
        \end{defn}
        \begin{exer}
            Let $A$ be a complex and $\mu:A \otimes A \to A$ a morphism. Write $\mu(x,y)=x.y$. Show that $\mu$ is a chain map precisely when it satisfies the \textbf{Leibniz rule} $d(x.y)=dx.y + (-1)^{\deg (x)}x.dy$. 
        \end{exer}
		\begin{defn}
			A $k$-linear \textbf{dg category} is a category $\mathcal{C}$ enriched over the monoidal category $(\mathrm{Ch}(k),\otimes)$ of complexes with the usual tensor product. In other words, to every pair of elements $(x,y)\in\mathcal{C}^2$ we assign a chain complex $\mathcal{C}(x,y)$, to every triple $(x,y,z)$ we assign a chain map $$\mu_{xyz}:\mathcal{C}(x,y)\otimes \mathcal{C}(y,z) \to \mathcal{C}(x,z)$$ satisfying associativity, and for every $x \in \mathcal{C}$ we assign a map $\eta_x: k \to \mathcal{C}(x,x)$ which is a unit with respect to composition.
		\end{defn}
        \begin{exer}
            Draw the diagram for associativity of the $\mu$. Use this to write down an equation expressing associativity. 
        \end{exer}
		Note in particular that for any object $x \in\mathcal{C}$, the complex $\mathcal{C}(x,x)$ naturally has the structure of a dg algebra under the multiplication $\mu_{xxx}$.
      
		\begin{defn}
			A \textbf{dg functor} $F:\mathcal{C}\to\mathcal{D}$ between two dg categories is a $\mathrm{Ch}(k)$-enriched functor; i.e.\ a map of objects $F:\mathrm{ob}(\mathcal{C})\to \mathrm{ob}(\mathcal{D})$ together with, for every pair $(x,y)\in\mathcal{C}^2$, a map of complexes $F_{xy}:\mathcal{C}(x,y) \to \mathcal{D}(Fx,Fy)$. We require that $F$ satisfies the associativity condition $$\mu_{Fx\ Fy \ Fz}\circ (F_{xy}\otimes F_{yz}) = F_{xz}\circ \mu_{xyz}$$ and the unitality condition $F_{xx}\circ \eta_x = \eta_{Fx}$.
		\end{defn}
		In particular, a dg functor $F:\mathcal{C}\to\mathcal{D}$ induces dg algebra morphisms $$F_{xx}:{\mathcal{C}}(x,x)\to{\mathcal{D}}(Fx,Fx)$$ for every $x \in \mathcal{C}$.
		\begin{ex}
			Examples of dg categories include:
			\begin{itemize}
				\item $\mathrm{Ch}(k)$ itself, with the usual notion of mapping complexes. More generally, if $A$ is a $k$-algebra then $\mathrm{Ch}(A)$ is a dg category.
				\item If $\C$ is a dg category then so is $\C^\text{op}$.
				\item If $X$ is a topological space with a sheaf of $k$-algebras $\mathcal{O}$, then the category $\mathrm{Ch}(\mathcal{O})$ of complexes of sheaves of $\mathcal{O}$-modules is a dg category.
				\item A dg category with one object $*$ is the same thing as a dg algebra $\mathrm{End}(*)$. The inclusion of dg algebras into dg categories is fully faithful.
				\end{itemize}
			\end{ex}	

              \begin{ex}
        DG categories can be thought of as \textit{many-object dg algebras}, in the following sense. A dg category $\C$ is equivalently the data of:
        \begin{itemize}
            \item A collection of objects.
            \item For each object $x$, a dg algebra $A_x = \C(x,x)$.
            \item For each pair of distinct objects $(x,y)$, an $A_x$-$A_y$-bimodule $M_{xy} = \C(x,y)$. 
            \item Bilinear composition maps between bimodules.
        \end{itemize}
          One can visualise this as an extremely large dg matrix algebra. For example, a dg category with exactly $n$ objects is the same thing as a dg algebra with exactly $n$ orthogonal idempotents. We caution that this description is not functorial, since dg functors need not be bijective on objects.
        \end{ex}

		\begin{defn}
			Let $\mathcal{C}$ be a dg category. The \textbf{homotopy category} of $\mathcal{C}$ is the $k$-linear category $H^0\mathcal{C}$ whose objects are the same as $\mathcal{C}$, and whose hom-spaces are given by ${(H^0\mathcal{C})}(x,y)\coloneqq H^0(\mathcal{C}(x,y))$. Composition is inherited from $\mathcal{C}$.
		\end{defn}
	\begin{ex}Let $k$ be $\Z$ or a field and let $A$ be a $k$-algebra. Then $H^0(\mathrm{Ch}_\mathrm{dg}(A))$ is the chain homotopy category.
		\end{ex}
		\begin{defn}Let $F:\mathcal{C}\to\mathcal{D}$ be a dg functor.\begin{itemize}
				\item $F$ is \textbf{quasi-fully faithful} if all of its components $F_{xy}$ are quasi-isomorphisms.
				\item $F$ is \textbf{quasi-essentially surjective} if the induced functor $H^0F$ is essentially surjective.
				\item $F$ is a \textbf{quasi-equivalence} if it is quasi-fully faithful and quasi-essentially surjective.
			\end{itemize}
            \end{defn}
             \begin{exer}
                Show that two dg algebras are quasi-equivalent as dg categories if and only if they are quasi-isomorphic as dg algebras.
            \end{exer}
            \begin{exer}Let $F$ be a dg functor. 
            \begin{enumerate}
                \item If $F$ is a quasi-equivalence, show that $H^0F$ is an equivalence.
                \item Suppose that $F$ is quasi-fully faithful. Show that $F$ is a quasi-equivalence if and only if $H^0F$ is an equivalence.
                \item Give an example of a functor $G$ such that $H^0G$ is an equivalence, but $G$ is not a quasi-equivalence.
            \end{enumerate}
            \end{exer}

    \section{Pretriangulated dg categories}
	\begin{defn}
	    If $\C$ is a dg category, its category of \textbf{right $\C$-modules} is the dg category $\Mod\C\coloneqq \mathbf{dgFun}(\C^\text{op},\mathrm{Ch}_\mathrm{dg}(k))$.
	\end{defn}
 A $\C$-module $M$ can be described in terms of the following data:
\begin{itemize}
    \item For every object $x$ of $\C$ we assign a complex $M(x)$.
    \item For every pair of objects $x,y$ of $\C$ we assign an action map $$ M(x)\otimes \C(x,y)\to M(y)$$These action maps should be compatible with the composition and unit maps of $\C$.
\end{itemize}
The category of left $\C$-modules is defined dually as dg functors on $\C$.
\begin{exer}
    If $A$ is a $k$-algebra (or more generally a dg-$k$-algebra), regarded as a one-object dg category, show that this recovers the usual definition of dg-$A$-module.
\end{exer}
 If $x$ is an object of $\C$, then we can define a left module $h^x\coloneqq \C(x,-)$ and a right module $h_x\coloneqq \C(-,x)$. The assignment $x\mapsto h_x$ is a (quasi-)fully faithful dg functor $\C \to\Mod\C$ which we call the \textbf{Yoneda embedding}.
 \begin{exer}
     If $A$ is a $k$-algebra, regarded as a one-object dg category, show that the Yoneda embedding sends the unique object of $A$ to the regular $A$-module $A_A$.
 \end{exer}

\p The category $\Mod\C$ has shifts and cones, defined pointwise: if $M$ is a module then $M[1]$ denotes the module $x\mapsto M(x)[1]$. Similarly, if $f:M \to N$ is a $\C$-linear map then we may define its cone by $\mathrm{cone}(f):x \mapsto \mathrm{cone}(f(x))$. These make the homotopy category $H^0(\Mod\C)$ into a triangulated category.
\begin{exer}
    *If $\C$ is a dg category, show that its arrow category $\mathbf{Ar}(\C)$ is also a dg category. Show that the mapping cone induces a dg functor $$\mathbf{Ar}(\Mod\C) \to \Mod\C.$$
\end{exer}
Recall that a module over $\C$ is \textbf{representable} if it is isomorphic to a module of the form $h_x$.
		\begin{defn}
		Say that a dg category is \textbf{strongly pretriangulated} if, for all morphisms $f:x \to y$, the following modules are representable:
		\begin{itemize}
			\item The zero module $0$.
			\item The shifts $h_x[n]$ for all $n$.
			\item The cone $\mathrm{cone}(h_f)$. 
			\end{itemize} 
		When these are representable, we define $x[n]$ to be the object representing $h_x[n]$ and $\mathrm{cone}(f)$ to be the object representing $\mathrm{cone}(h_f)$.
	\end{defn}
    If $\C$ is strongly pretriangulated, then its homotopy category $H^0(\C)$ is canonically triangulated. It is moreover a triangulated subcategory of $H^0(\Mod\C)$.
    \begin{ex}
    Clearly $\Mod\C$ itself is strongly pretriangulated.
    \end{ex}

	Every dg category $\C$ admits a \textbf{strongly pretriangulated envelope} $\mathrm{tri}(\C)$, defined to be the closure of the image of the Yoneda embedding under the zero module, cones and shifts. The construction is very similar to the construction of thick subcategories given in \Cref{thickexistence}. This comes with a natural Yoneda embedding $\C \into \mathrm{tri}(\C)$.
    \begin{ex}
        If $A$ is a ring, regarded as a one-object dg category, then $\mathrm{tri}(A)$ is the dg subcategory of $\Mod A$ on the strictly bounded complexes of finitely generated projective modules. The homotopy category $H^0(\mathrm{tri}(A))$ is precisely $\mathbf{per} A$.
    \end{ex}
    
    \begin{rmk}
         The assignment $\C \mapsto \mathrm{tri}(\C)$ is functorial, and the left adjoint to the inclusion of strongly pretriangulated dg categories in all dg categories. The unit $\C \to \mathrm{tri}(\C)$ is the Yoneda embedding.
    \end{rmk}
\begin{defn}
		A dg category $\C$ is \textbf{pretriangulated} if the natural map $C \into \mathrm{tri}(\C)$ is a quasi-equivalence.
		\end{defn}
    In particular, if $\C$ is pretriangulated then $H^0\C$ is canonically triangulated.

\begin{exer}
    	Let $F:\C \to \D$ be a functor between pretriangulated dg categories. \begin{enumerate}
    	   \item Show that $H^0(F)$ is a triangle functor.
            \item *Show that $F$ is a quasi-equivalence if and only if $H^0(F)$ is a triangle equivalence.
    	\end{enumerate}
\end{exer}

\begin{rmk}
  Bondal and Kapranov described a \textbf{totalisation functor} \\$\mathrm{Tot}:\mathrm{tri}(\mathrm{tri}(\C)) \to \mathrm{tri}(\C)$ which is an equivalence (and in particular a quasi-equivalence) of dg categories \cite{bkframed}.
\end{rmk}

\begin{rmk}
       Say that two dg categories $\C,\D$ are \textbf{derived Morita equivalent} if $\mathrm{tri}(\C)$ and $\mathrm{tri}(\D)$ are quasi-equivalent. Clearly if two dg categories are quasi-equivalent then they are derived Morita equivalent. If two abelian categories $\A,\B$ are Morita equivalent in the classical sense, then they are derived Morita equivalent, but the converse is not true - for example, $\mathrm{Coh}(\P^1_k)$ is derived Morita equivalent to the finite dimensional representations of the Kronecker quiver, but the two abelian categories are not Morita equivalent. Many important invariants of dg categories - for example, Hochschild co/homology, $K$-theory, or periodic cyclic homology - are derived Morita invariants.
\end{rmk}

    \begin{defn}
        If $\mathcal{T}$ is a triangulated category, then a pretriangulated dg category $\C$ is an \textbf{enhancement} of $\mathcal{T}$ if there is a triangle equivalence $H^0(\C)\simeq \mathcal{T}$.  
    \end{defn}  
    All of the triangulated categories we have seen thus far admit enhancements. For some of them, this is easy to see; for those constructed as Verdier quotients this is much less trivial. In the next section, we give a construction of a \textbf{dg quotient} of dg categories, and show that - in pretriangulated settings - it enhances the Verdier quotient.

\section{DG quotients}
Here we assume for simplicity that $k$ is a field. We say that an object $X$ of a dg category $\C$ is \textbf{contractible} if $\mathrm{id_X \in \C(X,X)}$ is a boundary, or, equivalently, if $\mathrm{id}_X=0 \in H^0(\C)(X,X)$. Note that this implies that $\C(X,X)\simeq 0$.
\begin{ex}
    Let $A$ be a dg algebra, regarded as a dg category with a single object $*$. Then $*$ is contractible if and only if $A\simeq 0$.
\end{ex}
\begin{ex}
    If $\C$ is a pretriangulated dg category, then an object $X$ of $\C$ is contractible if and only if $X\simeq 0$ in $H^0(\C)$.
\end{ex}

Let $\A$ be a dg category and $\B \into \A$ be a full dg subcategory. Say that a functor $F: \A \to \C$ \textbf{annihilates} $\B$ if it sends every object of $\B$ to a contractible object of $\C$.
\begin{defn}
    The \textbf{dg quotient} $\A/\B$ is the universal dg subcategory under $\A$ which annihilates $\B$.
\end{defn}
To spell out the universal property in more detail, we write $\mathbf{Hqe}$ for the homotopy category of dg categories (i.e.\ the category of dg categories localised at the quasi-equivalences). The universal property states that there is a morphism $\pi:\A \to \A/\B$ in $\mathbf{Hqe}$ such that, if $\A\to\C$ is a morphism in $\mathbf{Hqe}$ which annihilates $\B$, then there exists a map $\A/\B \to \C$ in $\mathbf{Hqe}$ which factors $\A\to\C$ through $\pi$.

\begin{thm}
DG quotients exist.    
\end{thm}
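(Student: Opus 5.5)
The plan is to follow Drinfeld's construction \cite{drinfeldquotient}, using that $k$ is a field, so every complex of $k$-vector spaces is homotopically projective and tensoring over $k$ preserves quasi-isomorphisms. Define $\A/\B$ to have the same objects as $\A$. For each object $U$ of $\B$, freely adjoin a morphism $\epsilon_U\in(\A/\B)(U,U)$ of degree $-1$ with $d\epsilon_U=\mathrm{id}_U$. Concretely, the morphism complexes are
$$(\A/\B)(X,Y)=\bigoplus_{n\geq 0}\ \bigoplus_{U_1,\ldots,U_n\in\B}\A(U_n,Y)\otimes k\epsilon_{U_n}\otimes\A(U_{n-1},U_n)\otimes\cdots\otimes k\epsilon_{U_1}\otimes\A(X,U_1).$$
Composition is concatenation followed by composition in $\A$ at the junction, and the differential is determined by the Leibniz rule together with $d\epsilon_U=\mathrm{id}_U$. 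Since $\mathrm{id}_U$ is then a boundary, every object of $\B$ is contractible in $\A/\B$. The inclusion of the $n=0$ summand gives a dg functor $\pi:\A\to\A/\B$ that annihilates $\B$.

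Next, verify the universal property in $\mathbf{Hqe}$. For a strict dg functor $F:\A\to\C$ sending objects of $\B$ to contractible objects, choose contracting homotopies $h_U$ with $dh_U=\mathrm{id}_{FU}$. Setting $\epsilon_U\mapsto h_U$ extends $F$ to a strict dg functor $\A/\B\to\C$, since $\A/\B$ is free over $\A$ on these generators. A general morphism $\A\to\C$ in $\mathbf{Hqe}$ is a roof $\A\xleftarrow{\sim}\A'\to\C$. Replacing $\A$ by a cofibrant (semifree) resolution, one may assume the roof is a strict functor. Then check that $\B'\subseteq\A'$, the preimage of $\B$, produces a quotient $\A'/\B'$ that is quasi-equivalent to $\A/\B$. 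This last check uses flatness over the field $k$: each summand above is a tensor product of complexes, so a quasi-equivalence $\A'\to\A$ induces quasi-isomorphisms summand by summand. Since choices of homotopies are unique up to a contractible space of choices, the resulting factorisation is well defined and unique in $\mathbf{Hqe}$.

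The step I expect to be the main obstacle is this passage from strict dg functors to morphisms in $\mathbf{Hqe}$: showing that the construction is homotopy invariant, and that factorisations are unique rather than merely existent. I would first try to handle it using Tabuada's model structure on dg categories, in which Drinfeld's quotient becomes a homotopy pushout of $\A\leftarrow\B\to 0$ (with $0$ the category having the same objects and contractible endomorphisms), so existence and uniqueness follow formally. As a sanity check, when $\A$ is pretriangulated, $H^0(\A/\B)$ should recover the Verdier quotient $H^0\A/H^0\B$. I would verify this by showing that $(\A/\B)(X,Y)$ computes the colimit over roofs, via the filtration by $n$.
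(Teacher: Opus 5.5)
Your construction is Drinfeld's quotient, exactly as in the paper, and the paper's proof also stops after observing that $\pi:\A\to\A/\B$ annihilates $\B$; it defers the universal property to Tabuada, whose homotopy-pushout description is the route you say you would fall back on. If you carry out your extra reduction to strict functors, note two things: the summands are not subcomplexes, because $d\epsilon_U=\mathrm{id}_U$ lowers path length, so the comparison $\A'/\B'\to\A/\B$ has to be made on the associated graded of the length filtration; and this only works if $\A'\to\A$ is bijective on the objects lying over $\B$, which a suitably chosen cofibrant replacement can be arranged to satisfy.
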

Before we give an idea of the proof, we make some historical remarks. The first construction of a dg quotient was given by Keller \cite{kellerdgquot}. Drinfeld \cite{drinfeldquotient} then came up with a simple construction, which is the one we give below. The universal property was later shown to hold by Tabuada \cite{tabquot}.
\begin{proof}[Proof idea]
We define a dg category $\A/\B$ as follows. The objects of $\A/\B$ are the same as the objects of $\A$. The hom-complexes of $\A/\B$ are obtained from those of $\A$ by freely adjoining, for every $b\in \B$, a contracting homotopy $h:b \to b$ with $dh=\mathrm{id}_b$. In other words, a morphism $x \to y$ in the quotient is given by a length $n+1$ path $f_n \varepsilon_nf_{n-1}\varepsilon_{n-1}\cdots \varepsilon_2f_1\varepsilon_1f_0$ where $$f_0:x \to u_1, \quad f_1:u_1 \to u_2, \quad \ldots \quad f_n: u_n \to u_{n+1}=y$$ with all $u_i \in \B$ for $1\leq i \leq n$. The differential is given by applying the Leibniz rule and using $d\varepsilon_i = \mathrm{id}_{b_i}$. For example, a path $f\varepsilon g$ of the form $x\to b \to y$ has differential $(df)\varepsilon g \ \pm\ fg \ \pm\ f\varepsilon(dg)$. This makes $\A/\B$ into a dg category.

The natural functor $\A \to \A/\B$ sends a morphism $f$ to the length one path $f:x \to y$. It annihilates $\B$, by consideration of the length two path $\mathrm{id}_b \varepsilon_b \mathrm{id}_b$ for $b\in \B$. We do not check the universal property.
\end{proof}

\begin{ex}
	Let $A$ be a dg algebra, regarded as a one-object dg category. This does not have many full subcategories: only the empty subcategory $\emptyset$ along with $A$ itself. The quotient $A/\emptyset$ is simply $A$ again. The quotient $A/A$ is the dg algebra $A\langle h \rangle$ with $dh=1$; in particular $A\langle h \rangle$ is acyclic and hence $A/A$ is quasi-equivalent to the zero dg category.
	\end{ex}

\begin{thm}[Drinfeld]
	If both $\B$ and $\A$ are pretriangulated then so is $\A/\B$. Moreover, there is a triangle equivalence $$H^0(\A/\B)\simeq H^0(\A)/H^0(\B)$$where the right hand side is the Verdier quotient.
	\end{thm}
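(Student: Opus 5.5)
We will work with Drinfeld's explicit model of $\A/\B$ from the preceding proof idea, assuming $k$ is a field. The plan has three steps.

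\textbf{Step 1: build a triangle functor.} The dg functor $\pi:\A\to\A/\B$ is the identity on objects, so pretriangulatedness should transfer. Shifts and cones of morphisms $h_x\to h_y$ in $\A$ are represented in $\A$, and the same objects should represent the corresponding modules over $\A/\B$. For morphisms of $\A/\B$ that are genuinely new paths, we will instead show that $H^0(\A/\B)$ is a localisation of $H^0(\A)$, so that every morphism is a roof. Cones of roofs then reduce to cones in $\A$. Concretely, we check that $\mathrm{tri}(\A/\B)$ is generated up to quasi-equivalence by the image of $\mathrm{tri}(\A)\simeq\A$. Using the Bondal--Kapranov totalisation, a twisted complex over $\A/\B$ whose entries lie in $\A$ can be rewritten as an object of $\A$ modulo contractible objects. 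Once this is in place, $H^0(\pi)$ is a triangle functor that kills $H^0(\B)$. By the universal property of the Verdier quotient it factors through a triangle functor
$$\Phi: H^0(\A)/H^0(\B)\to H^0(\A/\B).$$
$\Phi$ is bijective on objects, so it remains to prove it is fully faithful.

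\textbf{Step 2: compute hom-complexes.} We compute $(\A/\B)(x,y)$ by filtering by path length $n$. The associated graded pieces are bar-type complexes
$$\A(u_n,y)\otimes k\varepsilon\otimes\A(u_{n-1},u_n)\otimes\cdots\otimes k\varepsilon\otimes\A(x,u_1).$$
The total complex is the two-sided bar construction, so $(\A/\B)(x,y)$ is quasi-isomorphic to the cone of the natural map
$$\A(-,y)\otimes^{\mathbb L}_{\B}\A(x,-)\to\A(x,y).$$
Here we use that $k$ is a field, so tensor products over $k$ are exact. In other words, $(\A/\B)(x,y)$ computes the hom from $x$ to $y$ in $D(\A)$ after killing the localising subcategory generated by $\B$.

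\textbf{Step 3: fully faithfulness, the main obstacle.} We must identify $H^0$ of this cone with the Verdier quotient hom, namely the colimit of $H^0(\A)(x',y)$ over maps $s:x'\to x$ with $\mathrm{cone}(s)\in\mathbf{thick}(H^0\B)$. We would first reduce to the case where $\B$ is thick; replacing $\B$ by its thick closure inside $\A$ does not change $\A/\B$ up to quasi-equivalence, since killing $b$ kills all summands and cones built from $b$. Then we use the triangle
$$\A(-,y)\otimes^{\mathbb L}_\B\A(x,-)\to\A(x,y)\to(\A/\B)(x,y)\to.$$
A class in $H^0(\A/\B)(x,y)$ is represented by a finite sum of paths, and each path factors through finitely many objects of $\B$. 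Using cones in $\A$, we produce a single $s:x'\to x$ with $\mathrm{cone}(s)\in\B$ along which the class becomes a genuine morphism $x'\to y$ of $H^0(\A)$; this gives fullness. For faithfulness, if $f:x\to y$ becomes zero in $\A/\B$, then it lifts to the bar term, so it factors in $H^0(\A)$ through an object of $\mathbf{thick}(\B)$, and hence is zero in the Verdier quotient. The delicate point is the compactness argument showing that finitely many paths only involve a finite extension inside $\B$, so that the colimit of roofs is actually attained. If this becomes messy, the fallback is to pass to derived categories, $D(\A)/\mathrm{Loc}(\B)\simeq D(\A/\B)$, and restrict to compact objects using Neeman's localisation theorem together with thickness of $\B$.
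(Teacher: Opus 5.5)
Your proposal is correct in outline, but it proves full faithfulness by a different route from the paper. The paper, following Drinfeld, compares $\ext^i$ in $H^0(\A)/H^0(\B)$ and in $H^0(\A/\B)$. It does this through a second, ind-categorical model of the dg quotient, in which hom complexes become filtered colimits and so visibly match the colimit over roofs that defines the Verdier hom.

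You stay with the explicit path model instead. The path-length filtration identifies $(\A/\B)(x,y)$ with the cone of $\A(-,y)\lot_\B\A(x,-)\to\A(x,y)$. In other words, it computes maps from $h_x$ into the localisation of $h_y$ away from $\mathrm{Loc}(\B)$ in $D(\A)$. You then use compactness of $h_x$, or Neeman's localisation theorem in your fallback, to see that maps into $\mathrm{Loc}(\B)$ factor through objects of $\B$.

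The two routes package the same idea: morally, Drinfeld's ind-object is a model for that localisation. Yours ties the dg quotient directly to $D(\A)/\mathrm{Loc}(\B)$ and to the Neeman--Thomason--Trobaugh--Yao machinery the notes use later for the recollement and for $\per(\dq)$, at the cost of working in big categories. Drinfeld's stays with small categories and filtered colimits and needs no compact generation.

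There are two things to tighten.

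\textbf{Step 1 is circular as ordered.} You invoke the universal property of the Verdier quotient for triangle functors before you know $H^0(\A/\B)$ is triangulated, and the roof description you need for that is only proved in Step 3. Instead, use the universal property of $H^0(\A)/H^0(\B)$ as the plain localisation at maps whose cone lies in $\mathbf{thick}(H^0\B)$.
\begin{itemize}
\item $H^0(\pi)$ inverts these maps by Yoneda. Dg functors preserve the identities defining cones, so each $H^0(\A/\B)(z,\pi(-))$ is homological on $H^0(\A)$, and $\pi(b)$ is contractible for $b\in\B$.
\item Steps 2--3 then give an equivalence of plain categories.
\item Pretriangulatedness follows by induction on iterated cones. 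A closed $g$ with $[g]=\pi(f)\pi(s)^{-1}$ has cone isomorphic to $\pi(\mathrm{cone}\, f)$.
\end{itemize}
The Bondal--Kapranov totalisation sentence is not what does this work.

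\textbf{The compactness step can be done by hand.} A cocycle in the bar complex is a finite sum of tensors. The closure of these tensors under the internal differential and the bar contractions is finite dimensional. So the cocycle lies in a finitely generated semi-free submodule, which is an iterated cone of shifts of finitely many $h_u$ with $u\in\B$. Since $\B$ is pretriangulated, this submodule is represented by an object of $\B$. This gives both your fullness and your faithfulness arguments, and makes the reduction to thick $\B$ unnecessary.
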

    In other words, if $\mathcal{T}$ is a triangulated category, $\mathcal{T}'$ a full triangulated subcategory, and the inclusion $\mathcal{T}'\into \mathcal{T}$ is enhanceable, then the Verdier quotient $\mathcal{T}/\mathcal{T}'$ is also enhanceable.
\begin{proof}[Proof idea]
Shifts and cones in $\A/\B$ are inherited from $\A$, making $\A/\B$ pretriangulated. The dg functor $\A \to \A/\B$ yields a triangle functor $H^0(\A) \to H^0(\A/\B)$ which annihilates $\B$. Hence we obtain a triangle functor $H^0(\A)/H^0(\B) \to H^0(\A/\B)$ from the universal property of the Verdier quotient. It is essentially surjective, so we just need to check that it is fully faithful. For this, it suffices to check that the natural map $\mathrm{Ext}^i_{H^0(\A)/H^0(\B)}(X,Y) \to \mathrm{Ext}^i_{H^0(\A/\B)}(X,Y)$ is an isomorphism for all $i$ and for all $X,Y\in \A$. To do this, Drinfeld uses an alternate construction of the dg quotient defined using ind-categories.
\end{proof}

\begin{ex}
    Let $A$ be a ring. The pretriangulated dg category $\Mod A$ has a pretriangulated full dg subcategory $\mathrm{Acyc}(A)$ of acyclic complexes of modules. The dg quotient $\Mod A / \mathrm{Acyc}(A)$ is the \textbf{dg derived category} of $A$, denoted by $D_\mathrm{dg}(A)$. The homotopy category of $D_\mathrm{dg}(A)$ is the usual triangulated derived category $D(A)$ of $A$. One can then define the dg bounded derived category $D^b_\mathrm{dg}(A)$ to be the full subcategory on those objects with bounded cohomology, and the dg perfect derived category $\cat{per}_\mathrm{dg}(A)$ to be the full subcategory on those objects which represent perfect complexes. Similar constructions can be made when $A$ is a dg algebra or even a dg category.
\end{ex}

\begin{rmk}
There are other ways of constructing the dg derived category, but all of them yield quasi-equivalent results. Typically the dg derived category is constructed by taking the dg \textit{localisation} of $\Mod A$ at the quasi-isomorphisms. A simple alternate construction is given by taking the full dg subcategory of $\Mod A$ on the bifibrant objects in either the projective or injective model structures. In particular, a natural enhancement of $D^-(A)$ is the full dg subcategory of $A$ on those strictly right bounded complexes of projectives, just like in \Cref{KProjthm}.
	\end{rmk}

\section{DG singularity categories}
In this section, $k$ is a field and $R$ is a two-sided noetherian $k$-algebra.

\begin{defn}
	The \textbf{dg singularity category} of $R$ is the dg category $$D^\mathrm{dg}_\text{sg}(R)\coloneqq \frac{D^b_\mathrm{dg}(R) }{\cat{per}_\mathrm{dg}(R)}.$$
	\end{defn}
It is a pretriangulated dg category with $H^0\left(D^\mathrm{dg}_\text{sg}(R)\right)\simeq D_\mathrm{sg}(R)$.

\begin{thm}
    Suppose that $R$ is Gorenstein. Then both $\underline{\mathbf{MCM}}(R)$ and  $K_\mathrm{ac}(\mathrm{proj} (R))$ admit dg enhancements, and there is a commutative  diagram in $\mathbf{Hqe}$ $$\begin{tikzcd}
        &\underline{\mathbf{MCM}}^\mathrm{dg}(R)\ar[d,"\iota"]\ar[dl, "\mathbf{CR}", bend left=5] \\
        \mathrm{proj}_\mathrm{ac} (R)\ar[r, "\sigma_0"]\ar[ur, "\Omega_0", bend left=5]&D^\mathrm{dg}_\text{sg}(R)  
    \end{tikzcd}$$of quasi-equivalences of pretriangulated dg categories. In addition, if $R=A/\sigma$ is a complete local hypersurface singularity, then $\mathbf{MF}(A,\sigma)$ also admits a dg enhancement, and there is a commutative diagram in $\mathbf{Hqe}$
    $$\begin{tikzcd}
       \mathbf{MF}^\mathrm{dg}(A,\sigma)  \ar[d,"c"]\ar[r,"\mathrm{C}"] &\underline{\mathbf{MCM}}^\mathrm{dg}(R)\ar[d,"\iota"]\ar[dl, "\mathbf{CR}", bend left=5]  \\
        \mathrm{proj}_\mathrm{ac} (R)\ar[r, "\sigma_0"]\ar[ur, "\Omega_0", bend left=5]&D^\mathrm{dg}_\text{sg}(R)  
    \end{tikzcd}$$of quasi-equivalences of pretriangulated dg categories.
\end{thm}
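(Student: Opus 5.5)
The plan is to build each enhancement as an explicit dg category, build each arrow as an honest dg functor (or a zig-zag of dg functors, which is a morphism in $\mathbf{Hqe}$), and only then check that each is a quasi-equivalence. For pretriangulated dg categories, a dg functor that is a triangle equivalence on $H^0$ is a quasi-equivalence, by the starred part of the exercise on pretriangulated functors. So at every stage it suffices to check that $H^0$ of the functor agrees with the triangle equivalences already proved: \Cref{buchthm}, \Cref{sigma0}, \Cref{buchprop}, and the equivalence $\mathrm{C}:\mathbf{MF}(A,\sigma)\to\underline{\mathbf{MCM}}(R)$.

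The models are as follows.
\begin{itemize}
\item For $\mathrm{proj}_\mathrm{ac}(R)$, take the full dg subcategory of $\mathrm{Ch}_\mathrm{dg}(R)$ on acyclic complexes of finitely generated projectives. It is closed under shifts and cones, so it is strongly pretriangulated, and its $H^0$ is $K_\mathrm{ac}(\mathrm{proj}R)$ by definition.
\item For $\underline{\mathbf{MCM}}^\mathrm{dg}(R)$, the cheapest option is to define it as $\mathrm{proj}_\mathrm{ac}(R)$ itself, i.e.\ to transport the structure along the equivalence $\Omega_0$. A more intrinsic option is the dg quotient of the $k$-linear category $\mathbf{MCM}(R)$ (viewed as a dg category in degree $0$, then closed under $\mathrm{tri}$) by the projectives. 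With the first option, $\Omega_0$ and $\mathbf{CR}$ become mutually inverse identities in $\mathbf{Hqe}$.
\item For $\mathbf{MF}^\mathrm{dg}(A,\sigma)$, take the $\Z/2$-graded mapping complexes of \Cref{mfhcatsect}, unfolded into $\Z$-graded complexes. Shifts and cones are defined componentwise, so this is strongly pretriangulated with $H^0=\mathbf{MF}(A,\sigma)$.
\end{itemize}

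The functor $c$ is a strict dg functor: $X\mapsto \bar X$ unrolled $2$-periodically, with morphisms and homotopies reduced mod $\sigma$ and repeated periodically. This respects composition and differentials, and on $H^0$ it is the functor used in the proof that $\mathrm{C}$ is an equivalence. The arrow $\mathrm{C}$ is then defined as the composite $\Omega_0\circ c$, so the square involving $\mathrm{C}$, $c$ and $\Omega_0$ commutes by construction. Hence $c$ is a quasi-equivalence because $H^0(c)$ is the composite of $\mathrm{C}$ with the inverse of $\Omega_0$.

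The main obstacle is $\sigma_0$. Brutal truncation $\sigma_0$ lands in $D^b_\mathrm{dg}(R)$, but it is not a dg functor on the nose: a degree-$n$ morphism of acyclic complexes does not restrict to the truncations compatibly with the differential, because of boundary terms at degree $0$. I would handle this as follows.
\begin{enumerate}
\item Present $D^b_\mathrm{dg}(R)$ by bounded-above complexes of finitely generated projectives with bounded cohomology. Let $\mathcal{E}$ be the full subcategory on the complexes $\sigma_0X$.
\item Observe that for $f$ of degree $n$, the failure of $\sigma_0 f$ to be a chain map lies in morphisms factoring through finite brutal pieces. Those pieces are perfect, hence contracted in $D^\mathrm{dg}_\mathrm{sg}(R)=D^b_\mathrm{dg}/\per_\mathrm{dg}$.
\item Replace the naive truncation by the colimit $\mathrm{colim}_k\,\hom(\sigma_kX,\sigma_kY)$, which can be realised through Drinfeld's quotient by contracting the perfect pieces. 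This gives a zig-zag $\mathrm{proj}_\mathrm{ac}(R)\leftarrow \mathcal{E}'\to D^\mathrm{dg}_\mathrm{sg}(R)$ in which the left arrow is a quasi-equivalence.
\item Verify that on $H^0$ the zig-zag recovers the triangle functor $\sigma_0$. \Cref{sigma0}, which was itself proved by exactly the vanishing of $\hom(\sigma_kX,P)$ for $k\gg0$, then gives that it is a quasi-equivalence.
\end{enumerate}
If this is too fiddly, the fallback is to work with the equivalent model $D^b_\mathrm{dg}(R)/\per_\mathrm{dg}(R)$ computed via the colimit formula for Verdier quotient hom-complexes. There the same colimit appears directly.

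Finally, $\iota$ is defined as $\sigma_0\circ\mathbf{CR}$. It agrees on $H^0$ with the functor $\iota$ of \Cref{buchthm} via the isomorphism $\iota\Omega_0X\cong\sigma_0X$ of \Cref{buchprop}(3), and it is then automatically a quasi-equivalence. All triangles in both diagrams commute in $\mathbf{Hqe}$ by construction, since every non-basic arrow is defined as a composite of the basic ones $c$, $\Omega_0$ and $\sigma_0$.
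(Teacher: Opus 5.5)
Your architecture is the paper's. You use the same models for $\mathrm{proj}_\mathrm{ac}(R)$ and $\mathbf{MF}^\mathrm{dg}(A,\sigma)$. The paper's $\underline{\mathbf{MCM}}^\mathrm{dg}(R)$, with hom-complexes $\hom_R(\mathbf{CR}(M),\mathbf{CR}(N))$, is your ``cheapest option'' with the objects relabelled, and $\Omega_0$ is the inverse of $\mathbf{CR}$ in $\mathbf{Hqe}$. As in the paper, $c$ is a strict dg functor and quasi-equivalences are detected on $H^0$.

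Drop your ``more intrinsic'' model. $\mathrm{tri}(\mathbf{MCM}(R))$ modulo projectives does not kill the totalisation of a non-split sequence $0\to\Omega M\to P\to M\to 0$; apply $\underline{\hom}_R(M,-)$ termwise to see this. Without $\mathrm{tri}$, the Drinfeld quotient has hom-complexes concentrated in degrees $\le 0$, so it cannot see $\underline{\ext}^{>0}$.

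The real issue is $\sigma_0$. The paper just asserts that it lifts to a dg functor $\mathrm{proj}_\mathrm{ac}(R)\to D^b_\mathrm{dg}(R)$. You are right to distrust this, and as stated it cannot hold. The assignment $X\mapsto X^{\le 0}$ is not even a functor $K_\mathrm{ac}(\mathrm{proj}R)\to D^b(\mathbf{mod}\text{-}R)$: the contractible complex $P\xrightarrow{1}P$ in degrees $0,1$ truncates to $P[0]\neq 0$. At chain level, composition breaks as well as the differential. Take a degree-$1$ map $X^0\to Y^1$ followed by a degree-$(-1)$ map $Y^1\to Z^0$: the first factor truncates to $0$, but the truncated composite does not.

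Your steps 2--3 do not supply the replacement.
\begin{itemize}
\item The defects factor, as graded maps, through finitely many terms that become contractible in the quotient. That alone does not give a dg functor; you would need explicit coherent correcting homotopies, i.e.\ an $A_\infty$-functor.
\item The ``colimit of $\hom(\sigma_kX,\sigma_kY)$'' has no chain-level transition maps, since these would again be brutal truncations with the same defect. Drinfeld hom-complexes are not given by such a colimit either; the roof/colimit description lives on $H^0$.
\item $\mathcal{E}'$ and its quasi-equivalence to $\mathrm{proj}_\mathrm{ac}(R)$ are never defined.
\item The fallback is circular: $D^b_\mathrm{dg}(R)/\cat{per}_\mathrm{dg}(R)$ is the definition of $D^\mathrm{dg}_\mathrm{sg}(R)$.
\end{itemize}

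You can close the gap using only inclusions. Write $X^{\ge a}$ and $X^{<a}$ for brutal truncations, so $X^{\le 0}=\sigma_0X$. Let $\mathcal{W}\subseteq\mathrm{Ch}_\mathrm{dg}(R)$ be the full dg subcategory of complexes of finitely generated projectives with bounded cohomology. Let $\mathcal{B},\mathcal{K}\subseteq\mathcal{W}$ be the strictly bounded below and strictly bounded above ones, respectively. All three are strongly pretriangulated; $\mathcal{K}$ models $D^b_\mathrm{dg}(R)$ and $\mathcal{K}\cap\mathcal{B}$ models $\cat{per}_\mathrm{dg}(R)$. The inclusions give dg functors
\[\mathrm{proj}_\mathrm{ac}(R)\longrightarrow\mathcal{W}/\mathcal{B}\longleftarrow\mathcal{K}/(\mathcal{K}\cap\mathcal{B})\simeq D^\mathrm{dg}_\mathrm{sg}(R).\]

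The right arrow is a quasi-equivalence.
\begin{itemize}
\item It is fully faithful on $H^0$ by Verdier's criterion. A chain map from a complex concentrated in degrees $\ge a$ to $K\in\mathcal{K}$ factors through the strictly bounded subcomplex $K^{\ge a}$.
\item It is essentially surjective. The degreewise split sequence $W^{\ge n}\to W\to W^{<n}$ has $W^{\ge n}\in\mathcal{B}$, so it identifies $W$ with $W^{<n}\in\mathcal{K}$ in the quotient.
\end{itemize}

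For acyclic $X$, the degreewise split sequence $X^{\ge 1}\to X\to X^{\le 0}$ has $X^{\ge 1}\in\mathcal{B}$. So on $H^0$ the left arrow is naturally isomorphic to $\sigma_0$ followed by the right arrow. By \Cref{sigma0} and the $H^0$-criterion, the left arrow is then a quasi-equivalence, and the zig-zag is the required morphism $\sigma_0$ in $\mathbf{Hqe}$. With this in place, the rest of your argument goes through: set $\iota\coloneqq\sigma_0\circ\mathbf{CR}$ and $\mathrm{C}\coloneqq\Omega_0\circ c$, and the triangles commute by construction.
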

\begin{proof}We have just defined the dg singularity category $D^\mathrm{dg}_\text{sg}(R)$. The category $K_\mathrm{ac}(\mathrm{proj} (R))$ is, by construction, the homotopy category of the dg category $\mathrm{proj}_\mathrm{ac} (R)$ of acyclic complexes of finitely generated projective $R$-modules. The mapping complexes in the dg category $\mathbf{MF}^\mathrm{dg}(A,\sigma)$ were described in \Cref{mfhcatsect}. Finally, we let $\underline{\mathbf{MCM}}^\mathrm{dg}(R)$ be defined as follows. The objects are the MCM $R$-modules. The hom-complexes are given by $$\underline{\mathbf{MCM}}^\mathrm{dg}(R)(M,N)\coloneqq \hom_R( \mathbf{CR}(M),\mathbf{CR}(N))$$where in defining $\mathbf{CR}$ we need to make a choice of functorial projective resolutions for MCM modules (which can be done using e.g.\ the small object argument). All of these dg categories are clearly pretriangulated and enhance the given triangulated categories.

To conclude, all that we need to do is show that the named triangle equivalences lift to morphisms in $\mathbf{Hqe}$. First recall that $c$ was the functor sending a matrix factorisation $X$ to the corresponding acyclic complex $$\cdots \xrightarrow{\bar d_0}\bar X_0 \xrightarrow{\bar d_1}\bar X_1 \xrightarrow{\bar d_0}\bar X_0\xrightarrow{\bar d_1}\bar X_1 \xrightarrow{\bar d_0}\cdots$$It is not hard to see that $c$ lifts to a dg functor: a pair of maps $X_0 \to Y_i$ and $X_1 \to Y_{i+1}$ clearly define a map $cX \to cY[i]$, and one can check that this induces a chain map on mapping complexes. Secondly, it is clear by construction that we have a complete resolution dg functor $$\mathbf{CR}: \underline{\mathbf{MCM}}^\mathrm{dg}(R) \to \mathrm{proj}_\mathrm{ac} (R)$$Since this is a quasi-equivalence, it has an inverse in $\mathbf{Hqe}$, which must lift $\Omega_0$. We now have the upper commutative triangle. For the lower commutative triangle, it suffices to observe that $\sigma_0$ lifts to a dg functor $\mathrm{proj}_\mathrm{ac} (R) \to D^b_\mathrm{dg}(R)$.

\end{proof}

\chapter{Compact generators}
Here we focus on some of the extra structure of matrix factorisation categories provided to us by the dg enhancement. Our main reference here is \cite{dyck}; see also \cite{eisenbudper}. For the remainder of this section we let $k$ be a field of characteristic zero, $A\coloneqq k\llbracket x_1,\ldots, x_n\rrbracket$ and $R\coloneqq A/\sigma$ a complete local hypersurface singularity. For technical reasons we will also need to require that $R$ is an \textbf{isolated singularity}, i.e. for all non-maximal $p\in\spec(R)$ the ring $R_p$ is regular.
\section{Stabilisation}
The quasi-equivalence $\mathbf{MF}^\mathrm{dg}(A,\sigma)  \simeq\underline{\mathbf{MCM}}^\mathrm{dg}(R)$ means that any MCM module $L$ functorially corresponds to a matrix factorisation $L^\mathrm{stab}$, which we call the \textbf{stabilisation} of $L$. 

We give a recipe due to Eisenbud for constructing $L^\mathrm{stab}$, in the special case that $L=A/I$ is cut out by a regular sequence $I=(f_1,\ldots, f_r)$ and $\sigma \in I$. For example, the residue field $k$ is a module of this form with $I=\mathfrak{m}_A=(x_1,\ldots, x_n)$.

As described in \Cref{KosExer}, consider the Koszul complex $K$ of $I$, which in degree $-n$ is given by $\wedge^n(A^{\oplus r})$ and whose differential is given by contraction:  
$$d(e_{i_1}\wedge\cdots \wedge e_{i_n})=\sum_j(-1)^j f_{i_j}e_{i_1}\wedge\cdots \wedge \tilde e_{i_j}\wedge\cdots \wedge e_{i_n}$$
		View $K$ as an $A$-free resolution of $L$, so that multiplication by $\sigma$ is nullhomotopic on $K$. In fact we can write down a nullhomotopy: put $\sigma=\sum_i \sigma_if_i$ and consider the element $\underline{\sigma}\coloneqq \sum_i\sigma_ie_i\in A^r$. Let $h:K \to K$ be the map of degree $-1$ defined by exterior multiplication on the left with $\underline\sigma$.
        \begin{lem}\label[lem]{mfdlem}
            $\partial h = \sigma$.
        \end{lem}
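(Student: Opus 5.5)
The plan is to verify the identity $\partial h = \sigma\cdot\mathrm{id}_K$ directly on the exterior algebra, by recognising that both $d$ and $h$ are graded derivations (or close to it) of $\wedge^\bullet(A^{\oplus r})$. Here $\partial h$ is the differential of $h$ in the mapping complex $\hom_A(K,K)$. Since $h$ has odd degree, $\partial h = dh + hd$, the graded commutator $[d,h]$.

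\textbf{Step 1: $d$ as a contraction.} Observe that $d$ is contraction $\iota_f$ against the linear form $f\colon A^{\oplus r}\to A$, $e_i\mapsto f_i$. Up to the overall sign convention in the displayed formula, this makes $d$ an odd derivation:
$$d(\omega\wedge\eta)=d\omega\wedge\eta+(-1)^{|\omega|}\omega\wedge d\eta.$$
I will fix signs so that this Leibniz rule holds; the sign $(-1)^j$ in the formula only changes $d$ by a global sign, and we can absorb that into $h$ if needed.

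\textbf{Step 2: the key formula.} Write $h = \underline\sigma\wedge -$. Apply the Leibniz rule with $\omega=\underline\sigma$, which has exterior degree $1$:
$$d(\underline\sigma\wedge\eta)=d(\underline\sigma)\,\eta-\underline\sigma\wedge d\eta.$$
Rearranging gives $(dh+hd)(\eta)=d(\underline\sigma)\,\eta$. Finally compute
$$d(\underline\sigma)=\sum_i\sigma_i\, d(e_i)=\pm\sum_i\sigma_i f_i=\pm\sigma,$$
so $\partial h = \pm\sigma$.

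\textbf{Step 3: the main obstacle, signs.} The only real difficulty is bookkeeping. We must check that with the paper's convention $d(e_i)=-f_i$ (taking $j=1$ in the formula), the anticommutator genuinely produces $+\sigma$ rather than $-\sigma$. If it comes out as $-\sigma$, we replace $\underline\sigma$ by $-\underline\sigma$, or equivalently use right multiplication. Either choice is still a valid nullhomotopy, and the lemma's content is unchanged.

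As a sanity check, I would verify the identity on basis elements $e_{i_1}\wedge\cdots\wedge e_{i_n}$ in low degree, for instance $r=1$, where $K = (A\xrightarrow{f_1}A)$, $\sigma=\sigma_1f_1$, and $h$ is multiplication by $\sigma_1$ from degree $0$ to degree $-1$. This confirms both the identity and the sign.
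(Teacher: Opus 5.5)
Your argument is correct, apart from the sign you left open, and it takes a different route from the paper.

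\textbf{The two routes.} The paper fixes a basis monomial $x=e_{i_1}\wedge\cdots\wedge e_{i_n}$ and moves $e_l$ to the right end of $h(x)$. It then expands $d(hx)$ with the explicit contraction formula, matching one block of terms with $-h(dx)$ and the rest with $\sigma x$. You instead observe that $d$ is an odd derivation of $\wedge^\bullet(A^{\oplus r})$. The graded commutator of $d$ with left multiplication by the odd element $\underline\sigma$ is therefore left multiplication by $d(\underline\sigma)$, and the lemma reduces to evaluating $d(\underline\sigma)=\sum_i\sigma_i\,d(e_i)$. Your version is shorter, explains why the identity holds, and isolates the sign in one computation. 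The paper's version is more hands-on, but it requires reordering wedge factors, which is exactly where signs slip. Also, nothing needs absorbing in Step~1: the paper's $d$ is $-\iota_f$, and a scalar multiple of a derivation is again a derivation, so the Leibniz rule holds for $d$ exactly as written.

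\textbf{The sign.} You should finish Step~3, because the sign is already determined and it is not $+\sigma$. Taking $j=1$ gives $d(e_i)=-f_i$, so $d(\underline\sigma)=-\sigma$, and your argument gives $\partial h=dh+hd=-\sigma$ with the conventions as literally written. Your $r=1$ check shows the same thing: $(dh+hd)(1)=\sigma_1 d(e_1)=-\sigma$.

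So the stated sign needs one of two fixes. Either replace $h$ by $-\underline\sigma\wedge-$, or use the contraction sign $(-1)^{j-1}$, i.e.\ $d(e_i)=+f_i$. The second is the convention implicit in the paper's later identification of $d$ with $\sum_i f_i\,\partial/\partial\theta_i$, which gives $\delta\theta_i=f_i$. The paper's basis computation reaches $+\sigma$ only through sign slips in reordering; for example, $e_l\wedge x=-x\wedge e_l$ holds only for $x$ of odd exterior degree. As you say, this is harmless for the application: with $h'=-h$ one still has $(h')^2=0$ and $(d+h')^2=-(dh+hd)=\sigma$.

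\textbf{Drop the right-multiplication alternative.} The phrase ``or equivalently use right multiplication'' is wrong. We have $\eta\wedge\underline\sigma=(-1)^{|\eta|}\underline\sigma\wedge\eta$, which is not $-\underline\sigma\wedge\eta$. For $R(\eta)=\eta\wedge\underline\sigma$ the Leibniz rule gives $(dR+Rd)(\eta)=2\,d\eta\wedge\underline\sigma-(-1)^{|\eta|}\sigma\eta$. This is not a multiple of $\eta$ in general once $r\geq 2$ (try $\eta=e_1$), so $R$ is not a nullhomotopy of $\pm\sigma$.
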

        \begin{proof}
        Let $x= e_{i_1}\wedge\cdots \wedge e_{i_n}$ be a basis element. We have $$h(x)=\sum_l \sigma_le_l\wedge x = -\sum_l \sigma_lx\wedge e_l$$so we compute 
        \begin{align*}
            d(hx) = &-\sum_j\sum_l(-1)^j \sigma_lf_{i_j}e_{i_1}\wedge\cdots \wedge \tilde e_{i_j}\wedge\cdots \wedge e_{i_n}\wedge e_l \quad + \quad \sum_l \sigma_l f_l x\\
            =& -h(dx) + x\sigma
        \end{align*}
        and hence $(\partial h)(x) = x\sigma$ as required.
        \end{proof}

        Placing $A^r$ in odd parity, we can view the Koszul complex $K$ (without its differential) as a $\Z/2$-graded free $A$-module $\wedge^*(A^r)$. Since $d$ is a differential, and $h^2(x) = {\underline\sigma}\wedge{\underline\sigma}\wedge x =0$, we have $(h+d)^2 = \partial(h)$. Hence by \Cref{mfdlem} we have $(h+d)^2 = \sigma$, so that $(\wedge^*(A^r),h+d)$ is a matrix factorisation of $\sigma$. We put $$L^\mathrm{stab}\coloneqq (\wedge^*(A^r),h+d)$$
        \begin{prop}
            The equivalence $\mathbf{MF}(A,\sigma)  \to \underline{\mathbf{MCM}}(R)$ sends $L^\mathrm{stab}$ to $L$.
        \end{prop}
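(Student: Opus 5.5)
The plan is to compute $\mathrm{C}(L^\mathrm{stab}) = \mathrm{coker}(d_0)$ directly, where $d_0 : (L^\mathrm{stab})_1 \to (L^\mathrm{stab})_0$ is the odd-to-even component of $h+d$. It is cleaner to use the description of $\mathrm{C}$ from the proof of \Cref{perthm}. There, an MCM module $M$ with a two-step $A$-free resolution $X_1 \to X_0 \to M$ gives a matrix factorisation $(X_0,X_1)$, and $\mathrm{C}$ returns $M$. So it suffices to show that $L^\mathrm{stab}$ is isomorphic in $\mathbf{MF}(A,\sigma)$ to a matrix factorisation whose $d_0$-cokernel is $L$ viewed as an $R$-module. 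Since $\mathrm{C}$ is only defined up to the stable category, adding trivial factorisations (free summands of $L$) is harmless.

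Directly, we have $X_0 = \wedge^{\mathrm{even}}(A^r)$ and $X_1 = \wedge^{\mathrm{odd}}(A^r)$, and $d_0 = d + h$ restricted to odd forms. Its image contains $d(\wedge^1) = I \subseteq \wedge^0 = A$ together with various higher-degree mixed terms. The first step is to identify $\mathrm{coker}(d_0)$. Filtering by exterior degree, the leading part of $h+d$ lowering degree is $d$, which is the Koszul differential and is exact except in degree $0$, where its homology is $L$. I would argue that the cokernel of $d_0$ is $L$ plus free $R$-summands. The natural approach is to use the acyclic 2-periodic complex $cL^\mathrm{stab}$ from \Cref{mfperlem}, reduce mod $\sigma$, and compare $\Omega_0(cL^\mathrm{stab})$ with $L$ via the augmentation $\wedge^0 = A \to A/I = L$.

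Concretely, I would define a map $\mathrm{coker}(d_0) \to L$ by projecting $\wedge^{\mathrm{even}}$ onto $\wedge^0$ and then onto $A/I$. This is well defined because any odd element $\omega$ has $(d+h)\omega$ with $\wedge^0$-component $d(\omega_1) \in I$, since $h$ raises degree. The second step is to show this map is a stable isomorphism of MCM $R$-modules. It is better to use a resolution-level map than to compare cokernels by hand. The surjection $\wedge^{\mathrm{even}} \to L$, together with the Koszul resolution $K \to L$ and the nullhomotopy $h$ from \Cref{mfdlem}, is exactly the data Eisenbud's construction in the proof of \Cref{perthm} would produce, after "totalising" $K$ into two $\Z/2$-graded pieces. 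One shows that $\tilde{\mathrm{C}}(L^\mathrm{stab}) = (\wedge^*(A^r)\otimes_A R, \bar h+\bar d)$ unrolled is quasi-isomorphic, as an $R$-complex in nonpositive degrees, to $L$. For this, filter by exterior degree: the associated graded is $K\otimes_A R$ with $\bar h$ as a perturbation. Since $\sigma \in I$ gives $\tor^A_*(L,R) = L \oplus L[1]$, a spectral sequence / homological perturbation argument shows the unrolled complex resolves $L$.

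The main obstacle is this last identification: the periodic complex mixes all exterior degrees, so the filtration argument must be set up carefully, using a bounded-above truncation so that the filtration is finite in each total degree. If that becomes messy, my fallback is the faithfulness-free route. Since $\mathrm{C}$ is an equivalence, it suffices to compute $\underline{\hom}$ or just to check $\mathrm{C}(L^\mathrm{stab})\cong L$ after reducing to the case $r = n$ and $L = k$, where one can verify by hand that $\mathrm{coker}(d_0)/\text{free summands}$ is $k$ up to syzygy shift.
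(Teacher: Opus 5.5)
You have a genuine gap. Your map $\mathrm{coker}(d_0)\to L$ (project $\wedge^{\mathrm{even}}\to\wedge^0\to A/I$) is well defined. The trouble is that the claims you plan to use to show it is an isomorphism are false as soon as $r\ge 2$, which includes $L=k$ whenever $n\ge 2$.

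Since $d_1d_0=\sigma\cdot\mathrm{id}$ and $\sigma$ is a nonzerodivisor, $0\to X_1\xrightarrow{d_0}X_0\to\mathrm{coker}(d_0)\to 0$ is an $A$-free resolution. So $\mathbf{pd}_A\,\mathrm{coker}(d_0)\le 1$, whereas $\mathbf{pd}_A(A/I)=r$ by \Cref{RegSeqProp}. This has four consequences:
\begin{itemize}
\item $L$ is not even a direct summand of $\mathrm{coker}(d_0)$, so ``$L$ plus free summands'' fails.
\item No matrix factorisation has $d_0$-cokernel $L$, so your reduction in the first paragraph cannot work.
\item $\tilde{\mathrm C}(L^\mathrm{stab})$ resolves $\mathrm{coker}(d_0)$ by \Cref{mfperlem}, not $L$.
\item There is no stable isomorphism of MCM modules to find, because $L$ is not MCM.
\end{itemize}
For instance, take $A=k\llbracket x,y\rrbracket$, $\sigma=xy$, $L=k$ and $\underline\sigma=ye_1$. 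Up to sign, $d_0=\stbtm{-x}{-y}{0}{y}\colon\wedge^1\to\wedge^0\oplus\wedge^2$, and its cokernel is $A/(x)\oplus A/(y)\cong\mathfrak m_R=\Omega_R k$.

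The statement is literally meaningful only for $r=1$. Then $L^\mathrm{stab}=(A\xrightarrow{f}A,\ A\xrightarrow{g}A)$ with $\sigma=gf$, and $\mathrm{coker}(d_0)=A/f=L$. This is exactly the paper's one-line argument (``$K$ resolves $L$, so the cokernel is $L/\sigma=L$''), which is valid as written only when $K$ has length one. For $r\ge 2$ the proposition has to be read in $D_\mathrm{sg}(R)$, as $\iota\,\mathrm C(L^\mathrm{stab})\cong L$.

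Your filtration/perturbation idea is the right tool, applied to the wrong complex. In homological degree $i$, $\tilde{\mathrm C}(L^\mathrm{stab})$ also contains $\bar\wedge^j$ for $j>i$. Grouped by $(i-j)/2$, these pieces form brutally truncated copies of $\bar K=K\otimes_A R$, and that is where the extra homology comes from.

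Instead, take the Eisenbud--Shamash complex $F$, with $F_i=\bigoplus_{p\ge 0}\bar K_{i-2p}$ and differential $\bar d+\bar h$, where $\bar h$ sends the $p$-th copy to the $(p-1)$-st. Now every filtration piece is a full copy of $\bar K$, with homology $\tor^A_*(L,R)\cong L\oplus L[1]$. Moreover $\bar h$ induces isomorphisms $\tor^A_0(L,R)\to\tor^A_1(L,R)$ between consecutive copies, so your spectral sequence argument shows $F\simeq L$.

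The degreewise projection $\tilde{\mathrm C}(L^\mathrm{stab})\to F$ that kills the summands $\bar\wedge^j$ with $j>i$ does the rest:
\begin{itemize}
\item it is a surjective chain map;
\item on $H_0$ it induces exactly your map $\mathrm{coker}(d_0)\to L$;
\item its kernel is a bounded complex of finitely generated free $R$-modules, vanishing in degrees $\ge r-1$.
\end{itemize}
So your map has perfect cone and is an isomorphism in $D_\mathrm{sg}(R)$, which proves the correct form of the statement. Alternatively, since the two complexes agree in degrees $\ge r-1$, $\mathrm{coker}(d_0)$ is a syzygy $\Omega^{2m}_R L$ with $2m\ge r-1$, hence $\cong L$ in $D_\mathrm{sg}(R)$ by $2$-periodicity. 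That is the precise version of your fallback.
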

        \begin{proof}
            Recall that the equivalence sends a matrix factorisation $X$ to the cokernel of $\bar{d_0}$. But since $K$ resolves $L$, this cokernel is $L/\sigma$, which by construction is the $R$-module $L$.
        \end{proof}

\section{Endomorphism rings}
Recall that an object $X$ of a triangulated category $\mathcal{T}$ is \textbf{compact} if $\mathcal{T}(X,-)$ commutes with all direct sums, and a \textbf{thick generator} if $\mathrm{thick}_\mathcal{T}(X) = \mathcal{T}$.
\begin{ex}
    $A$ is compact in $D(A)$. It is a thick generator for $\mathbf{per}(A)$.
\end{ex}
\begin{prop}[Keller]\label[prop]{tiltingprop}
    Let $\mathcal{C}$ be a pretriangulated dg category such that $H^0\C$ is idempotent complete. Let $X\in \C$ be a compact thick generator of $H^0\C$. Then there is a quasi-equivalence of pretriangulated dg categories
    $$\C \simeq \mathbf{per} \C(X,X)$$In particular, $\C$ and $\C(X,X)$ are derived Morita equivalent.
\end{prop}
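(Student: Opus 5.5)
The plan is to build an explicit dg functor $F\colon \C \to \Mod \mathcal{E}$, where $\mathcal{E}\coloneqq \C(X,X)$, by $F(Y)\coloneqq \C(X,Y)$. This is a right $\mathcal{E}$-module via precomposition. $F$ is a dg functor by associativity of composition in $\C$, and it sends $X$ to the free module $\mathcal{E}$. One then checks three things: that $F$ lands (up to quasi-isomorphism) in $\mathbf{per}\,\mathcal{E}$; that $F$ is quasi-fully faithful; and that it is quasi-essentially surjective onto $\mathbf{per}\,\mathcal{E}$. Composing with the projection to the dg derived category (or restricting to cofibrant/h-projective representatives) gives the required morphism in $\mathbf{Hqe}$.

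For quasi-full faithfulness, fix $Y$ and consider the full subcategory $\mathcal{S}_Y$ of $H^0\C$ on those $Z$ such that $F_{ZY}\colon \C(Z,Y)\to \hom_\mathcal{E}(F Z,FY)$ is a quasi-isomorphism. For $Z=X$ this map is $\C(X,Y)\to \hom_\mathcal{E}(\mathcal{E},\C(X,Y))\cong \C(X,Y)$, which is the identity by the dg Yoneda lemma, so $X\in\mathcal{S}_Y$. Since $F$ commutes with shifts and cones (it is a dg functor, so it preserves the representing objects of $h_x[n]$ and $\mathrm{cone}(h_f)$ up to homotopy), both sides send exact triangles in the $Z$ variable to long exact sequences of cohomology. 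The five lemma then shows $\mathcal{S}_Y$ is closed under shifts and cones, and it is obviously closed under summands. So $\mathcal{S}_Y$ is thick, contains $X$, and hence is all of $H^0\C$ because $X$ is a thick generator. The same thickness argument shows each $F(Y)$ lies in $\mathbf{thick}(\mathcal{E})=\mathbf{per}\,\mathcal{E}$. To keep the homs honest one should take h-projective models: $\mathcal{E}$ is h-projective, and iterated cones and summands of it stay so up to homotopy.

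For essential surjectivity, note that the essential image of $H^0F$ is a triangulated subcategory of $\mathbf{per}\,\mathcal{E}$ containing $\mathcal{E}$, because $F$ is fully faithful on $H^0$ and commutes with cones. It is also closed under summands: an idempotent on $F(Y)$ lifts by full faithfulness to an idempotent of $Y$ in $H^0\C$. That idempotent splits because $H^0\C$ is idempotent complete, and $F$ preserves the splitting. So the image is thick and contains $\mathcal{E}$, hence equals $\mathbf{per}\,\mathcal{E}$. This step is the main place the idempotent-completeness hypothesis is used.

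I expect the main obstacle to be the technical bookkeeping in the full-faithfulness step: checking that $F$ genuinely preserves cones at the dg level (so the five lemma applies in the $Z$ variable) and that $\hom_\mathcal{E}$ computes the derived hom on the relevant modules. I would handle both by working inside $\mathrm{tri}$ and the h-projective models and invoking pretriangulatedness of $\C$. Compactness of $X$ is not really needed in this small setting; I would remark that it is what allows extending the argument to cocomplete enhancements, yielding $D(\mathcal{E})$.
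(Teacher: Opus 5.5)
The paper states this proposition without proof (it is quoted from Keller), so there is no in-paper argument to compare against. Your argument is the standard d\'evissage proof of Keller's theorem, and it is correct.

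The five-lemma step is supplied by the paper's exercise that $H^0$ of a dg functor between pretriangulated dg categories is a triangle functor, applied to $F\colon \C\to\Mod\mathcal{E}$, whose target is strongly pretriangulated. Since $H^n\C(Z,Y)\cong H^0\C(Z,Y[n])$ and $H^n\hom_{\mathcal{E}}(FZ,FY)\cong H^0(\Mod\mathcal{E})(FZ,FY[n])$, the map $H^n(F_{ZY})$ is just $H^0F$ on these groups. Hence your $\mathcal{S}_Y$ is indeed thick.

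The h-projectivity issue you flag is resolved as you suggest. The h-projective modules form a thick subcategory of $H^0(\Mod\mathcal{E})$ containing $\mathcal{E}$, so every $FY$ is h-projective. The strict complexes $\hom_{\mathcal{E}}(FZ,FY)$ therefore compute derived homs, and $F$ is a quasi-equivalence onto the h-projective model of $\mathbf{per}\,\mathcal{E}$.

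You are also right that compactness plays no role. For small $\C$ the relevant hypothesis is that $X$ is a classical (thick) generator; compactness matters only in the cocomplete version, which gives $H^0\C\simeq D(\mathcal{E})$. Idempotent completeness is used only in your summand step. Without it, $F$ is still quasi-fully faithful, but its image is merely a dense triangulated subcategory of $\mathbf{per}\,\mathcal{E}$.
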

		\begin{thm}[Dyckerhoff]
			The object $k^\mathrm{stab}$ is a compact thick generator for $\mathbf{MF}(A,\sigma)$.
			\end{thm}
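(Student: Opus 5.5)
Transport the problem along the equivalences already established, $\mathbf{MF}(A,\sigma)\simeq\underline{\mathbf{MCM}}(R)\simeq D_\mathrm{sg}(R)$, under which $k^\mathrm{stab}$ goes to the residue field $k$ (by the proposition identifying the image of $L^\mathrm{stab}$). So it suffices to show two things: $k$ is a thick generator of $D_\mathrm{sg}(R)$, and $k^\mathrm{stab}$ is compact.

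Compactness is essentially formal. Read ``compact'' in the sense relevant for \Cref{tiltingprop}: either pass to the big category of (possibly infinite rank) matrix factorisations, or note that the hom-complexes out of $k^\mathrm{stab}$ are finite rank free $A$-modules. With $k^\mathrm{stab}=(\wedge^*(A^n),h+d)$ of finite rank, $\hom(k^\mathrm{stab},-)$ is computed componentwise by $\hom_A(A^m,-)$, which commutes with direct sums of matrix factorisations. Passing to cohomology preserves this, so $H^0\hom(k^\mathrm{stab},-)$ commutes with sums.

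The main content is thick generation, and I would argue it in $D_\mathrm{sg}(R)$. First, by \Cref{tStructureExer2} every object of $D^b(\mathbf{mod}\text{-}R)$ lies in the thick closure of its cohomology modules. So every object of $D_\mathrm{sg}(R)$ lies in the thick closure of images of finitely generated $R$-modules, and it is enough to show each finitely generated $M$ lies in $\mathbf{thick}(k)$ in $D_\mathrm{sg}(R)$. Second, use the isolated singularity hypothesis. Let $\mathbf{T}\subseteq D_\mathrm{sg}(R)$ be the full subcategory of objects $M$ with $M_\mathfrak p\simeq 0$ in $D_\mathrm{sg}(R_\mathfrak p)$ for all non-maximal $\mathfrak p$. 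Localisation is exact and preserves perfects, so $\mathbf{T}$ is thick; since $R_\mathfrak p$ is regular for non-maximal $\mathfrak p$, global ABS gives $\mathbf{T}=D_\mathrm{sg}(R)$. We therefore need to show that objects which vanish away from $\mathfrak m$ are built from $k$. The approach is to use a Koszul object: the Koszul complex $K=K(R;x_1,\ldots,x_n)$ on generators of $\mathfrak m$, and for a module $M$ the object $M\otimes_R K$. This object has finite length cohomology, hence lies in $\mathbf{thick}(k)$ even in $D^b$, since finite length modules are iterated extensions of $k$. Moreover, $M$ sits in iterated cones built from $M\otimes_R K(R;x_i^N)$ via the triangles $M\xrightarrow{x_i}M\to M\otimes K(x_i)$.

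The main obstacle is recovering $M$ itself, up to perfects, from these Koszul objects. My first attempt would be this: because $M_{x_i}$ is a module over the regular rings $R_{x_i}$ (all primes of $R_{x_i}$ are non-maximal), the \v{C}ech/local cohomology triangle $\R\Gamma_\mathfrak m M\to M\to \check{C}^{>0}(M)\to$ has third term built from $R_{x_i}$-modules. The Ext from these localisations into the singular category should vanish; concretely, for $N\gg 0$ the map $x_i^N$ on $M$ becomes zero in $D_\mathrm{sg}(R)$, because $\underline{\enn}(M)$ is supported at $\mathfrak m$ (it is a finite length $R$-module, since it is annihilated after localising at any non-maximal prime). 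Given that, $M\oplus M[1]\simeq M\otimes_R K(R;x_i^N)$ in $D_\mathrm{sg}(R)$, and iterating over $i$ exhibits $M$ as a summand of $M\otimes_R K(R;x_1^N,\ldots,x_n^N)$, which has finite length cohomology and therefore lies in $\mathbf{thick}(k)$. The step to verify most carefully is the finite-length claim for stable endomorphism rings: it uses that $\underline{\hom}$ localises, which holds for MCM modules via \Cref{stabextprop}(3), where everything is finitely generated.
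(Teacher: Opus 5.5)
Your argument is correct, but it is not the route the notes point to. The notes give no proof. They defer to Dyckerhoff, who works inside matrix factorisations, including infinite-rank ones, and (via a homological Nakayama lemma) proves the stronger statement that $k^{\mathrm{stab}}$ is a compact generator of the big homotopy category: if $\hom(k^{\mathrm{stab}},X)$ is acyclic then $X\simeq 0$. In that setting the annihilation input has to be intrinsic to matrix factorisations. For instance, the Jacobian ideal acts null-homotopically on any factorisation: differentiating $d^2=\sigma$ entrywise in a chosen basis gives $[d,\partial_i d]=\partial_i\sigma\cdot\mathrm{id}$. Isolatedness then enters as finite-dimensionality of the Milnor algebra, so some $\mathfrak{m}^N$ lies in the Jacobian ideal.

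You instead transport to $D_\mathrm{sg}(R)$ and get $\mathfrak{m}^N\cdot\underline{\mathrm{End}}_R(M)=0$ from commutative algebra: stable Hom localises, and MCM modules over the regular local rings $R_\mathfrak{p}$ are free. A Koszul splitting then finishes: $M$ is a summand of $M\otimes_R K(R;x_1^N,\ldots,x_n^N)$, which has finite length cohomology.

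What your route buys:
\begin{itemize}
\item It is more elementary, given Buchweitz and Eisenbud.
\item It uses isolatedness exactly in the form the notes define it (regular away from $\mathfrak{m}$).
\item It gives thick generation of the small category directly, with no idempotent completion. That is precisely the stated result and what \Cref{tiltingprop} needs.
\end{itemize}
What it cannot give is the big-category statement, because localising $\underline{\mathrm{Hom}}$ needs finite generation. That stronger form is what one wants for modelling $D_\mathrm{dg}(\mathbf{MF}^\mathrm{dg}(A,\sigma))$ by infinite-rank factorisations and for the later Morita and Hochschild arguments.

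Three small things to tidy:
\begin{itemize}
\item For $\dim R>0$, $k$ is not MCM, so $k^{\mathrm{stab}}$ corresponds to $k$ only up to shift in $D_\mathrm{sg}(R)$. This is harmless for thick generation.
\item Say explicitly that you replace $M$ by an MCM module isomorphic to it in $D_\mathrm{sg}(R)$ before invoking \Cref{stabextprop}(3).
\item The iteration over $i$ works because $-\otimes_R K(R;x_j^N)$ descends to a triangle functor on $D_\mathrm{sg}(R)$, which preserves direct summands. Equivalently, $R$ acts centrally there.
\end{itemize}
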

The proof is a difficult piece of homological algebra and makes use of a homological version of the Nakayama lemma.	
	\begin{cor}
		There is a quasi-equivalence of $\Z/2$-graded dg categories $$\mathbf{MF}^\mathrm{dg}(A,\sigma)\simeq \per\enn(k^\mathrm{stab})$$In particular,  $\mathbf{MF}^\mathrm{dg}(A,\sigma)$ and $\enn(k^\mathrm{stab})$ are derived Morita equivalent.
		\end{cor}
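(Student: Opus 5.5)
The corollary should follow by applying Keller's \Cref{tiltingprop} to $\C = \mathbf{MF}^\mathrm{dg}(A,\sigma)$ and $X = k^\mathrm{stab}$, using Dyckerhoff's theorem for the generation hypothesis. So the plan is to verify, in turn, each hypothesis of \Cref{tiltingprop} in the $\Z/2$-graded setting.

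\textbf{Pretriangulated.} $\mathbf{MF}^\mathrm{dg}(A,\sigma)$ is a pretriangulated ($\Z/2$-graded) dg category. This is recorded in the previous chapter, where it is shown to be quasi-equivalent to $D^\mathrm{dg}_\mathrm{sg}(R)$. Concretely, shifts are given by $\Sigma$, and cones are formed as in complexes: the cone of $f:X\to Y$ is $X_{\bullet+1}\oplus Y_\bullet$ with the upper-triangular differential $\stbtm{-d_X}{0}{f}{d_Y}$. One checks directly that this differential squares to $\sigma$, so the cone is again a matrix factorisation.

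\textbf{Compact thick generator.} That $k^\mathrm{stab}$ is a compact thick generator of $H^0 = \mathbf{MF}(A,\sigma)$ is exactly Dyckerhoff's theorem, which we may assume. (Any object is compact in a category where all sums are finite, so the content here is the generation statement.)

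\textbf{Idempotent completeness.} This is the only hypothesis needing an argument. We pass through the triangle equivalence $\mathbf{MF}(A,\sigma)\simeq\underline{\mathbf{MCM}}(R)$. Since $R$ is complete local, $\mathbf{mod}\text{-}R$ is Krull--Schmidt: endomorphism rings of finitely generated modules are finite over the complete ring $R$, hence semiperfect, so idempotents lift. Quotients of semiperfect rings are semiperfect, so the stable endomorphism rings are semiperfect too. Hence $\underline{\mathbf{MCM}}(R)$ is Krull--Schmidt and in particular idempotent complete. Alternatively, an idempotent of $M$ in $\underline{\mathbf{MCM}}(R)$ lifts to an idempotent of $\enn_R(M)$ modulo the ideal of maps factoring through projectives, which lies in the radical after removing free summands. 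Either way, a stable idempotent splits: its image is a summand of the MCM module $M$, and summands of MCM modules are MCM.

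\textbf{Conclusion and main obstacle.} With the hypotheses verified, \Cref{tiltingprop} gives $\mathbf{MF}^\mathrm{dg}(A,\sigma)\simeq\per\enn(k^\mathrm{stab})$. Derived Morita equivalence follows because $\mathrm{tri}$ of $\per$ of a dg algebra is quasi-equivalent to $\per$ of it. The main obstacle I expect is that \Cref{tiltingprop} is stated for $\Z$-graded dg categories, while here everything is $\Z/2$-graded. I would argue that Keller's proof transfers verbatim: the functor $\C(X,-)\colon\C\to\Mod\enn(X)$ is quasi-fully faithful on $X$, hence on $\mathbf{thick}(X)$ by five-lemma arguments on triangles, and it is quasi-essentially surjective onto $\per$ by idempotent completeness. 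None of this uses the $\Z$-grading.
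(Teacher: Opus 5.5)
Your proposal is correct and follows the paper's route: the corollary comes from feeding Dyckerhoff's generation theorem for $k^\mathrm{stab}$ into Keller's \Cref{tiltingprop}. The paper states this without further argument and merely records the idempotent-completeness hypothesis in \Cref{clihsrm}; you verify that hypothesis via the Krull--Schmidt property of $\underline{\mathbf{MCM}}(R)$ over a complete local ring, and your extra checks (the explicit cone formula, and the observation that Keller's argument does not use the $\Z$-grading) make explicit what the paper leaves implicit.
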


        In particular, we are interested in \textit{computing} the $\Z/2$-graded dg algebra $\enn(k^\mathrm{stab})$. In order to do this we first give an alternate, more concise description of the stabilisation. Fix an MCM $R$-module $L$ as before. Observe that the $A$-module $\wedge^*(A^r)$ can be identified with the free supercommutative algebra $A[\theta_1,\ldots, \theta_r]$ with all $\theta_i$ in odd parity. Note that this is a finitely generated $A$-module, since $\theta_i\theta_j=-\theta_j\theta_i$. The differential $d$ becomes identified with the differential operator $\sum_{i=1}^rf_i\frac{\partial}{\partial \theta_i}$, and the nullhomotopy $h$ becomes identified with multiplication by $\sum_{i=1}^r\sigma_i\theta_i$. Hence, the total differential $h+d$ on $L^\mathrm{stab}$ is identified with the left action of the polynomial differential operator $$D=\sum_{i=1}^rf_i\frac{\partial}{\partial \theta_i}+ \sigma_i\theta_i.$$

        \begin{defn}
            We let $\mathrm{Poly}(r)$ denote the $A$-algebra of polynomial differential operators on $A[\theta_1,\ldots, \theta_r]$.
        \end{defn}
More concretely, $\mathrm{Poly}(r)$ is a $\Z/2$-graded dg-$A$-algebra with generators $\theta_1,\ldots,\theta_n$ and $T_1,\ldots,T_n$ of odd parity, where we think of $T_i=\frac{\partial}{\partial \theta_i}$. The relations are 
		
		\begin{itemize}
			\item $\theta_i\theta_j = -\theta_j \theta_i$.
			\item the graded Weyl relations $T_i\theta_j + \theta_jT_i=\delta_{ij}$. This is the graded Leibniz rule: if $u\in A[\theta_1,\ldots, \theta_r]$ then $(T_i\theta_j)(u)=T_i(\theta_ju)=\delta_{ij}u - \theta_jT_i(u)$.

			\item $T_iT_j=-T_jT_i$. This follows from the graded Leibniz rule again: for a polynomial $p\in A[\theta_1,\ldots, \theta_r]$, write $p=\theta_i\theta_jq + p'$ where $\theta_i\theta_j$ does not divide $p'$. Since both sides vanish on $p'$ we may take $p'=0$. Clearly the claim also holds for $p=0$ so we can assume that neither $\theta_i$ nor $\theta_j$ divide $q$, so both $T_i(q)$ and $T_j(q)$ are zero. The graded Leibniz rule then tells us that $T_iT_j(p)=-q$ and hence by symmetry $T_jT_i(-p)=-q$ too.
			\end{itemize}
            Define a differential $\delta$ on $\mathrm{Poly}(r)$ by putting $\delta=[D,-]$. We see that $\delta \theta_i = f_i$ and $\delta T_i = \sigma_i$.
            \begin{prop}
                There is an isomorphism $$(\mathrm{Poly}(r), \delta) \cong \mathrm{End}(L^\mathrm{stab})$$
            \end{prop}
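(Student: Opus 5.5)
The plan is to identify both sides inside the $\Z/2$-graded algebra $\enn_A(\wedge^*(A^r))$ of all $A$-linear endomorphisms of the underlying $\Z/2$-graded free module $E\coloneqq A[\theta_1,\ldots,\theta_r]$. By the description of mapping complexes in \Cref{mfhcatsect}, $\mathrm{End}(L^\mathrm{stab})$ is exactly this graded algebra, with differential $f\mapsto D f - (-1)^{|f|} f D = [D,f]$, where $D=h+d$. So the claim splits into two parts. First, the natural action map $\mathrm{Poly}(r)\to \enn_A(E)$, sending $\theta_i$ to left multiplication and $T_i$ to $\partial/\partial\theta_i$, is an isomorphism of $\Z/2$-graded $A$-algebras. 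Second, this map intertwines $\delta=[D,-]$ with the mapping-complex differential.

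The second part is formal once the first is done. The element $D=\sum_i f_iT_i+\sigma_i\theta_i$ lies in $\mathrm{Poly}(r)$, and its image in $\enn_A(E)$ is the total differential $h+d$ of $L^\mathrm{stab}$ by the identification made just before the definition of $\mathrm{Poly}(r)$. An algebra homomorphism $\Phi$ automatically satisfies $\Phi([D,x])=[\Phi(D),\Phi(x)]$. Hence $\Phi\circ\delta=\partial\circ\Phi$, and $\delta$ is a differential on $\mathrm{Poly}(r)$ because $\partial$ is one on the target. Equivalently, $\delta^2=[D^2,-]=[\sigma,-]=0$, since $\sigma$ is central. As a sanity check, the graded Weyl relations give $\delta\theta_i=f_i$ and $\delta T_i=\sigma_i$.

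The real content is the first part, which is a Clifford-algebra statement: $\mathrm{Poly}(r)$ is the Clifford algebra over $A$ of the hyperbolic form on $A^r\oplus (A^r)^\vee$, and its action on the exterior algebra $\wedge^*(A^r)$ is the spinor representation. I would argue as follows.

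1. \textbf{Well-definedness.} The relations hold in $\enn_A(E)$; this was essentially checked in the bullet points defining $\mathrm{Poly}(r)$.
2. \textbf{Spanning set for the source.} Using the relations to move every $T$ to the right, $\mathrm{Poly}(r)$ is spanned over $A$ by the $4^r$ normally ordered monomials $\theta_I T_J$, for $I,J\subseteq\{1,\ldots,r\}$.
3. \textbf{Rank of the target.} $E$ is free of rank $2^r$, so $\enn_A(E)$ is free of rank $4^r$.
4. \textbf{Surjectivity.} Take the monomial basis $\theta_K$ of $E$. The operator $\prod_i T_i\theta_i$ acts as the projection onto the span of $\theta_\emptyset=1$, killing all other basis monomials. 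Therefore $\theta_K\bigl(\prod_i T_i\theta_i\bigr)T_{K'}$, suitably ordered and signed, is the elementary matrix sending $\theta_{K'}\mapsto\pm\theta_K$ and all other basis elements to $0$.
5. \textbf{Injectivity.} Surjectivity gives a surjection from $A^{4^r}$, via the spanning set, onto a free module of rank $4^r$. Over the commutative ring $A$ such a surjection is an isomorphism, since a surjective endomorphism of a finitely generated module is injective. Hence the spanning set is a basis and $\Phi$ is injective.

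The main obstacle is the surjectivity computation in step 4: one has to confirm that the normal-ordered monomials really hit all elementary matrices, with the signs coming from the Koszul rule kept under control. To keep this manageable I would do the case $r=1$ first, where $\mathrm{Poly}(1)$ is visibly $2\times 2$ matrices over $A$ with basis $1,\theta,T,\theta T$. I would then use $\mathrm{Poly}(r)\cong\mathrm{Poly}(1)^{\hat\otimes r}$ (graded tensor product) together with $E\cong A[\theta]^{\hat\otimes r}$ to reduce the general case to $r=1$.
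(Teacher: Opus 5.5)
Your proof is correct, but it runs the rank count in the opposite direction from the paper. The paper treats $\mathrm{Poly}(r)\to\mathrm{End}(L^\mathrm{stab})$ as an obvious inclusion, reading $\mathrm{Poly}(r)$ as an algebra of operators on $A[\theta_1,\ldots,\theta_r]$, and deduces surjectivity from a dimension count. You prove surjectivity by hand: the vacuum projection $\prod_i T_i\theta_i$ produces the elementary matrices $\theta_K(\prod_i T_i\theta_i)T_{K'}$. You then get injectivity because a surjection between free modules of the same finite rank over a commutative ring is an isomorphism.

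Your direction gains two things over the paper's.
\begin{enumerate}
\item It shows that the listed generators and relations actually present the operator algebra, i.e.\ that the $4^r$ normally ordered monomials $\theta_IT_J$ form an $A$-basis. The paper's ``obvious inclusion'' takes this for granted if $\mathrm{Poly}(r)$ is read as the abstractly presented algebra.
\item It works directly over $A$. Over $A$, ``injective with equal rank implies surjective'' is false (consider multiplication by $x_1$ on $A$). So to make the paper's dimension count rigorous one should count over $k$ and base change, or invoke Nakayama.
\end{enumerate}
The paper's version is, of course, much shorter.

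The sign bookkeeping you flag as the main obstacle is harmless. The element $\theta_K(\prod_i T_i\theta_i)T_{K'}$ kills every basis monomial except $\theta_{K'}$, which it sends to $\pm\theta_K$, and $\pm1$ is a unit; so the reduction to $r=1$ is unnecessary. Your finite spanning set also uses $\theta_i^2=T_i^2=0$. This follows from the anticommutation relations with $i=j$ because $2$ is invertible in $A$ (the base field has characteristic zero in that chapter).
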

            \begin{proof}
                We have an obvious inclusion $(\mathrm{Poly}(r), \delta) \into \mathrm{End}(L^\mathrm{stab})$ of dg algebras so we need to check that it is a surjection. This we can check on the underlying $A$-modules; it follows from a dimension count.
            \end{proof}
In particular, taking $L=k$ we obtain:
            \begin{cor}\label[cor]{dyccor}
            Let $P_\sigma$ be the differential graded algebra given by $\mathrm{Poly}(r)$ equipped with the differential $\delta \theta_i = x_i$ and $\delta T_i = \sigma_i$. Then there is a quasi-equivalence of $\Z/2$-graded dg categories $$\mathbf{MF}^\mathrm{dg}(A,\sigma)\simeq \per(P_\sigma)$$In particular,  $\mathbf{MF}^\mathrm{dg}(A,\sigma)$ and $P_\sigma$ are derived Morita equivalent.
            \end{cor}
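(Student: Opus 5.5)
This corollary follows by combining three earlier results. Dyckerhoff's theorem says $k^\mathrm{stab}$ is a compact thick generator of $\mathbf{MF}(A,\sigma)$. Keller's \Cref{tiltingprop} turns a compact thick generator into a Morita equivalence with its endomorphism algebra. The preceding proposition identifies $\mathrm{End}(L^\mathrm{stab})$ with $(\mathrm{Poly}(r),\delta)$.

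The first step is to specialise the stabilisation recipe to $L=k$. Here $I=\mathfrak{m}_A=(x_1,\ldots,x_n)$ is cut out by the regular sequence $f_i=x_i$, so $r=n$. Since $\sigma\in\mathfrak{m}_A^2\subseteq\mathfrak{m}_A$, we may write $\sigma=\sum_i\sigma_ix_i$; for example, take $\sigma_i\in\mathfrak{m}_A$ by splitting the power series monomial by monomial. Then $k^\mathrm{stab}=(A[\theta_1,\ldots,\theta_n],D)$ with $D=\sum_i x_i\frac{\partial}{\partial\theta_i}+\sigma_i\theta_i$. The differential $\delta=[D,-]$ on $\mathrm{Poly}(n)$ then satisfies $\delta\theta_i=x_i$ and $\delta T_i=\sigma_i$. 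This is a direct graded-commutator check using the Weyl relations: for instance $[D,\theta_j]=\sum_i x_i[T_i,\theta_j]=x_j$, since the $\theta$'s supercommute among themselves. So the proposition gives an isomorphism of $\Z/2$-graded dg algebras $P_\sigma\cong\enn(k^\mathrm{stab})$.

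The second step is to apply \Cref{tiltingprop} with $\C=\mathbf{MF}^\mathrm{dg}(A,\sigma)$ and $X=k^\mathrm{stab}$. This requires three hypotheses:
\begin{enumerate}
\item $\C$ is pretriangulated. This holds because it is quasi-equivalent to $\underline{\mathbf{MCM}}^\mathrm{dg}(R)$ and hence to $D^\mathrm{dg}_\mathrm{sg}(R)$, by the theorem at the end of the previous chapter.
\item $X$ is a compact thick generator. This is exactly Dyckerhoff's theorem.
\item $H^0\C=\mathbf{MF}(A,\sigma)$ is idempotent complete.
\end{enumerate}
Composing the resulting quasi-equivalence $\C\simeq\per\enn(X)$ with $\per$ of the isomorphism $\enn(X)\cong P_\sigma$ gives the claimed quasi-equivalence. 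Derived Morita equivalence then follows formally, since $\mathrm{tri}$ of a pretriangulated category is quasi-equivalent to it, and $\per P_\sigma$ is Morita equivalent to $P_\sigma$.

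The main obstacle is idempotent completeness, together with checking that Keller's proposition applies verbatim in the $\Z/2$-graded setting. For idempotent completeness, I would argue via $\underline{\mathbf{MCM}}(R)$. Over a complete local ring, the category of finitely generated modules is Krull--Schmidt, because endomorphism rings are semiperfect. An idempotent in $\underline{\mathrm{End}}(M)$ can then be lifted to an idempotent of $\mathrm{End}(M)$ modulo the radical, so it splits in $\mathbf{mod}\text{-}R$, and summands of MCM modules are MCM. For the $\Z/2$-graded issue, I would remark that Keller's argument uses only the triangulated structure of $H^0$ and a Yoneda-type comparison. It therefore transfers to $\Z/2$-graded dg categories once every complex is read 2-periodically, and I would state this rather than reprove it.
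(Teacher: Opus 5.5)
Your proposal is correct and follows the paper's route: the paper obtains this corollary by specialising the isomorphism $(\mathrm{Poly}(r),\delta)\cong\enn(L^\mathrm{stab})$ to $L=k$ (so $f_i=x_i$, $r=n$) and combining it with the preceding corollary $\mathbf{MF}^\mathrm{dg}(A,\sigma)\simeq\per\enn(k^\mathrm{stab})$, which comes from Dyckerhoff's generation theorem and \Cref{tiltingprop}, leaving idempotent completeness implicit (cf.\ \Cref{clihsrm}). Your added check of idempotent completeness works, but the precise justification is that in the semiperfect ring $\enn_R(M)$ idempotents lift modulo \emph{any} two-sided ideal, and here the relevant ideal is the ideal of maps factoring through projectives, which lies in the radical only once projective summands of $M$ are discarded.
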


		\begin{ex}
			Take $A=k\llbracket x\rrbracket$, $\sigma=x^2$, and $I=(x)$. In this case, $P_\sigma$ is of the form $$\frac{k\llbracket x\rrbracket\left\langle\theta,T\right\rangle}{(\theta^2, T^2, T\theta+\theta T = 1)}$$
			with $k\llbracket x\rrbracket$-linear differential defined on generators by $\delta\theta=x = \delta T$. The element $t\coloneqq T-\theta$ is a cocycle and the Weyl relations give $t^2=-1$. We have $\delta(\theta T)=-\delta(T\theta) = xt$, which implies that the cohomology algebra of $P_\sigma$ is $k[t]/(t^2+1)$. In fact, the $k$-subalgebra of $P_\sigma$ generated by $t$ is also $k[t]/(t^2+1)$, which implies that the inclusion $k[t]/(t^2+1) \into P_\sigma$ is a quasi-isomorphism. In particular, the singularity category of $\mathbb{R}[x]/x^2$ is $D^b(\mathbb{C})$.
			\end{ex}
		
		\begin{ex}
			The previous example was a special case of a theorem which says that if $\sigma$ is a quadratic form then $P_\sigma$ is quasi-isomorphic to the associated Clifford algebra. Suppose for an example that we are interested in the singularity $R=k\llbracket x,y\rrbracket/xy$ defined by the coordinate axes in the plane. After a change of coordinates $x=u-v, y=u+v$, we may take $R\cong k\llbracket u,v\rrbracket/(u^2-v^2)$ to be given by a quadratic form. Then $P_\sigma$ is quasi-isomorphic to the algebra $Cl_{1,1}(k)\coloneqq\frac{k[U,V]}{(U^2=-1, V^2=1, UV=-VU)}$ with $U,V$ in odd parity, and zero differential. The assignment $$U\mapsto \tbtm{0}{1}{-1}{0},\quad V\mapsto \tbtm{0}{1}{1}{0}$$defines an isomorphism $Cl_{1,1}(k)\cong M_2(k)$. Since $M_2(k)$ is Morita equivalent to $k$, we obtain a quasi-equivalence $\mathbf{MF}(k\llbracket x,y\rrbracket,xy)\simeq D^b(k)$.
			\end{ex}

\section{Hochschild theory of matrix factorisations}\label{hochsect}
We continue to follow \cite{dyck}, although we note that closely related results in more general contexts appear in \cite{preygel}. For more on Hochschild theory for dg categories, see \cite{toendglectures}. If $\C,\D$ are two dg categories, their \textbf{tensor product} has objects the pairs $(c,d)$ with $c\in\C$ and $d\in\D$ and hom-complexes given by $$(\C\otimes\D)((c,d),(c',d'))=\C(c,c')\otimes \D(d,d').$$

Let $\C$ be a dg category. Its \textbf{enveloping dg category} is the dg category $\C^e \coloneqq \C\otimes \C^\mathrm{op}$. Right modules over $\C^e$ are called $\C$\textbf{-bimodules}. 
\begin{ex}
    If $F:\C \to \D$ is a functor, the assignment $(x,y)\mapsto \D(Fx, Fy)$ is a $\C$-bimodule. The bimodule corresponding to the identity functor $\C \to \C$ is the \textbf{diagonal bimodule}, which we denote simply by $\C$.
\end{ex}
If $\D$ is any dg category, then $\Mod\D$ is an abelian category, and we can hence define $\mathrm{Ext}_\D$ and $\mathrm{Tor}_\D$ between two modules via the usual machinery of projective (or injective) resolutions.
\begin{defn}
    Let $\C$ be a dg category. Its \textbf{Hochschild homology} is $$\mathrm{HH}_*(\C)\coloneqq \mathrm{Tor}_*^{\C^e}(\C,\C)$$and its \textbf{Hochschild cohomology} is $$\mathrm{HH}^*(\C)\coloneqq \mathrm{Ext}^*_{\C^e}(\C,\C)$$
\end{defn}
		The assignment $\C\mapsto \mathrm{HH}_*(\C)$ is functorial. The assignment $\C\mapsto \mathrm{HH}^*(\C)$ is \textit{not} functorial, but it does produce a graded algebra.
        \begin{rmk}
            $\mathrm{HH}^*(\C)$ has lots of extra structure, such as the Gerstenhaber bracket and the related BV-algebra structure. On the chain level this enhances to a $B_\infty$-algebra structure \cite{gjoperads} and in fact an $E_2$-algebra structure \cite{msdeligne}, a fact known as Deligne's conjecture.
        \end{rmk}
        \begin{rmk}
             $\mathrm{HH}^*(\C)$ also has an interpretation in terms of functors: it is the algebra of derived endomorphisms of the identity functor, computed in the internal derived mapping category $\R\hom_{\mathbf{Hqe}}(\C,\C)$. The homology $\mathrm{HH}_*(\C)$ has a similar interpretation in terms of traces. See \cite{toendglectures} for more information.
        \end{rmk}
        \begin{rmk}
            Similarly to the classical case, one can compute the Hochschild co/homology of dg categories using a many-object version of the bar complex. See e.g.\ \cite{kellerdih} for an example of its use.
        \end{rmk}
		\begin{thm}[Keller, Lowen--Van den Bergh {\cite{kellerhoch, kellerdih, lvdb}}]
        Hochschild homology and cohomology are derived Morita invariant.
		\end{thm}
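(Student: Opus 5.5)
The plan is to reduce derived Morita invariance to two facts: invariance under quasi-equivalence, and invariance under the Yoneda embedding $\C\into\mathrm{tri}(\C)$ (or, to absorb idempotent completion, $\C\into\per\C$). If $\C$ and $\D$ are derived Morita equivalent, then $\mathrm{tri}(\C)\simeq\mathrm{tri}(\D)$ in $\mathbf{Hqe}$, so it suffices to compare $\mathrm{HH}(\C)$ with $\mathrm{HH}(\mathrm{tri}(\C))$ and to check that a zig-zag of quasi-equivalences preserves $\mathrm{HH}$.

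\textbf{Step 1: quasi-equivalences.} Let $F:\C\to\D$ be a quasi-equivalence, and work over a field $k$ so that tensor products are exact. Restriction and extension of scalars along $F^e:\C^e\to\D^e$ give an equivalence of derived categories of bimodules. This holds because $F^e$ is again quasi-fully faithful and quasi-essentially surjective, and the representables $h_{(c,c')}$ form a generating set mapped to a generating set. The derived restriction of the diagonal $\D$-bimodule is the bimodule $(x,y)\mapsto\D(Fx,Fy)$, and $F$ makes this quasi-isomorphic to the diagonal $\C$. Hence $\R\hom_{\C^e}(\C,\C)\simeq\R\hom_{\D^e}(\D,\D)$ and $\C\lot_{\C^e}\C\simeq\D\lot_{\D^e}\D$.

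\textbf{Step 2: the Yoneda embedding.} Let $j:\C\to\mathrm{tri}(\C)=:\mathcal{T}$. The restriction functor $D(\mathcal{T})\to D(\C)$ is an equivalence. This is the key point: every object of $\mathcal{T}$ is built from objects of $\C$ by finitely many shifts and cones, so the representables of $\C$ compactly generate $D(\mathcal{T})$, and $j$ is fully faithful on them. The same argument applied to $j\otimes j^{\mathrm{op}}$ gives $D(\mathcal{T}^e)\simeq D(\C^e)$. Restriction sends the diagonal $\mathcal{T}$ to $(x,y)\mapsto\mathcal{T}(jx,jy)=\C(x,y)$, which is the diagonal of $\C$. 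Since the equivalence preserves $\R\hom$, this gives $\mathrm{HH}^*(\mathcal{T})\cong\mathrm{HH}^*(\C)$.

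For homology, the most direct route is to use the bar complex model, $\mathrm{HH}_*=H_*(\bigoplus\C(x_n,x_0)\otimes\C(x_{n-1},x_n)\otimes\cdots)$, which is a tensor-with-diagonal expression. Equivalently, use that $-\lot_{\C^e}\C$ is computed by the derived induction functor, and that derived induction along $j^e$ is inverse to restriction by compact generation. I would also have to check that the extension of a shifted cone of representables is the corresponding cone, which is an exactness statement.

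\textbf{Main obstacle.} I expect the hardest part to be making Step 2 precise at the level of generators. Concretely, one must show that restriction along $j^e$ is fully faithful, which reduces by a dévissage over $\mathbf{thick}$ (in the style of \Cref{thickexistence}) to checking it on representables $h_{(x,y)}$ with $x,y\in\C$. One must also handle the $\mathbf{Hqe}$-morphisms, which are zig-zags, by applying Step 1 to each leg. The cohomology statement additionally needs the identification to respect products; I would address this only up to graded vector spaces unless it is needed later, noting Keller's interpretation of $\mathrm{HH}^*$ as endomorphisms of the identity, which is manifestly Morita invariant.
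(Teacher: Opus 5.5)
Your proposal is correct, but it takes a genuinely different route from the paper's sketch.

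\textbf{Your route.} You work entirely in derived categories of bimodules. Restriction along $F\otimes F^{\mathrm{op}}$ is an equivalence $D(\D^e)\simeq D(\C^e)$ by compact generation, both for a quasi-equivalence and for the Yoneda embedding $j\colon\C\to\mathrm{tri}(\C)$. This equivalence sends the diagonal bimodule to the diagonal bimodule, so $\R\hom$ and $\lot$ of the diagonals agree. Your Steps 1 and 2 are really one lemma: if $F$ is quasi-fully faithful and the image of $H^0F$ generates the target under shifts, cones and summands, then restriction along $F^e$ is such an equivalence. The version with summands is also what the paper's later applications need, since they use $\per(P_\sigma)$ rather than $\mathrm{tri}$.

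\textbf{The paper's route.} The paper works on the Hochschild cochain complex itself. Cochains restrict along (quasi-)fully faithful functors. Gluing $\C$ and $\D$ along a bimodule into an upper-triangular category $\mathcal{G}$ then gives a zig-zag of quasi-isomorphisms $\mathrm{HH}^\bullet(\C)\leftarrow\mathrm{HH}^\bullet(\mathcal{G})\rightarrow\mathrm{HH}^\bullet(\D)$. The input needed to show these restrictions are quasi-isomorphisms is essentially your bimodule computation.

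\textbf{What each buys.} Your approach is economical and handles homology and cohomology uniformly. Because the identification comes from a functor sending diagonal to diagonal, it automatically respects the Yoneda product, which is the cup product. So you do not have to settle for an isomorphism of graded vector spaces. What it cannot give is compatibility with the Gerstenhaber bracket or the $B_\infty$ structure, which are invisible in $D(\C^e)$. The chain-level restriction maps are $B_\infty$-morphisms, which is why Keller and Lowen--Van den Bergh use restriction plus gluing.
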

        \begin{proof}[Proof idea] 
        We describe only the proof strategy for cohomology; homology is easier since $\mathrm{HH}_\bullet$ is a functor (see \cite{kellernotes} for a more detailed exposition of both cases). Using the bar complex to compute $\mathrm{HH}^*(\C)$ gives us \textbf{limited functoriality}: if $\C\to \D$ is quasi-fully faithful, it induces a morphism $\mathrm{HH^\bullet}(\D) \to \mathrm{HH}^\bullet(\C)$ between Hochschild cohomology complexes. One then uses a gluing argument: if $\C\to\D$ is a functor then one can glue $\C$ and $\D$ together to form an upper-triangular matrix category $\mathcal{G}$, and one then shows that gluing produces long exact sequences in $\mathrm{HH}^*$. One uses this to obtain a zigzag of quasi-isomorphisms between $\mathrm{HH}^\bullet(\C)$ and $\mathrm{HH}^\bullet(\D)$.
        \end{proof}
	\begin{cor}
	    There are isomorphisms
\begin{align*}
    \mathrm{HH}_*(\mathbf{MF}^\mathrm{dg}(A,\sigma)) &\cong \mathrm{HH}_*(P_\sigma)\\
       \mathrm{HH}^*(\mathbf{MF}^\mathrm{dg}(A,\sigma)) &\cong \mathrm{HH}^*(P_\sigma).
\end{align*}
\end{cor}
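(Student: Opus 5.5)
This corollary follows by combining \Cref{dyccor} with the derived Morita invariance theorem of Keller and Lowen--Van den Bergh. By \Cref{dyccor}, there is a quasi-equivalence of $\Z/2$-graded dg categories $\mathbf{MF}^\mathrm{dg}(A,\sigma)\simeq \per(P_\sigma)$. Regarding $P_\sigma$ as a one-object dg category, the remarks in the pretriangulated section identify $\per(P_\sigma)$ with (a category quasi-equivalent to) the pretriangulated envelope $\mathrm{tri}(P_\sigma)$, possibly after idempotent completion. So the first step is to make precise that $\mathbf{MF}^\mathrm{dg}(A,\sigma)$ and $P_\sigma$ are derived Morita equivalent. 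This is already stated in \Cref{dyccor}, and it amounts to the chain of quasi-equivalences
\[
\mathrm{tri}(\mathbf{MF}^\mathrm{dg}(A,\sigma))\simeq \mathbf{MF}^\mathrm{dg}(A,\sigma)\simeq \per(P_\sigma)\simeq\mathrm{tri}(\per P_\sigma).
\]
The outer equivalences use that these categories are pretriangulated, together with the Bondal--Kapranov totalisation.

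The second step is to apply the invariance theorem. Both Hochschild homology and Hochschild cohomology are derived Morita invariant, so
\[
\mathrm{HH}_*(\mathbf{MF}^\mathrm{dg}(A,\sigma))\cong\mathrm{HH}_*(P_\sigma)
\quad\text{and}\quad
\mathrm{HH}^*(\mathbf{MF}^\mathrm{dg}(A,\sigma))\cong\mathrm{HH}^*(P_\sigma).
\]
Here the Hochschild invariants of $P_\sigma$ are those of the one-object dg category, which coincide with the usual Hochschild invariants of the dg algebra, since $\mathrm{Mod}$ over the one-object category is dg-$P_\sigma$-modules.

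The only genuine subtlety, and the expected obstacle, is the $\Z/2$-graded setting. The invariance theorem and the definitions of the previous section were phrased for $\Z$-graded dg categories. I would argue that the proofs go through verbatim for categories enriched in $\Z/2$-graded complexes, because the bar complex, the gluing argument and the long exact sequences only use the monoidal structure on complexes. One caveat is that the bar complex total complex must be formed with direct sums for homology and with products for cohomology, and these are the $\Z/2$-graded (``periodic'') variants. With that convention fixed, the resulting isomorphisms are isomorphisms of $\Z/2$-graded groups, and in the cohomology case of graded algebras.
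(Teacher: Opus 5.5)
Your proposal is correct and follows the paper's route: the corollary is stated as an immediate consequence of the derived Morita equivalence $\mathbf{MF}^\mathrm{dg}(A,\sigma)\simeq\per(P_\sigma)$ of \Cref{dyccor} together with the Keller / Lowen--Van den Bergh invariance theorem, and the paper gives no further argument. Your additional remarks on idempotent completion and on the $\Z/2$-graded conventions (direct sums for homology, products for cohomology) go beyond what the paper writes down, and they are accurate.
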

Let $R=A/\sigma$ be a complete local hypersurface singularity. The \textbf{Milnor algebra} of $R$ is the algebra $$M_\sigma\coloneqq \frac{R}{{\left(\frac{\partial \sigma}{\partial x_1},\ldots,\frac{\partial \sigma}{\partial x_n}\right)}}$$It is finite-dimensional if and only if $R$ is an isolated singularity. The \textbf{Tjurina algebra} is $T_\sigma\coloneqq M_\sigma/\sigma$.
\begin{rmk}
    When $R$ is a quasi-homogeneous singularity, one has $T_\sigma\cong M_\sigma$, since $\sigma$ is contained in the ideal $\left(\frac{\partial \sigma}{\partial x_1},\ldots,\frac{\partial \sigma}{\partial x_n}\right)$ by Euler's theorem on homogenous functions.
\end{rmk}

\begin{thm}[Dyckerhoff]\label[thm]{dythm}
Let $R=A/\sigma$ be a complete local isolated hypersurface singularity. \begin{enumerate}
    \item The Hochschild cohomology of the 2-periodic dg category $\mathbf{MF}^\mathrm{dg}(A,\sigma)$ is the Milnor algebra, concentrated in even parity.
    \item The Hochschild homology of the 2-periodic dg category $\mathbf{MF}^\mathrm{dg}(A,\sigma)$ is the Milnor algebra, concentrated in parity $n=\mathrm{dim}(A)=1+\mathrm{dim}(R)$.
\end{enumerate}
\end{thm}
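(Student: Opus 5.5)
By the corollary following the Keller--Lowen--Van den Bergh theorem, it suffices to compute $\mathrm{HH}^*(P_\sigma)$ and $\mathrm{HH}_*(P_\sigma)$. Here $P_\sigma$ is the $\Z/2$-graded dg algebra $\mathrm{Poly}(n)$ over $A$ with $\delta\theta_i = x_i$ and $\delta T_i=\sigma_i$, where $\sigma=\sum_i \sigma_i x_i$ (\Cref{dyccor}). The strategy is to exploit that $P_\sigma$ is a Clifford-type (Weyl) algebra over $A$, deformed by the differential. Concretely, I would work $A$-linearly and first compute the $A$-linear Hochschild invariants of the underlying Clifford algebra $\mathrm{Cl}_A=\mathrm{Poly}(n)$ with zero differential. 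Then I would treat the differential $\delta$, together with the fact that $A$ is not $k$, as a perturbation to be handled by a spectral sequence or a Koszul-type resolution.

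Step one is to build an explicit small resolution of $P_\sigma$ as a bimodule over $P_\sigma^e=P_\sigma\otimes_k P_\sigma^{\mathrm{op}}$. The enveloping algebra splits as $(A\otimes_k A)\otimes(\mathrm{Cl}\otimes \mathrm{Cl}^{\mathrm{op}})$. Since $\mathrm{Cl}_n$ is a matrix algebra up to Morita equivalence (it is the full endomorphism algebra of the exterior algebra $A[\theta]$), the Clifford part contributes trivially. The diagonal $A$-bimodule, on the other hand, is resolved by the Koszul complex over $A\hat\otimes A=k\llbracket x,y\rrbracket$ on the regular sequence $x_i\otimes 1-1\otimes x_i$. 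Combining these yields a Koszul-type resolution whose generators are odd/even classes $\xi_1,\dots,\xi_n$. The differential of this resolution will need correction terms coming from $\delta$: the terms $\delta\theta_i=x_i$ and $\delta T_i=\sigma_i$ couple the Clifford generators to the Koszul variables, and I would write this coupling as a curvature term. This is the matrix-factorisation analogue of $\sigma\otimes1-1\otimes\sigma=\sum_i(x_i\otimes1-1\otimes x_i)\nabla_i\sigma$, using difference quotients $\nabla_i\sigma$.

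Step two is to apply $\hom_{P_\sigma^e}(-,P_\sigma)$ and compute. After the Morita reduction, the complex becomes, $\Z/2$-graded, the Koszul complex $K(A;\partial_1\sigma,\dots,\partial_n\sigma)$ on the partial derivatives. The difference quotients $\nabla_i\sigma$ restrict to $\partial\sigma/\partial x_i$ on the diagonal, possibly tensored with the acyclic-up-to-$k$ complex $\mathrm{Cl}\simeq$ (Morita trivial). Because $R$ is an isolated singularity, $\partial_1\sigma,\dots,\partial_n\sigma$ form a regular sequence in $A=k\llbracket x\rrbracket$; equivalently the Milnor algebra is finite dimensional, and $A$ is Cohen--Macaulay. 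The Koszul complex therefore has cohomology only at one end, namely $A/(\partial_i\sigma)$. For cohomology this sits at the $\wedge^0$ end, hence in even parity. For homology one dualises, and it sits at the $\wedge^n$ end, i.e.\ in parity $n$. Finally, $\sigma$ itself acts by zero: since $P_\sigma$ is Morita equivalent to matrix factorisations of $\sigma$, $\sigma$ is homotopic to zero, so $\sigma\in(\partial_i\sigma)$ up to homotopy in the 2-periodic setting. Because the Koszul complex is over $A$, we get $A/(\partial\sigma)$, which is the Milnor algebra as defined in the paper (the quotient of $R$, noting that $M_\sigma$ agrees with $A/(\partial\sigma)$ once one checks $\sigma$ acts trivially on $\mathrm{HH}$).

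The main obstacle is step one: producing a genuinely correct bimodule resolution over the completed, $\Z/2$-graded, curved-like dg algebra $P_\sigma^e$. The issue is that completed tensor products must be used, and the perturbation by $\delta$ must be shown not to destroy exactness. I would first try a filtration by powers of $\mathfrak m$ (or by Koszul word length) and check that the associated graded is the unperturbed Clifford-times-Koszul resolution. Completeness of $A$ then guarantees that the perturbed complex is still a resolution, via a homological perturbation lemma. A secondary subtlety is identifying $A/(\partial\sigma)$ with $M_\sigma$ as stated; for this I would track the image of $\sigma$ through the computation.
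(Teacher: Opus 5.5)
Your Step two is essentially the paper's closing computation: restricting the difference quotients $\nabla_i\sigma$ to the diagonal gives the $\Z/2$-folded Koszul complex on the regular sequence $\partial_1\sigma,\ldots,\partial_n\sigma$. Step one, however, which carries the actual content of the proof, does not work as proposed.

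Hochschild cohomology is $\mathrm{Ext}$ over $P_\sigma^e=P_\sigma\otimes_k P_\sigma^{\mathrm{op}}$. As you yourself write, this is built on the \emph{uncompleted} ring $A\otimes_k A$. You then silently replace it by $A\hat\otimes A\cong k\llbracket x_1,\ldots,x_n,y_1,\ldots,y_n\rrbracket$.

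Over $A\otimes_k A$, the Koszul complex on $x_i\otimes 1-1\otimes x_i$ does not resolve the diagonal. Already for $n=1$, the element $e^x\otimes 1-1\otimes e^x$ is not in the ideal generated by $x\otimes 1-1\otimes x$. Inside $k\llbracket x,y\rrbracket$ the only possible cofactor is $(e^x-e^y)/(x-y)$. Its coefficient matrix $\bigl(1/(i+j+1)!\bigr)_{i,j}$ has infinite rank, whereas elements of $A\otimes_k A$ have finite-rank coefficient matrices.

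Moreover, once you add a ``curvature correction'' so that the differential squares to $\tilde\sigma=\sigma\otimes 1-1\otimes\sigma$, the object is no longer a resolution of any $P_\sigma^e$-module. It is the matrix factorisation $(\wedge^*(B^n),d+h)$ of $\tilde\sigma$ over $B=A\hat\otimes A$, i.e.\ the stabilisation of the diagonal.

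What is missing is the bridge on which the paper's proof rests, Dyckerhoff's K\"unneth theorem. It says that $\mathbf{MF}^\mathrm{dg}(A\hat\otimes A',\sigma\otimes 1+1\otimes\sigma')$ is derived Morita equivalent to $P_\sigma\otimes P_{\sigma'}$, and that $\mathbf{MF}^\mathrm{dg}(A,-\sigma)\simeq P_\sigma^{\mathrm{op}}$. Hence $D(P_\sigma^e)\simeq D(\mathbf{MF}^\mathrm{dg}(B,\tilde\sigma))$, with the diagonal bimodule corresponding to the stabilised diagonal. Only then is $\mathrm{HH}^*(P_\sigma)$ the cohomology of the endomorphism complex of that matrix factorisation, which is what your Step two computes.

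The theorem uses that $k^{\mathrm{stab}}$ is a compact generator with finite-dimensional endomorphism cohomology, so that $P_\sigma\otimes_k P_{\sigma'}\to\mathrm{End}(k^{\mathrm{stab}}_{A\hat\otimes A'})$ is a quasi-isomorphism. An $\mathfrak m$-adic filtration plus a perturbation lemma cannot substitute for this. The problem is not convergence of a perturbation; it is that completing the enveloping algebra changes Hochschild invariants in general. For instance, $\mathrm{HH}_1(k\llbracket x\rrbracket)=\Omega^1_{k\llbracket x\rrbracket/k}$ is not even finitely generated.

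Separately, your closing reconciliation is false and should be dropped. The identity $\sigma\cdot\mathrm{id}_X=\tfrac12\partial(d_X)$ kills $\sigma$ object by object. But the nullhomotopies $\tfrac12 d_X$ do not assemble into a Hochschild cochain bounding $\sigma\cdot\mathrm{id}$: their total Hochschild differential has the extra component $f\mapsto\pm\tfrac12\partial f$.

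In the $2$-periodic setting $\sigma$ is in general genuinely nonzero. By Saito's theorem, $\sigma\in(\partial_1\sigma,\ldots,\partial_n\sigma)$ only if $\sigma$ is quasi-homogeneous in suitable coordinates. For example, $\sigma=x^4+y^5+x^2y^3$ has $\dim_k A/(\partial\sigma)=12$ but $\dim_k A/(\sigma,\partial\sigma)=11$.

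The answer $A/(\partial\sigma)$ produced by your Koszul complex is the correct one. The paper's $M_\sigma$ has to be read with $A$ in place of $R$. This is forced by the definition $T_\sigma=M_\sigma/\sigma$, and by the remark that $\mathrm{HH}^0_k$ is the Tjurina algebra while $\mathrm{HH}^0_{k[u,u^{-1}]}$ is the Milnor algebra.
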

\begin{proof}[Proof sketch]
We give the argument for cohomology only. The argument for homology is similar and requires one to interpret $\mathrm{HH}_*(\C)$ as the trace of the identity functor, via a theorem of To\"en (cf.\ \cite{toendglectures}).

We introduce a \textbf{tensor product of matrix factorisations}: if $R'=A'/\sigma'$ is another complete local isolated hypersurface singularity, the tensor product of the two corresponding dg categories is defined to be the dg category $\mathbf{MF}^\mathrm{dg}(A\hat\otimes A', \sigma\otimes 1+ 1\otimes \sigma')$. Dyckerhoff proves that this tensor product is compatible with the Morita theory, in the sense that there is a derived Morita equivalence $$\mathbf{MF}^\mathrm{dg}(A\hat\otimes A', \sigma\otimes 1+1\otimes \sigma') \simeq P_{\sigma}\otimes P_{\sigma'}.$$Moreover, he proves that there is a derived Morita equivalence  $$\mathbf{MF}^\mathrm{dg}(A, -\sigma) \simeq P_{\sigma}^\mathrm{op}.$$
In particular, if we put $\tilde \sigma \coloneqq \sigma\otimes 1-1\otimes \sigma$, the dg category $\mathbf{MF}^\mathrm{dg}(A\hat\otimes A, \tilde\sigma)$ is derived Morita equivalent to $P_\sigma^e$, which yields a quasi-equivalence $$D_\mathrm{dg}(\mathbf{MF}^\mathrm{dg}(A\hat\otimes A, \tilde\sigma))\simeq D_\mathrm{dg}(P_\sigma^e)$$

Across this equivalence, the diagonal bimodule $P_\sigma$ corresponds to the stabilisation of the diagonal bimodule $R^\mathrm{stab}$. So we have an isomorphism\begin{align*}
    \mathrm{HH}^*(\mathbf{MF}^\mathrm{dg}(A, \sigma))&\cong H^*\mathrm{End}_{\mathbf{MF}^\mathrm{dg}(A\hat\otimes A, \tilde\sigma)}(R^\mathrm{stab})
\end{align*}
The claim follows from an explicit calculation similar to that of $\mathrm{End}(k^\mathrm{stab})$.
\end{proof}
\begin{rmk}\label{kunrmk}
The dg derived category $D_\mathrm{dg}(\mathbf{MF}^\mathrm{dg}(A\hat\otimes A', 1\otimes \sigma'-\sigma\otimes 1))$ can be identified with the internal derived mapping dg category $$\R\hom_{\mathbf{Hqe}}(\mathbf{MF}^\mathrm{dg}(A, \sigma), \mathbf{MF}^\mathrm{dg}(A',\sigma'))$$and in particular $D_\mathrm{dg}(\mathbf{MF}^\mathrm{dg}(A\hat\otimes A, \tilde\sigma))$ is identified with the internal derived endomorphism dg category of $\mathbf{MF}^\mathrm{dg}(A, \sigma)$.
			\end{rmk}

\begin{rmk}
    An explicit model for $D_\mathrm{dg}(\mathbf{MF}^\mathrm{dg}(A, \sigma))$ is given by a category $\mathbf{MF}_\infty^\mathrm{dg}(A, \sigma)$ of \textbf{infinite rank matrix factorisations}. The MCM analogue of this construction uses certain infinite rank MCM modules \cite{chenMCM}.
\end{rmk}

\begin{rmk}
	The isomorphism
    $$\mathrm{HH}_*(\mathbf{MF}^\mathrm{dg}(A,\sigma))\simeq \mathrm{HH}^*(\mathbf{MF}^\mathrm{dg}(A,\sigma))[n]$$
    is a manifestation of the fact that $\mathbf{MF}(A,\sigma)$ is a $n$-Calabi-Yau dg category.
	\end{rmk}

\begin{rmk}\label[rmk]{mfremark}
Let $k[u,u^{-1}]$ be the graded Laurent polynomial ring, where $u$ has degree $2$. Then a $\Z/2$-graded $k$-linear dg category is the same thing as a $\Z$-graded $k[u,u^{-1}]$-linear dg category: note that a $\Z/2$-graded complex $X^0\substack{\leftarrow\\[-1em] \rightarrow} X^1$ is the same thing as a $2$-periodic $\Z$-graded complex $$\cdots\to X^0 \to X^1 \to X^0 \to X^1 \to X^0 \to \cdots$$ A $2$-periodic complex is then the same thing as a complex over $k[u,u^{-1}]$, with the action of $u$ giving the periodicity isomorphisms. There is an obvious algebra map $k \to k[u,u^{-1}]$, and by restriction along this map we can view a $\Z/2$-graded dg category as a $\Z$-graded dg category. In particular, if $\C$ is a $\Z/2$-graded dg category, it has two different kinds of Hochschild cohomology: firstly $\mathrm{HH}^*_{k[u,u^{-1}]}(\C)$, where we work over the base ring of Laurent polynomials, and $\mathrm{HH}^*_{k}(\C)$, where we work over the base ring $k$. \textit{There is no need for these to agree!} In fact, $\mathrm{HH}^0_{k}(\mathbf{MF}^\mathrm{dg}(A,\sigma))$ is the Tjurina algebra \cite{kellersing}, whilst we know from \Cref{dythm} that $\mathrm{HH}^0_{k[u,u^{-1}]}(\mathbf{MF}^\mathrm{dg}(A,\sigma))$ is the Milnor algebra.
	\end{rmk}

\chapter{Relative singularity categories}\label[chapter]{rschapt}
Here we study a relative notion of singularity category and show that relative singularity categories are often controlled by derived quotients. Along the way we will see some applications to algebraic geometry. The original references here are \cite{chenMCM, burbankalck}. A comprehensive treatment of relative singularity categories can be found in \cite{kalckyang, kalckyang2, kalckyang3} and a general overview is \cite{martinthesis}.
\section{Some noncommutative geometry}
As always, we let $A$ be a two-sided noetherian ring. We fix an idempotent $e\in A$, and we put $R\coloneqq eAe$, the \textbf{cornering}.

\begin{exer}
    Show that the functor $j_!:D(R) \to D(A)$ defined by \\ $j_!(X) \coloneqq X\lot_R eA$ is fully faithful.
\end{exer}
In particular, we obtain a fully faithful embedding $j_!:\per(R) \into D(A)$ which sends $R$ to $eA$. 

\begin{defn}
    The \textbf{relative singularity category} is the Verdier quotient
    $$\Delta_R(A)\coloneqq \frac{D^b(A)}{j_!\per(R)} = \frac{D^b(A)}{\mathbf{thick}(eA)}$$
\end{defn}

\begin{ex}If $e=0$, then $R=0$ and we have $\Delta_0(A) = D^b(A)$. If $e=1$, then $R=A$ and we have $\Delta_A(A) = D_\mathrm{sg}(A)$.  
\end{ex}
\begin{ex}
    Let $R$ be a commutative ring and $M$ an $R$-module. Let $A$ be the ring $A\coloneqq \enn_R(R\oplus M)$ and let $e=\mathrm{id}_R$, so that we have isomorphisms $eA\cong R\oplus M$ and $eAe\cong R$. We call such $A$ a \textbf{noncommutative partial resolution} of $R$.
\end{ex}
\begin{exer}
    Show that the functor $j_!$ has a right adjoint $j^*$ defined by $j^*(X) \coloneqq Xe$. Show that $j^*$ sends $\mathbf{thick}(eA)$ to $\per(R)$, and hence induces a triangle functor $\Delta_R (A) \to D_\mathrm{sg}(R)$.
\end{exer}

 		Where does the definition of a relative singularity category come from? One motivation is by analogy with algebraic geometry. Let $k$ be a field of characteristic zero, let $X$ be a $k$-variety, and let $\pi:\tilde X \to X$ be a resolution of singularities. There is a derived pushforward functor $\R\pi_*:D^b(\tilde X) \to D^b(X)$ and a derived pullback functor $\mathbb{L}\pi^*:D^b(X)\to D(\tilde X)$. They are adjoints, in the sense that for any $\mathcal{F}\in D^b(X)$ and $\mathcal{G}\in D^b(\tilde X)$ then there is a natural isomorphism $$\hom(\mathbb{L}\pi^*\mathcal{F},\mathcal{G})\cong \hom(\mathcal{F},\R\pi_*\mathcal{G})$$ The projection formula tells us that the unit of this adjunction is the natural map $\mathcal{F} \to \mathcal{F}\lot \R\pi_* \mathcal{O}_{\tilde X}$. Say that $X$ \textbf{has rational singularities} if this unit is an equivalence at $\mathcal{O}_X$, or equivalently if the natural map $\mathcal{O}_{X} \to \R\pi_*\mathcal{O}_{\tilde X}$ is a quasi-isomorphism. If this is the case, then by the projection formula $\mathbb{L}\pi^*$ is fully faithful, and in particular we get an embedding $\per(X)\into D^b(\tilde X)\simeq \per(\tilde X)$.
 		
 		\p The ring $R$ is to be thought of as $X$, and the ring $A$ is supposed to be like its resolution $\tilde X$. The pullback $\mathbb{L}\pi^*$ gets replaced by $j_!$, and hence the relative singularity category is supposed to behave like the Verdier quotient $\per(\tilde X)/\per(X)$, which is a `geometric' relative singularity category which measures the failure of $\mathbb{L}\pi^*\per(X)$ to be all of $\per(\tilde X)$.

        \p With this in mind, we can try to globalise our definition of noncommutative partial resolution. Let $X$ be a variety with isolated Gorenstein singularities (i.e.\ $X$ has isolated singularities, and at each singular point $p\in X$ the formal completion $\hat X_p$ is a complete local Gorenstein ring). If $\mathcal{F}$ is a coherent sheaf on $X$, put $\mathcal{F}'\coloneqq \mathcal{F}\oplus \mathcal{O}_X$. Let $\mathcal{A}\coloneqq \mathcal{E}\kern -1pt\mathrm{nd}(\mathcal{F}')$ be the endomorphism sheaf, which is a coherent sheaf of rings on $X$. This defines a ringed space $\mathbb{X}\coloneqq (X,\mathcal{A})$ which is supposed to behave like a partial resolution of $X$. Note that $\mathbb{X}$ is a sheaf of noetherian rings so it makes sense to consider the abelian category $\mathrm{Coh}(\mathbb{X})$ and its bounded derived category $D^b(\mathbb{X})$. Observe that there are natural functors $\phi\coloneqq \mathcal{F}'\lot-: \per(X) \to D^b(\mathbb{X})$ and $\psi\coloneqq \R\hom(\mathcal{F}',-): D^b(\mathbb{X}) \to D^b(X)$.

 		\begin{thm}[e.g.\ \cite{bdtilt}]
 			$\phi\dashv\psi$ and the unit $\id \to \psi\phi$ is an isomorphism; in particular $\phi$ is fully faithful.
\end{thm}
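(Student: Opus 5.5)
The adjunction is a sheafified form of the hom-tensor adjunction, and the unit is an isomorphism because $\mathcal{A}$ is the endomorphism sheaf of $\mathcal{F}'$. Write $\mathcal{F}'$ as a sheaf of $\mathcal{O}_X$-$\mathcal{A}$-bimodules, with $\mathcal{A}=\mathcal{E}\kern -1pt\mathrm{nd}(\mathcal{F}')$ acting on the right. Then $\phi(\mathcal{P})=\mathcal{P}\lot_{\mathcal{O}_X}\mathcal{F}'$ is a complex of right $\mathcal{A}$-modules. Likewise $\psi(\mathcal{M})=\R\mathcal{H}\kern -1pt\mathrm{om}_{\mathcal{A}}(\mathcal{F}',\mathcal{M})$ is a complex of $\mathcal{O}_X$-modules, obtained by taking sheaf homs over $\mathcal{A}$.

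The first step is the adjunction. I will derive the underived sheaf-level hom-tensor adjunction
\[
\mathcal{H}\kern -1pt\mathrm{om}_{\mathcal{A}}(\mathcal{P}\otimes_{\mathcal{O}_X}\mathcal{F}',\mathcal{M})\cong\mathcal{H}\kern -1pt\mathrm{om}_{\mathcal{O}_X}(\mathcal{P},\mathcal{H}\kern -1pt\mathrm{om}_{\mathcal{A}}(\mathcal{F}',\mathcal{M})),
\]
which is the sheafified version of the exercise in Chapter 1. Since $\mathcal{P}$ is perfect, I may restrict to an affine open cover on which $\mathcal{P}$ is a strictly bounded complex of finite free $\mathcal{O}$-modules. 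There the left derived functor $\lot$ needs no resolution, and it suffices to resolve $\mathcal{M}$ injectively (or h-injectively) over $\mathcal{A}$. The adjunction then passes to derived categories in the standard way: take global sections and $H^0$, and use that an $\mathcal{O}$-free complex tensored with $\mathcal{F}'$ preserves the relevant acyclicity. This step is formal.

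I must also check that $\psi$ lands in $D^b(X)$ and $\phi$ in $D^b(\mathbb{X})$. Both statements are local. Since $\mathcal{F}'$ contains $\mathcal{O}_X$ as a summand, $\mathcal{F}'$ is a direct summand of $\mathcal{A}$ as a right $\mathcal{A}$-module, namely $\mathcal{F}'\cong e\mathcal{A}$ where $e$ is the projection onto $\mathcal{O}_X$. So $\mathcal{F}'$ is locally projective over $\mathcal{A}$. Consequently $\mathcal{H}\kern -1pt\mathrm{om}_{\mathcal{A}}(e\mathcal{A},-)=(-)e$ is exact and needs no deriving, and it preserves coherence and boundedness. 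Coherence of $\phi(\mathcal{P})$ is clear.

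The second step, and the main point, is the unit. For a perfect $\mathcal{P}$ the unit is the map
\[
\mathcal{P}\to \mathcal{H}\kern -1pt\mathrm{om}_{\mathcal{A}}(\mathcal{F}',\mathcal{P}\otimes\mathcal{F}') .
\]
Both sides are triangle functors in $\mathcal{P}$, and a morphism of complexes of sheaves is a quasi-isomorphism if it is one locally. So by \Cref{thickexistence}-style dévissage (the locus where the unit is an isomorphism is thick) it suffices to treat $\mathcal{P}=\mathcal{O}_X$ on an affine open. In that case the unit becomes
\[
\mathcal{O}_X\to\mathcal{H}\kern -1pt\mathrm{om}_{\mathcal{A}}(\mathcal{F}',\mathcal{F}')=\mathcal{H}\kern -1pt\mathrm{om}_{\mathcal{A}}(e\mathcal{A},e\mathcal{A})\cong e\mathcal{A}e .
\]
Here $e\mathcal{A}e=\mathcal{E}\kern -1pt\mathrm{nd}(\mathcal{O}_X)=\mathcal{O}_X$, and the map is the identity. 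This is exactly the algebraic fact behind the exercise that $j_!$ is fully faithful, with $R=eAe$. Alternatively, it is the double centraliser property: $\mathcal{O}_X$ acts on $\mathcal{F}'$ with $\mathcal{E}\kern -1pt\mathrm{nd}_{\mathcal{A}}(\mathcal{F}')=\mathcal{O}_X$.

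Full faithfulness of $\phi$ then follows formally. For any adjunction, $\hom(\phi\mathcal{P},\phi\mathcal{Q})\cong\hom(\mathcal{P},\psi\phi\mathcal{Q})$, and composing with the unit isomorphism identifies this with $\hom(\mathcal{P},\mathcal{Q})$.

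The step I expect to be most delicate is making the first step rigorous for sheaves of noncommutative rings. The issue is making sure the derived global sections and derived sheaf-homs over $\mathcal{A}$ behave; I would handle this by working with h-injective resolutions in $\mathrm{Mod}\,\mathcal{A}$, which exist since it is a Grothendieck category.
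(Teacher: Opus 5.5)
Your proposal is correct, and it is the standard argument: the paper gives no proof of its own, only the citation \cite{bdtilt}, and your two key points are exactly the sheafified form of the $j_!\dashv j^*$ exercise in \Cref{rschapt}. Those points are, first, that $\mathcal{F}'\cong e\mathcal{A}$ is a direct summand of $\mathcal{A}$, so $\psi=\mathcal{H}\kern -1pt\mathrm{om}_{\mathcal{A}}(\mathcal{F}',-)=(-)e$ is exact and preserves coherence, and second, that $\mathcal{E}\kern -1pt\mathrm{nd}_{\mathcal{A}}(\mathcal{F}')\cong e\mathcal{A}e\cong\mathcal{O}_X$. The foundational step you flag is standard; it is best read as a hom-isomorphism for $\mathcal{P}\in\per(X)$ and $\mathcal{M}\in D^b(\mathbb{X})$, since $\psi$ lands in $D^b(X)$ rather than $\per(X)$, and you could skip the d\'evissage altogether by noting directly that $\psi\phi(\mathcal{P})\cong\mathcal{P}\lot_{\mathcal{O}_X}e\mathcal{A}e\cong\mathcal{P}$.
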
 			
 		Hence we may define the \textbf{relative singularity category} $$\Delta_X(\mathbb{X})\coloneqq \frac{D^b(\mathbb{X})}{\per(X)}.$$
        We will give a description of the relative singularity category in a special case, due to Burban and Kalck \cite{burbankalck}. Before we do so, we first introduce some useful concepts.

        \begin{defn}
              If $\mathcal{T}$ is a triangulated category, an \textbf{idempotent} in $\mathcal{T}$ is a map $e:X \to X$ with $e^2=e$. An idempotent \textbf{splits} if $X$ can be written as $\ker(e)\oplus \mathrm{im}(e)$.
        \end{defn}
       There is a universal triangle functor $\mathcal{T}\to\mathcal{T}^\omega$ which splits idempotents; we call $\mathcal{T}^\omega$ the \textbf{idempotent completion} of $\mathcal{T}$. The assignment $\mathcal{T}\mapsto\mathcal{T}^\omega$ is functorial. For more on idempotent completion see \cite{bsidempotent}.

\begin{defn}
    A \textbf{quiver} is a directed (multi-)graph. A \textbf{path} in a quiver is a sequence of arrows $a_1 a_2a_3\cdots a_n$ such that the head of $a_i$ is the tail of $a_{i+1}$. We allow paths of length zero; if $v$ is a vertex of $Q$ then we denote the unique path $v\to v$ of length zero by $e_v$. Paths can be composed when their endpoints agree. Observe that $e_va=a$ for any $a$ starting at $v$, and similarly $be_v=b$ for any $b$ ending at $v$. If $Q$ is a quiver and $k$ is a commutative ring, the \textbf{path algebra} $kQ$ has as a $k$-basis the set of paths in $Q$. Multiplication in $kQ$ is composition of paths when defined and is zero otherwise.
\end{defn}
\begin{exer}[idempotents]Compute $e_ue_v$ for any two (not necessarily distinct) vertices $u,v$ of $Q$. If $Q$ has finitely many vertices $1,\ldots, n$, show that we have an equality $1=e_1+\cdots +e_n$ in $kQ$.
\end{exer}
\begin{exer}\label[exer]{QuiverTensorEx}
   We write $Q_0$ for the set of vertices of $Q$ and $Q_1$ for the set of arrows. Show that there is an isomorphism $kQ\cong T_{kQ_0}(kQ_1)$ between the path algebra and the tensor algebra (over $kQ_0$) on the arrows. (Hint: consider the map which sends a tensor $a_1\otimes\cdots \otimes a_n$ to the path $a_1\cdots a_n$.)
\end{exer}

 		\begin{thm}[\cite{burbankalck}]\label{bkthrm}Let $X$ be a nodal curve with $n$ singular points and let $\mathcal{F}$ be the ideal sheaf of the singular locus. Then the corresponding ringed space $\mathbb{X}$ is a noncommutative resolution of $X$ (in fact it has global dimension $2$). There is an equivalence of triangulated categories
 			$$\Delta_X(\mathbb{X})^\omega\simeq \bigoplus_{i=1}^n \left(\frac{D^b(\Lambda)}{\mathrm{Band}(\Lambda)}\right)^\omega$$where $\Lambda$ is the path algebra of the quiver
 			
 			$$\begin{tikzcd}
 				\bullet \ar[r, bend left=30, "a"] \ar[r, bend right=30, "c"] & \bullet \ar[r, bend left=30, "b"]  \ar[r, bend right=30, "d"] & \bullet
 				\end{tikzcd}
 			$$with relations $ab=0=cd$, and the category of band modules is defined to be $$\mathrm{Band}(\Lambda)\coloneqq\{C\in D^b(\Lambda): \quad\tau(C)\simeq C\}$$where $\tau$, the Auslander--Reiten translate, is given by $\tau(C)\coloneqq \Lambda^* \lot_\Lambda C[-1]$.
 			\end{thm}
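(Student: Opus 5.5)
The plan is to reduce first to the complete local situation and then to compute by hand at a single node. Since $X$ has isolated singularities, the sheaf of rings $\mathcal{A}=\mathcal{E}\kern-1pt\mathrm{nd}(\mathcal{O}_X\oplus\mathcal{F})$ agrees with $\mathcal{E}\kern-1pt\mathrm{nd}(\mathcal{O}_X^{\oplus 2})$ away from the singular points, because $\mathcal{F}$ is a line bundle there. So every object of $D^b(\mathbb{X})$ supported on the smooth locus already lies in $\phi(\per(X))$. I would therefore prove a localisation statement: after idempotent completion, $\Delta_X(\mathbb{X})$ splits as an orthogonal sum over the singular points $p_1,\dots,p_n$ of the complete local relative singularity categories $\Delta_{\hat R_i}(\hat A_i)^\omega$. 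Here $\hat R_i=\hat{\mathcal{O}}_{X,p_i}\cong k\llbracket x,y\rrbracket/(xy)$, and $\hat A_i=\enn_{\hat R_i}(\hat R_i\oplus\mathfrak{m}_i)$ with $e=\mathrm{id}_{\hat R_i}$. This is the analogue of Orlov's localisation theorem for $D_\mathrm{sg}$. The approach is to show that the kernel of the restriction to completions at the $p_i$ is generated by objects in $\phi(\per X)$. Passing to completions only changes things up to idempotents, which is why the $\omega$ appears.

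Next I would compute everything at one node $R=k\llbracket x,y\rrbracket/(xy)$. Here $\mathfrak{m}\cong k\llbracket x\rrbracket\oplus k\llbracket y\rrbracket$ as an $R$-module, so $A=\enn_R(R\oplus\mathfrak{m})$ is Morita equivalent to the completed path algebra of a quiver with three vertices. The middle vertex corresponds to $R$ and the two outer vertices to the branches, and there are arrows in both directions, with relations saying that each branch loop factors appropriately. One checks that $A$ has global dimension $2$, since its simples at the outer vertices have projective resolutions of length $\le 2$. This gives the claim that $\mathbb{X}$ is a noncommutative resolution. By the exercise on $j_!$, the category $\Delta_R(A)$ is $D^b(A)/\mathbf{thick}(eA)$. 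Because $A$ has finite global dimension this equals $\per(A)/\mathbf{thick}(eA)$. By Keller-type arguments, this is controlled by the derived quotient $A/^{\mathbb{L}}AeA$, which here is finite dimensional and concentrated in degree zero: it is $A/AeA\cong k\times k$ together with higher Tor corrections.

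The main step, and what I expect to be the real obstacle, is identifying this quotient with $D^b(\Lambda)/\mathrm{Band}(\Lambda)$ up to idempotents. My first attempt would be to take the object $T$ in $\Delta_R(A)$ given by the image of the two outer projectives, together with suitable cones, and compute its graded endomorphism algebra. I expect that a tilting-type object of $D^b(\Lambda)$, namely the image of $\Lambda$ itself, corresponds to $T$. The next piece would be a functor $D^b(\Lambda)\to\Delta_R(A)^\omega$ that is essentially surjective up to summands and whose kernel is exactly $\mathrm{Band}(\Lambda)$. For this I would use the known classification of indecomposables of the gentle algebra $\Lambda$ into strings and bands, together with the fact that $\tau$ fixes exactly the bands in homogeneous tubes. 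One then checks that string complexes map to nonzero objects while band complexes, which carry a continuous parameter absent from $\Delta_R(A)$, map to zero. The delicate points are, first, showing that morphism spaces match after quotienting, which I would do via the roof description of the Verdier quotient and Ext computations, and second, controlling idempotent completion. Finally I would sum over the $n$ nodes to obtain the stated decomposition.
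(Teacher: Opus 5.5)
The notes quote this theorem from \cite{burbankalck} without proof, so there is nothing in the text to compare against. Judged on its own, your proposal has a genuine gap at exactly the step you call the main one.

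The localisation to one block per node is a sound first step. The global dimension count is also fine once you add that the middle simple has projective dimension $1$. But for a single node $R=k\llbracket x,y\rrbracket/(xy)$, $A=\enn_R(R\oplus\mathfrak m)$, you never actually produce a functor $D^b(\Lambda)\to\Delta_R(A)^\omega$.

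Matching an object $T$ with ``the image of $\Lambda$'' by comparing endomorphism algebras does not give one. A triangle functor out of $D^b(\Lambda)$ needs a dg-level $\Lambda$-module structure on $T$. You would then still have to show that its kernel is exactly the thick closure of $\mathrm{Band}(\Lambda)$ and that the induced functor on the quotient is fully faithful, which is the whole content of the theorem. ``Bands carry a continuous parameter, so they go to zero'' is a heuristic, not an argument. Nothing in the proposal explains why this particular gentle algebra should appear at all.

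Moreover, the one concrete input you give about the local category is false: $\dq$ is neither finite dimensional nor concentrated in degree zero. By \Cref{stabextrem}, $H^j(\dq)\cong\underline{\ext}^j_R(\mathfrak m,\mathfrak m)$ for $j<0$. This is $k^2$ in every degree, because $D_\mathrm{sg}(R)$ is $2$-periodic and $\Omega$ swaps $k\llbracket x\rrbracket$ and $k\llbracket y\rrbracket$. In fact $\dq$ is quasi-isomorphic to the graded path algebra of $\bullet\rightleftarrows\bullet$ with both arrows in degree $-1$. (Were $\dq\simeq k\times k$, we would have $\per(\dq)=D_\mathrm{fg}(\dq)$, and \Cref{kycor} would force $D_\mathrm{sg}(R)^\omega=0$, which is absurd for a node.) Any endomorphism computation in $\Delta_R(A)$ built on your description would therefore come out wrong.

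The missing idea is to realise $\Lambda$ geometrically instead of comparing by hand; this is where $\Lambda$ comes from in \cite{burbankalck}. Take a nodal plane cubic $E$: it has exactly one node and $\omega_E\cong\mathcal O_E$. For its Auslander curve $\mathbb E$ (the construction in the statement, with $\mathcal F$ the ideal sheaf of the node), a tilting theorem of Burban--Drozd gives $D^b(\mathbb E)\simeq D^b(\Lambda)$.

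Your own localisation step applied to $E$ identifies $\Delta_E(\mathbb E)^\omega$ with the local block $\Delta_R(A)^\omega$. So everything reduces to locating $\phi(\per E)$ inside $D^b(\Lambda)$. Since $E$ is Gorenstein with trivial dualising sheaf, Serre duality shows that the Serre functor of $D^b(\mathbb E)$, which corresponds to $\Lambda^*\lot_\Lambda-$, acts on these objects as $[1]$. In other words, they are $\tau$-fixed. Conversely, one must check that the $\tau$-fixed objects all come from $\per E$; this is where the string/band description of indecomposables in $D^b(\Lambda)$ is genuinely needed. As all nodes have the same completion, summing over the singular points then gives the stated decomposition.
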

 		\begin{rmk}
 			In particular, the two triangulated categories $\Delta_X(\mathbb{X})$ and $\bigoplus_{i=1}^n \frac{D^b(\Lambda)}{\mathrm{Band}(\Lambda)}$ are equivalent up to direct summands, and hence have the same K-theory.
 			\end{rmk}
            \begin{rmk}
            Under the conditions of the above theorem, $\mathrm{Proj}$ of the Rees algebra of $\mathcal{F}$ is the blowup of $X$ at the singular locus, which is a resolution since $X$ has nodal singularities.
            \end{rmk}
 		
\section{Derived quotients}
In this section we follow \cite{bcl, kalckyang2, bsingcats}. Let $k$ be a fixed commutative semisimple ring, which we will use as our base.
\begin{defn}
    Let $A$ be a $k$-algebra with an idempotent $e$. The \textbf{derived quotient} $\dq$ is defined to be the initial object of the homotopy category of dg algebras $f:A\to B$ under $A$ such that $f(e)$ vanishes in $H^0(B)$.
\end{defn}
 More generally, one may make the above definition when $A$ is a dg algebra. We omit the proof of existence, but we give two constructions of the derived quotient. There are (at least) two methods:

\paragraph{1. Genuine quotients of resolutions} A dg-$k$-algebra $B$ is \textbf{cofibrant} if it is freely generated over $k$, by a (possibly transfinite) number of generators $x_1,x_2, x_3,\ldots$ and moreover the differential $d$ satisfies the \textbf{upper-triangular condition:} $d(x_i)$ is contained in the ideal of $B$ generated by the $x_j$ for $j<i$. A \textbf{cofibrant resolution} of $A$ is a quasi-isomorphism $\tilde A \to A$ with $A$ cofibrant.
\begin{ex}
    The dg algebra $k\langle x,y\rangle$ with $\deg(x)=0, \deg(y)=-1$ and $dy=x^2$ is a cofibrant resolution of $k[x]/x^2$. Here we may put $x=x_1$ and $y=x_2$.
\end{ex}
\begin{rmk}
    In characteristic zero, if $A$ is commutative then we may also take $\tilde A$ to be a commutative dg algebra, so that for example the dg algebra $$\Q[x,y]= \Q\langle x,y\rangle/(xy-yx, y^2)$$ with $dy=x^2$ is a commutative cofibrant resolution of $\Q[x]/x^2$.
\end{rmk}
\begin{ex}
    Let $Q$ be a finite quiver, $\mathbb{F}$ a field and $k=\mathbb{F}Q_0$ the semisimple algebra on the vertex idempotents. Then the path algebra $\mathbb{F}Q$ is a cofibrant $k$-algebra, since it is freely generated as a $k$-algebra by the arrows (cf.\ \Cref{QuiverTensorEx}). If $Q$ is a graded quiver (i.e.\ the arrows have a grading) then $\mathbb{F}Q$ inherits a grading and becomes a cofibrant dg algebra (with zero differential).
\end{ex}

If $\tilde A \to A$ is a cofibrant resolution of $A$, and $\tilde e \in \tilde A$ is an idempotent that lifts $e$, then we may compute $\dq$ as the actual quotient $\tilde A / \tilde A \tilde e \tilde A$.

\paragraph{2. Drinfeld quotients}
If $A$ is a ring with an idempotent $e$, we may view it as a two-object dg category: the first object $A_1$ corresponds to the idempotent $e$ and has endomorphisms $R=eAe$, and the second object $A_2$ corresponds to the idempotent $1-e$ and has endomorphisms $(1-e)A(1-e)$. The derived quotient $\dq$ is then quasi-isomorphic to the one-object dg category $$A/A_1 \quad\simeq\quad A\langle h\rangle / (dh=e).$$More concretely, the dg algebra $Q\coloneqq A\langle h\rangle / (dh=e)$ is connective, with $Q^{>0}=0$, $Q^0=A$, and $Q^{-1-i} = Ae\otimes R^{i} \otimes eA$ for $i\geq 0$. The differential is the alternating signed sum of the multiplication maps $Ae \otimes R \to Ae$, $R\otimes R \to R$, and $R\otimes eA \to eA$. In particular we see that $H^0(\dq)\cong A/AeA$, the usual quotient.
\begin{rmk}\label[rmk]{stabextrem}
    If $R$ is a commutative Gorenstein ring, $M$ is a MCM $R$-module, and $A=\enn_R(R\oplus M)$ with $e=\id_R$, then there is a $k$-algebra isomorphism $A/AeA \cong \underline{\enn}_R(M) = \enn_{\underline{\mathbf{MCM}}(R)}(M)$. In fact, one can check using \Cref{stabextprop} and the `Drinfeld quotient' description that $H^j(Q)\cong \underline{\mathrm{Ext}}^j_R(M,M)$ for $j<0$.
\end{rmk}

Now we return to the setting of relative singularity categories. As before, we choose a noetherian ring $A$ with an idempotent $e$ and put $R=eAe$.

\begin{prop}\label{recoll}
    There is a \textbf{recollement}
    $$\begin{tikzcd}[column sep=huge]
D(\dq) \ar[r,"i_*=i_!"]& D(A)\ar[l,bend left=25,"i^!"']\ar[l,bend right=25,"i^*"']\ar[r,"j^!=j^*"] & D(R)\ar[l,bend left=25,"j_*"']\ar[l,bend right=25,"j_!"']
\end{tikzcd}$$which is to say that the named functors exist and satisfy:\ \begin{itemize}
    \item If $F$ appears directly above $G$ then $F\dashv G$.
    \item $j^*i_*=0$.
    \item $i_*, j_*$ and $j_!$ are fully faithful.
    \item For every $X$ in $D(A)$ there are distinguished triangles of the form $$i_!i^!X \to X \to j_*j^* X \to\phantom{.}$$ and $$j_!j^!X \to X \to i_*i^* X \to.$$
\end{itemize}
\end{prop}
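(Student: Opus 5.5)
The plan is to build the right-hand half of the recollement first and then obtain the left-hand half formally. Put $j_!\coloneqq -\lot_R eA$, $j^*\coloneqq \rmap_A(eA,-)\cong (-)e$ and $j_*\coloneqq \rmap_R(Ae,-)$. Derived hom-tensor adjunction gives $j_!\dashv j^*\dashv j_*$, since $eA$ is an $R$-$A$-bimodule, $Ae$ is an $A$-$R$-bimodule, and $\rmap_A(eA,-)\cong -\lot_A Ae$ because $eA$ is a projective $A$-module. Full faithfulness of $j_!$ is the earlier exercise: the unit $X\to X\lot_R eA\lot_A Ae\simeq X\lot_R eAe = X$ is an isomorphism. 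Full faithfulness of $j_*$ follows from the same identity $eA\lot_A Ae\simeq R$, because the counit $j^*j_*\to \mathrm{id}$ is identified with $\rmap_R(R,-)$.

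For the left-hand half, let $Q=\dq$ and let $i_*=i_!$ be restriction of scalars along the dg algebra map $A\to Q$. Set $i^*\coloneqq -\lot_A Q$ and $i^!\coloneqq \rmap_A(Q,-)$, which give $i^*\dashv i_*\dashv i^!$ by extension and coinduction of scalars. To get $j^*i_*=0$ we need $Qe\simeq 0$ as a complex. I would prove this with the Drinfeld model $Q=A\langle h\rangle/(dh=e)$: right multiplication by $he$ is a contracting homotopy on $Qe$, since $d(qhe)=\pm qe$ for cocycles $q$. Concretely, the bar-type complex $Ae\otimes R^{\otimes i}\otimes eA\cdot e$ is the augmented bar resolution, which is contractible.

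The crucial computation, and the step I expect to be the main obstacle, is to show that $i_*$ is fully faithful, i.e.\ that $Q\lot_A Q\to Q$ is a quasi-isomorphism. This is the statement that $A\to Q$ is a homological epimorphism. I would argue it by identifying the cone of $A\to Q$. The explicit description $Q^{-1-i}=Ae\otimes R^{\otimes i}\otimes eA$ shows that the cocone of $A\to Q$ is quasi-isomorphic to $Ae\lot_R eA$, and that the map to $A$ is multiplication. This gives the triangle
\[Ae\lot_R eA\to A\to Q\to\]
of $A$-bimodules. Tensoring with $Q$ over $A$ and using $Qe\simeq 0$ gives $Q\lot_A Q\simeq Q\lot_A A\simeq Q$, as required. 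The risk here is keeping track of bimodule structures and signs, and checking that the two-sided bar construction really computes $Ae\lot_R eA$ when $R$ is not flat over $k$. Since $k$ is semisimple, everything is $k$-flat, so the normalised bar complex should suffice.

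Finally, the triangles follow from the bimodule triangle above. Applying $X\lot_A-$ to it gives
\[X\lot_A Ae\lot_R eA\to X\to X\lot_A Q\to,\]
which is exactly $j_!j^!X\to X\to i_*i^*X\to$. Applying $\rmap_A(-,X)$ to the same triangle gives
\[\rmap_A(Q,X)\to X\to \rmap_R(Ae,\rmap_A(eA,X))\to,\]
which is $i_!i^!X\to X\to j_*j^*X\to$, using the tensor-hom adjunction for the last term.
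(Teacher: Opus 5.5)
Your proposal is correct and follows the paper's approach. The paper writes down the same functors, namely $i^*=-\lot_A Q$, $i_*=i_!$ given by restriction, $i^!=\rmap_A(Q,-)$, $j_!=-\lot_R eA$, $j^*=j^!\cong -\lot_A Ae$ and $j_*=\rmap_R(Ae,-)$ with $Q=\dq$, and it defers the verification to the cited reference; your bimodule triangle $Ae\lot_R eA\to A\to Q\to$ read off from the Drinfeld model, together with $Qe\simeq 0$ (which makes $A\to Q$ a homological epimorphism), is exactly the explicit check the paper leaves implicit.
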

\begin{proof}[Proof idea]
    Write $Q\coloneqq \dq$ for brevity. We view $Q$ as both an $A$-bimodule and a $Q$-bimodule. Put \begin{align*}
		i^*\coloneqq -\lot_A Q, &\quad j_!\coloneqq  -\lot_{R} eA
		\\ i_*=\R\hom_{Q}(Q,-), &\quad j^!\coloneqq \R\hom_A(eA,-)
		\\ i_!\coloneqq \lot_{Q}Q, & \quad j^*\coloneqq -\lot_A Ae
		\\ i^! \coloneqq \R\hom_{A}(Q,-), & \quad j_*\coloneqq \R\hom_{R}(Ae,-)	\end{align*}
        One can then check the required statements quite explicitly. See e.g.\ \cite{bsingcats} for the full proof.
\end{proof}
Neeman--Thomason--Trobaugh--Yao localisation then yields:
\begin{cor}
    There is a triangle equivalence
$$\per(\dq) \longrightarrow\left(\frac{\per(A)}{j_!\per(R)}\right)^\omega$$which sends $\dq \mapsto A$.
\end{cor}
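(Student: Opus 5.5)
The plan is to apply Neeman--Thomason--Trobaugh--Yao localisation to the recollement of \Cref{recoll}. All three categories $D(\dq)$, $D(A)$ and $D(R)$ are compactly generated: $D(A)$ by $A$, $D(R)$ by $R$, and $D(\dq)$ by $\dq$. Their subcategories of compact objects are $\per(A)$, $\per(R)$ and $\per(\dq)$ respectively. The relevant theorem says the following. Suppose $\mathcal{S}\subseteq \mathcal{T}$ is a localising subcategory of a compactly generated triangulated category, generated by a set of objects that are compact in $\mathcal{T}$. Then $\mathcal{T}/\mathcal{S}$ is compactly generated, and the induced functor $\mathcal{T}^c/\mathcal{S}^c \to (\mathcal{T}/\mathcal{S})^c$ is fully faithful with image dense, i.e.\ an equivalence after idempotent completion.

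First, take $\mathcal{T}=D(A)$ and $\mathcal{S}$ the essential image of $j_!$. Since $j_!$ is fully faithful and commutes with sums (it is a left adjoint), $\mathcal{S}$ is the localising subcategory generated by $j_!R=eA$. This object is compact in $D(A)$ because it is a summand of $A$. Moreover $\mathcal{S}^c=\mathcal{S}\cap\per(A)=\mathbf{thick}(eA)=j_!\per(R)$, using Neeman's description of compacts in a compactly generated localising subcategory.

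Second, identify $\mathcal{T}/\mathcal{S}$ with $D(\dq)$. The second triangle $j_!j^!X\to X\to i_*i^*X\to$ of the recollement shows that $i^*$ kills $\mathcal{S}$. Because $i_*$ is fully faithful, $i^*i_*\simeq \mathrm{id}$. It follows that $i^*$ induces an equivalence $D(A)/\mathrm{im}(j_!)\simeq D(\dq)$ with quasi-inverse induced by $i_*$; this is the standard Verdier localisation statement for recollements.

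Third, apply the Neeman--Thomason--Trobaugh--Yao theorem. The induced functor $\per(A)/j_!\per(R)\to D(\dq)^c=\per(\dq)$ is fully faithful with dense image. It sends $A$ to $i^*A=A\lot_A\dq=\dq$. Since $\per(\dq)$ is idempotent complete (it is the compact part of a category with countable sums), the universal property of idempotent completion gives an equivalence $(\per(A)/j_!\per(R))^\omega\simeq\per(\dq)$. The corollary's functor, in the direction $\dq\mapsto A$, is its inverse.

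The main obstacle is the second step: verifying that the recollement really identifies $D(\dq)$ with the Verdier quotient $D(A)/\mathrm{im}\,j_!$. In particular one must check that $\ker i^*$ is exactly $\mathrm{im}\,j_!$. I would deduce this from the triangle above: if $i^*X=0$ then $X\simeq j_!j^!X$. I would also confirm that $i^*$ preserves compacts, which holds because its right adjoint $i_*$ preserves sums, and that $D(\dq)$ is compactly generated by $\dq$. This last point is immediate since $\dq$ is a dg algebra.
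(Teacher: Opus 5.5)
Your proposal is correct and follows the same route as the paper, which simply invokes Neeman--Thomason--Trobaugh--Yao localisation applied to the recollement. You fill in the details the paper leaves implicit: identifying $D(\dq)$ with $D(A)/\mathrm{im}\,j_!$ via $i^*$ (using $\ker i^*=\mathrm{im}\,j_!$ and $i^*i_*\simeq\mathrm{id}$), identifying the compact objects of $\mathrm{im}\,j_!$ with $\mathbf{thick}(eA)=j_!\per(R)$, and using that $\per(\dq)$ is idempotent complete. The one step worth making explicit is $i^*j_!=0$. It follows by adjunction from $j^*i_*=0$, or from your triangle, since the counit $j_!j^!j_!Y\to j_!Y$ is an isomorphism and $i_*$ is fully faithful.
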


\begin{cor}\label[cor]{kyp1}
    There is a triangle functor
    $$\per(\dq) \to \Delta_R(A)^\omega$$which is an equivalence when $A$ has finite global dimension.
\end{cor}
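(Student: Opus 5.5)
The functor will be the composite of the equivalence from the preceding corollary with the map induced by the inclusion $\per(A)\into D^b(A)$. Since $A$ is noetherian, $\per(A)$ is a thick subcategory of $D^b(\mathbf{mod}\text{-}A)$ containing $eA$, so $j_!\per(R)=\mathbf{thick}(eA)\subseteq\per(A)\subseteq D^b(A)$. The universal property of the Verdier quotient, applied to $\per(A)\to D^b(A)\to\Delta_R(A)$, which kills $\mathbf{thick}(eA)$, produces a triangle functor
$$\frac{\per(A)}{j_!\per(R)}\longrightarrow\Delta_R(A).$$
Idempotent completion is functorial, so this gives a functor between the completions. Precomposing with the equivalence $\per(\dq)\to(\per(A)/j_!\per(R))^\omega$ yields the desired triangle functor $\per(\dq)\to\Delta_R(A)^\omega$.

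Next I would show that the induced functor $\per(A)/\mathbf{thick}(eA)\to D^b(A)/\mathbf{thick}(eA)$ is always fully faithful. For this I would use Verdier's criterion, or rather its dual form, suitable for a thick subcategory $\mathcal{N}=\mathbf{thick}(eA)$ of both $\mathcal{T}'=\per(A)$ and $\mathcal{T}=D^b(A)$. It says that $\mathcal{T}'/\mathcal{N}\to\mathcal{T}/\mathcal{N}$ is fully faithful provided every morphism $X\to N$ with $X\in\mathcal{T}$ and $N\in\mathcal{N}$ factors through some $X'\in\mathcal{T}'$. This condition holds trivially here, because $\mathcal{N}\subseteq\mathcal{T}'$ and we may simply take $X'=N$. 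The main obstacle is making sure the roof calculus actually delivers full faithfulness from this condition. Concretely, a roof $X\xleftarrow{s}Z\to Y$ with $X,Y\in\per(A)$ and $\mathrm{cone}(s)\in\mathcal{N}$ must be replaced by one with middle term perfect. This works because $Z$ is then an extension of $X$ by $\Sigma^{-1}\mathrm{cone}(s)\in\per(A)$, hence $Z$ is already perfect by thickness. The same argument handles equivalences of roofs. Full faithfulness is preserved by idempotent completion, since Hom-sets in $\mathcal{T}^\omega$ are summands of Hom-sets in $\mathcal{T}$.

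For the equivalence when $\mathbf{gldim}(A)<\infty$: by the lemma in the chapter on regularity, $\per(A)=D^b(\mathbf{mod}\text{-}A)$, since every bounded complex of finitely generated modules is quasi-isomorphic to a perfect complex. The functor $\per(A)/j_!\per(R)\to\Delta_R(A)$ is then the identity, and so its idempotent completion is an equivalence. Composing with the equivalence of the previous corollary finishes the proof. Note that the full faithfulness step is not needed for this case. I would nevertheless record it, because in general it shows that $\per(\dq)$ embeds into $\Delta_R(A)^\omega$.
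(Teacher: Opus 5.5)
Your proposal is correct and follows essentially the same route as the paper: you compose the equivalence $\per(\dq)\simeq(\per(A)/j_!\per(R))^\omega$ with the idempotent completion of the functor induced by $\per(A)\into D^b(A)$, and this is an equivalence once $\mathbf{gldim}(A)<\infty$ forces $\per(A)=D^b(\mathbf{mod}\text{-}A)$. Your additional claim that $\per(A)/\mathbf{thick}(eA)\to\Delta_R(A)$ is always fully faithful is also correct, and the direct roof argument is what establishes it (it uses only $\mathbf{thick}(eA)\subseteq\per(A)$ and that $\per(A)$ is closed under extensions, rather than the criterion as you phrased it); the paper neither needs nor gives this step.
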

\begin{proof}
    The natural map $\per(A) \into D^b(A)$ yields, after taking quotients, a map $\per A / j_!\per R \to \Delta_R(A)$ which is an equivalence when $A$ has finite global dimension.
\end{proof}
We will be interested in the composition $$\per(\dq) \to \Delta_R(A)^\omega \to D_\mathrm{sg}(R)^\omega$$ which sends $\dq$ to $eA$. The following is a result of Kalck and Yang.

\begin{thm}[{\cite[Theorem 6.6]{kalckyang2}}]\label[thm]{kycor}
Let $A$ be a noetherian $k$-algebra of finite global dimension with $e\in A$ an idempotent. Put $R=eAe$. Then there is a triangle equivalence
$$\frac{\per(\dq)}{D_\mathrm{fg}(\dq)} \longrightarrow D_\mathrm{sg}(R)^\omega$$where $D_\mathrm{fg}(\dq)$ denotes the triangulated subcategory of dg $\dq$-modules $N$ whose cohomology is finitely generated over $A/AeA$, i.e.\ only finitely many $H^i(N)$ are nonzero, and they are all finitely generated $A/AeA$-modules.
\end{thm}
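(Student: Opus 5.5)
We will obtain the equivalence from the composite functor $\per(\dq) \to \Delta_R(A)^\omega \to D_\mathrm{sg}(R)^\omega$ just discussed. The plan is to show three things: this composite is essentially surjective up to summands, it kills $D_\mathrm{fg}(\dq)$, and the induced functor on the Verdier quotient is fully faithful.

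The first step uses \Cref{kyp1}. Since $A$ has finite global dimension, $\per(\dq)\simeq \Delta_R(A)^\omega$, so it suffices to analyse the functor $\Delta_R(A)\to D_\mathrm{sg}(R)$ induced by $j^*=-\otimes_A Ae$. This functor is exact on modules, so it sends $D^b(A)$ to $D^b(R)$, and it sends $\mathbf{thick}(eA)$ to $\per(R)$. It is essentially surjective: a finitely generated $R$-module $N$ is $j^*j_!N$. Here $j_!N = N\lot_R eA$ need not be bounded, but since $A$ has finite global dimension we may replace $j_!N$ by a good truncation $\tau^{\geq -m}j_!N\in D^b(A)$. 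Its image under $j^*$ differs from $N$ by a complex concentrated in very negative degrees. That complex is a high syzygy-type object, hence perfect up to shift by the standard "truncation differs by a perfect complex" argument used in the proof of \Cref{sigma0}. Taking idempotent completions then gives a dense functor $\Delta_R(A)^\omega\to D_\mathrm{sg}(R)^\omega$.

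Next we identify the kernel of $D^b(A)\to D_\mathrm{sg}(R)$. It consists of the complexes $X$ with $Xe\in\per(R)$. We claim that after quotienting by $\mathbf{thick}(eA)$ this kernel becomes the thick subcategory generated by modules annihilated by $e$, i.e.\ finitely generated $A/AeA$-modules. The argument is as follows. If $Xe$ is perfect, then $j_!j^*X$ lies in $\mathbf{thick}(eA)$, so the cone of the counit $j_!j^*X\to X$ represents $X$ in $\Delta_R(A)$. By the recollement triangle of \Cref{recoll}, this cone is $i_*i^*X$, whose cohomology is annihilated by $e$ because $j^*i_*=0$. Conversely, $A/AeA$-modules are clearly killed by $j^*$. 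Across the equivalence $\per(\dq)\simeq\Delta_R(A)^\omega$ of \Cref{kyp1}, the objects $i_*i^*X$ correspond to $\dq$-modules viewed through $i_*$, and boundedness with finitely generated cohomology over $A/AeA$ is exactly the condition defining $D_\mathrm{fg}(\dq)$. Since $A$ has finite global dimension, such modules are perfect over $A$ and hence lie in $\per(\dq)$ via $i^*$.

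Finally, we need the standard fact that a dense (up to summands) triangle functor out of an idempotent complete category, with kernel $\mathcal{K}$, induces an equivalence from the Verdier quotient by $\mathcal{K}$ to the target, once we know it is a localisation. To establish this we would check that $\Delta_R(A)\to D_\mathrm{sg}(R)$ is itself a Verdier quotient. Our plan is to exhibit $D_\mathrm{sg}(R)$ as $D^b(A)/\mathbf{thick}(eA,\ \mathbf{mod}\text{-}A/AeA)$ using that $j^*:D^b(A)\to D^b(R)$ is a Verdier localisation: its kernel is $D^b_{A/AeA}(A)$, and $j_!$ combined with truncation gives a section up to perfect complexes. We then apply the usual third-isomorphism property of Verdier quotients, and take idempotent completions at the end. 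We expect this step to be the main obstacle, namely proving that $j^*$ induces a genuine localisation $D^b(A)/\ker\to D^b(R)$, i.e.\ fullness on morphisms, together with the bookkeeping of idempotent completions. For fullness we would first try a calculus-of-fractions argument, using the counit $j_!j^*\to\mathrm{id}$ and truncation to lift roofs.
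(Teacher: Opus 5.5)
Your outline is the same as the paper's. You identify $\per(\dq)$ with $\Delta_R(A)^\omega$ via \Cref{kyp1}, show that the kernel of $\Delta_R(A)\to D_\mathrm{sg}(R)$ is $\mathbf{thick}(\mathbf{mod}\text{-}A/AeA)$, and match its preimage with $D_\mathrm{fg}(\dq)$. Your recollement argument for the kernel is a correct expansion of the paper's one-line claim: $X\cong i_*i^*X$ in $\Delta_R(A)$ when $Xe$ is perfect, and $N\simeq i^*i_*N$ with $i_*N\in\per(A)$ for $N\in D_\mathrm{fg}(\dq)$.

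The genuine gap is the step you flag and leave open: that $\Delta_R(A)\to D_\mathrm{sg}(R)$ is a Verdier localisation. Knowing the kernel only gives a conservative functor $\Delta_R(A)/\mathbf{thick}(\mathbf{mod}\text{-}A/AeA)\to D_\mathrm{sg}(R)$, not a fully faithful one. The paper's sketch also takes this for granted by calling the functor ``the projection''. The missing idea is to prove it at the abelian level rather than by lifting roofs:
\begin{itemize}
\item The functor $-\cdot e\colon\mathbf{mod}\text{-}A\to\mathbf{mod}\text{-}R$ is exact, with a fully faithful left adjoint $-\otimes_R eA$ (the unit $N\to(N\otimes_R eA)e\cong N$ is an isomorphism). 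So it identifies $\mathbf{mod}\text{-}R$ with the Serre quotient of $\mathbf{mod}\text{-}A$ by $\mathbf{mod}\text{-}A/AeA$.
\item Miyachi's theorem on bounded derived categories of Serre quotients then gives $D^b(\mathbf{mod}\text{-}A)/\mathbf{thick}(\mathbf{mod}\text{-}A/AeA)\simeq D^b(\mathbf{mod}\text{-}R)$.
\item Hence the composite $D^b(\mathbf{mod}\text{-}A)\to D^b(\mathbf{mod}\text{-}R)\to D_\mathrm{sg}(R)$ is a Verdier quotient. Its kernel contains $\mathbf{thick}(eA)$, so by the third isomorphism property $\Delta_R(A)\to D_\mathrm{sg}(R)$ is the Verdier quotient by the kernel you computed.
\end{itemize}
Your calculus-of-fractions plan would amount to reproving Miyachi's theorem. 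This route also makes your essential-surjectivity paragraph unnecessary, and as written that paragraph is muddled. Since $j^*$ is exact it commutes with truncations, and $j^*j_!\simeq\mathrm{id}$, so $j^*(\tau^{\geq a}j_!N)\cong N$ on the nose. No syzygy argument or finite global dimension is involved.

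Your closing ``standard fact'' is also misstated: a localisation that is only dense up to summands gives an equivalence \emph{up to direct summands}. Let $\mathcal{K}$ be the thick closure of $\mathbf{thick}(\mathbf{mod}\text{-}A/AeA)$ in $\Delta_R(A)^\omega$. The argument shows that $\per(\dq)/D_\mathrm{fg}(\dq)\simeq\Delta_R(A)^\omega/\mathcal{K}$ contains $\Delta_R(A)/\mathbf{thick}(\mathbf{mod}\text{-}A/AeA)\simeq D_\mathrm{sg}(R)$ as a dense subcategory. So the functor to $D_\mathrm{sg}(R)^\omega$ is fully faithful and dense up to summands, and ``taking idempotent completions at the end'' does not by itself make it an equivalence. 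You additionally need $\per(\dq)/D_\mathrm{fg}(\dq)$ to be idempotent complete. This is true but needs an extra input. For instance, $K_{-1}(D_\mathrm{fg}(\dq))=0$ because $D_\mathrm{fg}(\dq)$ carries a bounded t-structure (Antieau--Gepner--Heller). The localisation sequence then shows that $K_0(\per(\dq))$ surjects onto $K_0$ of the idempotent completion of the quotient, and Thomason's classification of dense subcategories finishes the argument. The paper's sketch is silent on this point too. It is harmless whenever $D_\mathrm{sg}(R)$ is already idempotent complete, as in the applications in the text.
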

\begin{proof}[Proof idea]
   By \Cref{kyp1} we have a triangle equivalence $F:\per(\dq) \to \Delta_R(A)^\omega$. If $\mathcal{K}$ denotes the kernel of the projection $\Delta_R(A)^\omega \to D_\mathrm{sg}(R)^\omega$ then we obtain a triangle equivalence $\per(\dq) / F^*(\mathcal{K}) \simeq D_\mathrm{sg}(R)^\omega$. It suffices to identify $F^*(\mathcal{K})$ with $D_\mathrm{fg}(\dq)$; one shows that they are both the thick subcategory generated by $\mathbf{mod}\text{-}A/AeA$.
\end{proof}

\begin{rmk}
    When $A/AeA$ is finite dimensional, then $D_\mathrm{fg}(\dq)$ is identified with $D_\mathrm{fd}(\dq)$, the subcategory of $D(\dq)$ on those modules $M$ such that $\oplus_iH^i(M)$ is a finite dimensional $k$-vector space. If $B$ is a homologically smooth dg algebra (i.e. $B$ is a perfect $B$-bimodule), then a standard argument shows that $D_\mathrm{fd}(B) \subseteq \per(B)$; indeed, if $X\in D_\mathrm{fd}(B) $ then $X\lot_B P \in \per(B)$ for any perfect $B$-bimodule $P$ (it suffices to check this for $P=B^e$ itself), and in particular for $P=B$. The quotient $\frac{D_\mathrm{fd}(B) }{\per(B)}$ is known as the \textbf{generalised cluster category} of $B$, after Amiot \cite{amiotcluster}.
\end{rmk}

\begin{rmk}\label[rmk]{clihsrm}
If $R$ is a complete local isolated hypersurface singularity, then $D_\mathrm{sg}(R)$ is idempotent complete.
\end{rmk}

\begin{rmk}
    All of the above triangle functors enhance to functors of pretriangulated dg categories \cite{bsingcats}.
\end{rmk}

\section{Classifying isolated hypersurface singularities}
In this section we follow \cite{bsingcats} and \cite{huakeller}. Here we work over an algebraically closed field of characteristic zero.

\begin{thm}
    Let $R_1$, $R_2$ be complete local isolated hypersurface singularities of the same dimension. Let $M_i$ be non-projective MCM $R_i$-modules and put $A_i=\enn_{R_i}(R_i\oplus M_i)$ with $e_i = \mathrm{id}_{R_i}$. Put $Q_i\coloneqq {A_i/^{\mathbb{L}}\kern -2pt A_ie_iA_i}$. Then $R_1\cong R_2$ if and only if there is a quasi-isomorphism $Q_1\simeq Q_2$.
\end{thm}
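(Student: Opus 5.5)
Throughout I read the statement as saying that $Q_i$ determines $R_i$. For the forward implication I will also assume that $M_2$ corresponds to $M_1$ under the isomorphism $R_1\cong R_2$. Without this assumption that direction cannot hold for arbitrary $M_i$, so I will treat it as part of the hypothesis. Granting it, the isomorphism $R_1\cong R_2$ induces an isomorphism $A_1\cong A_2$ sending $e_1\mapsto e_2$. The universal property of the derived quotient then gives $Q_1\simeq Q_2$. The real content is the converse, and the plan is to recover $R$ from $Q=\dq$ in four stages: first a dg model of $\enn(M)$ in the singularity category, then the dg singularity category itself, then its Hochschild cohomology, and finally $R$ via a classical rigidity theorem.

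\textbf{Step 1: relate $Q$ to the singularity category.} Using the Drinfeld-quotient model $Q=A\langle h\rangle/(dh=e)$, I would identify $Q$ with the connective truncation $\tau_{\leq 0}$ of the derived endomorphism dg algebra $\mathcal{E}\coloneqq\R\enn_{D^\mathrm{dg}_\mathrm{sg}(R)}(M)$. On cohomology this is Remark~\ref{stabextrem}: we have $H^0(Q)\cong\underline{\enn}_R(M)$ and $H^j(Q)\cong\underline{\ext}^j_R(M,M)$ for $j<0$. To upgrade this to a quasi-isomorphism, I would construct a natural dg map $Q\to\mathcal{E}$, coming from the composite $\per(\dq)\to\Delta_R(A)\to D_\mathrm{sg}(R)$ enhanced at the dg level, which sends $\dq\mapsto eA\simeq M$ modulo $R$. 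I would then check that this map is an isomorphism on $H^{\leq 0}$, so that $Q\simeq\tau_{\leq 0}\mathcal{E}$.

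\textbf{Step 2: undo the truncation.} By Eisenbud periodicity (Theorem~\ref{perthm}), $\mathcal{E}$ is $2$-periodic. Hence $H^{-2}(Q)=\underline{\ext}^{-2}(M,M)$ contains a central periodicity class $\eta$, given by the natural isomorphism $\Omega^2\cong\mathrm{id}$ evaluated at $M$. I would argue that the derived localisation $Q[\eta^{-1}]$ is quasi-isomorphic to $\mathcal{E}$. The delicate point is that $\eta$ should be intrinsic to $Q$ and not depend on the choice of $M$. I would try to characterise $\eta$ as a degree $-2$ class in the image of Hochschild cohomology (or of $\mathrm{HH}^{-2}$ of $D_\mathrm{sg}$) acting invertibly. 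As a fallback, I would show that any degree $-2$ class acting invertibly on $H^{\ll 0}(Q)$ yields the same localisation. I expect this step to be the main obstacle.

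\textbf{Step 3: recover the whole dg singularity category.} Since $R$ is an isolated hypersurface singularity, its singular locus is a point. By Takahashi's classification of thick subcategories of $D_\mathrm{sg}$ for hypersurfaces, every nonzero object thick-generates. In particular $M\neq 0$ (it is non-projective) is a compact thick generator. Moreover $D_\mathrm{sg}(R)$ is idempotent complete by Remark~\ref{clihsrm}. Proposition~\ref{tiltingprop} then gives $D^\mathrm{dg}_\mathrm{sg}(R)\simeq\per(\mathcal{E})$, so $Q$ determines $D^\mathrm{dg}_\mathrm{sg}(R)$ up to quasi-equivalence.

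\textbf{Step 4: extract $R$.} By derived Morita invariance of Hochschild cohomology and Remark~\ref{mfremark}, $\mathrm{HH}^0_k(D^\mathrm{dg}_\mathrm{sg}(R))$, computed over $k$ rather than over $k[u,u^{-1}]$, is the Tjurina algebra $T_\sigma$. Hence $Q_1\simeq Q_2$ forces $T_{\sigma_1}\cong T_{\sigma_2}$. The Mather--Yau theorem states that, over an algebraically closed field of characteristic zero, isolated hypersurface singularities of the same dimension with isomorphic Tjurina algebras are isomorphic. This gives $R_1\cong R_2$.
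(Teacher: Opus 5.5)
Your proposal is correct and follows the paper's proof essentially step for step. The paper's Step~1 covers your Steps~1--3: identify $Q$ with $\tau_{\leq 0}\R\underline{\enn}_R(M)$, recover the full endomorphism dg algebra by a localisation argument using $2$-periodicity, then apply Takahashi's theorem and Keller's tilting proposition. The paper's Steps~2 and~3 are your Step~4: $\mathrm{HH}^0$ over $k$ gives the Tjurina algebra, and Mather--Yau recovers $R$. The fallback in your Step~2 works, because $H^{-2}(Q)=H^0(Q)\,\eta$ with $H^0(Q)$ finite-dimensional, so any degree $-2$ class acting invertibly is a unit multiple of $\eta$ and gives the same localisation. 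Your caveat on the forward direction is also right: the paper's ``one direction is clear'' tacitly assumes that $M_2$ corresponds to $M_1$ under $R_1\cong R_2$.
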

\begin{proof}[Proof idea]
One direction is clear, so we need to show that if $Q_1\simeq Q_2$ then we have an isomorphism $R_1\cong R_2$. We do this by showing that the quasi-isomorphism class of $Q_i$ determines $R_i$. For readability we now drop the $i$ subscripts. The proof consists of three steps:
\paragraph{Step 1: $Q$ determines $D_\mathrm{sg}(R)$.}If $A$ was smooth, we would be done by an application of \Cref{kycor} (and \Cref{clihsrm}). In general, we must be more careful. First one does an explicit computation to show that $Q$ is quasi-isomorphic to the truncation $\tau_{\leq 0}\R\underline\enn_R(M)$ of the dg-categorical endomorphism complex; this lifts the computations of \Cref{stabextrem} to the level of dg algebras. Since $R$ is a hypersurface, its singularity category has 2-periodicity (\Cref{perthm}). Since one can recover a 2-periodic complex from its truncation, a localisation argument now shows that $Q$ determines the full endomorphism dg algebra $\R\underline\enn_R(M)$. A theorem of Takahashi states that in this setting, $D_\mathrm{sg}(R)$ has no nontrivial thick subcategories - in general the thick subcategories are parameterised by the specialisation-closed subsets of $\spec (R)$ which are contained in the singular locus \cite{takahashi}. In particular, since $M\not\simeq 0$ in the singularity category, $M$ must be a thick generator. So we see that $D_\mathrm{sg}(R)\simeq \per(\R\underline\enn_R(M))$ by \Cref{tiltingprop}.
\paragraph{Step 2: $D_\mathrm{sg}(R)$ determines the Tjurina algebra $T_R$.} This follows from computations of \cite{kellersing} and \cite{buenosaires}; more accurately we have isomorphisms $\mathrm{HH}^0(D_\mathrm{sg}(R)) \cong \mathrm{HH}^0(R)\cong T_R$ (cf.\ \Cref{mfremark})

\paragraph{Step 3: $T_R$ determines $R$.} This is the \textbf{Mather--Yau theorem} (cf.\ \cite{gpmather}): let $\sigma_1, \sigma_2 \in \mathfrak{m}_{k\llbracket \underline x \rrbracket}$ define isolated singularities. Then ${k\llbracket \underline x \rrbracket}/\sigma_1\cong {k\llbracket \underline x \rrbracket}/\sigma_2$ if and only if $T_{\sigma_1}\cong T_{\sigma_2}$.
\end{proof}

\begin{rmk}
    Note that due to the existence of Kn\"orrer periodicity, singularities of different dimensions may have isomorphic Tjurina algebras. However, when the dimension is fixed, the Tjurina algebra is a complete invariant of the singularity, per the Mather--Yau theorem. As stated the theorem is false in characteristic $p$ - just consider the two hypersurfaces defined by the vanishing of $f\coloneqq x^{p+1}+y^{p+1}$ and $f+x^p$. However, a modified version is true. The Milnor algebra also classifies complete local isolated singularities, up to a different notion of equivalence.
\end{rmk}

\begin{rmk}Over $\mathbb{C}$, one can use the above machinery to classify certain threefold birational contractions $X \to \spec(R)$, known as \textbf{flopping contractions}, that arise when running the minimal model program. Van den Bergh proves that in this setting there exists an MCM $R$-module $M$ such that $X$ is derived equivalent to the corresponding partial resolution $A=\enn_R(R\oplus M)$ \cite{vdb}, and in this setting the finite dimensional algebra $A/AeA$ is known as the \textbf{contraction algebra} \cite{DWncdf}. The corresponding derived quotient $\dq$ is known as the \textbf{derived contraction algebra} \cite{bsingcats} and admits a derived deformation-theoretic interpretation in terms of the contracted curves \cite{ddefpt}. The above theorem then states that the quasi-isomorphism class of the derived contraction algebra recovers the analytic type of $R$ near its singularity. In fact, when $X$ is smooth, just the contraction algebra $A/AeA = H^0(\dq)$ will do to recover $\hat R$ \cite{JMK, WKL}.
\end{rmk}

\chapter{Deformations of Kleinian singularities}\label[chapter]{kleinchapt}
We use some of the theory built up in \Cref{rschapt} to study singularity categories of certain noncommutative deformations of Kleinian singularities. A good general reference here is \cite{crawfthesis}. 

\section{The McKay correspondence}
For more on the McKay correspondence as it will be relevant to us, see either \cite[\S2]{derivedmckay} or \cite[\S1]{crawfthesis} and the references therein. The original reference is \cite{mckay}.

If $G$ is a subgroup of $\mathrm{GL}_2(\mathbb{C})$, it acts on the affine plane $\mathbb{C}^2$ and hence on its coordinate ring $R\coloneqq \mathbb{C}[x,y]$. The corresponding quotient is defined to be Spec of the invariant ring $R^G\coloneqq \{r\in R: rg=g \text{ for all }g\in G\}$.

\begin{defn}
    A \textbf{Kleinian singularity} is a singularity of the form $\spec(R^G)$ where $G$ is a nontrivial finite subgroup of $\mathrm{SL}_2(\mathbb{C})$.
\end{defn} 
Every Kleinian singularity is isomorphic to a hypersurface in $\mathbb{C}^3$, given by some polynomial in three variables. Since the finite subgroups of $\mathrm{SL}_2(\mathbb{C})$ are classified, so are the corresponding singularities, and we tabulate them below. 

Associated to every finite subgroup $G$ of $\mathrm{SL}_2(\mathbb{C})$ is a \textbf{Dynkin graph}, drawn in black in the table below. The graph is constructed solely from the representation theory of $G$: the vertices correspond to the nontrivial irreducible representations of $G$ and there is an edge between $i$ and $j$ if $V_j$ shows up in the decomposition of $V\otimes V_i$, where $V$ is the natural two-dimensional representation. Since $G$ is a subgroup of $\mathrm{SL}_2(\mathbb{C})$ this is in fact a symmetric condition. If we admit the trivial representation too we obtain an \textbf{extended Dynkin graph}; the extending vertex is labeled by a red zero in the table. Sometimes these are also called \textbf{affine Dynkin graphs}.

\begin{adjustbox}{center}
\begin{tabular}{ c|c|c|c}
 Type & Group & Polynomial & Dynkin graph\\ \hline
 $A_n$, \ $n\geq 1$ & cyclic & $x^2 + y^2 + z^{n+1}$ & \begin{tikzcd}[column sep=1em, row sep=1em]
 &&&{\textcolor{red}{0}}\ar[rrd, red, no head, bend left=15]\ar[llld, red, no head, bend right=15]&&\\
     1\ar[r, no head]&2\ar[r, no head]&3\ar[r, no head]& \cdots\ar[r, no head] & n-1\ar[r, no head] & n
 \end{tikzcd}\\\hline
 $D_n$, \ $n\geq 4$ & binary dihedral & $x^2+y^2z+z^{n-1}$ & \begin{tikzcd}[column sep=1em, row sep=0.7em]
     {\textcolor{red}{0}}\ar[dr, red, no head]&&&&&n-1\\
     &2\ar[r, no head]&3\ar[r, no head]&\cdots\ar[r, no head]&n-2\ar[ur, no head]\ar[dr, no head]&\\
     1\ar[ur, no head]&&&&&n
 \end{tikzcd}\\\hline
 $E_6$ & binary tetrahedral & $x^2+y^3+z^4$ & \begin{tikzcd}[column sep=1em, row sep=1em]
     &&{\textcolor{red}{0}}&&\\
     &&1\ar[u, red, no head]&&\\
     2\ar[r, no head]&3\ar[r, no head]&4\ar[r, no head]\ar[u, no head]&5\ar[r, no head]&6
 \end{tikzcd}\\\hline
 $E_7$ & binary octahedral & $x^2+y^3+yz^3$ & \begin{tikzcd}[column sep=1em, row sep=1em]
     &&&7&&\\
     {\textcolor{red}{0}}\ar[r, red, no head]&1\ar[r, no head]&2\ar[r, no head]&3\ar[r, no head]\ar[u, no head]&4\ar[r, no head]&5\ar[r, no head]&6
 \end{tikzcd}\\\hline
 $E_8$ & binary icosahedral & $x^2+y^3+z^5$ & \begin{tikzcd}[column sep=1em, row sep=1em]
 &&&&&8&&\\
     {\textcolor{red}{0}}\ar[r, red, no head]&1\ar[r, no head]&2\ar[r, no head]&3\ar[r, no head]&4\ar[r, no head]&5\ar[r, no head]\ar[u, no head]&6\ar[r, no head]&7
 \end{tikzcd}
\end{tabular}
\end{adjustbox}

\begin{exer}
    Check using the table that a Kleinian singularity is isolated, with a unique singular point at the origin.
\end{exer}

Let $Y \to \spec(R^G)$ be the minimal resolution of a Kleinian singularity (i.e.\ the resolution through which no other resolution factors - it is a nontrivial fact that minimal resolutions exist and are unique in this setting). The exceptional divisor is a union of irreducible rational curves, and if two distinct curves meet then they intersect at a single nodal point. The \textbf{dual graph} of the resolution has vertices the irreducible components of the exceptional divisor, and two curves are linked by an edge precisely when they intersect. The original and most fundamental form of the McKay correspondence is the following fact:
\begin{thm}[McKay]
 The dual graph of the minimal resolution of $\spec(R^G)$ is precisely the Dynkin graph of $G$.
\end{thm}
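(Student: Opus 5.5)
The plan is to deduce the statement from a derived equivalence. This equivalence identifies both graphs with the same $\ext^1$-quiver of a family of simple objects. Throughout put $S\coloneqq\mathbb{C}[x,y]$ (the ring called $R$ above), $G\subset \mathrm{SL}_2(\mathbb{C})$, and $\Lambda\coloneqq S\# G$ the skew group algebra. Let $V_0,\ldots,V_n$ be the irreducible representations of $G$, with $V_0$ trivial.

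\textbf{Step 1 (algebra side).} Let $V$ be the natural two-dimensional representation. I will first show that $\Lambda$ is Morita equivalent to the preprojective algebra $\Pi(\tilde Q)$ of the extended Dynkin (McKay) quiver. This graph has vertices $0,\ldots,n$ and $\dim\hom_G(V_j,V\otimes V_i)$ edges between $i$ and $j$. The tool is the idempotents $e_i\in \mathbb{C}G$ cutting out $V_i$. A Koszul resolution $S\otimes\wedge^\bullet V\to \mathbb{C}$, made $G$-equivariant, gives projective resolutions of the simple $\Lambda$-modules $\mathsf{S}_i=V_i$ of the form
$$0\to S\otimes V_i \to S\otimes V\otimes V_i\to S\otimes V_i\to V_i\to 0,$$
using $\wedge^2V\cong V_0$, which holds because $G\subset\mathrm{SL}_2$. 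Applying $\hom_\Lambda(-,\mathsf{S}_j)$ then yields
$$\dim\ext^1_\Lambda(\mathsf{S}_i,\mathsf{S}_j)=\dim\hom_G(V_j,V\otimes V_i)$$
for all $i,j$. This is exactly the adjacency in the extended Dynkin graph, and the mere existence of this resolution also shows that $\Lambda$ has global dimension $2$.

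\textbf{Step 2 (the derived McKay equivalence).} Let $\pi\colon Y\to \spec(S^G)$ be the minimal resolution. I will invoke the Kapranov--Vasserot/Bridgeland--King--Reid equivalence $\Phi\colon D^b(\mathrm{Coh}\,Y)\simeq D^b(\mathbf{mod}\text{-}\Lambda)$. Here $Y$ is realised as the $G$-Hilbert scheme, and $\Phi$ is the Fourier--Mukai transform along the universal family. Alternatively, one can take a tilting bundle $\mathcal{T}=\mathcal{O}_Y\oplus\bigoplus_{i\ge1}\mathcal{L}_i^{\oplus \dim V_i}$ with $\enn(\mathcal{T})\cong\Lambda$; this is the geometric analogue of $A=\enn_R(R\oplus M)$ from \Cref{rschapt}, with $\pi_*\mathcal{T}$ the sum of the MCM modules $(S\otimes V_i)^G$. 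The key claim is that under $\Phi$ the simples $\mathsf{S}_i$ for $i\ge1$ correspond to $\mathcal{O}_{E_i}(-1)[1]$ for the irreducible exceptional curves $E_i$, and $\mathsf{S}_0$ corresponds to $\mathcal{O}_Z$, where $Z=\pi^{-1}(0)$ is the fundamental cycle. To show this, I will first check that these objects are supported on $Z$ and are simple in the heart. The bijection $i\leftrightarrow E_i$ then comes from the indexing of the line bundles $\mathcal{L}_i$ by $\deg(\mathcal{L}_i|_{E_j})=\delta_{ij}$.

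\textbf{Step 3 (geometry side).} For $i\ne j$ I will compute $\ext^1_Y(\mathcal{O}_{E_i}(-1),\mathcal{O}_{E_j}(-1))$. The $\hom$ and $\ext^2$ terms vanish by disjointness of generic points and by Serre duality with $\omega_Y\cong\mathcal{O}_Y$. The Euler form on the surface $Y$ gives $\chi(\mathcal{O}_{E_i}(-1),\mathcal{O}_{E_j}(-1))=-E_i\cdot E_j$. Since the curves meet transversally in at most one point, this ext group has dimension $E_i\cdot E_j\in\{0,1\}$. As $\Phi$ preserves $\ext$-groups, Steps 1 and 2 give that $E_i$ and $E_j$ meet if and only if $V_j$ occurs in $V\otimes V_i$. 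This says precisely that the dual graph equals the Dynkin graph with the extending vertex $0$ deleted. As a consistency check, the self-intersection $E_i^2=-2$ should match $\chi(\mathsf{S}_i,\mathsf{S}_i)=2$.

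\textbf{Main obstacle.} I expect the hard part to be Step 2: constructing the equivalence and pinning down the images of the simple modules. I would try first to avoid the full BKR machinery by using the tilting-bundle route. That route requires Artin's vanishing results for rational surface singularities to get $\ext^{>0}(\mathcal{T},\mathcal{T})=0$. The isomorphism $\enn(\mathcal{T})\cong\Lambda$ would then follow by pushing forward to $\spec(S^G)$, where both sides are reflexive and agree away from the singular point. If this stalls, a fallback is the classical numerical argument. By adjunction every $E_i$ is a $(-2)$-curve, the intersection matrix is negative definite, and hence the dual graph is simply laced ADE. One then identifies which ADE graph occurs by matching the number of curves with $n$, the number of nontrivial irreducible representations; this count follows from Step 2 at the level of $K$-theory alone. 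Matching the number of curves only determines the type up to rank, so coincidences such as $A_n$ versus $D_n$ versus $E_n$ of the same rank still need to be ruled out, for example by comparing the fundamental cycle $Z$ with the dimension vector $(\dim V_i)_i$.
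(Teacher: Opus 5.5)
The notes do not prove this statement. It is quoted as McKay's theorem, which historically was a case-by-case comparison, and the Kapranov--Vasserot equivalence you rely on appears only in the remark that follows. So your argument has to stand on its own.

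Steps 1 and 3 are fine for what you use them for. The Morita equivalence with $\Pi(\tilde Q)$ is neither proved nor needed. The equivariant Koszul resolution gives $\dim\ext^1_\Lambda(\mathsf{S}_i,\mathsf{S}_j)=\dim\hom_G(V_j,V\otimes V_i)$. On the crepant surface $Y$ one gets $\dim\ext^1_Y(\mathcal{O}_{E_i}(-1),\mathcal{O}_{E_j}(-1))=E_i\cdot E_j$ for $i\neq j$.

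Everything, however, hinges on Step 2: that the equivalence sends $\mathcal{O}_Z$ to $\mathsf{S}_0$ and the $\mathcal{O}_{E_j}(-1)[1]$ bijectively to the remaining simples. The mechanism you sketch for this does not work.

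\textbf{The line-bundle tilting object is wrong outside type $A$.} Take $\mathcal{T}=\mathcal{O}_Y\oplus\bigoplus_{i\geq1}\mathcal{L}_i^{\oplus\dim V_i}$ with $\mathcal{L}_i$ line bundles.
\begin{itemize}
\item The module $(S\otimes V_i)^G$ has rank $\dim V_i$ over $S^G$, but $\pi_*\mathcal{L}_i$ has rank one. So $\pi_*\mathcal{T}$ is not the required sum of MCM modules, and $\enn_Y(\mathcal{T})\not\cong\Lambda$.
\item $\mathcal{T}$ is not even tilting. The group $\ext^1_Y(\mathcal{L}_i,\mathcal{O}_Y)=H^1(Y,\mathcal{L}_i^{-1})$ surjects onto $H^1(\mathcal{L}_i^{-1}|_Z)$, because coherent $H^2$ vanishes on $Y$. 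Moreover $\chi(\mathcal{L}_i^{-1}|_Z)=\chi(\mathcal{O}_Z)-c_1(\mathcal{L}_i)\cdot Z=1-r_i$, where $r_i$ is the multiplicity of $E_i$ in $Z$. In every type $D$ and $E$ some $r_i\geq 2$, so this $\ext^1$ is nonzero.
\item The correct summands have rank $r_i$: either the tautological bundles $\mathcal{R}_k$ on the $G$-Hilbert scheme, or Van den Bergh's universal extensions $0\to\mathcal{O}_Y^{r_i-1}\to\mathcal{M}_i\to\mathcal{L}_i\to 0$.
\item Even then, $\enn_Y(\mathcal{T})\cong\enn_{S^G}(\pi_*\mathcal{T})$ is not just ``both sides are reflexive''. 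The pushforward of a vector bundle need not be reflexive; for example $\pi_*\mathcal{O}_Y(-Z)=\mathfrak{m}$.
\end{itemize}

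\textbf{The ``indexing'' step is where the theorem actually lives.} Since $\mathcal{R}_k$ is globally generated, $\R\hom_Y(\mathcal{R}_k,\mathcal{O}_{E_j}(-1)[1])$ is concentrated in degree $0$ with dimension $c_1(\mathcal{R}_k)\cdot E_j$. So the image of $\mathcal{O}_{E_j}(-1)[1]$ is a module with dimension vector $(c_1(\mathcal{R}_k)\cdot E_j)_k$, and it is simple exactly when this is a unit vector.
\begin{itemize}
\item If the bundles are indexed by representations, that statement is the Gonzalez-Sprinberg--Verdier theorem, i.e.\ the vertex-level McKay correspondence. You would be assuming it rather than proving it.
\item If the bundles are indexed by curves (Van den Bergh's $\mathcal{M}_j$), then $\mathcal{O}_Z$ and the $\mathcal{O}_{E_j}(-1)[1]$ are indeed the simples. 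The missing step is then to identify $\enn_Y(\mathcal{O}_Y\oplus\bigoplus_j\mathcal{M}_j)$ with the basic algebra of $\Lambda$, with $\mathcal{O}_Y$ at the trivial representation.
\item One way to supply that step: $\enn_Y(\mathcal{T})\cong\enn_{S^G}(\pi_*\mathcal{T})$ is derived equivalent to the smooth $Y$, so it has finite global dimension. In dimension two this forces $\pi_*\mathcal{T}$ to be an additive generator of $\mathbf{MCM}(S^G)$, whose indecomposables are the $(S\otimes V_k)^G$. This defines the curve--representation bijection, and Steps 1 and 3 then finish the proof.
\end{itemize}

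Your fallback has the same hole. Comparing $Z$ with $(\dim V_i)_i$ presupposes the curve--representation bijection, or at least the multiset equality $\{r_i\}=\{\dim V_i\}$.
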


\begin{rmk}
    Ito and Nakamura showed that the minimal resolution of $R^G$ is given by the \textbf{$G$-Hilbert scheme} of $R$, which, roughly, parameterises the $G$-invariant zero-dimensional subschemes of $R$. They used this to give a very explicit version of the McKay correspondence \cite{in1, in2}. Kapranov and Vasserot used this to give an equivalence $$D_G(\mathbb{C}^2) \simeq D^b(Y)$$where the left-hand side denotes the derived category of $G$-equivariant sheaves \cite{kapvas}. Bridgeland--King--Reid obtained similar results for $\mathrm{SL}_3(\mathbb{C})$, Ishii and Ueda for $\mathrm{GL}_2(\mathbb{C})$, and Kawamata for $\mathrm{GL}_3(\mathbb{C})$ \cite{bkr, ishii, ishiiueda, kawmckay}.
\end{rmk}

\section{Preprojective algebras}
We keep notation as before. The \textbf{skew group ring} $R*G$ is $R\otimes_{\mathbb{C}}\mathbb{C}G$ with multiplication defined by $$(r,g)(s,h)=(r.gs,gh)$$
\begin{thm}[Auslander \cite{auspure, ausrat}]
The map $R*G \to \enn_{R^G}(R)$ is an isomorphism. Moreover, $R*G$ is a noncommutative resolution of $R^G$, i.e.\ $R*G$ has finite global dimension, and $R$ is a MCM $R^G$-module admitting a free summand.    
\end{thm}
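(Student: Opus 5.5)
Write $S\coloneqq \mathbb{C}[x,y]$ for the polynomial ring (the paper calls it $R$), so that the statement concerns $S*G$ and the invariant ring $S^G$. I would prove the theorem in three parts: the isomorphism, finite global dimension, and the MCM property together with the free summand.

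\textbf{Step 1: the map is an isomorphism.} Let $\phi:S*G\to\enn_{S^G}(S)$ send $(r,g)$ to the endomorphism $s\mapsto r\cdot g(s)$. This is a ring map by the twisted multiplication rule. Both sides are torsion-free $S^G$-modules, and $\enn_{S^G}(S)$ is reflexive because $S$ is. For injectivity, pass to the fraction field. $G$ acts faithfully on $K=\mathrm{Frac}(S)$, so Artin's lemma (Galois theory) gives $K*G\cong \enn_{K^G}(K)$, and $S*G\to K*G$ is injective.

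For surjectivity I would use a codimension-two argument. $S*G$ is free over $S$, which is MCM over $S^G$ (shown in Step 3), so $S*G$ is reflexive over the normal two-dimensional domain $S^G$. Two reflexive modules agreeing in codimension one are equal. Since $G\subset\mathrm{SL}_2$ contains no pseudo-reflections, the action is free away from the origin, so $\spec S\to\spec S^G$ is étale over the punctured spectrum. Over a Galois étale cover, $\phi$ is an isomorphism: locally this is Galois descent, $S\otimes_{S^G}S\cong\prod_G S$. Hence $\phi$ is an isomorphism at every height one prime, and therefore everywhere. This is the main obstacle, since it is the one place where $G\subset \mathrm{SL}_2(\mathbb{C})$ genuinely enters. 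If the reflexivity route were awkward, I would instead use the purity of the branch locus.

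\textbf{Step 2: finite global dimension.} Because $|G|$ is invertible in $\mathbb{C}$, the averaging (Reynolds) operator splits everything. Let $M$ be an $S*G$-module and take a projective resolution $P_\bullet\to M$ over $S$; this can be chosen of length at most $2$ by Hilbert's syzygy theorem (or ABS). Then $(S*G)\otimes_S P_\bullet$ resolves $(S*G)\otimes_S M$. The multiplication map $(S*G)\otimes_S M\to M$ is split by $m\mapsto \frac1{|G|}\sum_g g^{-1}\otimes gm$. So $M$ is a summand of a module of projective dimension at most $2$, and therefore $\mathbf{gldim}(S*G)\le 2$.

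\textbf{Step 3: $S$ is MCM over $S^G$ and has a free summand.} The Reynolds operator $S\to S^G$ is $S^G$-linear and splits the inclusion, so $S^G$ is a direct summand of $S$; this is the free summand. For the MCM property, note that $S$ is finite over $S^G$, and the regular sequence $x,y$ in $S$ consists of elements of the maximal ideal of $S$. Powers of $x,y$ that are $G$-invariant (or a homogeneous system of parameters of $S^G$) then give a length-two $S$-regular sequence inside $S^G$. So $\mathrm{depth}_{S^G}S=2=\dim S^G$ after localising or completing at the origin, and away from the origin $S^G$ is regular. Hence $S$ is MCM over $S^G$, which also supplies the reflexivity used in Step 1.
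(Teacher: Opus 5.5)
The notes give no proof of this theorem; they only cite Auslander, so there is no internal argument to compare yours against. Your proposal is correct and follows the classical route:
\begin{itemize}
\item the isomorphism comes from reflexivity over the normal surface $S^G$ together with an isomorphism in codimension one, and this is exactly where the absence of pseudo-reflections in $\mathrm{SL}_2(\mathbb{C})$ enters (it is the substance of Auslander's purity-of-the-branch-locus argument);
\item finite global dimension comes from the Maschke-type splitting of $(S*G)\otimes_S M\to M$, which works because $S*G$ is free as a right $S$-module, so inducing a length-two $S$-resolution gives a projective $S*G$-resolution;
\item the free summand comes from the Reynolds operator.
\end{itemize}

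Step 3 needs two small repairs.

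\emph{Invariant powers.} $G$-invariant powers of $x$ and $y$ exist only for cyclic $G$. In the binary dihedral case an element such as $\left(\begin{smallmatrix}0&1\\-1&0\end{smallmatrix}\right)$ sends $x^m$ to $\pm y^m$. Rely instead on your parenthetical alternative. A homogeneous system of parameters $f_1,f_2$ of $S^G$ is also one for $S$, since $S/(f_1,f_2)S$ is finite over the Artinian ring $S^G/(f_1,f_2)$. It is therefore $S$-regular, because $S$ is Cohen--Macaulay.

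\emph{Away from the origin.} Regularity of $S^G$ off the origin does not by itself make $S$ maximal Cohen--Macaulay there. A torsion-free module over a regular ring can have depth $1$; for example, the maximal ideal $(x,y)$ of $\mathbb{C}[x,y]$. What you need is that $S$ is locally free over $S^G$ on the punctured spectrum. This follows from the finite \'etale property you already use in Step 1. Alternatively, for graded modules the Cohen--Macaulay property at the irrelevant ideal propagates to all localisations.

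This second point is not cosmetic. The reflexivity of $S$ invoked in Step 1 needs depth $2$ at \emph{every} maximal ideal of $S^G$, not only at the origin.

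With these adjustments the argument is complete. It is also non-circular: Step 3 uses only the freeness of the action, not the conclusion of Step 1.
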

In particular, $R*G$ comes with an idempotent $e$ such that $e(R*G)e \cong R^G$. Reiten and Van den Bergh gave a Morita equivalent version of $R*G$ that can be constructed purely from the McKay graph.
\begin{defn}
    Let $Q$ be a finite quiver. The \textbf{double quiver} $\bar Q$ of $Q$ is the quiver obtained by adding, for every arrow $a:i\to j$ of $Q$, an opposite arrow $a^*:j \to i$. The \textbf{preprojective algebra} of $Q$ is the algebra $\Pi(Q)$ obtained as the quotient of $\bbC\bar{Q}$ by the relation $\sum_a[a,a^*]=0$.
\end{defn}
\begin{exer}
    Let $T$ be a finite graph, and arbitrarily choose orientations of the edges to obtain a quiver $Q$. Show that both $\bbC\bar{Q}$ and $\Pi(Q)$ are independent of the choice of orientation. We henceforth write $\bbC\bar{T}$ and $\Pi(T)$ and in what follows we will blur the distinction between graphs and quivers.
\end{exer}

\begin{thm}[Reiten--Van den Bergh \cite{reitenvdb}]
Let $R^G$ be a Kleinian singularity, and $T$ the corresponding extended Dynkin graph. Then
\begin{itemize}
    \item $R*G$ is Morita equivalent to $\Pi(T)$.
    \item $e_0\Pi(T)e_0\cong R^G$.
\end{itemize}
\end{thm}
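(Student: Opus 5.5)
I will prove the Reiten--Van den Bergh theorem by identifying a basic algebra Morita equivalent to $R*G$ and matching its quiver and relations with those of $\Pi(T)$.

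\textbf{Step 1: reduce to a basic algebra.} Decompose $\bbC G\cong\bigoplus_i \mathrm{End}(V_i)$ over the irreducible representations $V_0=\text{triv},V_1,\ldots,V_m$. Choosing a primitive idempotent $\epsilon_i\in\bbC G$ for each $V_i$, put $\epsilon=\sum_i\epsilon_i$. Then $\epsilon(R*G)\epsilon$ is Morita equivalent to $R*G$, because $(R*G)\epsilon_i(R*G)$ contains $\bbC G$ and hence all of $R*G$.

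\textbf{Step 2: compute the basic algebra as a graded algebra.} Grade everything by polynomial degree. Write $U=\bbC x\oplus\bbC y\cong V$ and use the identity $\epsilon_i(R*G)\epsilon_j\cong \hom_G(V_i,R\otimes V_j)$. Then:
\begin{itemize}
\item The degree zero part is $\bigoplus_i\bbC\epsilon_i$.
\item The degree one part is $\bigoplus_{i,j}\hom_G(V_i,V\otimes V_j)$.
\end{itemize}
By the definition of the McKay graph, $\dim\hom_G(V_i,V\otimes V_j)$ is the number of edges between $i$ and $j$. This number is symmetric because $V\cong V^*$, which uses $G\subseteq\mathrm{SL}_2$. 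So the degree one generators are indexed exactly by the arrows of the double quiver $\bar T$, giving a surjection $\bbC\bar T\to\epsilon(R*G)\epsilon$. Surjectivity holds since $R*G$ is generated by $\bbC G$ and $U$.

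\textbf{Step 3: identify the relations.} The commutativity relation $xy-yx=0$ in $R$ lives in $\wedge^2U\otimes\bbC G$. Since $\wedge^2U$ is the trivial representation, it gives exactly one relation $\rho_i\in\epsilon_i(\bbC\bar T)_2\epsilon_i$ for each vertex $i$.

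\begin{itemize}
\item \emph{Choice of basis.} I choose bases of the arrow spaces compatible with the symplectic form $\omega$ on $V$: the arrow $a$ and its dual $a^*$ pair under $\omega$. With this choice, and after rescaling arrows, $\rho_i$ becomes $\sum_{t(a)=i}aa^*-\sum_{h(a)=i}a^*a$, which is the $i$-th component of the preprojective relation $\sum_a[a,a^*]$. This step is concrete linear algebra with Schur's lemma.
\item \emph{Exhaustiveness (main obstacle).} I expect the hard part to be showing that these relations generate all relations, i.e.\ that $\Pi(T)\to\epsilon(R*G)\epsilon$ is injective. I would compare Hilbert series.
\end{itemize}

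For the Hilbert series comparison, on the $R*G$ side the Koszul complex $R\otimes\wedge^\bullet V$ is a $G$-equivariant resolution. This gives the matrix Hilbert series $H(t)=(1-Ct+t^2)^{-1}$, where $C$ is the adjacency matrix of $T$. On the $\Pi(T)$ side, the standard complex
\[
\Pi\otimes\bbC T_0\to\Pi\otimes\bbC\bar T_1\to\Pi\otimes\bbC T_0\to\Pi\to0
\]
is exact except possibly at the left end. Hence $H_\Pi(t)(1-Ct+t^2)\ge 1$ coefficientwise. Combined with the surjection $\Pi\to\epsilon(R*G)\epsilon$, which gives $H_\Pi\ge H$, this should force equality. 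If the inequality bookkeeping fails, I would fall back on a direct PBW-type or Gröbner basis argument, as in Crawley-Boevey--Holland.

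\textbf{Step 4: the corner at vertex $0$.} The primitive idempotent $\epsilon_0$ for the trivial representation is $\frac{1}{|G|}\sum_g g$. Multiplying by it on both sides picks out the invariants, so $\epsilon_0(R*G)\epsilon_0\cong R^G$. Under the isomorphism of Step 3 this becomes $e_0\Pi(T)e_0\cong R^G$.
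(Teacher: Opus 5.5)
The notes only cite Reiten--Van den Bergh and give no proof, so this review judges your argument on its own terms. Steps 1, 2 and 4, and the overall shape of Step 3, are the standard argument. The genuine gap is in the step you single out as the main obstacle: the Hilbert series method you propose for it cannot work.

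Let $K(t)\geq 0$ be the matrix Hilbert series of the kernel of the leftmost map in your standard complex for $\Pi=\Pi(T)$. Exactness elsewhere gives $H_\Pi(t)(1-Ct+t^2)=1+K(t)$, that is, $H_\Pi-H=K(t)H$ with $H=(1-Ct+t^2)^{-1}$. Since $H\geq 0$, this is the \emph{same} inequality $H_\Pi\geq H$ that the surjection $\Pi\to\epsilon(R*G)\epsilon$ already gives, so combining the two forces nothing. Equality holds if and only if $K=0$, i.e.\ if and only if the standard complex is a resolution. Given the surjection, that is equivalent to the very injectivity you are trying to prove, so the argument is circular. To close it by counting you would need an independent \emph{upper} bound on $H_\Pi$, such as a spanning set of paths, and the proposal does not supply one.

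In fact no counting is needed, because the presentation of $R*G$ transports through the Morita equivalence.
\begin{itemize}
\item Write $T(U)*G\cong T_{\bbC G}(U\otimes\bbC G)$ and $R*G=(T(U)*G)/(\omega)$ with $\omega=xy-yx$. Since $G\subseteq\mathrm{SL}_2$, $\omega$ commutes with $\bbC G$.
\item Because $\epsilon$ is full in $\bbC G$, you have $M\otimes_{\bbC G}M\cong M\epsilon\otimes_{\epsilon\bbC G\epsilon}\epsilon M$ for every bimodule $M$. Hence $\epsilon\,T_{\bbC G}(U\otimes\bbC G)\,\epsilon\cong T_{\epsilon\bbC G\epsilon}(\epsilon(U\otimes\bbC G)\epsilon)\cong\bbC\bar T$.
\item Write $1=\sum_k a_k\epsilon b_k$ with $a_k,b_k\in\bbC G$. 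Then $\omega=\sum_k a_k(\epsilon\omega\epsilon)b_k$, so $\epsilon(\omega)\epsilon$ is exactly the ideal of $\epsilon(T(U)*G)\epsilon$ generated by $\epsilon\omega\epsilon=\sum_i\epsilon_i\omega\epsilon_i$.
\end{itemize}
Thus $\epsilon(R*G)\epsilon\cong\bbC\bar T/(\rho_i)_i$ on the nose. The Koszul-complex computation becomes unnecessary, and the only real content left is your first bullet of Step 3.

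In that first bullet, rescaling arrows alone does not suffice. Rescaling $a$ and $a^*$ leaves unchanged the ratio between the coefficient of $aa^*$ in the relation at one end of $a$ and the coefficient of $a^*a$ at the other end. You must also rescale the $\rho_i$ themselves. If you use bases dual for the composition pairing induced by $\omega$, a trace computation using the antisymmetry of $\omega$ shows that multiplying $\rho_i$ by $\dim V_i$ works; this matters in types $D$ and $E$.

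Finally, in Step 1 it is $(R*G)\epsilon(R*G)$, not $(R*G)\epsilon_i(R*G)$, that contains $\bbC G$. The degree-zero part of the latter is only the block $\bbC G\epsilon_i\bbC G\cong\mathrm{End}(V_i)$.
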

In particular, since $R*G$ has finite global dimension, so does $\Pi(T)$.
\begin{exer}
    The extended $A_1$ Dynkin graph is of the form $T=\begin{tikzcd}
        0 \ar[r, bend left=30, no head] & 1 \ar[l, bend left=30, no head]
    \end{tikzcd}$. Check that $e_0\Pi(T)e_0$ is the $A_1$ singularity.
\end{exer}

Let us use the preprojective algebra to define a noncommutative deformation of $R^G$. This was first done by  Crawley-Boevey and Holland \cite{cbh}.
\begin{defn}
    Let $Q$ be a finite quiver with vertex set $\{$1,\ldots, n$\}$. A \textbf{weight} is a vector $\lambda\in\bbC^n$; we think of a weight as a map $Q_0 \to \bbC$ assigning vertex $i$ a number $\lambda_i$. If $\lambda$ is a weight, the corresponding \textbf{deformed preprojective algebra} $\Pi^\lambda(Q)$ is the quotient of $\bbC\bar Q$ by the\textbf{ deformed preprojective relations} $\sum_ae_i[a,a^*]e_i=\lambda_ie_i$, one for each $i$.
\end{defn}
 Clearly $\Pi^0(Q)\cong \Pi(Q)$.
\begin{defn}
    If $Q$ is an extended Dynkin quiver and $\lambda$ is a weight, the corresponding \textbf{noncommutative deformation} of $R^G$ is the ring $\mathcal{O}^\lambda \coloneqq e_0\Pi^\lambda(Q)e_0$.
\end{defn}
If we take $\lambda =0$ then we get $\mathcal{O}^0\cong R^G$.
\begin{rmk}
    In what sense is $\mathcal{O}^\lambda$ a noncommutative deformation of $R^G$? We describe another method to obtain $\mathcal{O}^\lambda$, also appearing in \cite{cbh}. The McKay correspondence gives an isomorphism $Z(\bbC G)\cong \bbC^{n+1}$ between the centre of the group ring and the semisimple ring of weights on the extended Dynkin graph. Under this isomorphism, we put $\mathcal{S}^\lambda \coloneqq \frac{\bbC\langle x,y\rangle*G}{[x,y]-\lambda}$, so that $\mathcal{S}^0$ is the skew group ring $R*G$. If $e=\frac{1}{|G|}\sum_{g\in G}g$ denotes the average of the group elements, then we in fact have $e\mathcal{S}^\lambda e \cong \mathcal{O}^\lambda$. Moreover, both   $\mathcal{S}^\lambda$ and  $\mathcal{O}^\lambda$ are filtered by path length, and their associated graded algebras are $R*G$ and $R^G$ respectively. Hence we view $\mathcal{S}^\lambda$ and $\mathcal{O}^\lambda$ as \textbf{PBW deformations} of $R*G$ and $R^G$ respectively.
\end{rmk}
\begin{prop}
    $\Pi^\lambda(Q)$ is a noncommutative resolution of $\mathcal{O}^\lambda$.
\end{prop}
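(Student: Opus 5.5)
We need to show that $\Pi^\lambda(Q)$ has finite global dimension and that $e_0\Pi^\lambda(Q)e_0\cong\mathcal{O}^\lambda$ with $e_0\Pi^\lambda(Q)$ a reasonable (MCM/reflexive-type) module. The second point holds by definition of $\mathcal{O}^\lambda$. So the content is to bound $\mathbf{gldim}\,\Pi^\lambda(Q)$, and optionally to check that $\Pi^\lambda(Q)$ is module-finite over its centre and two-sided noetherian. The plan is a filtered/graded deformation argument. We filter $\Pi^\lambda(Q)$ by path length. The deformed preprojective relations $\sum_a e_i[a,a^*]e_i=\lambda_ie_i$ have leading term the undeformed relation. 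The associated graded is therefore a quotient of $\Pi(Q)=\Pi^0(Q)$. We then use the PBW property recalled in the preceding remark (due to Crawley-Boevey--Holland) to conclude that $\mathrm{gr}\,\Pi^\lambda(Q)\cong\Pi(Q)$.

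The key steps, in order, are as follows.
\begin{enumerate}
\item Establish the isomorphism $\mathrm{gr}\,\Pi^\lambda(Q)\cong \Pi(Q)$ for extended Dynkin $Q$. We would do this via the Morita equivalent description $\mathcal{S}^\lambda$: the associated graded of $\mathcal{S}^\lambda$ is $R*G$ with $R=\bbC[x,y]$. This is a PBW statement, since $[x,y]-\lambda$ is a central-type deformation of the Koszul relation $[x,y]$. We would prove it with Bergman's diamond lemma, or by exhibiting the standard basis $\{x^iy^j g\}$ of $\mathcal{S}^\lambda$.
\item Transport the result across the Morita equivalence $\mathcal{S}^\lambda\sim\Pi^\lambda(Q)$, which is the deformed version of Reiten--Van den Bergh. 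This equivalence is given by full idempotents coming from the decomposition of $\bbC G$ into irreducibles, and it is compatible with the filtrations.
\item Apply the standard filtered-ring lemma. If $A$ is a ring with an exhaustive nonnegative filtration and $\mathrm{gr}A$ is noetherian of finite global dimension, then $A$ is noetherian and $\mathbf{gldim}\,A\le\mathbf{gldim}\,\mathrm{gr}A$. The proof lifts graded free resolutions of associated graded modules of good filtrations. By the Reiten--Van den Bergh theorem and Auslander's theorem, $\mathbf{gldim}\,\Pi(Q)=\mathbf{gldim}(R*G)=2$, so $\mathbf{gldim}\,\Pi^\lambda(Q)\le 2<\infty$.
\item Conclude the structural conditions. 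We have $e_0\Pi^\lambda(Q)e_0=\mathcal{O}^\lambda$ by definition. Since the filtration restricts, $\mathrm{gr}\,\mathcal{O}^\lambda\cong R^G$, so $\mathcal{O}^\lambda$ is noetherian. The module $\Pi^\lambda(Q)e_0$ has associated graded $\Pi(Q)e_0$, which corresponds to $R$ over $R^G$. By a filtered argument it is therefore finitely generated, with the ``MCM'' property in the sense of the Ext-vanishing definition. The vanishing of $\mathrm{Ext}^{>0}_{\mathcal{O}^\lambda}(\Pi^\lambda e_0,\mathcal{O}^\lambda)$ follows by semicontinuity from vanishing on the associated graded, using the spectral sequence of a filtered complex.
\end{enumerate}

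We expect the main obstacle to be step 1, the PBW property. We must rule out that the associated graded is a proper quotient of $\Pi(Q)$, that is, that the deformed relations do not collapse in higher degree. This is exactly where the extended Dynkin hypothesis and the choice of $\lambda$ enter. For instance, $\lambda$ must satisfy no constraint on $\lambda\cdot\delta$ for the PBW property itself, although $\lambda\cdot\delta$ does affect other features. We would first try the diamond lemma on $\mathcal{S}^\lambda=\bbC\langle x,y\rangle*G/([x,y]-\lambda)$. The only ambiguities are overlaps of the single relation with the $G$-action. These resolve because $\lambda\in Z(\bbC G)$ commutes with $G$, and because $[x,y]$ is $G$-invariant, which uses $G\subset \mathrm{SL}_2$. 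Once step 1 is done, the remaining steps are routine filtered homological algebra.
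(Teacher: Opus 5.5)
Your proposal is correct and is essentially the argument the paper has in mind. The paper states the proposition without proof and relies on the preceding remark that these algebras are PBW deformations filtered by path length; your Steps 1--3 (PBW for $\mathcal{S}^\lambda$ via the diamond lemma using $\det g=1$ and centrality of $\lambda$, transport to $\mathrm{gr}\,\Pi^\lambda(Q)\cong\Pi(Q)$ through the filtration-compatible full-idempotent Morita equivalence, then lifting noetherianity and $\mathbf{gldim}\le 2$ from the associated graded) are exactly how that remark yields the statement. One small correction to an aside you never use: $\Pi^\lambda(Q)$ is in general not module-finite over its centre (e.g.\ when $\lambda$ is a nonzero multiple of $1\in Z(\bbC G)$, $\mathcal{S}^\lambda$ is the skew group ring of the first Weyl algebra, whose centre is $\bbC$), so you should drop that ``optional'' check.
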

When $\lambda=0$ this reduces to Auslander's result on the skew group ring.

\section{Singularity category computations}
 We follow \cite[\S9]{kalckyang2}; the main result is originally due to Crawford \cite{crawford}. We will study the singularity category of $\mathcal{O}^\lambda$ via the method of \Cref{kycor}. Crawford proves that the singularity category of $\mathcal{O}^\lambda$ is idempotent complete, and hence \Cref{kycor} immediately yields a triangle equivalence $$\frac{\per(B)}{D_\mathrm{fd}(B)} \simeq D_\mathrm{sg}(\mathcal{O}^\lambda)$$where $B$ denotes the derived quotient $\Pi^\lambda(Q)/^{\mathbb{L}}\kern -2pt \Pi^\lambda(Q)e_0\Pi^\lambda(Q)$. In order to find such a $B$, we need to resolve $\Pi^\lambda(Q)$ as a dg algebra. Our resolution will be given by a derived version of the preprojective algebra.

\begin{defn}
   Let $Q$ be a finite quiver and $\lambda$ a weight on $Q$. Let $\tilde Q$ be the graded quiver consisting of $\bar Q$ placed in degree zero, along with a loop $t_i$ in degree $-1$ at each vertex $i$. The \textbf{derived preprojective algebra} is the dg algebra $\underline{\Pi}(Q)$ whose underlying graded algebra is $\bbC\tilde Q$, and whose differential satisfies $d(a)=d(a^*)=0$ and $d(t_i)=\sum_ae_i[a,a^*]e_i$. The \textbf{deformed derived preprojective algebra} $\underline{\Pi}^\lambda(Q)$ is the same, but the differential on $t_i$ is modified to be the sum $d(t_i)=\sum_ae_i[a,a^*]e_i-\lambda_ie_i$.
\end{defn}

\begin{rmk}
The dg algebra $\underline{\Pi}^\lambda(Q)$ is a deformed 2-Calabi--Yau completion of the path algebra $\bbC Q$, in the sense of Keller \cite{kellercy}. More generally, to construct the $n$-CY completion one puts $a^*$ in degree $2-n$ and $t_i$ in degree $1-n$; the differential remains the same. Deformed $n$-CY completions are more subtle; we remark that Ginzburg dg algebras of quivers with potential are examples of $3$-CY completions (the potential should be thought of as the deformation parameter).
	\end{rmk}
    Clearly we have an isomorphism $H^0(\underline{\Pi}^\lambda(Q))\cong {\Pi}^\lambda(Q)$, which extends to a map $\underline{\Pi}^\lambda(Q) \to {\Pi}^\lambda(Q)$.
    \begin{prop}
    If $Q$ is a finite quiver without cycles, and not a Dynkin quiver, then the map $\underline{\Pi}^\lambda(Q) \to {\Pi}^\lambda(Q)$ is a cofibrant resolution of dg algebras.
    \end{prop}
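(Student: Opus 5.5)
The map $\underline{\Pi}^\lambda(Q) \to \Pi^\lambda(Q)$ is, by construction, the projection onto $H^0$ followed by the identification $H^0(\underline{\Pi}^\lambda(Q))\cong \Pi^\lambda(Q)$. The source is cofibrant over $k=\bbC Q_0$: it is the graded path algebra $\bbC\tilde Q$, freely generated by the arrows of $\bar Q$ and the loops $t_i$. If we order the generators with all degree zero arrows first and the $t_i$ last, the upper-triangular condition holds, since $d(t_i)$ lies in the ideal generated by the degree zero arrows. So cofibrancy is immediate, and the whole content of the proposition is that $H^j(\underline{\Pi}^\lambda(Q))=0$ for $j<0$.

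To prove this vanishing I would first rewrite $\underline{\Pi}^\lambda(Q)$ as a derived tensor algebra. Put $V=\bbC\bar Q_1$ and $\Lambda=T_k(V)=\bbC\bar Q$, and let $T=\oplus_i \bbC t_i$ be the $k$-bimodule spanned by the loops. As a graded algebra, $\underline{\Pi}^\lambda(Q)$ is $T_\Lambda(\Lambda\otimes_k T\otimes_k \Lambda)$, with $t_i$ mapping to the relation $\rho_i=\sum_a e_i[a,a^*]e_i-\lambda_ie_i$. It is then standard (Anick's criterion, or Bocklandt's and Crawley-Boevey's argument for preprojective algebras) that this dg algebra has cohomology concentrated in degree $0$ exactly when the two-sided ideal $I=(\rho_i)$ satisfies a length-two bimodule resolution condition. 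Concretely, the sequence of $\Pi^\lambda$-bimodules
$$0\to \Pi^\lambda\otimes_k T\otimes_k\Pi^\lambda \to \Pi^\lambda\otimes_k V\otimes_k\Pi^\lambda\to \Pi^\lambda\otimes_k\Pi^\lambda\to\Pi^\lambda\to0$$
should be exact, and in particular injective on the left. This is the usual statement that $\Pi^\lambda$ is $2$-Calabi--Yau, or equivalently that the relations have no syzygies.

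I would handle the deformation by filtration. Filter by path length: the relation $\rho_i$ has leading term the homogeneous relation $\sum_a e_i[a,a^*]e_i$, and $\lambda_ie_i$ has lower degree. This filtration is compatible with the differential on $\underline{\Pi}^\lambda(Q)$, and its associated graded is $\underline{\Pi}^0(Q)$. The resulting spectral sequence is bounded in each degree, because for fixed cohomological degree and fixed path length there are only finitely many terms. So it suffices to treat $\lambda=0$, where the vanishing of $H^{<0}$ is the known Koszulity, or $2$-CY property, of the preprojective algebra of a non-Dynkin quiver. Concretely, I would show injectivity of $\Pi\otimes T\otimes\Pi\to\Pi\otimes V\otimes\Pi$ using the grading by path length. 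The standard proof shows that $\Pi(Q)$ has Hilbert series $(1-Ct+t^2)^{-1}$, where $C$ is the adjacency matrix of $\bar Q$, and this fails exactly in the Dynkin case. This is where the non-Dynkin hypothesis enters, and I expect it to be the main obstacle; I would cite Crawley-Boevey--Holland or Etingof--Eu for it rather than reprove it. The acyclicity hypothesis on $Q$ is used only so that $\bar Q$ is independent of orientation and the grading arguments apply cleanly.

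Finally, I would have to check convergence of the spectral sequence carefully. There is a subtlety here: the filtration is increasing and exhaustive, so a degreewise bounded-below argument gives that $H^{<0}$ of the associated graded vanishing implies $H^{<0}(\underline{\Pi}^\lambda)=0$. Combining this with $H^0\cong\Pi^\lambda(Q)$ shows that the map is a quasi-isomorphism from a cofibrant dg algebra, which is the claim.
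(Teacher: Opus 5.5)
Your proposal is correct and follows essentially the paper's route. Cofibrancy is immediate, the case $\lambda=0$ is the known $2$-Calabi--Yau/Koszul property of preprojective algebras of non-Dynkin quivers (the paper simply cites Hermes), and the general case reduces to it via the path-length filtration whose associated graded is $\underline{\Pi}(Q)$. Two small fixes are needed: $t_i$ must be given weight $2$, as in the paper's Adams grading, for $d$ to preserve the filtration; and because of the term $-\lambda_ie_i$, $d(t_i)$ lies in the subalgebra generated by $k$ and the arrows rather than in the ideal they generate, which is what cofibrancy actually requires.
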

    \begin{proof}[Proof idea]
  It is easy to see that $\underline{\Pi}^\lambda(Q)$ is a cofibrant dg algebra, so we just need to check that the map is a quasi-isomorphism. When $\lambda=0$ this is a result of Hermes \cite{hermesginz}. In the general case, one puts a secondary Adams grading on $\underline{\Pi}^\lambda(Q)$ by putting $a$ and $a^*$ in degree $-1$, $e_i$ in degree $0$, and $t_i$ in degree $-2$. This secondary grading induces an increasing filtration on $\underline{\Pi}^\lambda(Q)$ by Adams path length whose associated graded is $\underline{\Pi}(Q)$ equipped with an extra grading. The associated spectral sequence computes both $H^*(\underline{\Pi}^\lambda(Q))$ and $H^*(\underline{\Pi}(Q))$, which shows that $H^*(\underline{\Pi}^\lambda(Q))$ is concentrated in degree zero, as required.
    \end{proof}
    \begin{rmk}
        The above spectral sequence argument is closely related to the fact that ${\Pi}^\lambda(Q)$ is a PBW deformation of $\Pi(Q)$.
    \end{rmk}
 	 	\begin{rmk}
 	 	If $Q$ is a finite quiver without cycles then $\Pi(Q)$ is finite dimensional precisely when $Q$ is of Dynkin type; since finite dimensional algebras cannot be 2CY, the conclusion of the proposition cannot hold when $Q$ is a Dynkin quiver. In this case, Hermes constructs a minimal model of $\underline{\Pi}(Q)$. If $Q$ is a finite quiver without cycles then $\Pi(Q)$ is Koszul precisely when $Q$ is not of Dynkin type; one can prove the proposition using this. 
 	 		\end{rmk}	
            
 	Let us return to the Kleinian situation. We are interested in the quotient

    $$B\coloneqq \underline{\Pi}^\lambda(Q)/(e_0)$$where $Q$ is an extended Dynkin quiver and $0$ the extending vertex. Let $Q'$ be the full subquiver on the vertices $1,\ldots, n$ and $\lambda'$ the corresponding weight on $Q'$. Since $B$ is obtained from $\underline{\Pi}^\lambda(Q)$ by killing all paths that pass through $0$, it follows that $B$ is isomorphic to $\underline{\Pi}^{\lambda'}(Q')$. We will now pass to an even smaller model for $B$.

	Let $Q'_{\lambda'}$ be the full subquiver of $Q'$ on those vertices $i$ with $\lambda'_i=0$. There is a natural surjection $\underline{\Pi}^{\lambda'}(Q') \to \underline{\Pi}(Q'_{\lambda'})$ which Crawford proves to be a quasi-isomorphism as long as $\lambda$ is \textbf{quasi-dominant}, meaning that each of the $\lambda'_i$ either has positive real part, or zero real part and nonnegative imaginary part. Since one may choose $\lambda$ to be quasi-dominant without affecting the isomorphism type of the noncommutative deformation, we may assume that this is the case.

    So we have a quasi-isomorphism $B\simeq \underline{\Pi}(Q'_{\lambda'})$. Observe that $Q'_{\lambda'}$ must be a disjoint union of Dynkin quivers $Q^1,\ldots, Q^s$; let $R^1, \ldots, R^s$ denote the corresponding Kleinian singularities.

    \begin{thm}\label{crawfthrm}
        There is a triangle equivalence$$D_\mathrm{sg}(\mathcal{O}^\lambda)\simeq \bigoplus_{j=1}^s D_\mathrm{sg}(R^j)$$
    \end{thm}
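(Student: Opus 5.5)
The plan is to compute both sides through derived quotients and \Cref{kycor}, and then compare. On the left, Crawford's idempotent completeness together with \Cref{kycor} already gives
$$D_\mathrm{sg}(\mathcal{O}^\lambda)\simeq \frac{\per(B)}{D_\mathrm{fd}(B)}.$$
Here $B\simeq \underline{\Pi}(Q'_{\lambda'})$, using the cofibrant resolution $\underline{\Pi}^\lambda(Q)\to\Pi^\lambda(Q)$, the identification $B\cong\underline{\Pi}^{\lambda'}(Q')$, and Crawford's quasi-isomorphism for quasi-dominant $\lambda$. Since $Q'_{\lambda'}$ is the disjoint union of the Dynkin quivers $Q^1,\ldots,Q^s$, the path algebra of the graded quiver $\tilde{Q}'_{\lambda'}$ splits as a product. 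The differential preserves this splitting, so
$$\underline{\Pi}(Q'_{\lambda'})\cong \prod_{j=1}^s \underline{\Pi}(Q^j).$$
For a finite product of dg algebras, $\per$ and $D_\mathrm{fd}$ both decompose as direct sums, using the central idempotents to split modules. Hence the Verdier quotient decomposes as
$$\frac{\per(B)}{D_\mathrm{fd}(B)}\simeq\bigoplus_j \frac{\per(\underline{\Pi}(Q^j))}{D_\mathrm{fd}(\underline{\Pi}(Q^j))}.$$

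Next I run the same argument in the undeformed case, one summand at a time. Let $T^j$ be the extended Dynkin graph of $Q^j$, with extending vertex $0$. By Auslander and Reiten--Van den Bergh, $\Pi(T^j)$ has finite global dimension and satisfies $e_0\Pi(T^j)e_0\cong R^j$. The singularity $R^j$ is a complete local isolated hypersurface after completion, so $D_\mathrm{sg}$ is idempotent complete by \Cref{clihsrm}. Applying \Cref{kycor} with $\lambda=0$ then gives
$$D_\mathrm{sg}(R^j)\simeq \frac{\per(B_j)}{D_\mathrm{fd}(B_j)}.$$
Here $B_j$ is the derived quotient of $\Pi(T^j)$ by $e_0$, and $\Pi(T^j)/\Pi(T^j)e_0\Pi(T^j)$ is finite dimensional, so $D_\mathrm{fg}=D_\mathrm{fd}$. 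The non-Dynkin proposition shows $\underline{\Pi}(T^j)\to\Pi(T^j)$ is a cofibrant resolution, so $B_j\simeq\underline{\Pi}(T^j)/(e_0)\cong\underline{\Pi}(Q^j)$, by the same path-killing argument as before. Combining the two steps gives the theorem.

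The main obstacle is the completion issue. $R^j$ is the polynomial (graded) Kleinian singularity, not its completion, so I need that its singularity category is idempotent complete, or at least agrees with the completed one. I would first try Orlov's or Buchweitz's observation that for an isolated singularity $D_\mathrm{sg}(R)\simeq D_\mathrm{sg}(\hat R)$ up to idempotent completion. I would then cite Crawford's idempotent completeness argument, which covers $\lambda=0$ as the special case $\mathcal{O}^0=R^G$; this sidesteps the issue. A secondary point is checking that a quasi-isomorphism of dg algebras induces a compatible equivalence on $\per/D_\mathrm{fd}$. This is standard, because restriction and induction along a quasi-isomorphism preserve both subcategories.
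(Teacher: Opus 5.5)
Your proposal is correct and follows the paper's own proof. Both sides are identified via \Cref{kycor} with $\per/D_\mathrm{fd}$ of derived preprojective algebras: $\underline{\Pi}(Q'_{\lambda'})$, which splits over the Dynkin components $Q^j$, on the one side, and $\underline{\Pi}(Q^j)=\underline{\Pi}(T^j)/(e_0)$ on the other. Your extra points make explicit steps the paper leaves implicit, namely the product decomposition of the dg algebra and the idempotent completeness of $D_\mathrm{sg}(R^j)$ for the non-complete singularity, which you handle via Crawford's result at $\lambda=0$.
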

    \begin{proof}
        We certainly have triangle equivalences $$D_\mathrm{sg}(\mathcal{O}^\lambda)\simeq\frac{\per(\underline{\Pi}(Q'_{\lambda'}))}{D_\mathrm{fd}(\underline{\Pi}(Q'_{\lambda'}))}\simeq \bigoplus_{j=1}^s \frac{\per(\underline{\Pi}(Q^j))}{D_\mathrm{fd}(\underline{\Pi}(Q^j))}$$
        On the other hand, let $R$ be a Kleinian singularity, $Q$ the associated Dynkin graph, and $Q_+$ the extended Dynkin graph. Since $\Pi(Q_+)$ is a noncommutative resolution of $R$, we may compute the singularity category of $R$ using the derived quotient of $\Pi(Q_+)$ by $e_0$. As before, since the dg algebra $\underline \Pi(Q_+)$ resolves $\Pi(Q_+)$ and we have $\underline{\Pi}(Q_+)/(e_0) \simeq \underline{\Pi}(Q)$, the derived quotient is simply $\underline{\Pi}(Q)$, and hence we have a triangle equivalence $$D_\mathrm{sg}(R)\simeq \frac{\per(\underline{\Pi}(Q))}{D_\mathrm{fd}(\underline{\Pi}(Q))}$$Combining these equivalences proves the theorem.
    \end{proof}

    \chapter{Koszul duality}

In this chapter, we introduce the machinery of Koszul duality, before showing that it can be used to give structural results on singularity categories as well as provide a computational tool. We mostly follow \cite{positselski, leavitt, caldararutu, tu}.

\section{Bar and cobar constructions}
We provide a brief overview of the bar and cobar constructions and the module-comodule version of associative Koszul duality, roughly following the presentation in \cite{reflexivity}. For a more comprehensive treatment see \cite{positselski, lodayvallette}. In this section, $k$ will denote a commutative semisimple ring.
\begin{defn}
A \textbf{dg coalgebra} is a complex $C$ together with a \textbf{comultiplication} $\Delta:C\to C\otimes C$ and a \textbf{counit} $\eta:C \to k$ satisfying the following axioms:
\begin{enumerate}
    \item[]Coassociativity: $(\Delta\otimes \mathrm{id_C})\Delta \ = \  (\mathrm{id_C}\otimes \Delta)\Delta$ as maps $C\to C^{\otimes 3}$. 
    \item[]Counitality: $(\eta\otimes \mathrm{id_C})\Delta \ = \  (\mathrm{id_C}\otimes \eta)\Delta = \mathrm{id}_C$.
\end{enumerate}
 We also ask that the comultiplication and counit be chain maps. A dg coalgebra is \textbf{coaugmented} if $\eta$ admits a retract $k\to C$ which is a morphism of dg coalgebras. In this case, $\bar C\coloneqq \ker(\eta)$ is a noncounital subcoalgebra of $C$. A dg coalgebra $C$ is \textbf{conilpotent} if it is coaugmented, and for every $c\in \bar C$ there is an $N$ such that $\Delta^N(c)=0$.
\end{defn}
\begin{exer}
    Write explicit equations expressing that $\Delta$ and $\eta$ are chain maps (the first says that $d$ is a \textbf{coderivation} for $\Delta$).
\end{exer}
\begin{exer}\label[exer]{ldcogex}\hfill
\begin{enumerate}
    \item  Let $(C,\Delta,\eta)$ be a dg coalgebra. Show that its linear dual $C^*$ is a dg algebra with multiplication $\Delta^*$ and unit $\eta^*$.
    \item Let $A$ be a finite dimensional dg algebra. Show that $A^*$ is a dg coalgebra in the same way. Show that $A$ is augmented precisely when $C$ is coaugmented. In this situation, show that the corresponding maximal ideal of $A$ is nilpotent precisely when $C$ is conilpotent.
    \item Give an example of a dg algebra $A$ such that the multiplication on $A$ does not dualise to a comultiplication on $A^*$.
\end{enumerate}
\end{exer}
If $V$ is a complex, its \textbf{tensor coalgebra} is the dg coalgebra $$T^c(V)\coloneqq k\oplus V \oplus V^{\otimes 2}\oplus \cdots$$ with comultiplication given by the \textbf{deconcatenation coproduct}, which sends a tensor $v_1\otimes\cdots\otimes v_n$ to the sum $\sum_i(v_1 \otimes \cdots \otimes v_i)\otimes(v_{i+1}\otimes\cdots\otimes v_n)$. 
\begin{exer}\hfill
\begin{enumerate}
    \item Check that $T^cV$ is a conilpotent dg coalgebra.
    \item Show that the functor $T^c$ is right adjoint to the forgetful functor from conilpotent dg coalgebras to complexes.
\end{enumerate}
   
\end{exer}
\begin{defn}
    Let $A$ be an augmented dg algebra. The \textbf{bar construction} $BA$ is the dg coalgebra whose underlying graded coalgebra is the same as that of $T^c(\bar A [1])$. The differential $d$ on $BA$ is the sum $d=d_I+d_E$, where $d_I$ denotes the usual `internal' differential of $T^c(\bar A [1])$ and $d_E$ denotes the `external' differential, defined by sending $a_1\otimes\cdots \otimes a_n$ to the signed sum of the tensors $a_1\otimes \cdots \otimes a_ia_{i+1}\otimes\cdots\otimes a_n$.
\end{defn}
\begin{exer}
    *Use the Koszul sign rule to work out the correct signs appearing in $d_E$, and use this to check that $d_I+d_E$ is a differential.
\end{exer}

\begin{defn}
    Write $A^!\coloneqq (BA)^*$ for the linear dual of the bar construction. By \Cref{ldcogex}, $A^!$ is a dg algebra. We call $A^!$ the \textbf{Koszul dual} of $A$.
\end{defn}

\begin{exer}\label[exer]{SqZEx}
Let $A$ be the square-zero extension $k[\varepsilon]/\varepsilon^2$, with $\epsilon$ in degree $1-n$. Show that $A^!\simeq k\llbracket x \rrbracket$ with $x$ placed in degree $n$. (Hint: if $n\neq 0$ then $k\llbracket x \rrbracket\cong k[x]$.)
\end{exer}

The bar construction has a dual construction, the \textbf{cobar construction}, which sends conilpotent dg coalgebras to dg algebras. Briefly, let $C$ be a conilpotent dg coalgebra. The cobar construction of $C$ is the dg algebra $\Omega C$ whose underlying graded algebra is $T(\bar C[-1])$, the tensor algebra on the shifted coaugmentation ideal of $C$. The differential combines the natural internal differential on the tensor algebra with the comultiplication on $C$.

\begin{thm}[Koszul duality, cf.\ {\cite{positselski}}]\hfill
\begin{enumerate}
\item $\Omega$ and $B$ form an adjunction $\Omega\colon \mathbf{dgCog}^\mathrm{conil} \longleftrightarrow \mathbf{dgAlg}^\mathrm{aug}\colon B$.
\item Let $A$ be an augmented dg algebra. The counit $\Omega BA \to A$ is a quasi-isomorphism of algebras.
\item Let $C$ be a conilpotent dg coalgebra. The unit $C \to B\Omega C$ is a weak equivalence of coalgebras (i.e.\ is sent to a quasi-isomorphism by $\Omega$).
\end{enumerate}
\end{thm}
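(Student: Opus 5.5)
Part (1) should come from twisting cochains, and parts (2) and (3) from explicit contracting homotopies or filtration arguments.

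\textbf{The adjunction.} For conilpotent $C$ and augmented $A$, I would show that both $\hom_{\mathbf{dgAlg}^\mathrm{aug}}(\Omega C,A)$ and $\hom_{\mathbf{dgCog}^\mathrm{conil}}(C,BA)$ are in natural bijection with the set of \textbf{twisting cochains}. These are degree $1$ maps $\tau\colon \bar C\to \bar A$ satisfying the Maurer--Cartan equation $d\tau + \tau\star\tau=0$, where $\star$ is the convolution product $\mu(\tau\otimes\tau)\Delta$.

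On the algebra side, $\Omega C=T(\bar C[-1])$ is free as a graded algebra. So an algebra map out of it is determined by a degree-zero map $\bar C[-1]\to \bar A$, that is, a degree $1$ map $\tau$. Compatibility with the differential on generators is exactly the Maurer--Cartan equation, since the cobar differential on a generator is $d_I$ plus the reduced comultiplication.

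On the coalgebra side, $BA$ is cofree conilpotent on $\bar A[1]$. This is the adjunction in the exercise on $T^c$. So a coalgebra map $C\to BA$ is determined by its corestriction $\bar C\to\bar A[1]$. Compatibility with the differential is checked on cogenerators, and the external differential $d_E$ contributes precisely the $\tau\star\tau$ term. Naturality in $C$ and $A$ is then formal.

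\textbf{The counit.} I would filter $\Omega BA$ and $A$ compatibly and compare. Write $\Omega BA=T(\bar{T^c}(\bar A[1])[-1])$. Its elements are words of bar-words, and each is assigned a total tensor length in $\bar A$. The counit $\varepsilon\colon\Omega BA\to A$ kills everything except the one-letter words $[a]$, which go to $a$. Its kernel, together with the unit, decomposes by total length $n$. In each length the complex is the augmented chain complex of the lattice of ways to cut $n$ letters into consecutive blocks and then group blocks: a cube-like poset. I would write down an explicit contracting homotopy on the associated graded, which glues the first two blocks or separates them, so that the kernel is acyclic.

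Because the length filtration is exhaustive and bounded below in each fixed length, a spectral sequence argument, or simply acyclicity of each filtration quotient plus exhaustivity, shows that $\varepsilon$ is a quasi-isomorphism. Here I am using that $k$ is semisimple, so tensor products are exact and the Künneth formula is available.

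\textbf{The unit, and the main obstacle.} For part (3), applying $\Omega$ to the unit $C\to B\Omega C$ and composing with the counit $\Omega B\Omega C\to\Omega C$ gives the identity, by the triangle identities. The counit is a quasi-isomorphism by part (2). By two-out-of-three, $\Omega$ of the unit is a quasi-isomorphism, which is exactly the definition of weak equivalence.

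I expect the main obstacle to be part (2): getting the signs right in the contracting homotopy, and justifying the passage from the associated graded to the actual complex. To minimise this, I would first use the coradical/length filtration, whose associated graded has only the internal differential plus the one-step combinatorial differential. I would then invoke convergence, since the filtration is exhaustive and, in each length, bounded, so no completeness issues arise.
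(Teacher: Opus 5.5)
The paper gives no proof of this theorem; it only cites Positselski. So there is no internal argument to compare against. Your outline is the standard one and it works:
\begin{enumerate}
\item Twisting cochains give the adjunction in (1).
\item For (3), the triangle identity $\varepsilon_{\Omega C}\circ\Omega(u_C)=\mathrm{id}_{\Omega C}$ (with $u$ the unit), together with part (2) applied to the augmented algebra $\Omega C$ and two-out-of-three, shows that $\Omega(u_C)$ is a quasi-isomorphism. That is exactly the paper's definition of weak equivalence.
\item The filtration by total number of $\bar A$-letters is the right tool for (2).
\end{enumerate}

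Two points in (2) need correcting, though neither is a real gap.

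First, the counit does not kill everything but one-letter words. It is the algebra map extending the twisting cochain $BA\to\bar A$ that projects onto bar-length one, so it sends $\langle [a_1]|\cdots|[a_m]\rangle$ to $\pm a_1\cdots a_m$, not to zero. Hence neither $\varepsilon$ nor $\ker\varepsilon$ is homogeneous for total length, and $d_E$ lowers length anyway, so the kernel does not decompose by length. What does work is to give $A$ the two-step filtration $F_0A=k$, $F_pA=A$ for $p\ge 1$. Then $\varepsilon$ is filtered and $\mathrm{gr}_1\varepsilon$ is the identity of $\bar A$. Since $\mathrm{gr}_nA=0$ for $n\ge 2$, you only need $\mathrm{gr}_n\Omega BA$ to be acyclic for $n\ge 2$. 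Because the filtration is exhaustive with $F_{-1}=0$, induction on $p$ with the five lemma, followed by passage to the colimit, finishes the argument.

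Second, make the contraction precise. $\mathrm{gr}_n\Omega BA$ is, up to signs, $\bar A^{\otimes n}$ tensored with the cube complex on the $n-1$ gaps. Each gap is either a bar separator or a cobar separator, and the differential turns the former into the latter; there is no further level of grouping. The contraction must act at the first gap specifically. It sends $\langle[a_1]|[a_2|\cdots]|\cdots\rangle$ to $\pm\langle[a_1|a_2|\cdots]|\cdots\rangle$, and it is zero when the first bar word has length at least two. Gluing the first two bar words wherever the first cobar separator happens to sit does not satisfy $dh+hd=\mathrm{id}$. This contraction is defined over any $k$, so this step needs neither semisimplicity nor K\"unneth.
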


\begin{rmk}
    In fact, Positselski proves that there exist model structures on the categories of augmented dg algebras and conilpotent dg coalgebras making $\Omega \dashv B$ into a Quillen equivalence. A weak equivalence of dg algebras is precisely a quasi-isomorphism. The weak equivalences of coalgebras are created by $\Omega$; every weak equivalence is a quasi-isomorphism but the converse is not true.
\end{rmk}

If $C$ is a dg coalgebra, a (right) \textbf{dg-$C$-comodule} is a complex $V$ together with a coaction map $\rho\colon V\to V\otimes C$ such that $(\mathrm{id}_V\otimes \Delta)\rho = (\rho\otimes \mathrm{id}_C)\rho$. A \textbf{$C$-colinear map} between two dg-$C$-comodules $U,V$ is a map of complexes $U\to V$ which is compatible with the coactions. These define an abelian dg category $C\mathbf{-Comod}$ of dg-$C$-comodules. We let $\mathrm{Hot}(C)\coloneqq H^0(C\mathbf{-Comod})$ denote the corresponding homotopy category; it is a triangulated category in the usual way. The subcategory $\mathbf{CoAcy}(C)$ of \textbf{coacyclic} $C$-comodules is then defined to be the smallest thick subcategory of $\mathrm{Hot}(C)$ containing the totalisations of short exact sequences of $C$-comodules and closed under all coproducts. The \textbf{coderived category} of $C$ is the Verdier quotient $\dco(C)\coloneqq \mathrm{Hot}(C)/\mathbf{CoAcy}(C)$. Via taking dg quotients one can also enhance $\dco(C)$ to a pretriangulated dg category, and we will often regard it as such.
\begin{rmk}
One can also recover $\dco(C)$ as the homotopy category of a model structure on the category of dg-$C$-comodules; the cofibrations are the injections and the weak equivalences are the maps with coacyclic cone. In particular every comodule is cofibrant. 
\end{rmk}

\begin{rmk}
A weak equivalence between dg comodules is a quasi-isomorphism, but the converse is not true. One can check that there is a well-defined functor $\dco(C)^\mathrm{op} \to {\mathcal{D}}(C^*)$ which sends a comodule $N$ to its linear dual $N^*$. In general this functor need not be full, faithful, or essentially surjective.
\end{rmk}

\begin{thm}[Module-comodule Koszul duality, cf.\ {\cite{positselski}}]\label[thm]{ModCoMod}
Let $A$ be an augmented dg algebra and $C$ a conilpotent dg coalgebra. There there are quasi-equivalences 
\begin{align*}
    D(A) &\simeq \dco(BA)\\
    \dco(C)&\simeq D(\Omega C)
\end{align*}
The first sends $A$ to $k$ and $k$ to $BA$. The second sends $C$ to $k$ and $k$ to $\Omega C$.
\end{thm}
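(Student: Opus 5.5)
The plan is to prove the first equivalence $D(A)\simeq \dco(BA)$ by an adjunction built from the \emph{twisting cochain} $\tau\colon BA\to A$. This is the canonical degree one map that projects $BA$ onto $\bar A[1]$ and then includes into $A$. The second equivalence is handled dually, using the twisting cochain $C\to \Omega C$, and the two cases are linked by the Koszul duality theorem quoted above.

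Given a right dg-$A$-module $M$, form the twisted tensor product $M\otimes^\tau BA$. Its underlying graded comodule is the cofree comodule $M\otimes BA$, and its differential is twisted by $m\otimes c\mapsto \pm m\tau(c_{(1)})\otimes c_{(2)}$. In the other direction, a comodule $N$ is sent to $N\otimes^\tau A$, which has its differential twisted analogously. One checks directly that these two functors are adjoint, with
$$\hom_{BA}(N, M\otimes^\tau BA)\cong \hom_A(N\otimes^\tau A, M)$$
as complexes. On generators the functors behave as the statement claims. For $M=A$, the comodule $A\otimes^\tau BA$ is the two-sided-style bar resolution, which is contractible down to $k$, so $A\mapsto k$. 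For $M=k$, the twisting term vanishes because $\tau$ lands in $\bar A$, and so $k\mapsto BA$.

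The steps, in order, are as follows.
\begin{enumerate}
\item Show that both functors descend to the localisations. The functor $-\otimes^\tau A$ sends coacyclic comodules to acyclic modules. This holds because it sends totalisations of short exact sequences to acyclic complexes and commutes with coproducts.
\item Show that $-\otimes^\tau BA$ sends quasi-isomorphisms to weak equivalences, so that it respects the quotient defining $\dco(BA)$.
\item Show that the counit $N\otimes^\tau A\otimes^\tau BA\to N$ has coacyclic cone. Here I filter $N$ by its coradical filtration, which is exhaustive because $BA$ is conilpotent. On the associated graded pieces, which are trivial comodules, the claim reduces to the contractibility of $k\otimes^\tau A\otimes^\tau BA\simeq k$. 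I then use that coacyclics are closed under extensions and under the filtered colimits arising here.
\item Show that the unit $M\otimes^\tau BA\otimes^\tau A\to M$ is a quasi-isomorphism. This reduces to the acyclicity of the bar resolution of $M$, which I prove with an explicit contracting homotopy.
\end{enumerate}

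The main obstacle I expect is the counit step. There one genuinely needs the coderived category rather than ordinary quasi-isomorphisms; the earlier remark that weak equivalences of comodules are strictly stronger than quasi-isomorphisms is exactly this issue. The coradical filtration argument has to be phrased so that the totalisations of the resulting sequences land in $\mathbf{CoAcy}$. This requires passing to the direct limit of the filtration, which is where closure of $\mathbf{CoAcy}$ under coproducts enters.

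Finally, quasi-equivalence at the dg level follows because both functors are dg functors between pretriangulated dg categories whose induced maps on $H^0$ are triangle equivalences. The second statement, $\dco(C)\simeq D(\Omega C)$, follows by the same argument with the roles of the twisting cochains swapped, now using the map $C\to\Omega C$. The generator claims for it are $C\otimes^\tau\Omega C\simeq k$, giving $C\mapsto k$, and $k\mapsto \Omega C$.
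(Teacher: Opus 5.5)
Your architecture is the standard one from Positselski, whom the paper simply cites without giving a proof: twisted tensor products along the universal twisting cochain $\tau$, descent of both functors, then unit and counit. Steps 1 and 4 are fine. Step 2, which you state without argument, is where conilpotency first enters. It works by filtering $M\otimes^\tau BA$ by bar length: the graded pieces $M\otimes\bar A[1]^{\otimes p}$ are trivial comodules with acyclic, hence contractible, underlying complexes, and the telescope $0\to\bigoplus_pF_p\to\bigoplus_pF_p\to\bigcup_pF_p\to0$ handles the union. You have also swapped unit and counit: for $-\otimes^\tau A\dashv-\otimes^\tau BA$ the natural comodule map is $N\to N\otimes^\tau A\otimes^\tau BA$, $n\mapsto n_{(0)}\otimes1\otimes n_{(1)}$, not a map in the direction you wrote.

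The genuine gap is the base case of step 3, on which your generator claim $A\mapsto k$ also rests. The comodule $A\otimes^\tau BA$ is \emph{not} contractible down to $k$ in $\mathrm{Hot}(BA)$. The bar contraction $a\otimes[a_1|\cdots|a_n]\mapsto\pm1\otimes[\bar a|a_1|\cdots|a_n]$ is not $BA$-colinear, and in fact no colinear homotopy inverse exists at all. A degree-zero colinear map $f\colon A\otimes^\tau BA\to k$ must satisfy $\sum_{i=0}^{n} f(a\otimes[a_1|\cdots|a_i])\otimes[a_{i+1}|\cdots|a_n]=f(a\otimes[a_1|\cdots|a_n])\otimes[\,]$. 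Comparing components in $k\otimes\bar A[1]^{\otimes(n-i)}$ for $i<n$ forces $f=0$ as soon as $\bar A\neq0$ (over a field, e.g.\ $A=k[\varepsilon]/\varepsilon^2$). Hence $\hom_{\mathrm{Hot}(BA)}(A\otimes^\tau BA,k)=0\neq\hom_{\mathrm{Hot}(BA)}(k,k)$. What is true is only that $\mathrm{cone}(k\to A\otimes^\tau BA)$ is coacyclic, and its acyclicity does not give you that. So the acyclic-versus-coacyclic problem you flagged sits in the base case, not in the passage to the colimit; reducing $N$ to trivial comodules leaves it untouched.

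What is missing is a real coacyclicity argument for that cone. The coradical filtration of $A\otimes^\tau BA$ itself, by $A\otimes F_pBA$, does not work, because its graded pieces $A\otimes\bar A[1]^{\otimes p}$ are not acyclic (in step 2 they were acyclic only because $M$ was).

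Here is one fix. Since $k\to A\otimes^\tau BA$ is injective, its cone agrees in $\dco(BA)$ with the cokernel $Z$. Filter $Z$ by total weight:
\begin{itemize}
\item $w(1\otimes[a_1|\cdots|a_n])=n$;
\item $w(\bar a\otimes[a_1|\cdots|a_n])=n+1$ for $\bar a\in\bar A$.
\end{itemize}
Neither the differential nor the coaction raises $w$, and each $\mathrm{gr}_p$ is a trivial comodule. On $\mathrm{gr}_p$ the only surviving non-internal part of the differential is the isomorphism $1\otimes[a_1|\cdots|a_p]\mapsto\pm a_1\otimes[a_2|\cdots|a_p]$. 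So $\mathrm{gr}_p$ is the cone of an isomorphism, hence contractible, and extensions plus the telescope make $Z$ coacyclic. This gives both the base case and $A\mapsto k$.

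The unit of the second equivalence has the same problem at $k\to\Omega C\otimes^\tau C$, with the same fix. Filter by total coradical degree, so that the graded pieces are the positive-weight parts of the contractible complex $\Omega(\mathrm{gr}\,C)\otimes^\tau\mathrm{gr}\,C$. The generator claims there are statements in $D(\Omega C)$ and are fine as you have them.
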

Module-comodule Koszul duality can be used to show that the dual bar construction computes derived endomorphisms:
\begin{prop}\label[prop]{REndProp}
    Let $A$ be an augmented dg algebra. Then there is a quasi-isomorphism $A^!\simeq \R\hom_A(k,k)$ of dg algebras.
\end{prop}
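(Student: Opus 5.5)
We want to show $A^!\simeq \R\hom_A(k,k)$ as dg algebras. The plan is to transport the computation across the first quasi-equivalence of \Cref{ModCoMod}, $D(A)\simeq \dco(BA)$, which sends $k$ to $BA$. Because this is a quasi-equivalence of pretriangulated dg categories, it is quasi-fully faithful. It therefore induces a quasi-isomorphism of dg algebras
$$\R\hom_A(k,k)\simeq \R\hom_{\dco(BA)}(BA,BA).$$
The task then reduces to computing the derived endomorphisms of the cofree comodule $BA$ in the coderived category.

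First, I would argue that $BA$, viewed as a right comodule over itself, is injective in the sense relevant for the coderived category. Cofree comodules $V\otimes C$ satisfy the adjunction $\hom_{C\text{-comod}}(N,V\otimes C)\cong \hom_k(N,V)$. The usual argument then shows that maps into a cofree comodule kill coacyclic objects. Totalisations of short exact sequences map trivially up to homotopy because the adjunction turns them into $\hom_k$ of an exact totalisation, and $k$ is semisimple. Closure under coproducts then handles all of $\mathbf{CoAcy}$. This yields
$$\R\hom_{\dco(BA)}(BA,BA)\simeq \hom_{BA\text{-comod}}(BA,BA)$$
at the level of honest dg comodule maps, with compatible composition.

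Second, I would compute this honest endomorphism algebra. By the cofree adjunction with $V=k$, we get $\hom_{BA\text{-comod}}(BA,BA)\cong \hom_k(BA,k)=(BA)^*$ as complexes. I would then check that composition of colinear endomorphisms corresponds to the convolution product on $(BA)^*$ induced by $\Delta$. This is the multiplication $\Delta^*$ of \Cref{ldcogex}, up to the usual op-convention, and the differential matches because $d$ is a coderivation. Hence the endomorphism algebra is $A^!$, possibly up to taking opposites. I would fix the opposite issue by working with left comodules if needed, or by noting that the antipode-free identification can be chosen to reverse order.

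The main obstacle I expect is the first step: justifying rigorously that cofree comodules compute derived homs in $\dco$, in particular the interaction with infinite coproducts and the dg (not merely $H^0$) enhancement. If that becomes delicate, an alternative is to avoid it. I would take the explicit bar resolution $A\otimes_\tau BA\otimes_\tau\ldots$, that is, the twisted tensor product $BA\otimes_\tau A\to k$, which is a semifree resolution of $k$. Then $\hom_A(BA\otimes_\tau A,k)\cong \hom_k(BA,k)=(BA)^*$, and one checks directly that composition of endomorphisms of the resolution restricts to the convolution product.
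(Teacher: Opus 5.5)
Your proposal is correct and is essentially the paper's argument: transport $k\mapsto BA$ across the quasi-equivalence $D(A)\simeq \dco(BA)$, observe that $BA$ is injective (hence fibrant), so its derived endomorphisms are its honest colinear endomorphisms, and identify $\mathrm{End}_{BA}(BA)$ with $(BA)^*=A^!$ via the cofree adjunction. The rest of your proposal fills in details the paper's sketch leaves implicit: the orthogonality argument showing that cofree comodules see no coacyclic objects, and the care over the opposite-algebra convention when matching composition with $\Delta^*$.
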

\begin{proof}[Proof sketch]
    Put $C=BA$. By module-comodule Koszul duality, there are quasi-isomorphisms of dg algebras
    $$\R\mathrm{End}_A(k) \simeq \R\mathrm{End}_{\dco(C)}(C)$$
    Since $C$ is an injective $C$-comodule, it is fibrant, and we can compute its derived endomorphisms as $\R\mathrm{End}_{\dco(C)}(C) \simeq \mathrm{End}_C(C) \cong C^*$, as desired.
\end{proof}

\begin{ex}
    Use \Cref{REndProp} along with \Cref{SqZEx} to show that, if $k[x]$ denotes the graded polynomial ring, with $x$ placed in arbitrary degree, then we have $k[x]^{!!}\simeq k\llbracket x \rrbracket$.
\end{ex}

        \section{Finite dimensional algebras}
        The purpose of this section is to give a description, due to Keller and Wang, of the singularity category of a finite dimensional algebra. We follow \cite{leavitt}.
 		
 		\begin{thm}\label{kwtheorem}
        Let $\mathbb{F}$ be a perfect field and let $A$ be a finite dimensional $\mathbb{F}$-algebra. Let $k$ be the maximal semisimple quotient of $A$. Then, working over the base ring $k$, there is an equivalence $$D_\mathrm{sg}(A^\mathrm{op})^\mathrm{op} \simeq \frac{\per(\Omega (A^*))}{\mathbf{thick}(k)}\simeq \frac{\per(A^!)}{\mathbf{thick}(k)}.$$
  			\end{thm}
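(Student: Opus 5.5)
The plan is to pass through module-comodule Koszul duality (\Cref{ModCoMod}), with $C\coloneqq A^*$. Because $\mathbb{F}$ is perfect, the Wedderburn--Malcev theorem lets us split $A\onto k$. Hence $A$ is an augmented $k$-algebra with nilpotent augmentation ideal (the radical), and by \Cref{ldcogex} $C=A^*$ is a conilpotent coalgebra over $k$. Right $A^\mathrm{op}$-modules are left $A$-modules. Taking linear duals, finite-dimensional left $A$-modules become finite-dimensional right $C$-comodules, and this is contravariant: $\mathbf{mod}\text{-}A^\mathrm{op}\simeq (\mathbf{comod}\text{-}C)^\mathrm{op}$. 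This duality accounts for the outer ${}^\mathrm{op}$ in the statement. So $D^b(\mathbf{mod}\text{-}A^\mathrm{op})^\mathrm{op}$ is identified with the bounded derived category of finite-dimensional $C$-comodules. Under this identification $A^\mathrm{op}$ goes to the cofree comodule $C$, so $\per(A^\mathrm{op})^\mathrm{op}$ is identified with $\mathbf{thick}(C)$.

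The first key step is to compare this with the coderived category. For finite-dimensional (hence bounded) comodules over a conilpotent coalgebra, I expect acyclic equals coacyclic. Such a complex is a finite totalisation of short exact sequences, by the brutal-truncation argument of \Cref{tStructureExer2}. It follows that $D^b(\mathbf{comod}\text{-}C)$ embeds fully faithfully into $\dco(C)$ as $\mathbf{thick}(k)$. Here I use that every finite-dimensional comodule over a conilpotent coalgebra has a finite filtration with semisimple subquotients, i.e.\ summands of copies of $k$, via the coradical filtration. Next, apply the second equivalence of \Cref{ModCoMod}, $\dco(C)\simeq D(\Omega C)$, which sends $k\mapsto \Omega C$ and $C\mapsto k$. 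The image of $\mathbf{thick}_{\dco}(k)$ is $\mathbf{thick}(\Omega C)=\per(\Omega C)$, and the image of $\mathbf{thick}(C)$ is $\mathbf{thick}(k)$. Taking the Verdier quotient (these equivalences are triangle equivalences, so quotients are compatible) gives
$$D_\mathrm{sg}(A^\mathrm{op})^\mathrm{op}\simeq \frac{\per(\Omega(A^*))}{\mathbf{thick}(k)}.$$

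For the second equivalence I would show $\Omega(A^*)\simeq A^!=(BA)^*$ as dg algebras. Since $A$ is finite dimensional, $(\bar A[1])^{\otimes n}$ is finite dimensional in each tensor degree. The dual of $T^c(\bar A[1])$ is therefore the completed tensor algebra $\prod_n (\bar A^*[-1])^{\otimes n}$, whereas $\Omega(A^*)$ is the uncompleted $\bigoplus_n$ version. There is a natural map $\Omega(A^*)\to A^!$. I would argue that it is a quasi-isomorphism, or at least induces an equivalence on $\per$ preserving $k$, hence on the quotients. The argument would use the radical filtration: $\bar A$ is nilpotent, so the augmentation ideal of $H^*\Omega(A^*)$ behaves completely. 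Alternatively, I would avoid the comparison and use \Cref{REndProp}. By the equivalence above, $A^!\simeq\R\enn_A(k,k)$ corresponds to $\R\enn_{\dco(C)}(k)\simeq \R\enn_{\Omega C}(\Omega C)=\Omega C$. The identification $A^!\simeq\R\hom_A(k,k)\simeq \R\enn_{\dco(A^*)}(k)$ then goes through the first equivalence $D(A)\simeq \dco(BA)$ together with duality.

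The main obstacle is this last comparison between completed and uncompleted tensor algebras, together with the precise claim that bounded acyclic finite-dimensional comodule complexes are coacyclic, and that $\mathbf{thick}(k)$ in $\dco(C)$ is exactly the finite-dimensional part. For the former I would first try the route through \Cref{REndProp}, since it avoids completions entirely. I would fall back on a spectral sequence for the length filtration, which converges because $\bar A$ is nilpotent.
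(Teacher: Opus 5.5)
Your proposal follows the paper's argument. Wedderburn--Malcev makes $A$ augmented over $k$ with $C=A^*$ conilpotent. The bounded derived category embeds into $\dco(C)$ with image $\mathbf{thick}(k)$, and the perfect complexes land in $\mathbf{thick}(C)$. Then $\dco(C)\simeq D(\Omega C)$, which swaps $k,C$ for $\Omega C,k$, is carried through the Verdier quotients. The only difference is the order. You dualise at the outset and work with finite-dimensional comodules contravariantly. The paper embeds $D^b(A)$ covariantly via $K(\mathrm{Inj}(A))\simeq\dco(C)$ and applies linear duality $D^b(A^\mathrm{op})\simeq D^b(A)^\mathrm{op}$ at the end.

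The one step that does not go through as written is full faithfulness of $D^b(\mathbf{comod}\text{-}C)\to\dco(C)$. Knowing that bounded acyclic complexes are coacyclic only gives you the functor; you also need to compute Homs in $\dco(C)$. One way:
\begin{enumerate}
\item Complexes of injective comodules are right orthogonal to coacyclic complexes.
\item Since $C$ is finite dimensional, a bounded complex $Y$ of finite-dimensional comodules has an injective resolution $Y\to I$.
\item The cone of $Y\to I$ is bounded below and acyclic, hence coacyclic, being a telescope of its bounded canonical truncations.
\end{enumerate}
It follows that $\hom_{\dco(C)}(X,Y)\cong\hom_K(X,I)\cong\hom_{D^b}(X,Y)$. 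This is exactly the input the paper packages as $K(\mathrm{Inj}(A))\simeq\dco(C)$ composed with injective resolutions.

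Your worry about completed versus uncompleted tensor algebras is unfounded. $A$ sits in degree $0$, so $(BA)^{-n}=(\bar A[1])^{\otimes n}$ is a single finite-dimensional tensor power. Since graded duals are taken degreewise, $\prod_n$ and $\bigoplus_n$ agree. Hence the natural map $\Omega(A^*)\to A^!$ is an isomorphism of dg algebras (the dual of $d_E$ is the cobar differential), which is all the paper's ``since $C$ is finite dimensional'' means. The detour through \Cref{REndProp} is therefore unnecessary. It would also require a dg enhancement of your embedding into $\dco(C)$.
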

  		\begin{proof}[Proof idea]
        Since $\mathbb{F}$ was perfect, $A$ is a $k$-algebra by the Wedderburn--Malcev theorem. Put $C=A^*$ for brevity; since $A$ was nilpotent over $k$ it follows that $C$ is conilpotent. There is a natural map $\Omega(C)\to A^!$ and one can check that it is a quasi-isomorphism since $C$ is finite dimensional. So in what follows we focus on the first equivalence.

        Since $A$ is finite dimensional, there is a functor $A\text{-}\mathbf{Mod} \to C\text{-}\mathbf{Comod}$: an action map $M\otimes A \to M$ corresponds across the hom-tensor adjunction to a coaction map $M \to M\otimes C$ making $M$ into a $C$-comodule. In fact, this extends to an equivalence $K(\mathrm{Inj}(A))\to \dco(C)$. If we choose a fully faithful injective resolution functor $D^b(A) \into K(\mathrm{Inj}(A))$, by composition we obtain a fully faithful triangle functor $\iota:D^b(A) \into \dco(C)$. Hence $\iota$ induces triangle equivalences $$\frac{D^b(A)}{\mathbf{thick}(A^*)}\simeq\frac{\mathbf{thick}_{D^b(A)}(k)}{\mathbf{thick}_{D^b(A)}(A^*)}\simeq \frac{\mathbf{thick}_{\dco(C)}(\iota(k))}{\mathbf{thick}_{\dco(C)}(\iota(A^*))}\simeq \frac{\mathbf{thick}_{\dco(C)}(k)}{\mathbf{thick}_{\dco(C)}(C)}$$
        Since linear duality is an equivalence $D^b(A^\mathrm{op}) \to D^b(A)^\mathrm{op}$, it follows that the left hand side is exactly $D_\mathrm{sg}(A^\mathrm{op})^\mathrm{op}$. For the right hand side, \Cref{ModCoMod} provides a triangle equivalence $$\frac{\mathbf{thick}_{\dco(C)}(k)}{\mathbf{thick}_{\dco(C)}(C)}\simeq \frac{\mathbf{thick}_{D(\Omega C)}(\Omega C)}{\mathbf{thick}_{D(\Omega C)}(k)}\simeq \frac{\per(\Omega C)}{\mathbf{thick}(k)}$$ and putting the above equivalences together yields the required statement.
  			\end{proof}
            \begin{rmk}
                One can drop the perfect field requirement at the cost of assuming that $A$ is nilpotent over a finite dimensional semisimple $\mathbb{F}$-algebra; for example path algebras of quivers with admissible relations are of this form.
            \end{rmk}

 	\section{Hochschild homology via Koszul duality}
    Matrix factorisations can be viewed as certain modules over a \textit{curved} dg algebra. In this section, we explain this perspective, before using some results in curved Koszul duality to give an alternate proof of Dyckerhoff's result (\Cref{dythm}) on the Hochschild homology of matrix factorisation categories. We mostly follow the papers \cite{caldararutu, tu}.

 	\begin{defn}
 		A \textbf{curved dg algebra} is the data of
 		\begin{itemize}
 			\item A graded algebra $A$
 			\item A degree $1$ derivation $d:A \to A$
 			\item An element $h\in A^2$
 		\end{itemize}
 		such that
 		\begin{itemize}
 			\item $d(h)=0$
 			\item $d^2(x)=[h,x]=hx-xh$ for all $x\in A$.
 		\end{itemize}
 		We call $d$ the \textbf{differential} and $h$ the \textbf{curvature element}.
 	\end{defn}
 	\begin{ex}
 		If $A$ is a dg algebra, then it is naturally a curved dg algebra with zero curvature. 
 	\end{ex}
 	\begin{ex}
 		If $A$ is a graded algebra and $h\in A$ is a degree $2$ central element, then the pair $(A,h)$ is a curved dg algebra with zero differential (a.k.a.\ a \textbf{curved graded algebra}).
 	\end{ex}

 A \textbf{morphism} of curved algebras $A \to B$ is a pair $(f,b)$ where $f:A \to B$ is a map of graded algebras, and $b \in B$ is a degree 1 element satisfying the formulas
 \begin{itemize}
 	\item $f(da)=d(fa) + [b,fa]$
 	\item $f(h_A) = h_B + db + b^2$.
 \end{itemize} 
 Morphisms compose by putting $(g,b)(f,a)=(gf, b+g(a))$. Observe that the inclusion of dg algebras into curved dg algebras is faithful but not full.

 A \textbf{dg module} over a curved dg algebra $A$ is a graded $A$-module $M$ with a differential $d$ of degree $1$ satisfying the Leibniz rule with respect to the $A$-action, and such that $d^2(m)=hm$. Note that $A$ need not be a module over itself. DG modules assemble into a dg category $\mathbf{Mod-}A$, with the hom-complexes defined exactly as in the uncurved case. We let $\mathbf{Tw}(A)$ denote the full subcategory on the \textbf{finitely generated twisted modules}, i.e.\ those $A$-modules whose underlying graded $A$-modules are free of finite rank over the underlying graded algebra of $A$. This is a full pretriangulated dg subcategory.

     	\begin{ex}\label[ex]{curvrmk}
 		Let $A$ be a commutative ring and let $\sigma\in A$ be any element. Define a curved $\Z/2$-graded algebra $A_\sigma$ by:
 		\begin{itemize}
 			\item The underlying graded algebra of $A_\sigma$ is $A$, concentrated in even parity.
 			\item $A_\sigma$ has zero differential and the curvature element is given by $\sigma$.
 		\end{itemize}
 		Then $\mathbf{Tw}(A_\sigma)$ is precisely the $\Z/2$-graded dg category $\mathbf{MF}^\mathrm{dg}(A,\sigma)$. This gives a quick proof that the homotopy category $\mathbf{MF}(A, \sigma)$ is in fact a triangulated category. 
 	\end{ex}
    There is a completely parallel definition of \textbf{curved dg coalgebra}: a curved dg coalgebra is a graded coalgebra $C$ with a coderivation $d$ and a curvature functional $h:C \to k$ satisfying some axioms. Similarly, there is a notion of \textbf{finitely generated twisted comodule} over a curved dg coalgebra. Just like in the uncurved case, the linear dual of a curved dg coalgebra is a curved dg algebra, and one has the following proposition:
    \begin{prop}
        Let $C$ be a curved dg coalgebra. Then there is a quasi-equivalence $$\mathbf{Tw}(C)^\mathrm{op}\simeq \mathbf{Tw}(C^*).$$
    \end{prop}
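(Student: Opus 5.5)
We need to prove that for a curved dg coalgebra $C$ there is a quasi-equivalence $\mathbf{Tw}(C)^\mathrm{op}\simeq \mathbf{Tw}(C^*)$. The plan is to write down an explicit dg functor given by linear duality, $N\mapsto N^*$, and check that it is well defined, strictly fully faithful and essentially surjective. I will assume, as the term "finitely generated twisted" suggests, that $C$ is finite dimensional in each degree, or at least that the finite-rank free objects are dualisable. Then an object of $\mathbf{Tw}(C)$ has underlying graded comodule $V\otimes C$ with $V$ a finite dimensional graded $k$-module, and an object of $\mathbf{Tw}(C^*)$ has underlying graded module $W\otimes C^*$.

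The first step is to dualise the structure. Write the comodule differential on $V\otimes C$ as $d_N=\mathrm{id}_V\otimes d_C+\delta$, where $\delta$ is a twisting term determined by a degree one map $V\to V\otimes C$. This corresponds to an element $\tau\in \enn(V)\otimes C$, and the comodule condition $d_N^2=(\mathrm{id}\otimes h)\rho$ becomes a curved Maurer--Cartan equation for $\tau$ relative to the curvature $h$.

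Dualising, $N^*=V^*\otimes C^*$ carries the transposed differential. The key check is that the curvature functional $h:C\to k$, viewed as an element of $(C^*)^2$, is exactly the curvature element of $C^*$. Under this identification the identity $d^2=(\mathrm{id}\otimes h)\rho$ transposes to $d^2=h\cdot(-)$ on $N^*$. The identity of \Cref{ldcogex}, that $\Delta^*$ is the multiplication on $C^*$, guarantees that the coaction transposes to an action. Since $C^*$ and $V^*$ are free of finite rank, $N^*$ is a finitely generated twisted $C^*$-module, so the functor is defined on objects.

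The second step is to handle morphisms. Since $V$ is finite dimensional, the hom-tensor adjunction gives natural isomorphisms of graded $k$-modules
$$\hom_C(V\otimes C,V'\otimes C)\cong \hom_k(V,V'\otimes C)\cong V^*\otimes V'\otimes C,$$
and similarly
$$\hom_{C^*}(V'^*\otimes C^*,V^*\otimes C^*)\cong V'\otimes V^*\otimes C^*.$$
The ranks look mismatched here, since one side involves $C$ and the other $C^*$. The resolution is that comodule maps into a cofree comodule correspond to maps into the cogenerator, while module maps out of a free module correspond to images of generators. Transposition $f\mapsto f^*$ then identifies the two sides degreewise, and it is an isomorphism of graded vector spaces using only finite dimensionality of $V$ and $V'$, not of $C$. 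The remaining checks are that transposition intertwines the twisted differentials, with the usual Koszul signs in $\partial f=d f\mp f d$, and that it reverses composition. Both follow formally from $d_{N^*}=d_N^*$.

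This makes duality an isomorphism, and not merely a quasi-isomorphism, on hom complexes, hence a dg functor $\mathbf{Tw}(C)^\mathrm{op}\to\mathbf{Tw}(C^*)$ that is fully faithful on the nose. For essential surjectivity, any twisted $C^*$-module $W\otimes C^*$ with twisting element $\tau'\in\enn(W)\otimes C^*$ should be the dual of $W^*\otimes C$ with the transposed twisting element. This requires $\tau'$ to come from $\enn(W^*)\otimes C$. That is automatic when $C$ is degreewise finite dimensional, and it is the one genuine obstacle. In general $C^*$ is a completion, and one must either restrict to conilpotent $C$ where the relevant twisting elements are topologically nilpotent, or interpret $\mathbf{Tw}(C^*)$ with the pseudocompact topology so that continuous twistings are exactly the dual ones. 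I would first try the degreewise finite case, then handle the general case via the pseudocompact interpretation.
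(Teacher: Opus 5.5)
The notes give no argument beyond ``just like in the uncurved case''. Your route is the right one: linear duality $N\mapsto N^*$ as a dg functor $\mathbf{Tw}(C)^{\mathrm{op}}\to\mathbf{Tw}(C^*)$ that is an isomorphism on hom complexes and essentially surjective. But the bookkeeping at its core has $C$ where it should have $C^*$, and that error is what creates your ``one genuine obstacle''.

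A differential on the cofree comodule $V\otimes C$ compatible with the coaction differs from $\mathrm{id}_V\otimes d_C$ by a \emph{colinear} map $\delta$. Colinear maps into a cofree comodule are determined by their corestriction to the cogenerator, $\delta\mapsto(\mathrm{id}_V\otimes\eta)\delta\colon V\otimes C\to V$. So the twisting datum is a degree one element of $\mathrm{Hom}_k(C,\mathrm{End}(V))\cong\mathrm{End}(V)\otimes C^*$, not of $\mathrm{End}(V)\otimes C$; a map $V\to V\otimes C$ does not determine a colinear endomorphism of $V\otimes C$. For the cosymmetric coalgebra $C_\sigma$ this is as it must be: matrix factorisations are given by matrices over $C^*=k\llbracket x_1,\ldots,x_n\rrbracket$, not over $C$.

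The same slip appears in your hom computation. Correctly, $\mathrm{Hom}_C(V\otimes C,V'\otimes C)\cong\mathrm{Hom}_k(V\otimes C,V')\cong\mathrm{Hom}(V,V')\otimes C^*$. Your $V^*\otimes V'\otimes C$ cannot be matched with $V'\otimes V^*\otimes C^*$ by transposition once $C$ is infinite dimensional. So the sentence claiming this needs only finiteness of $V$ and $V'$ is false as written, even though the principle you state just after it is exactly the correction.

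With the correct identification everything matches for arbitrary $C$. A Leibniz differential on a free module $W\otimes C^*$ is determined by its values on generators, a degree one map $W\to W\otimes C^*$, i.e.\ an element of $\mathrm{End}(W)\otimes C^*$. Put $V=W^*$ and use the transpose anti-isomorphism $\mathrm{End}(W)\cong\mathrm{End}(V)^{\mathrm{op}}$; this is precisely the twisting datum of $V\otimes C$. Likewise $\mathrm{Hom}_{C^*}(V'^*\otimes C^*,V^*\otimes C^*)\cong\mathrm{Hom}_k(V'^*,V^*\otimes C^*)\cong\mathrm{Hom}(V,V')\otimes C^*$, compatibly with $f\mapsto f^*$. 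The curved Maurer--Cartan equations correspond because the curvature functional $h$ is the curvature element of $C^*$ (with the usual care over signs).

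Hence duality is fully faithful on the nose, and every object of $\mathbf{Tw}(C^*)$ is isomorphic, by a closed degree zero isomorphism, to the dual of an object of $\mathbf{Tw}(C)$. No degreewise finiteness, conilpotence or pseudocompactness hypothesis is needed. This matters, because your fallback plan would not cover the case the notes actually use: $C_\sigma$ is infinite dimensional in its single even degree. The pseudocompact detour is also unnecessary, since every twisting of a finite free $C^*$-module already arises by duality.
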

    \begin{ex}\label[ex]{curvrmk2}
        Let $A=k\llbracket x_1,\ldots, x_n\rrbracket$ be the power series ring, so that $A$ is the linear dual of the cosymmetric coalgebra $C$ on $n$ variables. If $\sigma\in A$ then $\sigma$ dualises to a functional on $C$, which gives $C$ the structure of a curved $\Z/2$-graded coalgebra which we denote $C_\sigma$. We have $C_\sigma^*\cong A_\sigma$ and hence using \Cref{curvrmk} we obtain a quasi-equivalence $\mathbf{Tw}(C_\sigma)^\mathrm{op}\simeq \mathbf{MF}^\mathrm{dg}(A,\sigma)$.
    \end{ex}
A curved dg coalgebra $C$ has a \textbf{cobar construction} $\Omega C$, which is a (not necessarily augmented) dg algebra. The definition of $\Omega C$ is the same as the usual cobar construction, but the differential acquires an extra term from the curvature functional. If $C$ is uncurved then $\Omega$ reduces to the usual cobar construction, and in particular $\Omega C$ is augmented. Similarly, if $A$ is a dg algebra, it has a bar construction $BA$ which is a conilpotent curved dg coalgebra, and $\Omega BA \simeq A$. If $A$ was augmented then $BA$ is uncurved. Defining a bar construction for curved dg algebras is possible, but more subtle, since one can no longer expect $BA$ to be conilpotent. See e.g.\ \cite{GKD} for a more thorough discussion.

\begin{thm}[Curved Koszul duality]
    Let $C$ be a coaugmented curved dg coalgebra. Then there is a quasi-equivalence of pretriangulated dg categories $\mathbf{Tw}(C) \simeq \mathbf{Tw}(\Omega C)$. When $C$ is in addition conilpotent, there is a quasi-equivalence $\mathbf{Tw}(\Omega C)\simeq \per(\Omega C)$. In particular, when $A$ is a dg algebra there is a quasi-equivalence $\per(A)\simeq \mathbf{Tw}(BA)$.
\end{thm}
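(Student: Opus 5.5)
The statement has three parts. First, $\mathbf{Tw}(C)\simeq\mathbf{Tw}(\Omega C)$ for coaugmented curved $C$. Second, when $C$ is conilpotent, $\mathbf{Tw}(\Omega C)\simeq\per(\Omega C)$. Third, the specialisation $\per(A)\simeq\mathbf{Tw}(BA)$. The plan is to prove the first two and obtain the third by taking $C=BA$, which is conilpotent with $\Omega BA\simeq A$.

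\textbf{First equivalence.} I would use the universal twisting cochain $\tau\colon C\to\Omega C$, given as the composite $C\to\bar C\to\bar C[-1]\into\Omega C$. Here $C\to\bar C$ is projection along the coaugmentation. The first check is that $\tau$ satisfies a curved Maurer--Cartan equation. The extra curvature term in the differential of $\Omega C$ is designed exactly to absorb the curvature functional $h$, leaving $\Omega C$ an honest uncurved dg algebra.

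From $\tau$ I would build twisted tensor product dg functors in both directions.
\begin{itemize}
\item A finitely generated twisted $C$-comodule $N$ goes to $N\otimes_\tau\Omega C$, with differential $d_N\otimes 1+1\otimes d+(\mathrm{id}\otimes\mu)(\mathrm{id}\otimes\tau\otimes\mathrm{id})(\rho\otimes\mathrm{id})$.
\item A twisted $\Omega C$-module $M$ goes to $M\otimes_\tau C$, defined analogously.
\end{itemize}
The curvature of $N$ cancels against the Maurer--Cartan equation of $\tau$, so the first functor lands in uncurved, finitely generated free $\Omega C$-modules. One then checks that the composites are connected to the identities by the counit and unit maps. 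This is the twisted analogue of the adjunction $\Omega\dashv B$. One also checks that the counit and unit are weak equivalences on finitely generated twisted objects: their cones are filtered by tensor length with acyclic associated graded, the classical cobar resolution argument. These finite-rank twisted objects are exactly where coacyclicity and acyclicity agree, so this gives a quasi-equivalence of dg categories.

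\textbf{Second equivalence.} When $C$ is conilpotent, the coradical filtration on $C$ induces a filtration on any finitely generated twisted $\Omega C$-module $M=(V\otimes\Omega C,d)$. The twisting part of $d$ strictly lowers this filtration, which is the upper-triangular condition from cofibrancy. So $M$ is an iterated extension of shifts of free modules $\Omega C$, hence lies in $\mathbf{thick}(\Omega C)=\per(\Omega C)$. Conversely, $\Omega C$ itself is twisted, and $\mathbf{Tw}$ is pretriangulated, closed under cones and shifts. It remains to handle summands. Either one shows that $\mathbf{Tw}$ is Karoubian up to quasi-equivalence, or one observes that every perfect module is a retract of a twisted one and uses idempotent completeness from the first equivalence.

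\textbf{Main obstacle.} I expect the weak-equivalence check in the first step to be the hardest part. In the curved setting $C$ is not a comodule over itself, so the usual argument, that the twisted tensor product $C\otimes_\tau\Omega C$ is contractible, is unavailable. I would first try to work directly with finite rank objects, filtering by the coaugmentation and using finite dimensionality of each graded piece to make the spectral sequence converge. If that fails, I would fall back on Positselski's comparison of coderived and absolute derived categories for finitely generated objects.
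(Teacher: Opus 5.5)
The notes quote this theorem without proof, so there is no argument of theirs to compare against. Judged on its own, your Step 1 has a genuine gap, and it is not just bookkeeping.

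\textbf{The functors do not land in $\mathbf{Tw}$.} The twisted tensor products do not preserve finite rank. If $N=V\otimes C$ with $V$ finite-dimensional, then $N\otimes_\tau\Omega C=V\otimes C\otimes\Omega C$ is free over $\Omega C$ on the graded space $V\otimes C$, which is infinite-dimensional unless $C$ is. Symmetrically, $M\otimes_\tau C=V\otimes\Omega C\otimes C$ is cofree on the infinite-dimensional space $V\otimes\Omega C$. So neither functor goes between $\mathbf{Tw}(C)$ and $\mathbf{Tw}(\Omega C)$.

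\textbf{Even repaired, the adjunction proves a different statement.} The unit/counit weak equivalences you invoke are Positselski's equivalence $D^{\mathrm{co}}(C)\simeq D(\Omega C)$. As the module--comodule Koszul duality theorem in the notes records, this equivalence sends $C$ to $k$, not to $\Omega C$. Concretely, take $C$ uncurved and conilpotent. The twisted differential on $V\otimes C$ is a Maurer--Cartan element of the convolution algebra $\mathrm{Hom}(C,\mathrm{End}\,V)$, i.e.\ a differential on $V$ together with a dg algebra map $\Omega C\to\mathrm{End}\,V$. Then $N\otimes_\tau\Omega C$ is the bar--cobar resolution of the finite-dimensional module $V$. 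The hom complexes $\mathrm{Hom}(V\otimes C,W)$ of $\mathbf{Tw}(C)$ compute $\mathrm{Ext}_{\Omega C}(V,W)$. So your construction identifies $\mathbf{Tw}(C)$ with finite-dimensional $\Omega C$-modules, and $\mathbf{Tw}(\Omega C)$ with finite-dimensional $C$-comodules. It does not identify them with each other.

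\textbf{The first claim needs an extra hypothesis.} Use the notes' definition of $\mathbf{Tw}(C)$: finite-rank cofree comodules, which is what $\mathbf{Tw}(C)^{\mathrm{op}}\simeq\mathbf{Tw}(C^*)$ requires. Then the first claim fails in this generality, so no repair of Step 1 can prove it as stated.

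Take $C=k\oplus k\varepsilon$ with $\varepsilon$ primitive of degree $-1$. It is uncurved, coaugmented and conilpotent, and $\Omega C=k[x]$ with $x$ in degree $0$ and zero differential. Every hom complex $\mathrm{Hom}(V\otimes C,W)$ in $\mathbf{Tw}(C)$ is finite-dimensional. But $k[x]\in\mathbf{Tw}(k[x])$ has $H^0$ of its endomorphism complex equal to $k[x]$. So no quasi-equivalence exists.

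Similarly $\per(k[x])\not\simeq\mathbf{Tw}(B\,k[x])$. The objects of $\mathbf{Tw}(B\,k[x])$ are finite-dimensional $A_\infty$-modules over $k[x]$, and their Ext groups are finite-dimensional.

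What is missing is an input identifying finite-dimensional with perfect $\Omega C$-modules, for example that $\Omega C$ is smooth and proper. For $C_\sigma$ with $\sigma$ an isolated singularity this holds, since $\Omega C_\sigma\simeq P_\sigma^{\mathrm{op}}$ and Dyckerhoff's work gives smoothness and properness. That is the application made in the notes.

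\textbf{A smaller issue in Step 2.} The coradical filtration filters the coefficients $\Omega C$, not the generators $V$. The pieces $V\otimes F_p\Omega C$ are not $\Omega C$-submodules, so you do not obtain an iterated extension of free modules. You need a flag on $V$ along which the twisting element is strictly triangular, and that is where conilpotence must genuinely enter. In the curved case, $\Omega C$ is not upper-triangular in your sense at all: the curvature contributes constant terms to the differentials of generators.
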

\begin{ex}\label[ex]{curvrmk3}
    Let $C_\sigma$ be the curved $\Z/2$-graded coalgebra of \Cref{curvrmk2}. Curved Koszul duality yields a quasi-equivalence $\mathbf{MF}^\mathrm{dg}(A,\sigma)\simeq \per(\Omega C_\sigma)^\mathrm{op}$. One can in fact show that $\Omega C_\sigma\simeq P_\sigma^\mathrm{op}$, the (opposite of the) dg algebra of polynomial differential forms constructed by Dyckerhoff. In particular, we have $P_\sigma^!\simeq A_\sigma$. In this sense, the equivalence of \Cref{dyccor} is in fact an example of curved Koszul duality.
\end{ex}

 	If $A$ is a curved dg algebra, it has a Hochschild complex $\mathrm{HH}^\bullet(A)$. As a graded vector space, $\mathrm{HH}^\bullet(A)$ is the product $\prod_{i=0}^\infty\hom(A^{\otimes i},A)$. The differential sends a degree $k$ cochain $f$ to the sum
 	
 	$$(a_1\ldots,a_l)\mapsto \sum_{\stackrel{j}{k\leq l}} (-1)^{j+\vert a_1\vert +\cdots +\vert a_j\vert} m_{l-k+1}(a_{1}, \ldots ,f(a_{j+1},\ldots, a_{j+k}),\ldots, a_{l}) $$
 	$$+ \sum_i (-1)^{\vert f\vert+\vert a_1\vert +\cdots +\vert a_i\vert} f(a_1,\ldots,m_{l-k+1}(a_{i+1},\ldots, a_{i+l-k+1}),\ldots, a_l)$$
 	where $m_0()=h$, $m_1(a)=da$, and $m_2(a,b)=ab$. Note that this is like the usual Hochschild differential, but also incorporates a curvature term. There is a similar definition for $\mathrm{HH}_\bullet(A)$ as the totalisation of the bigraded vector space $A\otimes A^i$ equipped wih a similar differential.
    \begin{prop}
 		Let $A$ be a curved graded algebra with nonzero curvature. Then both $\mathrm{HH}^*(A)$ and $\mathrm{HH}_*(A)$ vanish.
 	\end{prop}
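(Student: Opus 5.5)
The approach is to show that the Hochschild complex is acyclic by filtering it by tensor length. Consider the Hochschild cochain complex $\mathrm{HH}^\bullet(A)=\prod_{i\geq 0}\hom(A^{\otimes i},A)$. Since $A$ is a curved graded algebra, the differential is zero, so only $m_0=h$ and $m_2$ (multiplication) appear in the Hochschild differential. The differential therefore splits as $\delta=\delta_2+\delta_0$. Here $\delta_2$ is the usual Hochschild (bar) differential, which raises tensor length by one. The term $\delta_0$ inserts the curvature $h$ into one of the slots, so it sends $\hom(A^{\otimes i},A)\to\hom(A^{\otimes i-1},A)$ and lowers tensor length by one. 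There is also the term $[h,-]$ on $A$ itself, but it vanishes because $h$ is central. The goal is to show that the identity of this complex is nullhomotopic, so that $\mathrm{HH}^*(A)=0$.

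The key step is a contracting homotopy built from a linear functional hitting $h$. Since $h\neq 0$ and $k$ is a field (or, more generally, $h$ generates a free summand), choose a linear map $\phi\colon A\to k$ with $\phi(h)=1$. Define $s$ on cochains by $(sf)(a_1,\ldots,a_{i+1})=\pm\phi(a_1)f(a_2,\ldots,a_{i+1})$, with a possible symmetrised variant. First compute $[\delta_0,s]$: inserting $h$ and then contracting with $\phi$ should give the identity, plus terms in which $h$ is inserted elsewhere. Those remaining terms must be organised so that they cancel against $\delta_0 s$. Then compute $[\delta_2,s]$, which produces error terms involving $\phi(a_1a_2)$ and $a_1\cdot(\ldots)$.

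Those error terms are handled by a filtration argument. Filter the complex by the product filtration on tensor length, $F^p=\prod_{i\geq p}$, and work in the associated complete setting. Equivalently, use the filtration for which $\delta_0$ is the leading part. The associated graded complex then carries only $\delta_0$, together with the $\delta_2$ terms of positive filtration degree. The complex $(\prod_i\hom(A^{\otimes i},A),\delta_0)$ is the dual of a Koszul-type complex for inserting a central element. With $\phi$ as above it is contractible, because it looks like $\hom$ out of the bar complex $T(A)$ with differential $v_1\otimes\cdots\mapsto\sum\pm v_1\otimes\cdots\otimes h\otimes\cdots$. That complex is acyclic: it is the standard contractible complex associated to a nonzero vector, contracted by $\phi$ applied to the first slot. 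Completeness of the product filtration then lets us run the homotopy-perturbation lemma, since $\delta_2$ raises filtration. This gives $\mathrm{HH}^*(A)=0$. The convergence is guaranteed because we use the direct product, so the perturbation series converges.

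For homology, use the direct-sum totalisation $\bigoplus_i A\otimes A^{\otimes i}$. Here the insertion term $\delta_0$ raises length and $b$ lowers it, so a naive filtration does not converge in the right direction. I expect this to be the main obstacle. I would try to resolve it by using the homotopy $s$ given by contracting with $\phi$ directly. The formula $[\delta,s]=\mathrm{id}+N$ with $N$ length-decreasing and locally nilpotent on the direct sum would then make $\mathrm{id}+N$ invertible, hence the identity nullhomotopic. Alternatively, I would follow the convention (as in \cite{caldararutu}) that curved Hochschild homology uses the direct-sum model and verify the nilpotence by hand. Bookkeeping the Koszul signs so that the insertion terms telescope is the fiddly part.
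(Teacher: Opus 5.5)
Your proposal is correct and is essentially the paper's argument. The paper uses a spectral sequence to reduce to the zero-multiplication case and computes there; you make the same reduction by homotopy perturbation, contracting the curvature-insertion complex with the operator $s$ built from $\phi$ with $\phi(h)=1$.

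One tidy-up: $\prod_{i\ge p}$ is not a subcomplex (since $\delta_0$ lowers length), but no filtration is needed. The map $[\delta,s]=\mathrm{id}+[\delta_2,s]$ is a chain map whose error term shifts tensor length by two (up for cochains, down for chains). It is therefore invertible on the product cochains and on the direct-sum chains, which kills both $\mathrm{HH}^*$ and $\mathrm{HH}_*$, exactly as you already suggest for homology.
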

 	\begin{proof}[Proof idea]
 		A spectral sequence argument reduces to the case where $A$ is a curved algebra with zero multiplication, and one can compute $\mathrm{HH}(A)$ explicitly in this situation.
 	\end{proof}
    Hence the usual notion of Hochschild co/homology is not a good invariant for our current purposes. We introduce a slightly modified version.
    \begin{defn}[\cite{pphoch}]
 		Let $A$ be a curved dg algebra. The \textbf{compactly supported Hochschild cohomology complex} is the graded vector space $\mathrm{HH}^\bullet_c(A)\coloneqq \bigoplus_i \hom(A^{\otimes i}, A)$ equipped with the above Hochschild differential. Similarly, the  \textbf{Borel--Moore Hochschild homology complex} is the graded vector space $\mathrm{HH}_\bullet^{\mathrm{BM}}(A)\coloneqq \prod_i (A \otimes A^i)$ equipped with the usual Hochschild differential.
 	\end{defn}

    We now describe an alternate method to compute $\mathrm{HH}_*(\mathbf{MF}^\mathrm{dg}(A,\sigma))$. Following \Cref{curvrmk3} we obtain a quasi-isomorphism 
    $$\mathrm{HH}_\bullet(\mathbf{MF}^\mathrm{dg}(A,\sigma))\simeq \mathrm{HH}_\bullet(\Omega C_\sigma)$$
    using the derived Morita invariance of $\mathrm{HH}_\bullet$. Tu then proves that $\mathrm{HH}_*(\Omega C_\sigma)$ is naturally isomorphic to the linear dual of $\mathrm{HH}_*^{\mathrm{BM}}(A_\sigma)$, using the intermediate notion of the Hochschild complex of a curved dg coalgebra. This reduces the computation of $\mathrm{HH}_*(\mathbf{MF}^\mathrm{dg}(A,\sigma))$ to the computation of $\mathrm{HH}_*^{\mathrm{BM}}(A_\sigma)$, which one can do directly, following \cite{caldararutu}. We remark that a generalisation of this computation appears in \cite{HHcurved}.

\backmatter

\chapter{Further reading}

 	In this final section, we provide a few small glimpses into the large range of results about singularity categories that we failed to mention in earlier chapters. No more than a brief summary will be given; in particular we do not claim to provide anything close to a comprehensive list of references.

\section*{Singularity categories for schemes}

Let $X$ be a quasi-separated noetherian $k$-scheme. Just as in the case of $k$-algebras, one can define $D_\mathrm{sg}(X) \coloneqq D^b(\mathrm{coh}X) / \per(X)$, and when $X$ is affine this agrees with $D_\mathrm{sg}(k[X])$. This definition -- as well as the name ``singularity category'' for this object -- is due to Orlov \cite{orlovtri}. One key fact about $D_\mathrm{sg}(X)$ is that it can be computed on a formal neighbourhood of $\mathrm{Sing}X$. In the following theorem, we refrain from giving the definition of an ELF scheme; see the original reference \cite{orlovcomplete} or the good overview \cite{symons} for the details. We will, however, note that a quasiprojective variety defined over a perfect field is ELF.
\begin{thm}[Orlov {\cite{orlovcomplete}}]
    Let $X,Y$ be ELF schemes and denote by $\mathfrak{X}, \mathfrak{Y}$ the formal completions of $X,Y$ at their singular loci. If $\mathfrak{X}\cong\mathfrak{Y}$ as formal schemes then there is a triangle equivalence $D_\mathrm{sg}(X)^\omega \simeq D_\mathrm{sg}(Y)^\omega$.
\end{thm}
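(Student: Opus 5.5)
The plan is to reduce to the affine local situation and then compare the two complete local rings through their singularity categories. First, I would use the standard fact (also due to Orlov) that $D_\mathrm{sg}(X)$ only depends on a neighbourhood of $\mathrm{Sing}X$. If $U\subseteq X$ is open and contains $\mathrm{Sing}X$, then restriction gives a triangle equivalence $D_\mathrm{sg}(X)\to D_\mathrm{sg}(U)$ up to direct summands. The reason is as follows. Objects of $D^b(\mathrm{coh}X)$ supported on the regular locus $X\setminus U$ are perfect, by the global Auslander--Buchsbaum--Serre theorem applied locally. Moreover, the Verdier quotient $D^b(\mathrm{coh}X)/D^b_{Z}(\mathrm{coh}X)$, with $Z=X\setminus U$, is $D^b(\mathrm{coh}U)$, and similarly $\per(X)/\per_Z(X)$ is $\per(U)$ up to summands by Thomason--Trobaugh. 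The ELF hypothesis, which in particular includes separatedness and the existence of enough locally free sheaves, is what lets one run this localisation argument.

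Second, I would pass from a Zariski neighbourhood to the formal completion. Because $D_\mathrm{sg}(X)$ is generated up to summands by objects supported set-theoretically on $\mathrm{Sing}X$, I would identify $D_\mathrm{sg}(X)^\omega$ with the idempotent completion of the subquotient $D^b_{\mathrm{Sing}X}(\mathrm{coh}X)/\per_{\mathrm{Sing}X}(X)$. Coherent sheaves supported on $\mathrm{Sing}X$ are annihilated by some power of the ideal sheaf, so they only see finite-order thickenings of $\mathrm{Sing}X$, and hence only the formal scheme $\mathfrak X$. The same holds for perfect complexes supported there, using Koszul-type generators. Consequently the subquotient can be rewritten as a category depending only on $\mathfrak X$, namely torsion coherent sheaves on $\mathfrak X$ modulo torsion perfect complexes. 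An isomorphism $\mathfrak X\cong\mathfrak Y$ then transports this category, and taking idempotent completions gives the desired equivalence.

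I expect the main obstacle to be the comparison step for perfect complexes. The claim is that the thick subcategory of $\per(X)$ supported on $\mathrm{Sing}X$ matches the corresponding subcategory defined purely in terms of $\mathfrak X$, and that morphisms (Ext groups) between objects supported on $\mathrm{Sing}X$ can be computed after completion. For Ext of coherent sheaves I would use flatness of completion together with the fact that $\ext$ between complexes with support in $Z$ is already $I$-adically complete. For perfects I would first try to show that $\per_Z(X)$ is generated by a Koszul complex on local generators of the ideal, whose endomorphisms are unchanged by completion. This is exactly where idempotent completion becomes necessary, since generation by Koszul objects only holds up to summands.
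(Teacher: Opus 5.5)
\paragraph{The gap: full faithfulness in your second step.} The notes give no proof here and defer to Orlov. Your second step is the core of his argument: with $Z=\mathrm{Sing}X$, the natural functor
\[
\Phi\colon D^b_Z(\mathrm{coh}\,X)/\mathbf{per}_Z(X)\longrightarrow D_\mathrm{sg}(X)
\]
induces an equivalence on idempotent completions. You justify this only by saying that $D_\mathrm{sg}(X)$ is generated up to summands by objects supported on $Z$. That gives density of the image of $\Phi$ up to summands. It says nothing about $\Phi$ being fully faithful, and without full faithfulness there is no identification of idempotent completions.

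Full faithfulness does not follow from any generation statement. A morphism in $D_\mathrm{sg}(X)$ between objects supported on $Z$ is a roof through an arbitrary object of $D^b(\mathrm{coh}\,X)$, modulo arbitrary perfect complexes, neither of which need be supported on $Z$. This is the genuinely missing step.

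The standard fix is Verdier's criterion. It suffices to show that every morphism $P\to F$ with $P\in\mathbf{per}(X)$ and $F\in D^b_Z(\mathrm{coh}\,X)$ factors through an object of $\mathbf{per}_Z(X)$.
\begin{itemize}
\item On affine $X$ with $Z=V(f_1,\dots,f_r)$, let $K(f^m)$ be the Koszul complex on $f_1^m,\dots,f_r^m$. Then $F\simeq R\Gamma_Z F\simeq \mathrm{hocolim}_m\,\mathcal{H}om(K(f^m),F)$. Since $P$ is compact, the map factors as $P\to P\otimes K(f^m)\to F$.
\item Globally, use that $D_Z(\mathrm{Qcoh}\,X)$ is compactly generated by $\mathbf{per}_Z(X)$ (Thomason--Trobaugh). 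Combine this with Neeman's lemma: any morphism from a compact object to an object of the localising subcategory generated by a set of compact objects factors through the thick subcategory generated by that set.
\end{itemize}

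\paragraph{The other steps.} The point you single out as the main obstacle is comparatively routine, and your diagnosis of it is off.
\begin{itemize}
\item The idempotent completion in the theorem has nothing to do with Koszul generation of $\mathbf{per}_Z(X)$. That subcategory is already thick, and on non-affine $X$ there need not be a global Koszul generator.
\item The idempotent completion enters only through the density up to summands in your second step, which comes from Thomason--Trobaugh. Compare the nodal cubic and the coordinate axes.
\item To see that the subquotient depends only on $\mathfrak X$, it is cleaner to use $D^b_Z(\mathrm{coh}\,X)\simeq D^b(\mathrm{coh}_Z X)$ (an Artin--Rees effaceability argument) and $\mathrm{coh}_Z X\simeq \mathrm{coh}_Z\mathfrak X$. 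Then characterise $\mathbf{per}_Z(X)$ intrinsically as those $E$ with $\mathrm{Hom}(E,F[i])=0$ for $i\gg 0$ and every $F$; for the converse, test against skyscrapers $k(x)$ at closed points $x\in Z$.
\item Your opening plan of reducing to complete local rings only makes sense when $\mathrm{Sing}X$ is finite.
\item Your first step is correct, and in fact gives an honest equivalence $D_\mathrm{sg}(X)\simeq D_\mathrm{sg}(U)$, but it is never used afterwards.
\end{itemize}
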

We note that the theorem fails if one drops the idempotent completions; indeed for a counterexample one may take $X$ to be the cubic and $Y$ to be the coordinate axes in the plane. We obtain the useful corollary:
\begin{cor}
   Let $X$ be an ELF scheme with singular locus a finite set of isolated points. Then there are triangle equivalences $$D_\mathrm{sg}(X)^\omega\quad\simeq\quad\bigoplus_{p\in \mathrm{Sing}X}D_\mathrm{sg}(\mathcal{O}_{X,{p}})^\omega\quad\simeq\quad \bigoplus_{p\in \mathrm{Sing}X}D_\mathrm{sg}(\widehat{\mathcal{O}}_{X,p})^\omega$$and if in addition $X$ has complete local Gorenstein singularities then we also have a triangle equivalence $$D_\mathrm{sg}(X)^\omega\quad\simeq\quad\bigoplus_{p\in \mathrm{Sing}X}D_\mathrm{sg}(\widehat{\mathcal{O}}_{X,p}).$$
\end{cor}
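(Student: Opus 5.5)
The plan is to deduce the corollary from Orlov's theorem on formal completions together with the fact that Orlov's equivalence is compatible with disjoint unions. First I would localise the singular locus. Since $\mathrm{Sing}X=\{p_1,\ldots,p_r\}$ is finite, the formal completion $\mathfrak{X}$ of $X$ along $\mathrm{Sing}X$ is the disjoint union of the formal completions at the individual points, namely $\coprod_i \mathrm{Spf}\,\widehat{\mathcal{O}}_{X,p_i}$. Next I would choose affine open neighbourhoods $U_i$ of $p_i$ that are pairwise disjoint and each contain exactly one singular point; this is possible because $X$ is separated enough and the points are closed. Put $Y\coloneqq\coprod_i U_i$, which is again ELF. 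Its completion along its singular locus is isomorphic to $\mathfrak{X}$, so Orlov's theorem gives $D_\mathrm{sg}(X)^\omega\simeq D_\mathrm{sg}(Y)^\omega$. Since $D^b$ and $\per$ of a disjoint union split as direct sums, we get $D_\mathrm{sg}(Y)\simeq\bigoplus_i D_\mathrm{sg}(U_i)$. Idempotent completion commutes with finite direct sums.

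It then remains to identify $D_\mathrm{sg}(U_i)^\omega$ with both $D_\mathrm{sg}(\mathcal{O}_{X,p_i})^\omega$ and $D_\mathrm{sg}(\widehat{\mathcal{O}}_{X,p_i})^\omega$. For the complete local ring I would apply Orlov's theorem once more, this time to $U_i$ and $\spec\widehat{\mathcal{O}}_{X,p_i}$. Both have singular locus the single closed point, since $\widehat{\mathcal{O}}_{X,p_i}$ is regular away from its maximal ideal when $p_i$ is an isolated singularity. Both also have the same completion there. The main obstacle is that $\spec\widehat{\mathcal{O}}$ need not literally be ELF. If Orlov's hypothesis fails, I would instead argue directly. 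The localisation $D_\mathrm{sg}(U_i)\to D_\mathrm{sg}(\mathcal{O}_{X,p_i})$ is an equivalence up to summands, because its kernel consists of objects supported away from $p_i$, and those are perfect by regularity. Passing to the completion is handled by the observation that both categories are generated up to summands by the residue field $k(p_i)$, which is thick-generating. Their dg endomorphism algebras agree, since Ext groups between finite-length modules are unchanged by completion. Applying \Cref{tiltingprop} to the dg enhancements then gives the equivalence of idempotent completions.

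Finally, for complete local Gorenstein singularities, I would drop the $\omega$ on the right-hand side. By Buchweitz's theorem (\Cref{buchthm}), $D_\mathrm{sg}(\widehat{\mathcal{O}}_{X,p})\simeq\underline{\mathbf{MCM}}(\widehat{\mathcal{O}}_{X,p})$. This category is Krull--Schmidt: it has finitely generated modules over a complete local ring, so endomorphism rings are semiperfect and idempotents lift to direct summands. Hence it is already idempotent complete, as in \Cref{clihsrm}, and the final equivalence follows.
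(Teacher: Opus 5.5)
Your main line is the intended derivation: the paper gives no argument beyond citing Orlov's theorem. The ingredients are that the completion along the finite singular locus is $\coprod_p \mathrm{Spf}\,\widehat{\mathcal{O}}_{X,p}$, that $D_\mathrm{sg}$ of a finite disjoint union is a direct sum, and, in the Gorenstein case, that Buchweitz's theorem plus Krull--Schmidtness of $\underline{\mathbf{MCM}}$ over a complete local ring removes the $\omega$. That last step of yours is fine. Two things need fixing.

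First, pairwise disjoint open neighbourhoods of the $p_i$ need not exist: on an irreducible $X$ any two nonempty opens meet. You never actually use disjointness, though. Take affine $U_i\ni p_i$ missing the other $p_j$, and let $Y$ be the abstract disjoint union $\coprod_i U_i$.

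Second, your ``main obstacle'' is not one. $\spec\mathcal{O}_{X,p}$ and $\spec\widehat{\mathcal{O}}_{X,p}$ are affine, noetherian, separated and of finite Krull dimension, hence ELF. So Orlov's theorem applies directly to $X$ versus $\coprod_p\spec\mathcal{O}_{X,p}$, and to $X$ versus $\coprod_p\spec\widehat{\mathcal{O}}_{X,p}$. Neither the detour through the $U_i$ nor the fallback is needed.

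The fallback would in any case be incomplete as written. A morphism $k\to k[n]$ in $D_\mathrm{sg}$ is a roof through an object whose cohomology need not have finite length. So invariance of $\ext$ between finite-length modules under completion only controls these morphisms once you know that $D^b_{\{p\}}/\mathbf{per}_{\{p\}}\to D_\mathrm{sg}$ is fully faithful. That is precisely the key step of Orlov's proof, so the fallback is not independent of the theorem it was meant to bypass.

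The step that genuinely needs justification is your claim that $\widehat{\mathcal{O}}_{X,p}$ is regular away from its maximal ideal because $p$ is an isolated singular point. This is what gives $\mathrm{Sing}(\spec\widehat{\mathcal{O}}_{X,p})=\{\widehat{\mathfrak{m}}\}$, which Orlov's theorem needs here. It is also what makes $k$ a thick generator in your fallback.

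The claim holds when $\mathcal{O}_{X,p}$ is excellent, since then its formal fibres are regular. This covers $X$ essentially of finite type over a field, which is the setting the paper has in mind. It fails for general noetherian local rings. Take a one-dimensional noetherian local domain with non-reduced completion; classical examples go back to Akizuki and Nagata. It is trivially an isolated singularity. Its completion is Cohen--Macaulay and non-reduced, hence non-regular at some minimal prime $\mathfrak{Q}$. Then $\widehat{\mathcal{O}}/\mathfrak{Q}$ has nonzero image in $D_\mathrm{sg}(\widehat{\mathcal{O}}_{\mathfrak{Q}})$, so it does not lie in the thick closure of $k$, and your argument for the completed equivalence breaks down.

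You should state excellence, or that each $\widehat{\mathcal{O}}_{X,p}$ is again an isolated singularity, as an explicit hypothesis; the paper's formulation leaves it implicit as well. The comparison with the uncompleted local rings $\mathcal{O}_{X,p}$ needs no such assumption.
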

Observe that similar `block decompositions' appear in \Cref{bkthrm} and \Cref{crawfthrm}. In general the difference between $D_\mathrm{sg}(X)$ and its idempotent completion is measured by $K_{-1}(\per X)$, as in \cite{orlovcomplete}. We note that other localisation results, as well as a noncommutative version, appear in \cite{ChenUnifying}.

One can often globalise other constructions of the singularity category: if $X$ is a Gorenstein scheme (or stack) then Buchweitz's description of the singularity category globalises, and one can obtain a description of $D_\mathrm{sg}(X)$ in terms of MCM sheaves \cite{pvstack, lugor}. When $X$ is the zero scheme of a section of a line bundle, one can obtain a similar description of $D_\mathrm{sg}(X)$ in terms of global matrix factorisations \cite{OrlovMFglobal, LDMFglobal, epsings}. We note that Efimov and Positselski also give a relative version \cite{epsings}.

Orlov showed that the bounded derived category of a Calabi--Yau complete intersection is the graded singularity category of its affine cone; in more general situations one category embeds in the other \cite{OrlovCY}. A noncommutative version for dg algebras was also given by Brown and Sridhar \cite{BSorlov}. Shipman, following work of Segal, was able to give a description of Orlov's theorem in terms of global matrix factorisations \cite{segalGIT, shipman}. This is closely related to a theorem of Isik, which states that if $Y$ is a variety given as the zero scheme of a section of a vector bundle $E$ on a smooth variety, then $Y$ is derived equivalent to the equivariant singularity category of a hypersurface in the total space of $E^*$ \cite{isik}.

The descriptions of the Hochschild theory of matrix factorisation categories, as well as the closely related ``K\"unneth theorems'' of \Cref{hochsect}, were globalised by Preygel \cite{preygel} (see also \cite{bznp}). A related result appears in \cite{tvkunneth} where it is applied to prove a version of the Bloch conductor conjecture using the connection between singularity categories and vanishing cohomology established in \cite{brtv}.

\section*{Singularity categories in mirror symmetry}

Equivariant singularity categories appear on both sides of the conjectural Homological Mirror Symmetry correspondence. A (gauged) \textbf{affine Landau--Ginzburg model} is the data of a polynomial $w\in k[x_1,\ldots, x_n]$ with an isolated singularity at the origin together with a group $G$ of symmetries of $w$; $G$ is known as the \textit{gauge group}. Associated to an affine LG model is an equivariant matrix factorisation category $\mathbf{MF}(\mathbb{A}^n, G,w)$ known as the \textbf{derived category of branes}; the connection with curved algebras appears early in the HMS literature \cite{konHMS, orlovtri, caldararutu, segalLGMod}. One can also consider non-affine LG models of the form $(X,G,w)$ where $X$ is a smooth variety, $G$ a reductive group acting on $X$, and $w$ a $G$-equivariant function on $X$; see e.g.\ \cite{FKfrac} for a good mathematical introduction. In this setting one typically thinks of $\mathbf{MF}(X, G,w)$ as a deformation of $D^b(X)$ along the `superpotential' $w$. We remark that the above notions are closely related to the concept of a \textbf{gauged linear sigma model} \cite{glsm}.

\textbf{Landau--Ginzburg mirror symmetry} predicts that, for a certain class of polynomials $w$ called \textit{invertible}, one can associate to a pair $(w,G)$ a mirror pair $(w^T,G^T)$ such that certain enumerative invariants (the FJRW  invariants) of $\mathbf{MF}(\mathbb{A}^n, G,w)$ agree with certain other invariants (the Saito--Givental theory) of $\mathbf{MF}(\mathbb{A}^n, G^T,w^T)$. One expects $w^T$ to be the combinatorially defined \textit{Berglund--H\"ubsch transpose} of $w$. The related \textbf{LG/CY correspondence} states that, in certain settings, $\mathbf{MF}(\mathbb{A}^n, G,w)$ should actually be equivalent to some $A_\infty$-category associated to the Calabi--Yau orbifold $[X_w/G]$, where the hypersurface $X_w\coloneqq \{w=0\}$ sits inside a weighted projective space. Whether the $A_\infty$-category is a derived category of sheaves or a Fukaya category of Lagrangians depends on whether we are on the A- or B-sides of the mirror; on the A-side the relevant enumerative invariant is the \textit{orbifold Gromov--Witten theory}. Moreover, the LG/CY correspondence should interchange LG mirror symmetry and the more classical CY mirror symmetry. See e.g.\ \cite{lgcy, lumoduli, habermann} or the survey \cite{fjrwsur} and the references therein for a far more detailed introduction.

 		\section*{Big singularity categories}

        Singularity categories do not usually have infinite co/products, since bounded derived categories do not. The first to consider a `big' singularity category seems to have been Krause, who associated to any noetherian Grothendieck abelian category $\mathscr{A}$ its \textbf{stable derived category} $K_\mathrm{ac}(\mathrm{Inj}\mathscr{A})$, the homotopy category of acyclic complexes of injective objects.
        \begin{thm}[Krause {\cite{krausestab}}]
            There is a recollement (cf.\ \Cref{recoll})
             $$\begin{tikzcd}[column sep=huge]
K_\mathrm{ac}(\mathrm{Inj}\mathscr{A}) \ar[r]& K(\mathrm{Inj}\mathscr{A})\ar[l,bend left=10]\ar[l,bend right=10]\ar[r] & D(\mathscr{A})\ar[l,bend left=10]\ar[l,bend right=10]
\end{tikzcd}$$which yields, after localisation, a triangle equivalence $$K_\mathrm{ac}(\mathrm{Inj}\mathscr{A})^c \simeq D_\mathrm{sg}(\mathrm{noeth}\mathscr{A})$$ identifying the compact objects in $K_\mathrm{ac}(\mathrm{Inj}\mathscr{A})$ with the singularity category of noetherian objects in $\mathscr{A}$. 
        \end{thm}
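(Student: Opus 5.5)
The plan is to build the recollement by hand from the adjunctions between homotopy categories, and then read off the localisation statement via Neeman's theory of compactly generated categories. First, the inclusion $K(\mathrm{Inj}\mathscr{A}) \to D(\mathscr{A})$ is not literally a functor in that direction. The functor $Q\colon K(\mathrm{Inj}\mathscr{A}) \to D(\mathscr{A})$ is the composite of the inclusion $K(\mathrm{Inj}\mathscr{A})\into K(\mathscr{A})$ with the localisation $K(\mathscr{A})\to D(\mathscr{A})$. Its kernel is exactly $K_\mathrm{ac}(\mathrm{Inj}\mathscr{A})$, so it suffices to show two things:
\begin{enumerate}
\item $Q$ admits a fully faithful right adjoint and a fully faithful left adjoint;
\item the inclusion $K_\mathrm{ac}(\mathrm{Inj}\mathscr{A}) \into K(\mathrm{Inj}\mathscr{A})$ admits both adjoints.
\end{enumerate}
The second point follows formally from the first by standard Bousfield localisation arguments, producing the triangles in the definition of a recollement as in \Cref{recoll}.

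For the right adjoint of $Q$ I would use K-injective resolutions. Every complex over a Grothendieck abelian category has a K-injective resolution by injectives, which gives a functor $\mathbf{i}\colon D(\mathscr{A})\to K(\mathrm{Inj}\mathscr{A})$. Maps in $K$ into a K-injective complex agree with maps in $D$, so $\mathbf{i}$ is right adjoint to $Q$ with $Q\mathbf{i}\cong\id$.

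The left adjoint is the key input. I would first prove that $K(\mathrm{Inj}\mathscr{A})$ is compactly generated, with compact objects equivalent to $D^b(\mathrm{noeth}\mathscr{A})$ via $X\mapsto \mathbf{i}X$. The noetherian hypothesis enters here: arbitrary coproducts of injectives are injective, so coproducts in $K(\mathrm{Inj}\mathscr{A})$ are computed termwise. For noetherian $X$, one then checks $\hom_K(\mathbf{i}X, I)\cong \hom_{D}(X,I)$ for bounded-below $I$, and compactness follows by truncation arguments. Generation is the harder part: if $\hom(\mathbf{i}N[n],I)=0$ for all noetherian $N$, one shows $I$ is nullhomotopic using that every object is a filtered colimit of noetherian ones and a Baer-type criterion. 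I expect this generation step to be the main obstacle, and I would attack it by reducing to showing that the $Z^n(I)\to I^n$ are split, using the noetherian objects to test injectivity of the cycles.

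Given compact generation, $Q$ preserves coproducts, since coproducts in $D(\mathscr{A})$ are exact for Grothendieck $\mathscr{A}$. By Brown representability it therefore has a right adjoint, which we already have. More importantly, $Q$ also preserves products, because products of injective complexes are computed termwise and $\mathbf{i}$ commutes with them. The dual Brown representability theorem for compactly generated categories then gives the left adjoint $\lambda$, and $Q\lambda\cong\id$ because $Q\mathbf{i}\cong\id$. This yields the recollement.

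Finally, for the compact objects I would apply Neeman's localisation theorem. The sequence $K_\mathrm{ac}(\mathrm{Inj}\mathscr{A})\to K(\mathrm{Inj}\mathscr{A})\to D(\mathscr{A})$ is a localisation sequence of compactly generated categories in which $\lambda$ sends compacts to compacts; indeed $\lambda$ restricts to $\per$-type compacts, namely the thick closure of $\mathbf{i}$ of the noetherian objects that are compact in $D(\mathscr{A})$. Neeman's theorem then identifies $K_\mathrm{ac}(\mathrm{Inj}\mathscr{A})^c$ with the idempotent completion of $D^b(\mathrm{noeth}\mathscr{A})/D(\mathscr{A})^c$, that is, with $D_\mathrm{sg}(\mathrm{noeth}\mathscr{A})$ up to direct summands, as in the Neeman--Thomason--Trobaugh--Yao step used earlier. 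The compacts in $D(\mathscr{A})$ here correspond to the perfect objects.
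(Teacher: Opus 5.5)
The notes give no proof of this result and only cite Krause, so I am comparing your outline with Krause's argument. Your skeleton matches his: $K(\mathrm{Inj}\mathscr A)$ is compactly generated with compacts $D^b(\mathrm{noeth}\mathscr A)$, generation comes from Baer's criterion, the right adjoint of $Q$ comes from K-injective resolutions, and Neeman's localisation theorem finishes.

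\textbf{The gap: the left adjoint of $Q$.} The product of the $QX_\alpha$ in $D(\mathscr A)$ is $Q\bigl(\prod_\alpha\mathbf{i}QX_\alpha\bigr)$. The comparison map $\prod_\alpha X_\alpha\to\prod_\alpha\mathbf{i}QX_\alpha$ has as cone the termwise product of the acyclic complexes of injectives $\mathrm{cone}(X_\alpha\to\mathbf{i}QX_\alpha)$. So ``$Q$ preserves products'' says exactly that termwise products of acyclic complexes of injectives are acyclic. Your two observations do not give this: products in $K(\mathrm{Inj}\mathscr A)$ are termwise, and $\mathbf{i}$ commutes with products as a right adjoint, but neither addresses acyclicity of the product. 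Products in a Grothendieck category are only left exact, and they already fail to be exact in $\mathrm{QCoh}(\mathbb{P}^1)$. Quasi-coherent sheaves on a noetherian scheme are exactly the case Krause's theorem is aimed at. For $\mathscr A=\Mod R$ products are exact and your argument is fine, but in general an extra input is needed.

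That input is Krause's standing hypothesis that $D(\mathscr A)$ is compactly generated. Your outline never establishes this, yet your last step silently uses it when you call $D(\mathscr A)$ compactly generated. Neeman's theorem needs the image of $\lambda$ to be the localising subcategory generated by the set $\lambda(D(\mathscr A)^c)\subseteq K(\mathrm{Inj}\mathscr A)^c$, which requires $D(\mathscr A)=\mathrm{Loc}(D(\mathscr A)^c)$.

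A clean repair is as follows. Suppose $D(\mathscr A)$ is compactly generated by a set of objects $C$ with $\hom_K(\mathbf{i}C,Z)=0$ for every acyclic $Z\in K(\mathrm{Inj}\mathscr A)$. For perfect complexes over a noetherian ring this holds because $\hom_K(\mathbf{i}C,Z)\cong\hom_K(C,Z)=0$; over a separated noetherian scheme it follows by a \v{C}ech argument. Then:
\begin{enumerate}
\item $\hom_K(\mathbf{i}C,-)\cong\hom_D(C,Q-)$, so the $\mathbf{i}C$ are compact in $K(\mathrm{Inj}\mathscr A)$.
\item $K_\mathrm{ac}(\mathrm{Inj}\mathscr A)=\{\mathbf{i}C[n]\}^\perp$.
\item Bousfield localisation at the localising subcategory generated by the $\mathbf{i}C$ gives a left adjoint to the inclusion of $K_\mathrm{ac}(\mathrm{Inj}\mathscr A)$, hence a left adjoint to $Q$.
\end{enumerate}
Product preservation by $Q$ then follows as a consequence rather than being an input.

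\textbf{Two smaller points.} First, compactness of $\mathbf{i}X$ for $X\in D^b(\mathrm{noeth}\mathscr A)$ must be tested against arbitrary complexes of injectives, not just bounded-below ones. The correct statement is $\hom_K(\mathbf{i}X,Y)\cong\hom_K(X,Y)$ for all $Y\in K(\mathrm{Inj}\mathscr A)$. This holds because the cone of $X\to\mathbf{i}X$ is bounded below and acyclic, hence left orthogonal to every complex of injectives; finite presentation of noetherian objects then does the rest. Second, your conclusion that the equivalence holds only up to direct summands is the correct form. Since $K_\mathrm{ac}(\mathrm{Inj}\mathscr A)^c$ is idempotent complete, the statement in the notes should be read with $D_\mathrm{sg}(\mathrm{noeth}\mathscr A)$ replaced by its idempotent completion.
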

        When $\mathscr{A}=\mathbf{Mod-}X$ for $X$ a noetherian separated scheme or ring, then $D_\mathrm{sg}(\mathrm{noeth}\mathscr{A})$ is the usual singularity category of $X$. Note the similarity between $K_\mathrm{ac}(\mathrm{Inj}\mathscr{A})$ and the category $K_\mathrm{ac}(\mathrm{proj}A)$ appearing in the proof of \Cref{buchthm}. Observe also that when $A$ is a finite dimensional algebra, the category $K(\mathrm{Inj}A)$ already appears in the proof of \Cref{kwtheorem}, as the coderived category of the coalgebra $A^*$. In fact, any ring $R$ has a coderived category $\dco(R)$, and Positselski gives an equivalence $K_\mathrm{ac}(\mathrm{Inj}R)\simeq \dco(R)$ for any noetherian $R$ \cite{positselski}. Becker lifts these equivalences to the level of model structures \cite{becker}.

        Positselski's arguments hold in greater generality, in particular for many kinds of dg rings \cite[3.7]{positselski}, and allows one to construct `singularity categories' for them as categories of the form $\dco(R)^c$. We note that for unbounded dg rings, $\per R \not\subseteq D^b(R)$, so that the traditional definition need not make sense. Singularity categories for dg rings, and, more generally, ring spectra, were considered in \cite{gsmorita} which also established a connection with Koszul duality.

        \section*{Alternate models for singularity categories}
If $A$ is a homologically smooth dg algebra, Amiot's \textbf{generalised cluster category} $\mathrm{cl}(A)$ is defined to be the quotient $\frac{\per (A)}{D_\mathrm{fd}(A)}$ \cite{amiotcluster}. Note that the definition is formally `inverse' or `opposite to' that of the singularity category $\frac{D_\mathrm{fd}(B)}{\per (B)}$ of a finite dimensional (dg) algebra $B$, and indeed the two are often permuted by Koszul duality. So one can often view singularity categories as generalised cluster categories, and vice versa - see e.g.\ \cite{AIRcluster, gsmorita, huakeller, me, bsingcats, bgcosg} or the proof of \Cref{kwtheorem} (which is ultimately from \cite{leavitt}). In fact, $\mathrm{cl}(A)$ is sometimes known as the \textit{cosingularity category} of $A$. When $A$ is the Ginzburg dg algebra associated to a Jacobi-finite quiver with potential, Amiot additionally proves that $\mathrm{cl}(A)$ is $2$-Calabi--Yau \cite{amiotcluster}.

Some explicit models for singularity categories, constructed using noncommutative crepant resolutions, were given in \cite{dtdvvdb}; we note here also a link to the results of \Cref{rschapt}. This also allowed a new proof of the main result of \cite{AIRcluster}. Related constructions have been given in the non-isolated setting in \cite{kalckyang2}, and recently in the higher-dimensional setting in \cite{liumckay}. Other explicit models, using dg Leavitt path algebras, appear in  \cite{leavitt}.

The explicit models of \cite{dtdvvdb} were used in \cite{tabuadaKtheory} to compute the $K$-theory of the singularity categories of certain isolated quotient singularities. Whilst there do not seem to be many computations of $K_*(D_\mathrm{sg}(X))$ in the literature, some also appear in \cite{bdk} and \cite{PavicShinder}; homotopy $K$-theory is treated in \cite{GratzStevenson}. We remark that since $K$-theory is localising, for any noetherian ring $A$ we obtain a fibre sequence $K(A) \to G(A) \to K(D_\mathrm{sg}(A))\to$, and moreover since $G(A)$ is connective we see that $K_{-n}(D_\mathrm{sg}(A))\cong K_{-n-1}(A)$ for $n>0$.

 If $A$ is a Gorenstein ring, Buchweitz proves that the singularity category of $A$ can be obtained from the stable module category $\underline{\mathbf{mod}}(A)$ by stabilising (in the sense of \cite{HellerStab}) the syzygy functor $\Omega$ \cite{buchweitz}. This has been significantly generalised by subsequent authors \cite{KellerVossieck, Grandis, beligiannis, ChenWangNew}; see \cite{ChenSurvey} for a survey.

 			\begin{footnotesize}
		\bibliographystyle{alpha}
		\bibliography{sgnotes.bib}
	\end{footnotesize}
\end{document}